\def\l@subsection{\@tocline{2}{0pt}{2.5pc}{5pc}{}}
\DeclareMathOperator{\Hom}{Hom}
\DeclareMathOperator{\id}{1}
\DeclareMathOperator{\Spec}{Spec}
\DeclareMathOperator{\tot}{tot}
\DeclareMathOperator{\Coh}{Coh}
\DeclareMathOperator{\Higgs}{Higgs}
\DeclareMathOperator{\Bun}{Bun}
\DeclareMathOperator{\Maps}{Maps}
\DeclareMathOperator{\anMaps}{anMaps}
\DeclareMathOperator{\ad}{ad}
\DeclareMathOperator{\Sym}{Sym}
\DeclareMathOperator{\Rep}{Rep}
\DeclareMathOperator{\QCoh}{QCoh}
\DeclareMathOperator{\dSt}{\mathbf{dSt}}
\DeclareMathOperator{\dAnSt}{\mathbf{dAnSt}}
\DeclareMathOperator{\Hodge}{Hodge}
\DeclareMathOperator{\Tw}{Tw}
\DeclareMathOperator{\Betti}{Betti}
\DeclareMathOperator{\Image}{Image}
\DeclareMathOperator{\Perf}{Perf}
\newcommand{\heart}{\ensuremath\heartsuit}
\DeclareMathOperator{\Dol}{Dol}
\DeclareMathOperator{\dR}{dR}
\DeclareMathOperator{\Hod}{Hod}
\DeclareMathOperator{\Sim}{Sim}
\DeclareMathOperator{\BBB}{BBB}
\DeclareMathOperator{\sst}{sst}
\DeclareMathOperator{\DMod}{D-Mod}
\DeclareMathOperator{\TwMod}{Tw-Mod}
\DeclareMathOperator{\Nilp}{Nilp}
\DeclareMathOperator{\BAA}{BAA}
\DeclareMathOperator{\RH}{RH}
\newcommand{\op}{\operatorname}
\DeclareMathOperator{\Shv}{Shv}
\DeclareMathOperator{\IndCoh}{IndCoh}
\DeclareMathOperator{\an}{an}
\DeclareMathOperator{\restr}{restr}
\DeclareMathOperator{\Eis}{Eis}
\DeclareMathOperator{\Rees}{Rees}
\DeclareMathOperator{\Sing}{Sing}
\DeclareMathOperator{\Supp}{Supp}
\DeclareMathOperator{\Twistor}{Twistor}
\DeclareMathOperator{\irred}{irred}
\DeclareMathOperator{\CT}{CT}
\DeclareMathOperator{\cusp}{cusp}
\DeclareMathOperator{\LocSys}{LocSys}
\DeclareMathOperator{\Horiz}{Horiz}
\DeclareMathOperator{\nah}{nah}
\DeclareMathOperator{\Ind}{Ind}
\DeclareMathOperator{\GAGA}{GAGA}
\DeclareMathOperator{\alg}{alg}
\DeclareMathOperator{\SingSupp}{SingSupp}
\DeclareMathOperator{\Wilson}{Wils}
\DeclareMathOperator{\Wils}{Wils}
\DeclareMathOperator{\Map}{Map}
\DeclareMathOperator{\anMap}{anMap}
\DeclareMathOperator{\Res}{Res}
\DeclareMathOperator{\coarse}{coarse}
\DeclareMathOperator{\red}{red}
\DeclareMathOperator{\colim}{colim}
\DeclareMathOperator{\Red}{Red}
\DeclareMathOperator{\HN}{HN}
\DeclareMathOperator{\Cone}{Cone}
\newcommand{\Aa}{\mathcal{A}}
\newcommand{\Bb}{\mathcal{B}}
\newcommand{\Cc}{\mathcal{C}}
\newcommand{\Dd}{\mathcal{D}}
\newcommand{\Ee}{\mathcal{E}}
\newcommand{\Ff}{\mathcal{F}}
\newcommand{\Mm}{\mathcal{M}}
\newcommand{\Nn}{\mathcal{N}}
\newcommand{\Oo}{\mathcal{O}}
\newcommand{\Pp}{\mathcal{P}}
\newcommand{\Tt}{\mathcal{T}}
\newcommand{\Uu}{\mathcal{U}}
\newcommand{\Ww}{\mathcal{W}}
\newcommand{\Xx}{\mathcal{X}}
\newcommand{\Yy}{\mathcal{Y}}
\newcommand{\Zz}{\mathcal{Z}}
\newcommand{\B}{\mathrm{B}}
\newcommand{\ol}[1]{\overline{#1}}
\renewcommand{\AA}{\mathbb{A}}
\newcommand{\CC}{\mathbb{C}}
\newcommand{\DD}{\mathbb{D}}
\newcommand{\GG}{\mathbb{G}}
\newcommand{\LL}{\mathbb{L}}
\newcommand{\PP}{\mathbb{P}}
\newcommand{\TT}{\mathbb{T}}
\newcommand{\WW}{\mathbb{W}}
\newcommand{\gG}{\mathfrak{g}}
\newcommand{\pP}{\mathfrak{p}}
\newcommand{\mM}{\mathfrak{m}}
\newcommand{\uU}{\mathfrak{u}}
\renewcommand{\to}{\longrightarrow}
\newcommand{\RomanNumeralCaps}[1]
    {\MakeUppercase{\romannumeral #1}}
\numberwithin{equation}{section}
\newtheorem*{theorem*}{Theorem}
\newtheorem*{definition*}{Definition}
\newtheorem{theorem}{Theorem}[section]
\newtheorem{lemma}[theorem]{Lemma}
\newtheorem{proposition}[theorem]{Proposition}
\newtheorem{corollary}[theorem]{Corollary}
\newtheorem{remark}[theorem]{Remark}
\theoremstyle{definition}
\newtheorem{definition}[theorem]{Definition}
\newtheorem{def/prop}[theorem]{Definition/Proposition}
\theoremstyle{remark}
\definecolor{darkred}{rgb}{1,0,0} 
\definecolor{darkgreen}{rgb}{0,1,0}
\definecolor{darkblue}{rgb}{0, 0, 1}
\definecolor{darkpurple}{RGB}{170, 51, 106}
\title[Geometric Eisenstein series in non-abelian Hodge theory and hyperholomorphic branes from supersymmetry]{Geometric Eisenstein series in non-abelian Hodge theory and hyperholomorphic branes from supersymmetry}
\begin{document}

\begin{abstract}
Using \textit{geometric Eisenstein series}, foundational work of Arinkin--Gaitsgory constructs cuspidal-Eisenstein decompositions for ind-coherent nilpotent sheaves on the de Rham moduli of local systems. This article extends these constructions to coherent (not ind-coherent) nilpotent sheaves on the Dolbeault, Hodge and twistor moduli from non-abelian Hodge theory. We thus account for Higgs bundles,  Hodge filtrations and hyperk\"ahler rotations of local systems. In particular, our constructions are shown to decompose a hyperholomorphic sheaf theory of so-called \textit{BBB-branes} into cuspidal and Eisenstein components. Our work is motivated, on the one hand, by the `classical limit' or `Dolbeault geometric Langlands conjecture' of Donagi--Pantev, and on the other, by attempts to interpret Kapustin--Witten's physical duality between BBB-branes and BAA-branes in 4D supersymmetric Yang--Mills theories as a mathematical statement within the geometric Langlands program. 
\end{abstract} 

\author[R. Hanson]{Robert Hanson}
\address{R. Hanson, Department of Mathematics, Imperial College, London SW7 2AZ, United Kingdom}
\email{robert.hanson@imperial.ac.uk}

\thanks{
\, \newline
Author is supported by the Horizon Europe MSCA grant \textit{Hyperkähler mirror symmetry and Langlands duality} (ID 101204490)}

\, 

\vspace{-10mm}

\maketitle

\vspace{-1mm}

{
\hypersetup{linkcolor=black}
\setcounter{tocdepth}{2}
\renewcommand{\baselinestretch}{1.30}\normalsize
{
\Small
\tableofcontents
}
\renewcommand{\baselinestretch}{1.00}\normalsize
}

\newpage

\section{Introduction}

\setlength{\parskip}{7pt}
\setlength{\parindent}{0pt}

\subsection{Outline}
\label{outline}
A fundamental construction in the geometric Langlands program is that of parabolic induction via \textit{geometric Eisenstein series}. The construction has incarnations in the de Rham, quantum and local theories \cite{arinkin&gaitsgory, braverman&gaitsgory, drinfeld&gaitsgory, hamann, hamann&hansen&scholze, laumon_automorphic, lysenko_eis}, where in each case, a core idea is to use geometric Eisenstein series to decompose a given sheaf theory into cuspidal and Eisenstein components, in a manner that emulates the spectral analysis of an automorphic form. The objective of this article is to extend the study of geometric Eisenstein series to sheaves on the Dolbeault, Hodge and twistor type moduli stacks from non-abelian Hodge theory, and in particular, to a hyperholomorphic sheaf theory of so-called \textit{BBB-branes}, defined in this article as a geometric Langlands analogue of the namesake hyperholomorphic boundary conditions from 4D supersymmetric Yang--Mills theories. 

\subsubsection{de Rham Cuspidal-Eisenstein decompositions}
\label{se: dR cusp-Eis intro}
Let us first recall how geometric Eisenstein series act on the spectral side of the de Rham geometric Langlands correspondence. Fix a reductive group $G$ and a parabolic subgroup $P \subset G$ with associated Levi quotient $P \to M$. The corresponding moduli stacks of $M$, $P$ and $G$-structured de Rham local systems, over a fixed smooth projective curve $X$, inherit a pair of structure group induction morphisms  
\[
\begin{tikzcd}
 & \LocSys_P \arrow[dl, "q^{\dR}"'] \arrow[dr, "p^{\dR}"] & \\
\LocSys_M & & \LocSys_G
\end{tikzcd} .
\]
A pull-push procedure then defines the spectral geometric Eisenstein series functor
\begin{equation}
\label{de Rham Eis intro}
\Eis^{\dR}_P := (p^{\dR})_{*} \circ (q^{\dR})^{!} : \IndCoh_{\Nn}(\LocSys_M) \to \IndCoh_{\Nn}(\LocSys_G) .
\end{equation}
Arinkin and Gaitsgory used $\Eis^{\dR}_P$ to establish the following fundamental structure theorem for the categories $\IndCoh_{\Nn}(\LocSys_G)$ of ind-coherent sheaves on $\LocSys_G$ with nilpotent singular support\footnote{Throughout this article we conflate the terms `sheaf with nilpotent singular support' and `nilpotent sheaf'.}.  
\begin{theorem}
\label{de Rham decompositions}
\cite[Thm 13.3.6]{arinkin&gaitsgory}. $\IndCoh_{\Nn}(\LocSys_{G})$ is generated\footnote{A dg category $\Dd$ is generated by a collection of subcategories $\{\Dd_i\}_{i \in I}$ if the smallest full dg subcategory containing every $\Dd_i$ is equivalent to $\Dd$ itself.} 
by the essential images of 
\[
\Eis^{\dR}_{P} : \QCoh(\LocSys_{M}) \to \IndCoh_{\Nn}(\LocSys_{G}),
\]
for all parabolic subgroups $P \subset G$. 
\end{theorem}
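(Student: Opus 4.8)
\emph{Proof idea.} The plan is to induct on the semisimple rank of $G$, reduce the generation statement to the vanishing of a right orthogonal, identify that orthogonal with a space of cuspidal objects, and annihilate it by a singular-support analysis along the locus of reducible local systems. Since $\IndCoh_{\Nn}(\LocSys_G)$ is compactly generated and the full subcategory $\Ee \subset \IndCoh_{\Nn}(\LocSys_G)$ generated by the essential images of the $\Eis^{\dR}_P$ is localizing — it is generated by the set obtained by applying each $\Eis^{\dR}_P$ to a set of compact generators of $\QCoh(\LocSys_M)$ — it is enough to prove $\Ee^{\perp} = 0$. First I would record two facts about $\Eis^{\dR}_P$: the singular-support estimate implicit in \eqref{de Rham Eis intro}, namely that $\Eis^{\dR}_P$ carries $\IndCoh_{\Nn}(\LocSys_M)$ into $\IndCoh_{\Nn}(\LocSys_G)$; and transitivity, $\Eis^{\dR}_P \simeq \Eis^{\dR}_{P_1} \circ \Eis^{\dR, M_1}_{P_2}$ whenever $P_1 \subset G$ is a parabolic with Levi $M_1$, $P_2 \subset M_1$ is a parabolic, and $P \subset P_1$ is the preimage of $P_2$. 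Combining these with the inductive hypothesis for the proper Levis $M$ (which presents $\IndCoh_{\Nn}(\LocSys_M)$ as generated by Eisenstein series from smaller Levis), one checks that $\Ee$ already coincides with the subcategory generated by the canonical image of $\QCoh(\LocSys_G)$ — the term $P = G$ — together with the essential images $\Eis^{\dR}_P(\IndCoh_{\Nn}(\LocSys_M))$ over all \emph{proper} parabolics $P$.

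I would then pass to adjoints. By the second-adjointness theorem (the ``strange functional equation''), the right adjoint of $\Eis^{\dR}_P \colon \IndCoh_{\Nn}(\LocSys_M) \to \IndCoh_{\Nn}(\LocSys_G)$ is the constant-term functor $\CT^{\dR}_{P^-}$ along the opposite parabolic; and the right adjoint of the canonical fully faithful embedding $\QCoh(\LocSys_G) \hookrightarrow \IndCoh_{\Nn}(\LocSys_G)$ — whose essential image is the full subcategory of objects with singular support in the zero section — is a functor $\Psi_{\LocSys_G}$. Hence $\Ff \in \Ee^{\perp}$ precisely when $\Psi_{\LocSys_G}(\Ff) = 0$ and $\CT^{\dR}_{P^-}(\Ff) = 0$ for every proper $P$; since $P \mapsto P^-$ permutes conjugacy classes of parabolics, the latter condition says exactly that $\Ff$ is \emph{cuspidal}. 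So the theorem reduces to the key lemma: a cuspidal $\Ff \in \IndCoh_{\Nn}(\LocSys_G)$ has $\SingSupp(\Ff)$ contained in the zero section — equivalently, lies in the image of $\QCoh(\LocSys_G)$. I would prove the lemma by a fibrewise analysis of the global nilpotent cone $\Nilp(\LocSys_G) \subset \Sing(\LocSys_G)$: under the identification $\Sing(\LocSys_G)_{\sigma} \cong H^0(X, \ad\sigma)$ furnished by Serre duality on the quasi-smooth stack $\LocSys_G$, a nonzero point of $\Nilp$ over $\sigma$ is a nonzero flat nilpotent section $A$ of $\ad\sigma$; its conjugacy type is constant over the connected curve $X$, and — using that every nonzero nilpotent of $\gG$ lies in the nilradical of a Borel subalgebra — $A$ witnesses a reduction of $\sigma$ to a proper parabolic $P$ in a manner that the singular-support estimates for constant terms render visible to $\CT^{\dR}_{P^-}$. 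Thus, were $\SingSupp(\Ff)$ not contained in the zero section, some $\CT^{\dR}_{P^-}(\Ff)$ would be nonzero, contradicting cuspidality. Granting the lemma, write $\Ff$ as the image of $\Gg \in \QCoh(\LocSys_G)$; the vanishing $\Psi_{\LocSys_G}(\Ff) = 0$ then forces $\Gg = 0$, hence $\Ff = 0$, and the induction closes.

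The step I expect to be the genuine obstacle is the key lemma, ``cuspidal $\Rightarrow$ singular support lies in the zero section'': it requires a precise description of $\Sing(\LocSys_G)$ and of the singular (co)differentials of $p^{\dR}$ and $q^{\dR}$ in terms of group-theoretic data along $X$, together with a faithful translation between ``a nilpotent singular covector subordinate to a parabolic reduction'' and ``non-vanishing of the corresponding constant term'' — this is precisely the point at which the global nilpotent cone, stratified by Levi type, governs the argument. The two ingredients I would treat as black boxes — the singular-support estimate underlying \eqref{de Rham Eis intro}, and the second-adjointness identification of $(\Eis^{\dR}_P)^R$ with $\CT^{\dR}_{P^-}$ (which itself rests on acyclicity and contractibility properties of the Drinfeld-type compactifications of $\LocSys_P$) — are substantial theorems, and I would cite rather than reprove them. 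Granting these, the induction on semisimple rank, the (implicit) recollement along the open substack $\LocSys_G^{\mathrm{irr}}$ of irreducible local systems — over which $\Sing(\LocSys_G)$ is already the zero section, so that $\IndCoh_{\Nn} = \QCoh$ there — and the transitivity of Eisenstein series are bookkeeping.
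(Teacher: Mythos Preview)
The paper does not itself prove this theorem --- it is quoted from Arinkin--Gaitsgory --- but the paper's proof of the Hodge analogue (its Theorem on $\Coh_{\Nn}(\Hodge_G)$, in Section~\ref{se structure}) reproduces the Arinkin--Gaitsgory argument and is the right point of comparison. That argument is \emph{not} an orthogonality argument: it is a direct stratification of the global nilpotent cone. One shows, via separate analyses of $(q^{\dR})^!$ and $(p^{\dR})_*$ using singular-support functoriality, that the essential image of each $\Eis^{\dR}_P$ generates a subcategory $\IndCoh_{\Nn_G^P}(\LocSys_G)$ for an explicit sub-cone $\Nn_G^P \subset \Nn_G$; then one checks that $\bigcup_P \Nn_G^P \to \Nn_G$ is surjective on geometric points, which reduces to the group-theoretic fact that any nonzero nilpotent flat section of $\ad(\sigma)$ witnesses a parabolic reduction of $\sigma$ (Arinkin--Gaitsgory \S13.4.7; cf.\ the paper's Lemma~\ref{gen by Ii}). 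Generation follows by localisation in the singular-support variable.

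Your approach via right orthogonals is genuinely different and borrows its shape from the automorphic side. The problematic step is the second adjointness you invoke: the identification $(\Eis_P)^R \simeq \CT_{P^-}$ is the Drinfeld--Gaitsgory strange functional equation for $\DMod(\Bun_G)$, and no such statement is established (or needed) on the spectral side --- there is nothing to cite here. On $\LocSys_G$ the geometry is tamer: $p^{\dR}$ is schematic and proper and $q^{\dR}$ is affine and quasi-smooth, so the right adjoint of $\Eis^{\dR}_P = (p^{\dR})_* (q^{\dR})^!$ is directly computable, and its vanishing on $\Ff$ is governed by $(p^{\dR})^!\Ff$ together with conservativity of $(q^{\dR})_*$; no opposite parabolic appears. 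Your key lemma does share its geometric kernel with the actual proof --- both rest on the reducibility of local systems carrying nonzero nilpotent covectors --- but the translation you gesture at, from ``$A$ is subordinate to $P$'' to ``$\CT^{\dR}_{P^-}(\Ff) \neq 0$'', is precisely where the singular-support functoriality results (Arinkin--Gaitsgory \S8.4) do the work, and the stratification route invokes them directly rather than through a misattributed adjunction.
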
 
By taking $\IndCoh_{\Nn}(\LocSys_{G})_{\Eis}$ to be the full subcategory of $\IndCoh_{\Nn}(\LocSys_{G})$ generated by proper parabolics, the right orthogonal can be identified as $\QCoh(\LocSys^{\irred}_{G})$ on the moduli $\LocSys^{\irred}_{G}$ of irreducible $G$-local systems - i.e. those without a reduction to a proper parabolic of $G$. It follows that $\QCoh(\LocSys^{\irred}_{G})$ is a localisation of $\IndCoh_{\Nn}(\LocSys_{G})$ with respect to $\IndCoh_{\Nn}(\LocSys_{G})_{\Eis}$, thus one has an exact sequence of dg categories 
\begin{equation}
\begin{tikzcd}
\label{cusp-Eis dR}
\IndCoh_{\Nn}(\LocSys_{G})_{\Eis} 
\arrow[r,shift left=2pt]
& \IndCoh_{\Nn}(\LocSys_{G}) 
\arrow[l,shift left=2pt]
\arrow[r,shift left=2pt] 
& \QCoh(\LocSys^{\irred}_{G})
\arrow[l,shift left=2pt] 
\end{tikzcd} .
\end{equation} 
This defines the (de Rham, spectral) \textit{cuspidal-Eisenstein decomposition} of $\IndCoh_{\Nn}(\LocSys_{G})$. In parallel with decompositions of the automorphic categories $\DMod(\Bun_{G^{\vee}})$, the Langlands functor 
\[
\LL^{\dR}_G : \DMod(\Bun_{G^{\vee}}) \xrightarrow{\cong} \IndCoh_{\Nn}(\LocSys_{G}) , 
\]
has been proven to decompose into a cuspidal component $\LL^{\dR, \cusp}_G$ and an Eisenstein component $\LL^{\dR, \Eis}_G$, where $\LL^{\dR, \Eis}_G$ is generated by the $\LL^{\dR}_{M}$'s. Within the pentalogy of papers \textit{GLC \RomanNumeralCaps{1} - GLC \RomanNumeralCaps{5}} \cite{GLC1, GLC2, GLC3, GLC4, GLC5} that constitutes the proof of the de Rham geometric Langlands conjecture, see in particular \textit{GLC \RomanNumeralCaps{3}} \cite{GLC3} for the main results pertaining to parabolic induction. 

\subsubsection{Non-abelian Hodge theory} The de Rham geometry of local systems is part of a family of geometries referred to collectively as \textit{non-abelian Hodge theory}. In a series of papers of Simpson \cite{simpson_secondary, simpson_algebraic, simpson_geometricity} these geometries are studied as Dolbeault, de Rham, Betti and Hodge moduli stacks that emulate their namesake cohomology theories. In the twistor theory of local systems introduced by Deligne and Simpson \cite{simpson_twistor}, these moduli are understood as deformation of complex structure, within a hyperk\"ahler family indexed by the twistor $\PP^1$. 
\[
\begin{tabular}{c|c|c|c}
 & $\PP^1$ coordinate & Moduli & Parameter space of 
\\ \hline
Dolbeault & $\lambda = 0$ & $\Higgs_G$ & \text{$G$-Higgs bundles} on $X$ \\
de Rham & $\lambda  = 1$ & $\LocSys_G$ & \text{$G$-local systems} on $X$ \\
Betti & $\lambda = 1$ & $\Rep_G$ & \text{Representations $\pi_1(X) \to G$} \\
Hodge & $\lambda \in \AA^1$ & $\Hodge_G$ & \text{$(G,\lambda)$-connections} on $X$ \\
Twistor & $\lambda \in \PP^1$ & $\Twistor_G$ & \text{$(G,\lambda)$-connections} on $X$ and $\ol{X}$   
\end{tabular}
\]
\subsubsection{Classical limit} The moduli stack $\Hodge_G \to \AA^1$ interpolates between the general fiber $\LocSys_G \cong \Hodge_G \times \{ \lambda\}$ and the central fiber $\Higgs_G = \Hodge_G \times_{\AA^1} \{ 0\}$ by linearising the Leibniz rule. This degeneration family was used by Donagi and Pantev \cite{donagi&pantev} to derive the \textit{classical limit} or \textit{Dolbeault geometric Langlands conjecture}, which may approximately\footnote{It is not currently known which sheaf theory is required for a precise formulation of the Dolbeault geometric Langlands conjecture.} be formulated as an equivalence of categories 
\[
\LL_G^{\Dol} : \IndCoh_{\Nn}(\Higgs_{G^{\vee}}) \xrightarrow{ \cong }  \IndCoh_{\Nn}(\Higgs_{G}) , 
\]
compatible with Dolbeault Hecke functors, Dolbeault Eisenstein functors and the pair of algebraic integrable systems $\Higgs_G \to \Aa_G \cong \Aa_{G^{\vee}} \longleftarrow \Higgs_{G^{\vee}}$, initially discovered by Hitchin \cite{hitchin_self} and extended by Donagi--Gaitsgory \cite{donagi&gaitsgory} and Donagi--Pantev \cite[Thm A]{donagi&pantev}. 

Following the philosophy of the classical limit, we consider a Hodge and Dolbeault variant of Eisenstein functor, denoted $\Eis_P^{\Hod}$ and $\Eis_P^{\Dol}$, such that the original de Rham functors can be recovered as $\Eis_P^{\dR} = \Eis_P^{\Hod} \times_{\AA^1} \{ 1 \}$, which forgets the data of a filtration, and $\Eis_P^{\Dol} = \Eis_P^{\Hod} \times_{\AA^1} \{ 0 \}$, which computes the associated graded. Both $\Eis_P^{\Dol}$ and $\Eis_P^{\Hod}$ can be constructed by an identical parabolic induction procedure as in the de Rham theoretic definition of \eqref{de Rham Eis intro}. By studying Dolbeault and Hodge theoretic extensions to the methods of Arinkin--Gaitsgory \cite{arinkin&gaitsgory}, we construct the corresponding cuspidal-Eisenstein decompositions for compact sheaf theories\footnote{see \ref{finiteness intro} for an explanation as to why the use of compact sheaf theories (i.e. $\Coh_{\Nn}()$ instead of $\IndCoh_{\Nn}()$) is necessary.} in non-abelian Hodge theory. 
\begin{theorem}  
\label{Eis decomposition II intro}
\,
\begin{enumerate}
    \item (Theorem \ref{th eis Higgs}). $\Coh_{\Nn}(\Higgs_G)$ is generated by the essential image of Eisenstein functors
    \[
    \Eis^{\Dol}_{P} : \Perf(\Higgs_M) \to \Coh_{\Nn}(\Higgs_G) , 
    \]
    for all parabolics $P \subset G$ and all Harder--Narasimhan types $\nu$.

    \item (Theorem \ref{th eis hodge}). $\Coh_{\Nn}(\Hodge_G)$ is generated by the essential image of Eisenstein functors
    \[
    \Eis^{\Hod}_{P} : \Perf(\Hodge_M) \to \Coh_{\Nn}(\Hodge_G) , 
    \]
    for all parabolics $P \subset G$ and all Harder--Narasimhan types $\nu$.
\end{enumerate}
\end{theorem}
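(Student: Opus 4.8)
The plan is to adapt the inductive argument of Arinkin--Gaitsgory \cite[\S 13]{arinkin&gaitsgory} to the Dolbeault and Hodge moduli. Two features are genuinely new: we must work with \emph{compact} objects, which is possible only after refining the Eisenstein functors by a Harder--Narasimhan type $\nu$ so that the parabolic-induction maps become proper stratum by stratum; and the ``irreducible locus'' of \cite{arinkin&gaitsgory} is replaced by the \emph{stable} locus, over which the derived stacks in play are honestly smooth. I describe (1) in detail; (2) runs in parallel.

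\textbf{Dolbeault case.} Recall that $\Higgs_G$, as a quasi-smooth derived stack, is identified with the cotangent stack $\mathbf{T}^{*}\Bun_G$, so $\Sing(\Higgs_G)$ classifies pairs consisting of a Higgs bundle and an infinitesimal automorphism of it, and $\Nn$ cuts out the nilpotent such automorphisms. First I would record the formal properties of the $\nu$-refined Dolbeault Eisenstein diagram $\Higgs_M \xleftarrow{q^{\Dol}_\nu} \Higgs_{P,\nu} \xrightarrow{p^{\Dol}_\nu} \Higgs_G$: the adjunction $(\Eis^{\Dol}_{P,\nu}, \CT^{\Dol}_{P,\nu})$, base change, the projection formula, and properness of $p^{\Dol}_\nu$ on the Harder--Narasimhan stratum of type $\nu$ after a suitable relative compactification, which is what confines $\Eis^{\Dol}_{P,\nu}(\Perf(\Higgs_M))$ to $\Coh_{\Nn}(\Higgs_G)$ rather than to $\IndCoh_{\Nn}$; that the image lands in the nilpotent part is the Dolbeault analogue of the singular-support estimate of \cite{arinkin&gaitsgory}. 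The geometric input is that a non-stable Higgs bundle, equivalently one admitting a nonzero nilpotent infinitesimal automorphism, carries a canonical reduction to a proper parabolic (its Harder--Narasimhan reduction, or a Jordan--Hölder reduction when strictly semistable), so the non-stable locus of $\Higgs_G$ is precisely the union of the images of the $p^{\Dol}_{P,\nu}$. Hence a \emph{cuspidal} object, one annihilated by all $\CT^{\Dol}_{P,\nu}$ with $P \subsetneq G$, is supported on the stable locus $\Higgs_G^{\st}$; but there the automorphism groups are central, $\Sing$ degenerates to the zero section, $\Higgs_G^{\st}$ is smooth, and so $\Coh_{\Nn}(\Higgs_G^{\st}) = \Perf(\Higgs_G^{\st})$, which is reached by the tautological inclusion $\Eis^{\Dol}_G$. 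Noetherian induction on the semisimple rank of $G$ (so that $\Coh_{\Nn}(\Higgs_M)$ is generated by the Eisenstein functors of $M$, composed into those of $G$ by transitivity of parabolic induction), together with the localisation sequences for $\Coh_{\Nn}$ along the Harder--Narasimhan stratification, then yields (1).

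\textbf{Hodge case.} I would run the same argument with $\Hodge_G$ in place of $\Higgs_G$: it is quasi-smooth over $\AA^1$, its singular-support stack again records nilpotent infinitesimal automorphisms of $(G,\lambda)$-connections, its Eisenstein diagram $\Hodge_M \leftarrow \Hodge_{P,\nu} \rightarrow \Hodge_G$ has the same formal properties, and the terminal stratum is the smooth locus of $(G,\lambda)$-connections with central automorphism group. Additional traction comes from the $\GG_m$-equivariant family $\Hodge_G \to \AA^1$: the Cartesian squares of Eisenstein diagrams give $i_0^{*}\,\Eis^{\Hod}_{P,\nu} \simeq \Eis^{\Dol}_{P,\nu}\, i_0^{*}$ over the special fibre $\Higgs_G = \Hodge_G \times_{\AA^1}\{0\}$ and $j^{*}\,\Eis^{\Hod}_{P,\nu} \simeq \Eis^{\dR}_{P}\, j^{*}$ (right side the $\nu$-refined de Rham Eisenstein functor) over the open part $\Hodge_G|_{\GG_m} \cong \LocSys_G \times \GG_m$, so the localisation sequence $\Coh_{\Nn}(\Higgs_G) \xrightarrow{i_{0*}} \Coh_{\Nn}(\Hodge_G) \xrightarrow{j^{*}} \Coh_{\Nn}(\Hodge_G|_{\GG_m})$ reduces (2) to generation of the two outer terms --- the left by part (1), the right by the same stratification argument applied over $\GG_m$, whose terminal stratum of stable $(G,\lambda)$-connections ($\lambda \neq 0$) is again smooth.

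\textbf{Main obstacle.} The inductive skeleton is faithful to \cite{arinkin&gaitsgory}; the work lies in passing to compact sheaf theories. Arinkin--Gaitsgory operate inside $\IndCoh_{\Nn}$, where non-properness of the Eisenstein map is invisible, whereas to stay inside $\Coh_{\Nn}$ one must replace each Eisenstein map by a proper map on a Harder--Narasimhan compactification and then verify that this compactification (a) still has nilpotent singular support under pushforward, (b) retains the adjunctions and base-change identities needed to identify the cuspidal quotient and to drive the induction, and (c) involves, over any noetherian open, only finitely many pairs $(P,\nu)$, so the induction terminates. Bounding the singular support of $(p^{\Dol}_\nu)_{*}$ along these compactified, and typically singular, maps --- the Dolbeault/Hodge replacement for the singular-support estimates of \cite{arinkin&gaitsgory} --- is the step I expect to be most delicate; a secondary subtlety is the possible jumping of the Jordan type of a nilpotent endomorphism, which must be handled when extracting the parabolic reduction in the geometric step.
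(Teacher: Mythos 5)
Your skeleton (Harder--Narasimhan refinement of the Eisenstein diagram to secure properness and hence preservation of compactness, adaptation of the Arinkin--Gaitsgory induction on semi-simple rank, identification of the cuspidal quotient with perfect complexes on a smooth open locus) matches the paper's, but there are three substantive divergences worth flagging.

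\textbf{Misidentification of the cuspidal locus.} You claim the complement of the Eisenstein-covered locus is the \emph{stable} locus $\Higgs_G^{\st}$, and that non-stability is equivalent to carrying a nilpotent infinitesimal automorphism. Both claims fail. A semistable Higgs bundle of the form $\Ee_1 \oplus \Ee_2$ with $\Ee_i$ stable of equal slope is non-stable yet has $\End = \CC \times \CC$ with no nilpotents; and a stable $G$-Higgs bundle can still admit a reduction to a proper parabolic (take a stable rank-$2$ Higgs bundle with a low-degree Higgs sub-line-bundle), so the images of $\Higgs_P^{\sst,\nu} \to \Higgs_G$ for proper $P$ — including those with \emph{trivial} Harder--Narasimhan type — intrude into $\Higgs_G^{\st}$. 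The correct open locus is $\Higgs_G^{\irred}$ (objects admitting no parabolic reduction at all), which is strictly smaller than $\Higgs_G^{\st}$; the paper's Corollary in \ref{se: cusp-eis} identifies the cuspidal quotient as $\Perf(\Higgs_G^{\irred})$, not $\Perf(\Higgs_G^{\st})$. The generation statement you are trying to prove may survive despite this slip, but the supporting claim ``non-stable locus $=$ union of Eisenstein images'' is false and the orthogonal complement you isolate is not the right one.

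\textbf{Missing the Dolbeault-specific geometric input.} You correctly identify, as a ``secondary subtlety,'' that extracting a parabolic reduction compatible with a given nilpotent covector is delicate in the Higgs setting, but you do not supply the argument. This is in fact the crux and the genuinely new content of the proof. In the de Rham case Arinkin--Gaitsgory reduce it to a fixed-point theorem on the \emph{proper} space of horizontal $P$-reductions of a local system; for Higgs bundles this space of $P$-reductions is no longer proper and that argument collapses. The paper replaces it by a \emph{constructive} argument: take the canonical Harder--Narasimhan reduction $(E_P, \phi_P)$, use the Biswas--Holla exact sequence
\[
0 \to \ad(E_P,\phi_P) \to \ad(E,\phi) \to \ad(E_P,\phi_P)(\gG/\pP) \to 0
\]
to transport a nilpotent covector $A$ on $(E,\phi)$ to a covector $A_P$ on the reduction, and check nilpotence. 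Absent this, the surjectivity $\Nn_G^{\HN} \to \Nn_G$ needed to run the localisation argument is unproved, and the induction does not close. Note also that your proposed Hodge-case reduction via the localisation sequence over $\{0\} \hookleftarrow \AA^1 \hookrightarrow \GG_m$ does not dodge this: the $\lambda=0$ fibre is exactly the Dolbeault case, so you still owe this lemma; the paper runs the $\lambda$-analysis in the opposite order (Hodge first, casewise in $\lambda$, with Dolbeault as the hard new case) rather than Dolbeault-first, but either direction hinges on the same Higgs-specific input.

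\textbf{Compactification is a red herring.} You propose replacing the Eisenstein maps by proper maps ``on a Harder--Narasimhan compactification'' and worry about singular-support estimates along ``compactified, and typically singular, maps.'' No compactification is needed or used: on a fixed HN stratum $\Higgs_P^{\sst,\nu}$ the induction map $p^{\Dol} \colon \Higgs_P^{\sst,\nu} \to \Higgs_G$ is already proper (Laumon's observation, recalled in \ref{spectral Eis Hodge}), so $(p^{\Dol})_*$ preserves coherence without any compactification or attendant singularities, and the singular-support estimate is the same codifferential computation as in the de Rham case, applied to these unmodified proper maps.
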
 
Our motivation for Theorem \ref{Eis decomposition II intro} is the belief that the Dolbeault geometric Langlands correspondence $\LL_G^{\Dol}$ should commute with $\Eis_P^{\Dol}$. Here we discuss some background and consequential ideas surrounding this belief. By the same construction as in the de Rham theory, as described in \ref{se: dR cusp-Eis intro}, the statement of Theorem \ref{Eis decomposition II intro}(1) can be used to decompose $\Coh_{\Nn}(\Higgs_G)$ into an exact sequence of dg categories 
\[
\begin{tikzcd}
\Coh_{\Nn}(\Higgs_{G})_{\Eis} 
\arrow[r,shift left=2pt]
& \Coh_{\Nn}(\Higgs_{G})
\arrow[l,shift left=2pt]
\arrow[r,shift left=2pt] 
& \Perf(\Higgs_{G}^{\irred})
\arrow[l,shift left=2pt] 
\end{tikzcd} .
\]
Moreover Theorem \ref{Eis decomposition II intro}(2) shows that the Dolbeault decompositions of Theorems \ref{Eis decomposition II intro}(1) and the de Rham decompositions of Theorem \ref{de Rham decompositions} are compatible with an interpolating Hodge family of cuspidal and Eisenstein components. Indeed, the restriction to the $\lambda = 0$ and $\lambda = 1$ fibers of the structural map $\Hodge_G \to \AA^1$ yields the following commutative diagram\footnote{We emphasise that the subscripts $\Nn$ considered in this diagram refer to three different global nilpotent cones, defined within the cotangent bundles of $\LocSys_G$, $\Hodge_G$ and $\Higgs_G$ respectively. See Section \ref{singularities} for details.} with exact rows:
\begin{equation}
\label{big cusp-Eis diagram}
\begin{tikzcd}
\Coh_{\Nn}(\LocSys_{G})_{\Eis} 
\arrow[r,shift left=2pt]
& \Coh_{\Nn}(\LocSys_{G}) 
\arrow[l,shift left=2pt]
\arrow[r,shift left=2pt] 
& \Perf(\LocSys^{\irred}_{G})
\arrow[l,shift left=2pt] 
\\
\Coh_{\Nn}(\Hodge_{G})_{\Eis} 
\arrow[u, "\lambda = 1"]
\arrow[r,shift left=2pt]
\arrow[d, "\lambda = 0"']
& \Coh_{\Nn}(\Hodge_{G})
\arrow[u, "\lambda = 1"]
\arrow[l,shift left=2pt]
\arrow[r,shift left=2pt] 
\arrow[d, "\lambda = 0"']
& \Perf(\Hodge^{\irred}_{G})
\arrow[u, "\lambda = 1"]
\arrow[l,shift left=2pt] 
\arrow[d, "\lambda = 0"']
\\
\Coh_{\Nn}(\Higgs_{G})_{\Eis} 
\arrow[r,shift left=2pt]
& \Coh_{\Nn}(\Higgs_{G})
\arrow[l,shift left=2pt]
\arrow[r,shift left=2pt] 
& \Perf(\Higgs_{G}^{\irred})
\arrow[l,shift left=2pt] 
\end{tikzcd} ,
\end{equation}
as one would expect from the Donagi--Pantev idea that de Rham and Dolbeault geometric Langlands correspondences are compatible with the Hodge degeneration along the limit $\lambda \to 0$. 

For $G=GL_n$, our Dolbeault cuspidal-Eisenstein decompositions are well-suited to the work of Arinkin \cite{arinkin_transform}, who constructed, by Fourier--Mukai transform, an equivalence of categories 
\[
\LL^{\Dol, \cusp}_{GL_n} : \QCoh(\Higgs_{GL_n}^{\irred}) \xrightarrow{\cong} \QCoh(\Higgs_{GL_n}^{\irred}) , 
\]
that provides a \textit{cuspidal Dolbeault geometric Langlands correspondence}. It is reasonable to hope there exists a similar Fourier--Mukai-type equivalence $\LL^{\Dol, \cusp}_{G} : \QCoh(\Higgs_{G^{\vee}}^{\irred}) \xrightarrow{\cong} \QCoh(\Higgs_{G}^{\irred})$ for any reductive group $G$. This is easily confirmed for $G=SL_n$ using the same transform as Arinkin \cite{FHR, groechenig_complex} and is likely not too hard for other groups that are closely related to $GL_n$. Now, after Theorem \ref{Eis decomposition II intro}, a restriction of the cuspidal conjecture to compact objects can be explicitly identified as the orthogonal complement to an \textit{Eisenstein Dolbeault geometric Langlands conjecture}, 
displayed on the left hand side of the following diagram with exact rows and conjectural vertical equivalences. 
\[
\begin{tikzcd}
\Coh_{\Nn}(\Higgs_G)_{\Eis} 
\arrow[d, "(\LL^{\Dol, \Eis}_G)^{c}"] 
\arrow[r,shift left=2pt]
& \Coh_{\Nn}(\Higgs_G) 
\arrow[d, "(\LL^{\Dol}_G)^{c}"]
\arrow[l,shift left=2pt]
\arrow[r,shift left=2pt]
& \Perf(\Higgs_G^{\irred}) 
\arrow[l,shift left=2pt]
\arrow[d, "(\LL^{\Dol, \cusp}_G)^{c}"] 
\\
\Coh_{\Nn}(\Higgs_G)_{\Eis} 
\arrow[r,shift left=2pt]
& \Coh_{\Nn}(\Higgs_G)
\arrow[l,shift left=2pt]
\arrow[r,shift left=2pt]
& \Perf(\Higgs_G^{\irred})
\arrow[l,shift left=2pt]
\end{tikzcd} . 
\]
Thus, by emulating the de Rham theory, our Theorem \ref{Eis decomposition II intro} lets us explicitly specify how $\LL^{\Dol}_G$ is expected to decompose on compact objects. We suggest this interpretation of our results lends some evidence to the appearance of nilpotent singular support in the Dolbeault geometric Langlands conjecture, and moreover, that cuspidal-Eisenstein decompositions should play a central role in the Dolbeault theory. However, we say nothing on the enlargement beyond compact objects, i.e. to ind-coherent sheaves, thus avoiding well-known finiteness issues (see \ref{finiteness intro}).

\subsubsection{Twistors and BBB-branes} Alongside Dolbeault and Hodge theory, we study an associated twistor theoretic geometry, considered as a projectification of the Hodge deformation that extends the deformation coordinate $\AA^1$ to the twistor $\PP^1$. This is achieved by the moduli stack $\Twistor_G \to \PP^1$ from \cite{simpson_twistor, FH1}, presented as a pushout of derived analytic stacks
    \[
    \Twistor_G = \Hodge_G^{\an} \bigsqcup_{\Rep^{\an}_G \times \GG_m} \ol{\Hodge}_G^{\an} ,
    \]
with gluing data defined by the analytic Riemann--Hilbert correspondence $\LocSys_G^{\an} \cong \Rep_G^{\an}$ between the underlying analytic stacks. The notation $\ol{\Hodge}_G^{\an}$ denotes the analytic Hodge stack defined over the complex conjugate base curve $\ol{X}$. At the level of coarse moduli spaces, Simpson \cite[\textsection 4]{simpson_twistor} attributes this construction to letters of Deligne -- hence we name $\Twistor_G$ the \textit{Deligne twistor stack.} The pushout construction receives a natural twistor variant of geometric Eisenstein series functor 
\[
\Eis^{\Tw}_P = \Eis^{\Hod}_P \times_{\Betti} \ol{\Eis}^{\Hod}_P : \IndCoh(\Twistor_M) \to \IndCoh(\Twistor_G) ,
\]
defined as a pullback of complex conjugate pairs of Hodge Eisenstein functors (Proposition \ref{spec tw eis}). 

In the twistor setting, our focus lies with a hyperholomorphic sheaf theory $\IndCoh^{\BBB}_{\Nn}(\Twistor_G)$ of so-called \textit{BBB-branes} with nilpotent singular support. Building on the definition from \cite{FH1}, we propose $\IndCoh^{\BBB}_{\Nn}(\Twistor_G)$ as a categorification of the BBB-branes studied by Kapustin and Witten in their gauge theory interpretation of the geometric Langlands program \cite{kapustin&witten}. Our definition of $\IndCoh^{\BBB}(\Twistor_G)$ and therefore of $\IndCoh^{\BBB}_{\Nn}(\Twistor_G)$ is approximately as follows: we introduce a category $\Wils^{\Tw}_G \subset \QCoh(\Twistor_G)$ of \textit{twistor Wilson eigensheaves}, defined by skyscraper sheaves on embedded projective lines, and define $\IndCoh^{\BBB}(\Twistor_G)$ by prescribing conditions on how sheaves intersect with objects of $\WW^{\Tw}_G$. 

Our main results on BBB-branes are as follows. We check the twistor Eisenstein functors $\Eis^{\Tw}_P$ preserve the BBB-brane categories (Proposition \ref{Eis BBB to BBB}) and prove the resultant BBB-type Eisenstein functors induce the following hyperholomorphic variant of cuspidal-Eisenstein decomposition. 

\begin{theorem} (Theorem \ref{decompose BBB}). 
\label{Eis decomposition intro}
The category $\Coh^{\BBB}_{\Nn}(\Twistor_G)$ is generated by the essential image of 
\[
\Eis^{\Tw}_{P} : \Perf^{\BBB}(\Twistor_M) \to \Coh^{\BBB}_{\Nn}(\Twistor_G) ,
\]
for all parabolics $P \subset G$. 
\end{theorem}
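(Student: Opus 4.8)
The plan is to reduce the twistor statement to the Hodge-theoretic decomposition of Theorem~\ref{th eis hodge} by exploiting the pushout presentation of $\Twistor_G$ together with descent for the relevant categories of sheaves. First I would record a gluing description of the categories in play: since
\[
\Twistor_G = \Hodge_G^{\an} \bigsqcup_{\Rep^{\an}_G \times \GG_m} \ol{\Hodge}_G^{\an},
\]
descent identifies $\QCoh(\Twistor_G)$ with the homotopy fibre product $\QCoh(\Hodge_G^{\an}) \times_{\QCoh(\Rep^{\an}_G \times \GG_m)} \QCoh(\ol{\Hodge}_G^{\an})$, and this restricts to a gluing description of the nilpotent and of the BBB-subcategories: an object of $\Coh^{\BBB}_{\Nn}(\Twistor_G)$ is exactly the datum of a coherent nilpotent sheaf on each of $\Hodge_G$ and $\ol{\Hodge}_G$ satisfying the appropriate Wilson-eigensheaf intersection conditions, together with a Riemann--Hilbert gluing isomorphism over $\Rep^{\an}_G \times \GG_m$ (where the overlap category is controlled by coherent nilpotent sheaves on $\Rep_G$). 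Here one invokes a $\GAGA$ comparison to identify the analytic Hodge categories with their algebraic counterparts, so that Theorem~\ref{th eis hodge} is applicable to each factor.

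Next I would transport generation along this gluing. By Proposition~\ref{spec tw eis}, $\Eis^{\Tw}_P = \Eis^{\Hod}_P \times_{\Betti} \ol{\Eis}^{\Hod}_P$, where the fibre product is over the Betti Eisenstein functor; this presentation rests on the identifications $\Eis^{\Hod}_P \times_{\AA^1} \{1\} = \Eis^{\dR}_P$ and its complex conjugate, together with the fact that the analytic Riemann--Hilbert correspondence $\LocSys^{\an}_G \cong \Rep^{\an}_G$ intertwines $\Eis^{\dR}_P$ with the Betti Eisenstein functor $\Eis^{\Betti}_P$. Consequently the two projections $\Coh^{\BBB}_{\Nn}(\Twistor_G) \to \Coh_{\Nn}(\Hodge_G)$ and $\Coh^{\BBB}_{\Nn}(\Twistor_G) \to \Coh_{\Nn}(\ol{\Hodge}_G)$, as well as restriction to the overlapping Betti category, carry $\im \Eis^{\Tw}_P$ into $\im \Eis^{\Hod}_P$, $\im \ol{\Eis}^{\Hod}_P$ and $\im \Eis^{\Betti}_P$ respectively; and by Proposition~\ref{Eis BBB to BBB} the essential image of $\Eis^{\Tw}_P$ already lies in $\Coh^{\BBB}_{\Nn}(\Twistor_G)$, so the asserted generation takes place inside the correct category.

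The heart of the argument is then a gluing lemma for generation: if a dg category $\Cc$ is presented as a homotopy fibre product $\Cc_1 \times_{\Dd} \Cc_2$ along functors admitting continuous right adjoints, and one is given functors $F_i : \Bb_i \to \Cc_i$ and $F_0 : \Bb_0 \to \Dd$ compatible with the gluing (so that they assemble to $F : \Bb_1 \times_{\Bb_0} \Bb_2 \to \Cc$), such that $\Cc_i$ is generated by $\im F_i$ and $\Dd$ by $\im F_0$, then $\Cc$ is generated by $\im F$. I would prove this by using the recollement structure to express an arbitrary object of $\Cc$, in finitely many cone operations, out of objects pulled back from the $\Cc_i$ and from $\Dd$, then refine each such piece via the generation hypotheses on the factors, checking that the gluing data of the refined pieces is itself assembled from $\im F_0$. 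One applies this with $\Cc = \Coh^{\BBB}_{\Nn}(\Twistor_G)$, $\Cc_1 = \Coh_{\Nn}(\Hodge_G)$, $\Cc_2 = \Coh_{\Nn}(\ol{\Hodge}_G)$, $\Dd$ the overlapping Betti category, $\Bb = \Perf^{\BBB}(\Twistor_M)$ realised as the gluing of $\Perf(\Hodge_M)$ and $\Perf(\ol{\Hodge}_M)$ over the Betti side, $F_i$ ranging over $\Eis^{\Hod}_P$ (resp.\ $\ol{\Eis}^{\Hod}_P$) for all $P \subset G$ and all Harder--Narasimhan types $\nu$ (the latter suppressed in the notation $\Eis^{\Tw}_P$, which inherits them through the gluing), and $F_0$ over $\Eis^{\Betti}_P$; the three generation hypotheses are precisely Theorem~\ref{th eis hodge}, its complex conjugate, and the $\lambda = 1$ specialisation of Theorem~\ref{th eis hodge} (equivalently the compact-object form of Theorem~\ref{de Rham decompositions}) transported to the Betti side via Riemann--Hilbert.

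The main obstacle I anticipate is exactly this gluing lemma for generation and the verification of its compatibility hypotheses: generation of a dg category is not automatically inherited by homotopy limits, so one genuinely needs the explicit recollement between the Hodge, anti-Hodge and Betti categories — in particular the existence and continuity of the adjoints to the gluing functors — in order to propagate a finite-step generation from the three factors up to $\Twistor_G$, and the compatibility that lets the pieces glue is precisely the Riemann--Hilbert compatibility of $\Eis^{\dR}_P$ with $\Eis^{\Betti}_P$ recorded in Proposition~\ref{spec tw eis}. Secondary technical points, which I expect to be routine given the earlier sections, are the $\GAGA$ passage between analytic and algebraic coherent nilpotent sheaves on the Hodge pieces (so that Theorem~\ref{th eis hodge} applies) and the stability of the Wilson-eigensheaf conditions under all the functors involved, which is where Proposition~\ref{Eis BBB to BBB} and the compatibility of twistor Wilson eigensheaves with Eisenstein series are used.
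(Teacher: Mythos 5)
Your overall strategy of reducing to Theorem~\ref{th eis hodge} via the pushout presentation of $\Twistor_G$ matches the paper's chain of reductions: the paper proves Theorem~\ref{th eis hodge} directly, then derives Theorem~\ref{th eis and generate} by analytifying and gluing Hodge components over Betti (Lemma~\ref{concluding decompositions}(2)), and finally derives Theorem~\ref{decompose BBB} from Theorem~\ref{th eis and generate} (Lemma~\ref{concluding decompositions}(3)). Your treatment of the passage from $\Coh_\Nn(\Hodge_G)$ to $\Coh^{\GAGA}_\Nn(\Twistor_G)$ is in the same spirit as the paper's, and you correctly flag the need for a ``gluing lemma for generation'' -- that concern is legitimate, and the paper's actual mechanism for addressing it is the strata-by-strata counit surjections along the Harder--Narasimhan stratification, which you would do well to make explicit rather than assert as a general recollement lemma (the two charts over Betti form an open cover, not an open/closed recollement, so the recollement formalism does not directly apply).

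The genuine gap is in the final passage from the GAGA statement to the BBB statement. You set up the gluing lemma with $\Cc = \Coh^{\BBB}_{\Nn}(\Twistor_G)$, $\Cc_1 = \Coh_{\Nn}(\Hodge_G)$, $\Cc_2 = \Coh_{\Nn}(\ol{\Hodge}_G)$ and $\Dd$ the Betti overlap, but the fibre product $\Cc_1 \times_{\Dd} \Cc_2$ is $\Coh^{\GAGA}_{\Nn}(\Twistor_G)$, not $\Coh^{\BBB}_{\Nn}(\Twistor_G)$. The BBB condition is a \emph{global} constraint on the glued sheaf -- it tests intersection with twistor Wilson eigensheaves $\sigma_{*}\Oo_{\PP^1}$ supported on sections $\sigma : \PP^1 \to \Twistor_G$, and this cannot be checked chart-by-chart on the Hodge pieces -- so $\Coh^{\BBB}_{\Nn}(\Twistor_G)$ is a proper subcategory of your fibre product, and your lemma does not produce the BBB statement. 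Even granting the GAGA generation, it does not follow automatically that a BBB object is generated by $\Eis^{\Tw}_P$ applied to BBB inputs: you need to choose the resolution compatibly. The paper's Lemma~\ref{concluding decompositions}(3) does this explicitly, by using the counit surjections $\Eis^{\Tw}_{P,\nu}(\CT^{\Tw}_{P,\nu}(\Bb^{P,\nu})) \to \Bb^{P,\nu}$ strata-by-strata, together with the crucial fact (Proposition~\ref{CT BBB to BBB}) that the constant-term functor $\CT^{\Tw}_{P,\nu}$ preserves BBB-branes, so the ingredients of the resolution remain inside the BBB categories, followed by induction on semi-simple rank. Your proposal cites only Proposition~\ref{Eis BBB to BBB} (Eisenstein preserves BBB) and not the adjoint preservation statement; the former alone is insufficient, and without the $\CT$-preservation and the counit argument there is no mechanism to restrict the generation to BBB-branes.
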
 

Theorem \ref{Eis decomposition intro} once more gives rise to an exact sequence of dg categories 
\begin{equation}
\label{eq: cusp-Eis BBB intro}
\begin{tikzcd}
\Coh^{\BBB}_{\Nn}(\Twistor_{G})_{\Eis} 
\arrow[r,shift left=2pt]
& \Coh^{\BBB}_{\Nn}(\Twistor_{G}) 
\arrow[l,shift left=2pt]
\arrow[r,shift left=2pt] 
& \Perf^{\BBB}(\Twistor^{\irred}_{G})
\arrow[l,shift left=2pt] 
\end{tikzcd} ,
\end{equation} 
providing notions of \textit{Eisenstein BBB-branes} and \textit{cuspidal BBB-branes} within $\Coh^{\BBB}_{\Nn}(\Twistor_G)$. 

\subsubsection{Twistor geometric Langlands} 

The long-term idea behind Theorem \ref{Eis decomposition intro} is to develop a mathematical theory for the duality between BBB-branes and BAA-branes, proposed by Kapustin and Witten in their physical study of S-duality in 4D supersymmetric Yang-Mills theories \cite[\textsection 12]{kapustin&witten}. In one possible mathematical formulation of the duality, one can imagine a so-called \textit{twistor geometric Langlands conjecture} as an equivalence of dg categories 
\begin{equation}
\label{eq: LL^Tw}
\LL^{\Tw}_G : \TwMod^{\BAA}(\Bun_{G^{\vee}}) \xrightarrow{\cong} \IndCoh_{\Nn}^{\BBB}(\Twistor_G) , 
\end{equation}
between our spectral category $\IndCoh^{\BBB}_{\Nn}(\Twistor_G)$ of ind-coherent nilpotent BBB-branes and an automorphic category $\TwMod^{\BAA}(\Bun_{G^{\vee}})$ of quantised BAA-branes, the latter consisting of twistor D-modules on $\Bun_{G^{\vee}}$ that satisfy prescribed intersection conditions with $\Wils^{\Tw}_G$ via a twistor-valued convolution action. The idea of twistor geometric Langlands is to take Hodge filtrations of the de Rham theory into account, in a similar manner to the classical limit, but restricted to a rigid `hyperk\"ahler' class of objects -- i.e. our notion of BAA-branes and BBB-branes -- which are assumed to be compatible with the Hodge deformation, and thus easier to control\footnote{We hope this philosophy makes twistor geometric Langlands more approachable than Dolbeault geometric Langlands.}\footnote{A somewhat analogous idea of Pădurariu and Toda \cite{BPS_dolbeault} proposes a restriction of Dolbeault geometric Langlands to quasi-BPS categories over the semistable locus $\Higgs_G^{\sst}$, understood as a more precise Hodge deformation of $\LocSys_G$.}. 

Theorem \ref{Eis decomposition intro} is motivated by the expectation that such a twistor geometric Langlands correspondence should commute with our twistor Eisenstein functors $\Eis^{\Tw}_P$, and so the Langlands duality for BBB-branes predicted by Kapustin--Witten splits into cuspidal and Eisenstein components. One could therefore expect that $\Eis^{\Tw}_P$ plays as central a role in the twistor theory as $\Eis^{\dR}_P$ in the de Rham theory, providing potential methods with which to approach a conjecture such as \eqref{eq: LL^Tw}. 

\subsubsection{Digression on finiteness}
\label{finiteness intro} 
It is generally desirable to work with dg categories that are \textit{compactly generated}. The fact that $\IndCoh_{\Nn}(\LocSys_G)$ and $\DMod(\Bun_{G^{\vee}})$ are compactly generated, as proven in  \cite{arinkin&gaitsgory, drinfeld&gaitsgory}, is an essential property of the de Rham theory, used during the construction of the de Rham Langlands functor in \cite[Cory 1.6.5]{GLC1}. 

In the Dolbeault and Hodge theories, the moduli $\Higgs_G$ and $\Hodge_G$ are not quasi-compact and the categories $\IndCoh(\Higgs_G)$, $\IndCoh(\Hodge_G)$, $\IndCoh_{\Nn}(\Higgs_G)$ and $\IndCoh_{\Nn}(\Hodge_G)$ all fail to be compactly generated. In Dolbeault geometric Langlands, one expects to either modify $\IndCoh_{\Nn}(\Higgs_G)$, or develop new tools for non-compactly generated dg categories. It is therefore unlikely that an ind-extension of Theorem \ref{Eis decomposition II intro} exists, at least on the nose and via available methods. However, we hope to show in future work that the category $\IndCoh^{\BBB}_{\Nn}(\Twistor_G)$, or a small modification of it, is compactly generated, and thus prove an ind-extension of Theorem \ref{de Rham decompositions}. Philosophically this hope is a reflection of the scarcity and rigidity of hyperholomorphic sheaves in hyperk\"ahler geometry.

Non-compactness makes it a priori unclear how Eisenstein functors preserve compact objects, as the $G$-induction maps $\Higgs_P \to \Higgs_G$ and $\Hodge_P \to \Hodge_G$ are not proper. Following work of Laumon \cite{laumon_automorphic}, we deal with this issue in our work by considering a modified form of Eisenstein functor defined over Harder--Narasimhan stratifications. See Sections \ref{spectral Eis Hodge} and \ref{twistor spectral p and q} for the precise constructions and Corollary \ref{co: Eis compact} for the statement that $\Eis_P^{\Dol}$ and $\Eis_P^{\Hod}$ preserve compact objects. 

\subsubsection{Dirac--Higgs complex}
\label{Dirac--Higgs intro}
Categories of compact BBB-branes nonetheless contain interesting objects. To illustrate this within the context of our work, we compute the cuspidal-Eisenstein decompositions of a mathematically and physically influential BBB-brane - the \textit{Dirac--Higgs complex.} Initially coming from gauge theory constructions of Hitchin \cite{hitchin_dirac}, the Dirac--Higgs complex in our setting can be described as an object $\DD_{\rho} \in \Perf^{\BBB}(\Twistor_G)$ associated to a representation $\rho \in \Rep(G)$, as per \cite[Thm. D]{FH1}\footnote{The author thanks Emilio Franco for first suggesting that the Dirac--Higgs complex is an important class of objects in $\Perf^{\BBB}(\Twistor_G)$, an observation that led to the paper \cite{FH1}.}. On these objects, we show that cuspidal-Eisenstein decompositions are controlled by the induction functors $\Ind_{M}^{G} : \Rep(M) \to \Rep(G)$, providing a direct connection to representation theory. 
\begin{proposition}
\label{Dirac-Higgs decompositions intro}
(Proposition \ref{Dirac-Higgs decompositions}). 
Consider a collection $\{ \rho_{M} \in \Rep(M) \}_{P}$ parametrised by parabolics $P \subset G$. Let $\rho \in \Rep(G)$ be the representation generated by $\Ind^{G}_M : \Rep(M) \to \Rep(G)$ acting on the collection. Then $\DD_{\rho} \in \Perf^{\BBB}(\Twistor_G)$ is generated by the cuspidal and Eisenstein components 
\[
\DD_{\rho}^{\cusp} = \DD_{\rho} |_{\Twistor_G^{\irred}} , 
\quad 
\DD_{\rho}^{\Eis} = 
\big\langle \Eis^{\Tw}_{P}( \DD_{\rho_M} ) \big\rangle_{P} , 
\]
where the latter notation is for the object generated $\Eis^{\Tw}_P$ acting on the given family of complexes over all parabolic subgroups $P \subset G$. 
\end{proposition}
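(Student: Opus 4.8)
The plan is to deduce the statement from the general cuspidal--Eisenstein decomposition of Theorem~\ref{Eis decomposition intro}, applied to the single object $\DD_\rho$, once we have established that the Dirac--Higgs assignment $\sigma \mapsto \DD_\sigma$ intertwines representation-theoretic parabolic induction with the twistor Eisenstein functor. Note first that $\DD_\rho \in \Perf^{\BBB}(\Twistor_G)$ is perfect, hence has trivial --- in particular nilpotent --- singular support, so $\DD_\rho$ lies in $\Coh^{\BBB}_{\Nn}(\Twistor_G)$ and the decomposition applies.

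Running the general mechanism, the exact sequence \eqref{eq: cusp-Eis BBB intro} equips every $\Ff \in \Coh^{\BBB}_{\Nn}(\Twistor_G)$ with a functorial fibre sequence $\Ff^{\Eis} \to \Ff \to \Ff^{\cusp}$, in which $\Ff^{\Eis}$ is the image of $\Ff$ under the right adjoint of the inclusion $\Coh^{\BBB}_{\Nn}(\Twistor_G)_{\Eis} \hookrightarrow \Coh^{\BBB}_{\Nn}(\Twistor_G)$, and $\Ff^{\cusp}$ is obtained by applying the quotient functor onto $\Perf^{\BBB}(\Twistor_G^{\irred})$ and then its section. Since, by the construction of this quotient in the proof of Theorem~\ref{Eis decomposition intro}, it is restriction along the open substack $\Twistor_G^{\irred} \hookrightarrow \Twistor_G$ of irreducible objects, one has $\Ff^{\cusp} = \Ff|_{\Twistor_G^{\irred}}$ (identifying the quotient category with sheaves on $\Twistor_G^{\irred}$), and the fibre sequence shows $\langle \Ff^{\cusp}, \Ff^{\Eis} \rangle$ generates the thick subcategory on $\Ff$. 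Taking $\Ff = \DD_\rho$ yields $\DD_\rho^{\cusp} = \DD_\rho|_{\Twistor_G^{\irred}}$ at once, and reduces the proposition to identifying $\DD_\rho^{\Eis}$ with $\langle \Eis^{\Tw}_P(\DD_{\rho_M}) \rangle_P$.

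The crux is the natural equivalence, for $\sigma \in \Rep(M)$ and up to a shift and twist,
\[
\Eis^{\Tw}_P(\DD_\sigma) \;\simeq\; \DD_{\Ind^G_M \sigma} .
\]
I would prove this by combining the description of $\DD_\sigma$ as a twistor Wilson eigensheaf with eigenvalue $\sigma$ \cite[Thm.\ D]{FH1} with the compatibility of $\Eis^{\Tw}_P$ and the twistor Wilson action: the $G$-Wilson operator attached to $V \in \Rep(G)$ should pull back through $\Eis^{\Tw}_P$ to the $M$-Wilson operator attached to $\Res^G_M V$. This compatibility I would establish by base change along the induction correspondence $\Twistor_M \leftarrow \Twistor_P \rightarrow \Twistor_G$ and the pushout presentation $\Twistor_G = \Hodge_G^{\an} \bigsqcup_{\Rep_G^{\an} \times \GG_m} \ol{\Hodge}_G^{\an}$, verifying it on the two Hodge charts and on the Betti gluing locus, where it reduces to the analytic Riemann--Hilbert correspondence. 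Granting it, $\Eis^{\Tw}_P(\DD_\sigma)$ is a $G$-Wilson eigensheaf with eigenvalue $\Ind^G_M\sigma$ by Frobenius reciprocity, hence is identified with $\DD_{\Ind^G_M\sigma}$, and it remains a BBB-brane by Proposition~\ref{Eis BBB to BBB}; one could alternatively obtain the equivalence directly from the pull--push definition of $\Eis^{\Tw}_P$ and the projection formula along $p : \Twistor_P \to \Twistor_G$, using that $\Ind^G_M\sigma$ computes the relative cohomology of $\sigma$ over the fibres of $p$. I expect this step to be the main obstacle: the pull--push definition of $\Eis^{\Tw}_P$ and the Wilson-eigensheaf description of $\DD$ must be matched coherently across the three strata of the pushout, with the hyperholomorphic (BBB) conditions and the behaviour of Wilson eigensheaves under hyperk\"ahler rotation kept under control throughout.

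To conclude, the hypothesis on $\rho$ means $\rho$ is built inside $\Rep(G)$ from the $\Ind^G_M\rho_M$ (in characteristic zero, simply a direct summand of $\bigoplus_P \Ind^G_M\rho_M$), and since $\sigma \mapsto \DD_\sigma$ is $\CC$-linear, sends short exact sequences to cofibre sequences, and lands in the idempotent-complete category $\Coh^{\BBB}_{\Nn}(\Twistor_G)$, it follows that $\DD_\rho$ lies in the thick subcategory generated by the $\DD_{\Ind^G_M\rho_M}$, which by the displayed equivalence equals $\langle \Eis^{\Tw}_P(\DD_{\rho_M}) \rangle_P$. For each proper $P$, $\Eis^{\Tw}_P(\DD_{\rho_M})$ already lies in $\Coh^{\BBB}_{\Nn}(\Twistor_G)_{\Eis}$ by Proposition~\ref{Eis BBB to BBB}, hence is fixed by the Eisenstein projection; applying the exact Eisenstein projection to the generation statement for $\DD_\rho$ therefore gives $\DD_\rho^{\Eis} = \langle \Eis^{\Tw}_P(\DD_{\rho_M}) \rangle_P$, while $\DD_\rho^{\cusp} = \DD_\rho|_{\Twistor_G^{\irred}}$ as established above. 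With the fibre sequence of the second paragraph, this is the asserted generation of $\DD_\rho$ by its cuspidal and Eisenstein components.
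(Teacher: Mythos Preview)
Your overall shape is right --- the whole proposition reduces to an intertwining statement of the form $\Eis_P \circ \DD \simeq \DD \circ \Ind^G_M$ --- but there are two genuine problems with how you propose to get there.

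First, the global equivalence $\Eis^{\Tw}_P(\DD_\sigma) \simeq \DD_{\Ind^G_M\sigma}$ cannot be true as stated. For a proper parabolic $P$, the pushforward $(p^{\Tw})_*$ lands in sheaves supported on the image of $\Twistor_P \to \Twistor_G$, i.e.\ on the $P$-reducible locus, whereas $\DD_{\Ind^G_M\sigma}$ is nonzero on all of $\Twistor_G$ (in particular on $\Twistor_G^{\irred}$). This is not a ``shift and twist'' discrepancy; it is a support obstruction. The paper handles this by working Harder--Narasimhan stratum by stratum: the key lemma establishes
\[
\Eis^{\Hod}_{P,\nu}\bigl(\DD^{\Hod}_{\rho_M}\big|_{\Hodge_M^{\sst,\nu}}\bigr)\;\cong\;\DD^{\Hod}_{\Ind^G_M\rho_M}\big|_{\Hodge_G^{P,\nu}} ,
\]
and then glues over the two Hodge charts of $\Twistor_G$. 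The proof is exactly the ``alternative'' you mention in passing: one uses the universal equivalence $(\id\times q^{\Hod})^!\rho_M(\Uu_{\Hod,M}) \cong (\id\times p^{\Hod})^!\,\Ind^G_M\rho_M(\Uu_{\Hod,G})$, base change around the obvious Cartesian squares, and the counit $(p^{\Hod})_*(p^{\Hod})^! \to \id$ on the stratum. No Wilson machinery is invoked.

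Second, your primary route via a Wilson-eigensheaf characterisation has a gap. In this paper the Wilson operators $\WW^{\Tw}_{\rho,x,\ol{x}}$ are tensor actions by locally free sheaves (Section~\ref{se: Wilson}), so ``being a Wilson eigensheaf with eigenvalue $\rho$'' is very far from determining a sheaf uniquely --- any sheaf tensored by $\Oo$ is an eigensheaf for the trivial action, for instance. Moreover, the paper's category $\Wils^{\Tw}_G$ of ``twistor Wilson eigensheaves'' consists of skyscrapers along horizontal twistor lines and has nothing to do with the Dirac--Higgs complexes, so invoking \cite[Thm.\ D]{FH1} as a \emph{characterisation} of $\DD_\sigma$ in those terms does not go through here. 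Even granting Theorem~\ref{Wilson and Eis}, you only learn that $\Eis^{\Tw}_P(\DD_{\rho_M})$ carries the correct Wilson eigenvalues, not that it equals $\DD_{\Ind^G_M\rho_M}$. If you abandon the eigensheaf idea and run the pull--push/base-change computation on each HN stratum (your ``alternative''), you recover precisely the paper's proof.
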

In the adjoint representation, the Dirac--Higgs construction yields the tangent complex on $\Twistor_G$, and the consequential fact that the tangent complex lies in $\Perf^{\BBB}(\Twistor_G)$ is analogous to the classical fact that the tangent bundle of a hyperk\"ahler manifold is naturally hyperholomorphic. 

\subsubsection{Riemann--Hilbert and restricted variation} The construction of twistor moduli as a pushout 
    \[
    \Twistor_G = \Hodge_G^{\an} \bigsqcup_{\Rep^{\an}_G \times \GG_m} \ol{\Hodge}_G^{\an} ,
    \]
turns the analytic Riemann--Hilbert correspondence $\Rep_G^{\an} \cong \LocSys_G^{\an}$ into a key structural component of the twistor geometry. The correspondence is algebraic in formal neighbourhoods of points, and this statement can understood as an equivalence $\Rep_G^{\restr} \cong \LocSys_G^{\restr}$ between the algebraic moduli stacks of local systems with \textit{restricted variation}, introduced by Arinkin--Gaitsgory--Kazhdan--Raskin--Rozenblyum--Varshavsky \cite{AGKRRV} to analyse eigensheaves, compare the Betti and de Rham geometric Langlands theories, and to formulate (and partially prove \cite{gaitsgory&raskin_positive}) geometric Langlands correspondences in positive characteristic. In this article we use the notion of restricted variation to construct an algebraic twistor stack 
    \[
    \Twistor^{\restr, \alg}_G = \Hodge_G^{\restr, \alg} \bigsqcup_{\Rep^{\restr, \alg}_G \times \GG_m} \ol{\Hodge}_G^{\restr, \alg} ,
    \]
for which the underlying analytic stack $\Twistor^{\restr}_G = (\Twistor^{\restr, \alg}_G)^{\an}$ contains all geometric points of $\Twistor_G$. We use this construction to prove the following structural result on $\Twistor_G$. 

\begin{proposition}
\label{Tw lines algebraic intro}
(Proposition \ref{Tw lines algebraic}). Every section of the structural map $\Twistor_G \to \PP^1$ factors as 
\[
\PP^1 \to \Twistor_G^{\restr} \to \Twistor_G , 
\]
where $\PP^1 \to \Twistor_G^{\restr}$ is the analytification of an algebraic map $\PP^1 \to \Twistor_G^{\restr, \alg}$.  
\end{proposition}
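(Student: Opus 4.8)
The plan is to argue chart-wise on $\PP^1$ and reassemble through the defining pushouts. Write $\PP^1 = \AA^1_\lambda \cup \AA^1_{\ol\lambda}$ for the two twistor charts, with overlap $\GG_m$. Since the structure maps to $\PP^1$ are compatible with the pushouts, a section $\sigma$ of $\Twistor_G \to \PP^1$ amounts to a holomorphic section $\sigma_+$ of $\Hodge_G^{\an}\to\AA^1_\lambda$, a holomorphic section $\sigma_-$ of $\ol{\Hodge}_G^{\an}\to\AA^1_{\ol\lambda}$, and an identification $\sigma_+|_{\GG_m}\simeq\sigma_-|_{\GG_m}$ of sections of $\Rep_G^{\an}\times\GG_m\to\GG_m$, where one uses the rescaling isomorphism $\Hodge_G^{\an}|_{\GG_m}\cong\LocSys_G^{\an}\times\GG_m$ and analytic Riemann--Hilbert to identify $\Twistor_G|_{\GG_m}$ with $\Rep_G^{\an}\times\GG_m$. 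The identical recipe with the restricted-variation stacks $\Hodge_G^{\restr}$, $\ol{\Hodge}_G^{\restr}$, $\Rep_G^{\restr}$ presents sections of $\Twistor_G^{\restr}\to\PP^1$. So the proof splits into two tasks: (i) lift $\sigma_\pm$ to the restricted Hodge stacks, compatibly over $\GG_m$, and (ii) algebraize the resulting section of $\Twistor_G^{\restr}\to\PP^1$.

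For (i), I would first restrict to the overlap: $\sigma_+|_{\GG_m}$ is a holomorphic family of flat $G$-bundles on $X$, equivalently (via $\RH$) a holomorphic map $f\colon \GG_m\to\Rep_G^{\an}$. The crucial input is that such an $f$ — and indeed any holomorphic family of $G$-local systems over a finite-dimensional base — is of restricted variation in the sense of \cite{AGKRRV}, since restricted variation is a condition on the maps induced out of the formal/infinitesimal neighbourhoods of points of the base, which are automatically pro-algebraic for an honest analytic family. This makes $f$ factor through $\Rep_G^{\restr}\times\GG_m$, and the algebraicity of $\RH$ between restricted stacks then promotes this to a factorisation of $\sigma_+|_{\GG_m}$ through $\LocSys_G^{\restr}\times\GG_m$. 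Next I would extend the lift across $\lambda=0$: the $\lambda\to0$ degeneration to $\Higgs_G$ is the algebraic, finite-type Rees/Hodge degeneration already carried by $\Hodge_G^{\restr,\alg}$, so the Hodge deformation of a restricted-variation family stays of restricted variation and $\sigma_+$ lifts to a section $\tilde\sigma_+$ of $\Hodge_G^{\restr}\to\AA^1_\lambda$; symmetrically for $\sigma_-$ over the conjugate chart. By construction $\tilde\sigma_+$ and $\tilde\sigma_-$ agree over $\GG_m$ in $\Rep_G^{\restr}\times\GG_m$, so they glue to a section $\tilde\sigma\colon\PP^1\to\Twistor_G^{\restr}$ mapping to $\sigma$ under $\Twistor_G^{\restr}\to\Twistor_G$.

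For (ii), I would invoke GAGA: $\Twistor_G^{\restr,\alg}$ is an algebraic stack, locally of (ind-)finite type with affine diagonal, and $\PP^1$ is proper, so a holomorphic map $\PP^1\to\Twistor_G^{\restr}$ is the analytification of a unique algebraic morphism $\PP^1\to\Twistor_G^{\restr,\alg}$ — applied chart by chart to the algebraic stacks $\Hodge_G^{\restr,\alg}$, $\ol{\Hodge}_G^{\restr,\alg}$ and their overlap $\Rep_G^{\restr,\alg}\times\GG_m$, then glued. This yields the claimed factorisation $\PP^1\to\Twistor_G^{\restr}\to\Twistor_G$.

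The hard part will be the restricted-variation step: extracting from \cite{AGKRRV} the precise statement that an \emph{arbitrary} holomorphic section of the Hodge family over a disc or $\AA^1$ — not only a preferred/non-abelian-Hodge section — has restricted variation, and verifying that this property is inherited under the $\lambda\to0$ Hodge degeneration (i.e. that passing to the limit in $\Higgs_G$ introduces no unrestricted variation). A secondary technical point will be pinning down the functoriality of $\Twistor_G^{\restr}\to\Twistor_G$ and the GAGA statement for the possibly ind-algebraic stack $\Twistor_G^{\restr,\alg}$ carefully enough that chartwise lifts agreeing over $\GG_m$ genuinely assemble into a global lift of $\sigma$.
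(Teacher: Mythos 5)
Your overall shape --- restrict to charts, lift over $\GG_m$, extend across $0,\infty$, glue, then algebraize via GAGA --- matches the paper's, and your step (ii) (GAGA for maps from the proper $\PP^1$) is indeed in the paper, invoked up front as the identification $\Map_{\dAnSt}(\PP^1, \Twistor_G^{\restr}) = \Map_{\dSt}(\PP^1, \Twistor_G^{\restr, \alg})$. But there is a genuine gap in your step (i), and a second step that you make unnecessarily hard.

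The gap is your claim that an arbitrary holomorphic family of $G$-local systems over a finite-dimensional base automatically has restricted variation, justified by ``restricted variation is a condition on the maps induced out of the formal/infinitesimal neighbourhoods of points of the base.'' This is not what restricted variation means, and the claim as stated cannot be right: if every family over a finite-type base had restricted variation, then $\LocSys_G^{\restr} \to \LocSys_G$ would be an equivalence, defeating the purpose of introducing $\LocSys_G^{\restr}$ in the first place. Restricted variation is a finiteness condition on the \emph{image} of the family in $\LocSys_G$ --- roughly, that the map factor through the formal completion of a finite-type reduced closed substack --- and many families violate it; $\LocSys_G^{\restr}$ is a genuine proper ind-closed substack. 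The paper's input at this step is different and weaker: it invokes bijectivity of $\LocSys_G^{\restr} \to \LocSys_G$ on $\CC$-points \cite[\textsection 4.1.3]{AGKRRV}, and deduces from this that the restriction of $\sigma$ to $\PP^1 - \{0,\infty\}$ factors through $\LocSys_G^{\restr} \times \GG_m$. You rightly flagged this as the hard part, but the general statement you propose as the resolution is false, so the gap remains.

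The second divergence is your extension across $\lambda = 0, \infty$: you propose lifting each section $\sigma_\pm$ to $\Hodge_G^{\restr}$ over the full chart $\AA^1$, which commits you to showing that the Rees/Hodge degeneration $\lambda \to 0$ preserves restricted variation. This is delicate, and also unnecessary. By construction $\Twistor^{\restr}_G \times_{\PP^1} \{ 0 \} = \Twistor_G \times_{\PP^1} \{ 0 \} = \Higgs_G$ (and likewise at $\infty$), i.e.\ the restricted and full twistor stacks literally coincide over the Dolbeault fibers; so once the factorization exists over $\GG_m$ it extends to all of $\PP^1$ for free, with no degeneration argument. Indeed Remark \ref{Dol restr} in the paper notes that a restricted-variation Dolbeault moduli (the hypothetical $\Higgs_G^{\restr}$) is not currently available, so the degeneration argument you sketch would be hard to execute anyway --- and the coincidence of fibers is precisely what makes it unnecessary.
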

We record below some additional analysis of the Riemann--Hilbert correspondence used in our work. 
\begin{proposition}
\label{RH Propn intro}
Algebraic Riemann--Hilbert $\Rep_G^{\restr} \to \LocSys_G^{\restr}$ and analytic Riemann--Hilbert $\RH : \Rep_G^{\an} \xrightarrow{\cong} \LocSys_G^{\an}$ together induce the following results.  
\begin{enumerate}
    \item (Proposition \ref{RH and Nilp an} and \cite{AGKRRV}). A pullback relation $\RH^{*}\Nn_{\dR}^{\an} \cong \Nn_{\B}^{\an}$ on the analytic de Rhan and Betti nilpotent cones and an equivalence of nilpotent singular support categories   
    \begin{equation}
    \label{an RH intro}
    \IndCoh_{\Nn^{\an}_{\dR}}( \LocSys_G^{\an} ) \cong \IndCoh_{\Nn^{\an}_{\B}}( \Rep_G^{\an} ) , 
    \end{equation}
    which extends the analytification of the algebraic equivalence  
    \begin{equation}
    \label{alg RH intro}
    \IndCoh_{\Nn_{\B}}( \Rep_G^{\restr} ) \cong \IndCoh_{\Nn_{\dR}}( \LocSys_G^{\restr} ), 
    \end{equation}
    between spectral sides of restricted Betti/de Rham geometric Langlands \cite{AGKRRV, GLC1}. 

    \item (Section \ref{nilpotent sheaves}). A pullback presentation 
    \[
    \begin{tikzcd}
    \IndCoh_{\Nn_{\Tw}}(\Twistor_G) \arrow[d] \arrow[r] & \IndCoh_{\ol{\Nn}_{\Hod}}(\ol{\Hodge}^{\an}_G) \arrow[d] \\
    \IndCoh_{\Nn^{\an}_{\Hod}}(\Hodge^{\an}_G) \arrow[r] & \IndCoh_{\Nn_B^{\an}}(\Rep^{\an}_G \times \GG_m)
    \end{tikzcd} , 
    \]
    featuring the categories of sheaves with singular support contained within the 'twistor nilpotent cone' $\Nn_{\Tw} \subset \Twistor_G$ defined in \ref{twistor nilp}. 
    \end{enumerate}
\end{proposition}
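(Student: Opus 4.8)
The plan is to treat the two parts separately: part (1) is extracted from the algebraic Riemann--Hilbert comparison of \cite{AGKRRV} together with classical analytic Riemann--Hilbert, and part (2) follows from analytic descent for $\IndCoh$ along the open cover of $\Twistor_G$ underlying its defining pushout, with part (1) supplying the identification of the overlap term.

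\textbf{Part (1).} I would start from the algebraic equivalence \eqref{alg RH intro} between the restricted Betti and de Rham spectral categories (\cite{AGKRRV, GLC1}), together with the fact, also due to \cite{AGKRRV}, that the isomorphism of restricted-variation stacks underlying it matches the Betti cone $\Nn_{\B}$ with the de Rham cone $\Nn_{\dR}$ and is algebraic on the formal neighbourhood of each geometric point. Analytic Riemann--Hilbert gives an equivalence of quasi-smooth derived analytic stacks $\RH:\Rep_G^{\an}\xrightarrow{\cong}\LocSys_G^{\an}$, which therefore induces an isomorphism of $(-1)$-shifted cotangent complexes, hence $\RH^{*}\Sing(\LocSys_G)^{\an}\cong\Sing(\Rep_G)^{\an}$. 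Since $\RH$ restricts on formal neighbourhoods to the completion of the algebraic map, this isomorphism matches $\Nn_{\dR}^{\an}$ and $\Nn_{\B}^{\an}$ formally-locally, and as both are closed conical analytic substacks --- determined by their formal completions at all geometric points --- they agree. Finally \eqref{an RH intro} follows because an equivalence of quasi-smooth derived analytic stacks carrying one closed conical subset of the singularity space to another induces an equivalence of the associated singular-support subcategories, and compatibility with the analytification of \eqref{alg RH intro} is naturality of analytification plus the fact that algebraic Riemann--Hilbert analytifies to $\RH$.

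\textbf{Part (2).} By \cite{simpson_twistor, FH1} the stack $\Twistor_G$ is the pushout $\Hodge_G^{\an}\sqcup_{\Rep_G^{\an}\times\GG_m}\ol{\Hodge}_G^{\an}$ along the open immersions onto the loci $\lambda\neq 0$ and $\lambda\neq\infty$; the overlap $\Hodge_G^{\an}\cap\ol{\Hodge}_G^{\an}\cong\Rep_G^{\an}\times\GG_m$ arises from the rescaling isomorphisms $\Hodge_G^{\an}|_{\GG_m}\cong\LocSys_G^{\an}\times\GG_m$, its counterpart over $\ol X$, and $\RH$ (using $\pi_1(X)=\pi_1(\ol X)$). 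Thus $\Twistor_G$ is covered by the analytic opens $\Hodge_G^{\an}$ and $\ol{\Hodge}_G^{\an}$ with intersection $\Rep_G^{\an}\times\GG_m$, and analytic descent for $\IndCoh$ along this cover yields
\[
\IndCoh(\Twistor_G)\;\cong\;\IndCoh(\Hodge_G^{\an})\times_{\IndCoh(\Rep_G^{\an}\times\GG_m)}\IndCoh(\ol{\Hodge}_G^{\an}).
\]
It then remains to cut down by singular support. By the definition of $\Nn_{\Tw}$ in \ref{twistor nilp}, its restriction to $\Hodge_G^{\an}$ is $\Nn_{\Hod}^{\an}$ and to $\ol{\Hodge}_G^{\an}$ is $\ol{\Nn}_{\Hod}$, while over the overlap both restrict, through the rescaling isomorphism, to $\Nn_{\dR}^{\an}$ --- the smooth factor $\GG_m$ contributing nothing to the singularities --- which part (1) identifies with $\Nn_{\B}^{\an}$. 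Since the formation of singular support commutes with restriction to analytic opens, a sheaf on $\Twistor_G$ has singular support in $\Nn_{\Tw}$ precisely when its restrictions to the two charts have singular support in $\Nn_{\Hod}^{\an}$ and $\ol{\Nn}_{\Hod}$ respectively; passing to these full subcategories on both sides of the displayed equivalence yields the asserted pullback square.

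\textbf{Main obstacle.} The principal work is foundational: establishing $\IndCoh$ with prescribed singular support on derived analytic stacks and checking it behaves as in the algebraic theory of \cite{arinkin&gaitsgory} --- in particular that singular support is analytic-local, pulls back along equivalences, and is compatible with analytification, so that $\Nn_{\dR}^{\an}$, $\Nn_{\Hod}^{\an}$ and $\Nn_{\Tw}$ really are the expected analytic cones. The finite type-ness of $\Rep_G$, the local structure of $\Hodge_G$, and the restricted-variation models of \cite{AGKRRV} keep this tractable, and once these facts are available the arguments above are essentially formal. A secondary subtlety is that analytic Riemann--Hilbert must be used as an equivalence of derived --- not merely classical --- analytic stacks, compatibly with the shifted symplectic structures; this, and the formal-local cone-matching used in part (1), come from the formal-neighbourhood algebraicity of \cite{AGKRRV}.
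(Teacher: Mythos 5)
Your proposal is correct and follows the same broad strategy as the paper --- both parts hinge on the algebraic Riemann--Hilbert of \cite{AGKRRV}, the analytic Riemann--Hilbert of Porta and Holstein--Porta, the analytic singular-support foundations laid out in Section \ref{deformations}, and the pushout presentation of $\Twistor_G$ --- but the route you take in Part (1) differs in a useful way. The paper (Propositions \ref{RH and Nilp alg} and \ref{RH and Nilp an}) first proves the cone isomorphism $\Nn_{\B}\cong\Nn_{\dR}$ algebraically on the restricted stacks, by exhibiting a universal equivalence $\RH^{\alg,*}(\Uu_{\dR}^{\vee}|_{X_{\dR}\times\LocSys_G^{\restr}})\cong\Uu_{\B}^{\vee}|_{X_{\B}\times\Rep_G^{\restr}}$ that visibly preserves nilpotent sections, and then declares that the analytic statement on $\Rep_G^{\an}$ and $\LocSys_G^{\an}$ follows by analytification (implicitly via Lemma \ref{cotangent GAGA}, Lemma \ref{an and Sing} and Proposition \ref{an and sing supp}). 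You instead first produce the analytic cone identification $\RH^{*}\Sing(\LocSys_G^{\an})\cong\Sing(\Rep_G^{\an})$ directly from the equivalence of cotangent complexes, and then match the two closed conical substacks formally-locally by reducing to the algebraic statement of \cite{AGKRRV} on formal neighbourhoods. Your formal-local argument is somewhat more careful on one point the paper elides: Proposition \ref{RH and Nilp alg} is stated over $\Rep_G^{\restr}$ and $\LocSys_G^{\restr}$, whose analytifications are a priori only substacks of $\Rep_G^{\an}$ and $\LocSys_G^{\an}$, so passing to the full analytic statement requires knowing that the restricted map is a formal-neighbourhood isomorphism at every geometric point --- precisely what your argument uses. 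On the other hand, the paper's universal-bundle computation gives a more global and explicit handle on the isomorphism. For Part (2) your argument --- analytic descent for $\IndCoh$ along the two hemispherical charts, then cutting down by $\Nn_{\Tw}$ using that singular support is local and that the $\GG_m$ factor in the overlap contributes nothing --- is exactly the construction in \ref{nilpotent sheaves} (the pullback square \eqref{eq pullback nilps}), which the paper obtains from the pushout presentations of $\Twistor_G$ and $\Nn_{\Tw}$ together with the functoriality of singular support from \cite{arinkin&gaitsgory}.
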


\subsubsection{Analytic sheaves} To pass freely between the algebraic and analytic topologies in instances such as Proposition \ref{RH Propn intro}(1) we record below some general comparison results concerning singular support and analytification. Precisely, given a stack $\Yy$ and a conic of singularities $\Lambda \subset \Sing(\Yy)$, we compare the singular support categories $\IndCoh_{\Lambda}(\Yy)$ defined by Arinkin and Gaitsgory \cite{arinkin&gaitsgory} to the analytification procedure $\Yy \mapsto \Yy^{\an}$ defined by Holstein and Porta \cite{holstein&porta}. 

\begin{theorem} Let $\Yy$ be a geometric quasi-smooth stack with analytification $\Yy^{\an}$. 
\label{an and GAGA intro}
\, 
\begin{itemize}
    \item (Lemma \ref{an and Sing}). There exists a canonical pullback square 
\[
\begin{tikzcd}
\Sing(\Yy)^{\an} \arrow[d] \arrow[r] & \Sing(\Yy^{\an}) \arrow[d] \\
(T^{*}\Yy)^{\an} \arrow[r] & T^{*}(\Yy^{\an}) 
\end{tikzcd} ,
\]
in which both horizontal arrows are equivalences. 

    \item (Corollary \ref{an and GAGA}). Given a conic $\Lambda \subset \Sing(\Yy)$ then analytification induces a commutative square  
\[
\begin{tikzcd}
\IndCoh_{\Lambda}(\Yy) \arrow[d] \arrow[r, "(\cdot)^{\an}"] & \IndCoh_{\Lambda^{\an}}(\Yy^{\an}) \arrow[d] \\
\IndCoh(\Yy) \arrow[r, "(\cdot)^{\an}"] & \IndCoh(\Yy^{\an})
\end{tikzcd} . 
\]
\end{itemize}
\end{theorem}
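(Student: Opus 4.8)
The plan is to reduce both assertions to three compatibility properties of the Holstein--Porta analytification functor \cite{holstein&porta}, verify (or cite) these for affine derived schemes, and then bootstrap to geometric stacks by smooth descent; the two bullets then become almost formal. The three inputs I would record are: (i) analytification commutes with classical truncation and with the formation of the (co)tangent complex, so that $\Yy^{\an}$ is again quasi-smooth and $\TT_{\Yy^{\an}}\cong(\TT_{\Yy})^{\an}$; (ii) analytification is t-exact and commutes with relative $\Spec$ of the symmetric algebra on a perfect or coherent complex, since Stein-locally it is base change along the faithfully flat ring map $\Oo_{\Yy}\to\Oo_{\Yy^{\an}}$; and (iii) analytic GAGA holds for sheaf-level $\Ext$ between coherent complexes. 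For geometric $\Yy$ these reduce to the affine case along a smooth atlas $U\to\Yy$ (so $U^{\an}\to\Yy^{\an}$ is a smooth atlas), using that $\Sing$, $T^{*}$ and $\IndCoh_{(-)}$ satisfy smooth descent and smooth base change (for $\Sing$ this is \cite[\S2]{arinkin&gaitsgory}); so below I argue on an affine quasi-smooth $Z$.

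For Lemma \ref{an and Sing}: by construction $T^{*}Z=\Spec_{Z}\Sym_{\Oo_{Z}}(\TT_{Z})$, while $\Sing(Z)$ is the relative spectrum over $Z^{cl}$ of the symmetric algebra on $H^{1}(\TT_{Z}|_{Z^{cl}})$ with its conic $\GG_{m}$-action, and the canonical map $\Sing(Z)\to T^{*}Z$ is induced by the graded-algebra map coming from the truncation $\TT_{Z}|_{Z^{cl}}\to\tau_{\ge1}\TT_{Z}|_{Z^{cl}}$. By (i) and (ii), analytification carries $H^{1}(\TT_{Z}|_{Z^{cl}})$ to $H^{1}(\TT_{Z^{\an}}|_{(Z^{\an})^{cl}})$ and intertwines the two $\Spec\circ\Sym$ constructions; hence the natural comparison maps $(T^{*}Z)^{\an}\to T^{*}(Z^{\an})$ and $\Sing(Z)^{\an}\to\Sing(Z^{\an})$ are equivalences, and by naturality of the truncation they fit into a commuting square with the two vertical maps $\Sing\to T^{*}$. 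A commuting square whose two horizontal arrows are equivalences is automatically Cartesian, which is the assertion.

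For Corollary \ref{an and GAGA}: since $\Sing(\Yy)^{\an}\cong\Sing(\Yy^{\an})$ carries conic closed subsets to conic closed subsets, $\Lambda^{\an}\subseteq\Sing(\Yy^{\an})$ is closed and conic, so $\IndCoh_{\Lambda^{\an}}(\Yy^{\an})$ is defined; the square then commutes as soon as the top functor is well defined, i.e. as soon as $\SingSupp(\Ff^{\an})\subseteq\SingSupp(\Ff)^{\an}$ for $\Ff\in\Coh(\Yy)$, and I would prove equality. Recall from \cite[\S3--4]{arinkin&gaitsgory} that $\SingSupp(\Ff)$ is the support in $\Sing(\Yy)$ of the coherent sheaf $\Ext^{\bullet}_{\Oo_{\Yy}}(\Ff,\Ff)$, viewed as a graded $\Oo_{\Sing(\Yy)}$-module via the ``degree-two'' action built from $\TT_{\Yy}$. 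Since support is a local notion and $\Oo_{\Yy}\to\Oo_{\Yy^{\an}}$ is faithfully flat, it suffices by (iii) to observe that this sheaf, together with its $\Oo_{\Sing(\Yy)}$-module structure, analytifies to $\Ext^{\bullet}_{\Oo_{\Yy^{\an}}}(\Ff^{\an},\Ff^{\an})$ with its $\Oo_{\Sing(\Yy^{\an})}$-module structure — the degree-two action being natural and built from the cotangent complex, hence transported compatibly via the Lemma. The support of the analytified module is the preimage $\SingSupp(\Ff)^{\an}$, so $\SingSupp(\Ff^{\an})=\SingSupp(\Ff)^{\an}\subseteq\Lambda^{\an}$; ind-completing analytification on coherent sheaves then produces $\IndCoh_{\Lambda}(\Yy)\to\IndCoh_{\Lambda^{\an}}(\Yy^{\an})$ and the commuting square.

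The real work is concentrated in the first paragraph: establishing (i)--(iii) in the derived-analytic framework of \cite{holstein&porta} — above all the flatness of $\Oo_{\Yy}\to\Oo_{\Yy^{\an}}$ and the compatibility of analytification with the cotangent complex and with $\Spec\circ\Sym$ — together with the (softer) check that the Hochschild-type classes of \cite{arinkin&gaitsgory} cutting out singular support are natural enough to be carried along by analytification. Once those inputs are secured, the Cartesian-square claim and the passage to ind-completions are purely formal; I expect the flatness and cotangent-complex compatibility in the stacky derived-analytic setting to be the genuine obstacle.
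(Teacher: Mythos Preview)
Your proposal is correct and follows essentially the same route as the paper: both reduce everything to the single input that analytification commutes with the cotangent complex, after which the $\Sym$/$\Spec$ compatibility and the module-theoretic description of singular support make the two assertions formal. The only point worth flagging is that the paper isolates the cotangent-complex compatibility as a separate lemma (attributed to Porta) and proves it not by smooth descent to affines but by induction on the level of geometricity, using the Porta--Yu result for Deligne--Mumford stacks as the base case and a diagonal/loop-space trick $\delta_f^{*}\LL_{\Omega_f/S\times S}[-1]\cong f^{*}\LL_{\Yy}$ for the inductive step; your smooth-descent strategy would also work, but the paper's argument avoids having to separately verify that $\Sing$ and the $\Ext$-action satisfy smooth descent in the derived-analytic setting.
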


\subsubsection{Wilson operators} Our final result concerns the comparison between two fundamental sources of symmetry on the spectral categories $\IndCoh_{\Nn}^{\BBB}(\Twistor_G)$, namely that of the Eisenstein functors $\Eis^{\Tw}_P$ alongside the \textit{twistor Wilson functors}
\[
\WW^{\Tw}_{\rho, x, \ol{x}} : \IndCoh(\Twistor_G) \to \IndCoh(\Twistor_G) , 
\]
defined by a universal tensor action that depends on the fixed choice of data $\rho \in \Rep(G)$, $x \in X^{\an}$ and $\ol{x} \in \ol{X}^{\an}$. See Proposition \ref{twistor wilson exists} for a precise definition of this functor. 

\begin{theorem}
\label{Wilson and Eis intro}
(Theorem \ref{Wilson and Eis}.)
Fix $\rho \in Rep(G)$ and take the Levi reduction $\Red_M^{G}(\rho) \in \Rep(M)$. Then there exists a commutative square 
\[
\begin{tikzcd}
\IndCoh(\Twistor_M) \arrow[d, "\WW_{\Red_M^{G}(\rho), x}"] \arrow[r, "\Eis_{P}^{\Tw}"] & \IndCoh(\Twistor_G) \arrow[d, "\WW_{\rho, x}"] \\
\IndCoh(\Twistor_M) \arrow[r, "\Eis_{P}^{\Tw}"] & \IndCoh(\Twistor_G)
\end{tikzcd}, 
\]
which restricts to the commutative square 
\[
\begin{tikzcd}
\IndCoh^{\BBB}_{\Nn}(\Twistor_M) \arrow[d, "\WW_{\Red_M^{G}(\rho), x}"] \arrow[r, "\Eis_{P}^{\Tw}"] & \IndCoh^{\BBB}_{\Nn}(\Twistor_G) \arrow[d, "\WW_{\rho, x}"] \\
\IndCoh^{\BBB}_{\Nn}(\Twistor_M) \arrow[r, "\Eis_{P}^{\Tw}"] & \IndCoh^{\BBB}_{\Nn}(\Twistor_G)
\end{tikzcd} .
\]
\end{theorem}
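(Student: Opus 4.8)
The plan is to reduce the twistor assertion to its Hodge and conjugate-Hodge counterparts via the pushout presentation of $\Twistor_G$, to prove the Hodge statement by a projection-formula manipulation that isolates a comparison of two perfect complexes on $\Hodge_P$, to resolve the resulting filtration discrepancy by means of a contracting $\GG_m$-action, and finally to restrict to the BBB-subcategories, which is formal. First, I would exploit that both functors in the square are assembled from Hodge-theoretic data along $\Twistor_G = \Hodge_G^{\an}\bigsqcup_{\Rep_G^{\an}\times\GG_m}\ol{\Hodge}_G^{\an}$. By this pushout presentation, $\IndCoh(\Twistor_G)$ is the limit $\IndCoh(\Hodge_G^{\an})\times_{\IndCoh(\Rep_G^{\an}\times\GG_m)}\IndCoh(\ol{\Hodge}_G^{\an})$ (as in Proposition \ref{RH Propn intro}(2), without the nilpotency constraint), the twistor Eisenstein functor is $\Eis_P^{\Tw}=\Eis_P^{\Hod}\times_{\Betti}\ol{\Eis}_P^{\Hod}$ by definition (Proposition \ref{spec tw eis}), and the twistor Wilson functor (written $\WW_{\rho,x}$ in the theorem, precisely $\WW^{\Tw}_{\rho,x,\ol{x}}$ of Proposition \ref{twistor wilson exists}) is the functor induced on this limit by the triple $(\WW^{\Hod}_{\rho,x},\,\WW^{\ol{\Hod}}_{\rho,\ol{x}},\,\WW^{\B}_{\rho})$ of Hodge, conjugate-Hodge, and Betti Wilson functors — the last matching the other two over the Betti vertex because the Betti universal object is locally constant, so its fibres at $x$ and at $\ol{x}$ are canonically identified. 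Hence it suffices to produce the commuting square (i) for $\Eis_P^{\Hod}$ and $\WW^{\Hod}_{\rho,x}$ on $\IndCoh(\Hodge_G^{\an})$, (ii) the mirror of (i) over $\ol{X}$, and (iii) for the Betti Eisenstein and Wilson functors on $\IndCoh(\Rep_G^{\an}\times\GG_m)$, together with the compatibility of these three homotopies over the Betti vertex. Case (ii) is formally identical to (i); case (iii) is the de Rham square transported through the analytic Riemann--Hilbert equivalence of Proposition \ref{RH Propn intro} (which intertwines parabolic induction and the tensor actions), and is itself the $\lambda=1$ fibre of case (i), so the matching of the homotopies is automatic, all three being induced from the single pair of representations $(\rho,\Red_M^G(\rho))$. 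By Corollary \ref{an and GAGA} the content of (i) is insensitive to analytification, so I would work algebraically with $\Hodge_G$.

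Next, for the Hodge square, I would write $\Eis_P^{\Hod}=(p^{\Hod})_*\circ(q^{\Hod})^!$ along $\Hodge_M\xleftarrow{q}\Hodge_P\xrightarrow{p}\Hodge_G$ (no Harder--Narasimhan truncation is needed for the $\IndCoh$-statement; where convenient one works one stratum at a time, on which $p$ is proper). The functor $\WW^{\Hod}_{\rho,x}$ is tensoring by the perfect complex $\mathcal{W}^{\Hod}_{\rho,x,G}:=\rho(\mathcal{E}^{\Hod}_G)|_{\{x\}\times\Hodge_G}$, obtained by applying $\rho$ to the universal $(G,\lambda)$-connection and restricting to $x$. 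Using the projection formula for $(p^{\Hod})_*$ on one side and the compatibility of $(q^{\Hod})^!$ with tensoring by perfect complexes on the other, both composites
\[
\WW^{\Hod}_{\rho,x}\circ\Eis_P^{\Hod}\qquad\text{and}\qquad \Eis_P^{\Hod}\circ\WW^{\Hod}_{\Red_M^G(\rho),x}
\]
become identified with $\mathcal{F}\mapsto (p^{\Hod})_*\big((q^{\Hod})^!\mathcal{F}\otimes\Xi\big)$, where $\Xi$ is respectively $p^*\mathcal{W}^{\Hod}_{\rho,x,G}$ and $q^*\mathcal{W}^{\Hod}_{\Red_M^G(\rho),x,M}$. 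Since the universal $(P,\lambda)$-connection satisfies $\mathcal{E}^{\Hod}_P\times^P G\cong p^*\mathcal{E}^{\Hod}_G$ and $\mathcal{E}^{\Hod}_P\times^P M\cong q^*\mathcal{E}^{\Hod}_M$, both candidates for $\Xi$ are pulled back from $BP$ along the classifying map $\mathcal{E}^{\Hod}_P|_{\{x\}}\colon\Hodge_P\to BP$: the first from $\rho|_P$ (the $G$-representation restricted to $P$), the second from the inflation to $P$ of $\Red_M^G(\rho)=\rho|_M$. These two $P$-representations have the same underlying space, and $\rho|_P$ carries the canonical filtration by $Z(M)^\circ$-weight whose associated graded is $\mathrm{infl}_P^M(\rho|_M)$; so the two candidates for $\Xi$ are only filtration-equivalent, not isomorphic in general — one is a (typically non-split) iterated extension of the other.

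The main obstacle will be to show that this filtration discrepancy is invisible to $\Eis_P^{\Hod}$. I would resolve it using the contracting $\GG_m$-action on $\Hodge_P$ induced by a regular dominant cocharacter $\GG_m\to Z(M)^\circ/Z(G)^\circ$ — the Hodge avatar of the Gaitsgory--Drinfeld contraction for $\Bun_P\to\Bun_G$ — which acts trivially on $\Hodge_M$ and $\Hodge_G$, is compatible with $p$, $q$ and with $\Hodge_P\to\AA^1$, and contracts $\Hodge_P$ onto the fixed locus $\Hodge_M\hookrightarrow\Hodge_P$. The complex $p^*\mathcal{W}^{\Hod}_{\rho,x,G}$ is canonically $\GG_m$-equivariant for this action, with associated graded (for the weight filtration) equal to $q^*\mathcal{W}^{\Hod}_{\Red_M^G(\rho),x,M}$, and the two agree on the fixed locus. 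A Braden-type contraction argument — in the form that $(p^{\Hod})_*$ of a $\GG_m$-equivariant sheaf is computed from its restriction to the fixed locus, with the non-zero-weight constituents pushing to $0$ — then produces the desired natural isomorphism between the two composites; equivalently, the spectral sequence of the weight filtration degenerates because its differentials have non-zero $\GG_m$-weight and so annihilate the weight-zero target after applying $\Eis_P^{\Hod}$. I expect the genuinely delicate point to be making this contraction principle precise on the derived, stacky, $\lambda$-twisted moduli $\Hodge_P$ — and, if one additionally wants the $\Coh_{\Nn}$/$\Perf$-refinement, checking its compatibility with nilpotent singular support and with the Harder--Narasimhan truncations of Section \ref{spectral Eis Hodge}.

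Finally, the restriction to the BBB-subcategories is formal once the square on $\IndCoh(\Twistor_-)$ is in hand. Tensoring by the perfect complex $\mathcal{W}^{\Tw}_{\rho,x,\ol{x}}$ does not enlarge singular support, so $\WW_{\rho,x}$ preserves $\IndCoh_{\Nn}$; and since the restriction of $\mathcal{W}^{\Tw}_{\rho,x,\ol{x}}$ to an embedded projective line is a vector bundle on that $\PP^1$, tensoring a skyscraper on a twistor line by it remains a twistor Wilson eigensheaf (up to multiplicity), whence $\WW_{\rho,x}$ preserves $\IndCoh^{\BBB}(\Twistor_G)$ and therefore $\IndCoh^{\BBB}_{\Nn}(\Twistor_G)$; likewise for $\WW_{\Red_M^G(\rho),x}$ on $\Twistor_M$. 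Together with the fact that $\Eis_P^{\Tw}$ preserves BBB-branes with nilpotent singular support (Proposition \ref{Eis BBB to BBB}), the commuting square restricts to the full subcategories $\IndCoh^{\BBB}_{\Nn}(\Twistor_-)$, as claimed.
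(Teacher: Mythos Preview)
Your overall reduction strategy---split the twistor square into Hodge and conjugate-Hodge pieces via the pushout, then restrict to the BBB subcategories at the end---matches the paper exactly. The divergence is entirely in the Hodge step.

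First, note a discrepancy between the introduction and the body: Theorem~\ref{Wilson and Eis} as actually stated and proved in the paper takes the relationship to be $\rho = \Ind^G_M(\rho_M)$, not $\rho_M = \Red^G_M(\rho)$ as in the introductory statement you were handed. The paper's proof is written for the $\Ind$ formulation, and this matters.

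With that formulation, the paper does \emph{not} engage with any filtration discrepancy or contraction argument. It simply invokes the universal isomorphism~\eqref{Hodge universal equivalence} from the Dirac--Higgs section,
\[
(\id \times q^{\Hod})^{!}\rho_M(\Uu_{\Hod, M}) \;\cong\; (\id \times p^{\Hod})^{!}\,(\Ind^{G}_{M}\rho_M)(\Uu_{\Hod, G}),
\]
justified there by the assertion that the compositions $P\to M\xrightarrow{\rho_M}GL_n$ and $P\to G\xrightarrow{\Ind^G_M\rho_M}GL_n$ agree. Restricting the underlying bundles to $x$ gives an isomorphism of the two perfect complexes on $\Hodge_P$, and then the commutation is a three-line projection-formula calculation. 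No Braden contraction, no weight filtration, no spectral sequence.

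Your analysis---that for $\rho_M = \rho|_M$ the $P$-representations $\rho|_P$ and $\mathrm{infl}^P_M(\rho|_M)$ differ by a genuinely non-split filtration---is correct for the $\Red$ formulation you were working with, and is precisely the obstruction the paper's $\Ind$ formulation and its claimed identity of compositions are meant to sidestep. Whether that claimed identity is itself unproblematic is something you may wish to scrutinise, but the paper's proof as written never sees your filtration issue.

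Your proposed resolution via a $Z(M)^\circ$-contraction is not what the paper does, and as you wrote it there is a gap: you argue that the differentials in the weight spectral sequence have non-zero $\GG_m$-weight and hence vanish on the target, but the $\GG_m$-action on $\Hodge_P$ coming from $Z(M)^\circ$ does not descend to $\Hodge_G$, so there is no weight grading on $\IndCoh(\Hodge_G)$ against which those differentials can be measured. Making a Braden-type statement work in this setting would need a sharper formulation.

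Your final paragraph on the BBB restriction matches the paper's: both just cite that $\Eis^{\Tw}_P$ and $\WW^{\Tw}_{\rho,x,\ol{x}}$ separately preserve $\IndCoh^{\BBB}_{\Nn}$ (Corollary~\ref{BBB nilp Eis} and Proposition~\ref{Wilson acts nicely}), whence the square restricts.
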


\subsection{Structure} Section \ref{deformations} contains preliminary material on the singular support condition for algebraic and analytic sheaves. Section \ref{singular support} briefly recalls the definition of singular support, due to Arinkin and Gaitsgory \cite{arinkin&gaitsgory}; Section \ref{sing and an} is dedicated to various compatibilities between singular support and analytification, all of which are formal consequences of a comparison between analytic and algebraic deformations.  

Section \ref{Twistors NAH} is preliminary material on twistors of local systems and their moduli. Sections \ref{NAH} provide the construction of various moduli in non-abelian Hodge theory, including the moduli of twistors; Section \ref{singularities} is a study of the various singular support conditions on these moduli; Section \ref{Instability strata} is on the Harder--Narasimhan stratification in the Dolbeault, Hodge and twistor settings. 

Section \ref{hyperholomorphic branes} introduces our primary sheaf theory of interest -- the category of ind-coherent BBB-branes with nilpotent singular support. Section \ref{classical twistors} contains motivational recollections from twistor theory; Section \ref{se: twistor lines} analyses the geometry of twistor lines on the twistor stack; Section \ref{se: eigensheaves} studies skyscraper sheaves supported on twistor lines, which play the role of Wilson eigensheaves in our theory; Section \ref{se: BBB-branes} defines the categories of BBB-branes and studies some of their basic properties. 

Section \ref{Twistors and geometric Eisenstein series} is on the construction of the Eisenstein functors. Section \ref{se: Eis functors} recalls the definition of de Rham and Betti Eisenstein functors, as well as their basic properties under Riemann--Hilbert and analytification; Section \ref{spectral Eis Hodge} defines the Dolbeault and Hodge Eisenstein functors, including their Harder--Narasimhan-type description; Section \ref{twistor spectral p and q} is on twistor Eisenstein functors, which are shown to preserve nilpotent singular support; Section \ref{se: hyperholomorphic induction} is dedicated to proving that twistor Eisenstein functors preserve the categories of BBB-branes. 

Section \ref{se: Hyperholomorphic applications} contains our results on cuspidal-Eisenstein decompositions. Section \ref{se structure} contains the statement and proofs of our cuspidal-Eisenstein decompositions in non-abelian Hodge theory; Section \ref{Dirac--Higgs decompositions} computes the cuspidal-Eisenstein decomposition of a special class of BBB-branes called the \textit{Dirac--Higgs complexes}. 

Section \ref{se: Wilson} contains our results on the compatibility between Eisenstein and Wilson functors in the twistor setting. Section \ref{se: Wilson operators} is on the construction of twistor Wilson functors; Section \ref{se: intertwining} computes the intertwining compatibility relation between twistor Eisenstein and Wilson functors.  

\subsection{Conventions} We work in the category $\dSt$ of derived algebraic stacks defined over the \'{e}tale site of derived algebraic spaces, alongside the category $\dAnSt$ of derived $\CC$-analytic stacks defined over the \'{e}tale site of derived analytic spaces. By default, when we say \textit{algebraic stack} and \textit{analytic stack}, we refer to objects of $\dSt$ and $\dAnSt$ respectively. 

We use the following standard notation from derived algebraic geometry. Given $\Xx, \Yy \in \dSt$, we denote by $\Map(\Xx, \Yy)$ the $\infty$-groupoid of maps $\Xx \to \Yy$ in the $\infty$-category $\dSt$. $\Hom(\Xx, \Yy)$ is reserved for the set $\pi_{0}(\Map(\Xx, \Yy))$ of homomorphisms in the classical underlying category. The mapping stack $\Maps(\Xx, \Yy) \in \dSt$ is defined by the functor of points $S \mapsto \Map(\Xx \times S, \Yy)$. The analogous constructions on objects $\Xx, \Yy \in \dAnSt$ yields an $\infty$-groupoid $\anMap(\Xx, \Yy)$ and an analytic stack $\anMaps(\Xx, \Yy) \in \dAnSt$. Given a stack $\Yy$, algebraic or analytic, we denote by $\Perf(\Yy), \Coh(\Yy), \QCoh(\Yy)$, $\IndCoh(\Yy)$ the respective dg categories of perfect, coherent, quasi-coherent and ind-coherent complexes of sheaves on $\Yy$. The notation $\Perf^{\heart}(\cdot)$ \textit{etc}. is for the standard underlying abelian categories. 

At various points we use the term `Betti gluing data'. This refers to the construction of twistor structures as a pullback of the form `$\Hodge \times_{\Betti} \ol{\Hodge}$', or as a pushout of the form `$\Hodge \sqcup_{\Betti} \ol{\Hodge}$', where both are products of a complex conjugate pair of Hodge structures taken over topological Betti data. The use of Betti gluing data is due to the natural invariance of the Betti theory under variations in complex structure. 

Throughout the paper our base field is always $\CC$. 

\subsection{Acknowledgements} The author would like to thank Dima Arinkin, Emilio Franco, Tony Pantev and Mauro Porta for conversations and suggestions that supported the completion of this article. A special thanks to Mauro Porta for providing the author with the statement and proof of Lemma \ref{cotangent GAGA}, which plays an essential role in our analysis of analytic sheaves. 

The author is supported by the Horizon Europe Marie Skłodowska-Curie Action grant \textit{Hyperkähler mirror symmetry and Langlands duality} with grant agreement ID 101204490. 

\section{Singular support of analytic sheaves}
\label{deformations} 

\subsection{Singular support} 
\label{singular support}
We begin by recalling the theory of singular support for ind-coherent sheaves on derived stacks, introduced by Arinkin and Gaitsgory \cite{arinkin&gaitsgory} to formulate the precise statement of the de Rham geometric Langlands correspondence. 

\subsubsection{Singularities}
\label{singularities of stacks}
Let $\Yy$ be a quasi-smooth geometric stack and denote by $\LL_\Yy \in \Perf(\Yy)$ and $\TT_{\Yy} = \LL_{\Yy}^{\vee}$ the cotangent and tangent complex respectively. By the quasi-smoothness hypothesis on $\Yy$, $\LL_{\Yy}$ is supported in cohomological degrees $[-1, \infty ]$. Let $T^{*}(\Yy) = \tot(\LL_{\Yy})$ denote the cotangent stack of $\Yy$. The classical $1$-stack $\Sing(\Yy) \subset T^{*}(\Yy)$ of singularities is defined to be the classical $1$-stack truncation of 
\[
\tot(\LL_{\Yy}[1]) = \Spec_{\Yy}( \Sym^{\bullet}_{\Oo_\Yy}(\TT_{\Yy}[1] ) ). 
\]
If $\Yy$ is smooth then $\LL_{\Yy}[1]$ is supported in positive cohomological degree and so $\Sing(\Yy)$ is empty. In general $\Sing(\Yy)$ parametrises the natural obstructions to the smoothness of $\Yy$. 

The construction $\Sing(\cdot)$ is functorial in the following sense. Given a map $f : \Yy_1 \to \Yy_2$ of quasi-smooth geometric stacks there is an induced map $\TT_{\Yy_1} \to f^{*}\TT_{\Yy_2}$ in $\Perf(\Yy_1)$, to which taking spectra of the corresponding $1$-shifted symmetric algebras yields a morphism 
\[
\Sing(f) : \Sing(\Yy_2)_{\Yy_1} \to \Sing(\Yy_1),
\]
called the \textit{singular codifferential of $f$} \cite[\textsection 2.4]{arinkin&gaitsgory}. Here the notation $\Sing(\Yy_2)_{\Yy_1}$ denotes the classical $1$-stack underlying the derived product $\Sing(\Yy_2) \times_{\Yy_2}\Yy_1$, which coincides with the spectra of $\Sym^{\bullet}(f^{*}\TT_{\Yy_2}[1])$. 

\subsubsection{Singular support}
Consider quasi-smooth $\Yy$ and an object $\Ff \in \IndCoh(\Yy)$. The singular support of $\Ff$ is the substack $\SingSupp(\Ff) \subset T^{*}\Yy$ defined to be support for $\Ff$ when considered as a module over the graded algebra $\Sym^{\bullet}(H^1(\TT_{\Yy}))$. This measures the failure of $\Ff$ to be quasi-coherent, or for $\Ff \in \Coh(\Yy)$, the failure of $\Ff$ to be perfect. By construction, $\SingSupp(\Ff)$ is a closed conic within $\Sing(\Yy)$. To a fixed conic $\Lambda \subset \Sing(\Yy)$, one can assign an intermediate category
\begin{equation}
\label{intermediate sing supp}
\QCoh(\Yy) \subset \IndCoh_{\Lambda}(\Yy) \subset \IndCoh(\Yy) ,
\end{equation}
consisting of ind-coherent sheaves with singular support contained in $\Lambda$. The category $\QCoh(\Yy)$, viewed as a subcategory of $\IndCoh_{\Lambda}(\Yy)$, can be identified with the category $\IndCoh_{\{0\}}(\Yy)$ of ind-coherent sheaves with singular support contained in the zero section $\{0\} \subset \Sing(\Yy)$. The restriction to compact objects yields  
\[
\Perf(\Yy) = \Coh_{\{0\}}(\Yy) \subset \Coh_{\Lambda}(\Yy) \subset \Coh(\Yy) .
\]

\subsection{Analytification} 
\label{sing and an}

The analytification functor $(\cdot)^{\an} : \dSt \to \dAnSt$ due to Holstein and Porta \cite[Defn. 3.3]{holstein&porta} provides a vast generalisation of analytification for algebraic varieties, as studied in the classical theory of \textit{Géométrie Algébrique et Géométrie Analytique (GAGA)} due to Serre \cite{serre_GAGA}. For our purposes it is sufficient to consider $\Yy \in \dSt$ to be geometric, in which case the associated analytic stack can be described by the following functor of points. 

\begin{theorem} 
\label{geometric analytic}
\cite[Thm 3.10]{holstein&porta}. Let $\Yy$ be a derived geometric stack with analytification $\Yy^{\an}$. Then $\Yy^{\an}$ is the sheafification of the functor that sends a derived Stein space $S$ to $\Map(\Spec(\Gamma(S)), \Yy)$, where $\Gamma(S)$ are the global sections of $S$ considered as a derived commutative ring. 
\end{theorem}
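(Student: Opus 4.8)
The plan is to prove the formula by reducing to the case where $\Yy$ is an affine derived scheme, where it becomes a concrete assertion in (derived) Stein theory, and then to propagate the identification along a smooth atlas using that both sides of the claimed equivalence are local for the relevant Grothendieck topologies. Write $\Gamma(S)$ for the derived ring of global sections of a derived Stein space $S$, let $\iota : \dSt \hookrightarrow \mathrm{PSh}(\dAff)$ be the inclusion of sheaves into presheaves, and let $\Gamma^{*}$ denote restriction of presheaves along the "algebraization" functor $S \mapsto \Spec(\Gamma(S))$. Set $\Phi := \mathrm{sh}_{\mathrm{an}} \circ \Gamma^{*} \circ \iota : \dSt \to \dAnSt$, so that by definition $\Phi(\Yy)$ is the analytic sheafification of $S \mapsto \Yy(\Gamma(S)) = \Map_{\dSt}(\Spec(\Gamma(S)),\Yy)$; the claim is that $\Phi \simeq (\cdot)^{\an}$ on geometric stacks. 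I would first record that both functors preserve colimits: $(\cdot)^{\an}$ because, by construction \cite{holstein&porta}, it is the analytic sheafification of the left Kan extension of $\Spec A \mapsto (\Spec A)^{\an}$ along Yoneda; and $\Phi$ because $\Gamma^{*}$ preserves presheaf colimits (these are pointwise, and $\Gamma^{*}F$ is pointwise $F\circ\Gamma$) and $\mathrm{sh}_{\mathrm{an}}$ is a left adjoint — the one caveat being that $\iota$ fails to preserve colimits, a discrepancy to be absorbed by $\mathrm{sh}_{\mathrm{an}}$ in the last step.

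\textbf{Affine case (the heart).} For a derived Stein space $S$ and a derived commutative ring $A$ I would prove
\[
\anMap_{\dAnSt}\big(S,(\Spec A)^{\an}\big) \;\simeq\; \Map_{\mathrm{CAlg}}\big(A,\Gamma(S)\big) \;=\; \Map_{\dSt}\big(\Spec(\Gamma(S)),\Spec A\big),
\]
which says precisely that the restriction of $(\Spec A)^{\an}$ to the small Stein site already equals $\Gamma^{*}(\Spec A)$ and is already an analytic sheaf, hence $\Phi(\Spec A)\simeq(\Spec A)^{\an}$. For $A$ finitely presented over $\CC$ one embeds $(\Spec A)^{\an}\hookrightarrow \CC^{n,\an}$ via generators and reduces to the one-variable identity $\anMap(S,\CC)=\Gamma(S)$ together with the defining relations, the latter handled by exactness of $\Gamma(-)$ on Stein spaces (Cartan's Theorem B: vanishing of higher cohomology of coherent sheaves, and its derived enhancement for derived Stein spaces). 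For general $A$ one writes $\Spec A = \lim \Spec A_{i}$ over finitely presented affines and passes to the limit, using that $\Gamma$ of a Stein space commutes with the relevant (co)limits and that $(\Spec A)^{\an}$ is, by construction, the corresponding limit of analytifications.

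\textbf{Descent and conclusion.} Given a geometric stack $\Yy$, choose a smooth atlas $U \to \Yy$ with $U$ a disjoint union of affines (iterating on the geometricity level if necessary), with Čech nerve $U_{\bullet}$, so $\Yy \simeq \colim_{\Delta^{\mathrm{op}}} U_{n}$ in $\dSt$. Applying $(\cdot)^{\an}$, which preserves this colimit, gives $\Yy^{\an}\simeq\colim_{\Delta^{\mathrm{op}}} U_{n}^{\an}$, and the affine case identifies $\Phi(U_{n})\simeq U_{n}^{\an}$ termwise; it therefore remains to show $\Phi(\Yy)\simeq\colim_{\Delta^{\mathrm{op}}}\Phi(U_{n})$. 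Unwinding, this amounts to showing $\mathrm{sh}_{\mathrm{an}}\circ\Gamma^{*}\circ\mathrm{sh}_{\mathrm{alg}} \simeq \mathrm{sh}_{\mathrm{an}}\circ\Gamma^{*}$ on presheaves, i.e. that $\Gamma^{*}$ sends algebraic-étale-local equivalences to analytic-étale-local equivalences, together with the statement that $U^{\an}\to\Yy^{\an}$ is a smooth atlas with Čech nerve $U_{\bullet}^{\an}$ (so the realization is effective).

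\textbf{Main obstacle.} The genuine technical point — which I expect to be the hard part — is the compatibility of the two topologies under algebraization: one must show that an algebraic étale cover of $\Spec(\Gamma(S))$ pulls back, after refinement by an analytic étale cover of $S$, from a family of étale maps of derived Stein spaces, equivalently that $\Gamma$ is continuous for the étale topologies. Concretely this rests on faithful flatness of $\Oo_{S}^{\mathrm{an}}$ relative to $\Gamma(S)$ for Stein $S$, on preservation of smoothness/flatness/surjectivity under $S\mapsto\Spec(\Gamma(S))$ for morphisms of derived Stein spaces, and on the open-mapping behaviour of smooth analytic morphisms. Granting this, analytic-étale sheafification identifies $\Phi(\Yy)$ with $\mathrm{sh}_{\mathrm{an}}$ of the presheaf colimit $\colim_{\Delta^{\mathrm{op}}}(S\mapsto U_{n}(\Gamma(S)))$, which is $\colim_{\Delta^{\mathrm{op}}}\Phi(U_{n})\simeq\Yy^{\an}$; naturality in $\Yy$ is immediate from the construction of all the functors involved, completing the proof.
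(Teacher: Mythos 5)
You should first be aware that the paper contains no proof of this statement at all: Theorem \ref{geometric analytic} is quoted verbatim from Holstein--Porta \cite[Thm 3.10]{holstein&porta} and is used in the paper purely as an imported black box (e.g.\ in the proof of Lemma \ref{cotangent GAGA}). So there is no internal argument to compare against; the only meaningful comparison is with the cited source, and your outline does follow the natural strategy of that proof: establish the formula for affines via the universal property $\anMap(S,(\Spec A)^{\an})\simeq \Map_{\mathrm{CAlg}}(A,\Gamma(S))$, then propagate along a smooth atlas and its \v{C}ech nerve using colimit preservation of analytification and sheafification. In the framework of the reference the affine case is essentially built into the definition of analytification (via the structured-spaces formalism and the underlying-algebra functor), so your re-derivation through an embedding into $\CC^{n}$ and Cartan's Theorem B is re-proving something the cited machinery gives almost for free, while the real content lies in the descent step.

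That descent step is also where your proposal has a genuine gap, and you correctly identify it but then assume it: you need that $\Gamma^{*}$ carries algebraic-\'etale-local equivalences to analytic-\'etale-local equivalences, equivalently that smooth atlases analytify to smooth effective epimorphisms whose \v{C}ech nerves compute $\Yy^{\an}$, and that \'etale covers of $\Spec(\Gamma(S))$ refine to \'etale covers of $S$. ``Granting this'' is granting precisely the substantive content of the Holstein--Porta argument (flatness of $\Oo_{S}$ over $\Gamma(S)$, preservation of smoothness and surjectivity under analytification, effectivity of the resulting groupoid), so as written the proposal is an outline rather than a proof. A second, smaller defect is the reduction of the affine case to finitely presented $A$: writing $\Spec A=\lim_{i}\Spec A_{i}$ and asserting that $(\Spec A)^{\an}$ is ``by construction'' the corresponding limit of analytifications is unjustified, since analytification is constructed as a colimit-preserving left Kan extension and has no reason to commute with this cofiltered limit; either restrict to stacks locally almost of finite presentation (as the cited theorem effectively does) or supply a separate argument for this commutation.
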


Given a quasi-smooth analytic stack $\Zz$ with analytic cotangent complex $\LL_{\Zz}$, the construction of singular support can be restated verbatim on $(\Zz, \LL_{\Zz})$, providing analytic notions of the stack of singularities $\Sing(\Zz) \subset T^{*}\Zz$ and singular support for objects in $\IndCoh(\Zz)$. In this section, we consider the case where $\Zz = \Yy^{\an}$ for some algebraic stack $\Yy$, and compare the singular support construction to the analytification functor $(\cdot)^{\an} : \IndCoh(\Yy) \to \IndCoh(\Yy^{\an})$, constructed by Holstein--Porta in \cite[Cory. 5.6]{porta_GAGA} and Porta--Yu in \cite[\textsection 6.2]{porta&yu}. 

\subsubsection{Analytic deformations} First we compare the algebraic deformations of $\Yy$ to the analytic deformations of $\Yy^{\an}$ via a canonical equivalence $(\LL_{\Yy})^{\an} \cong \LL_{\Yy^{\an}}$ in $\Perf(\Yy^{\an})$. The equivalence has been constructed by Porta and Yu in the case where $\Yy$ is a Deligne--Mumford stack \cite[Thm. 5.21]{porta&yu}. The following generalisation to the case where $\Yy$ is a geometric stack is due to Porta, who we thank for sharing the proof with the author in email correspondence and allowing us to present his result.

\begin{lemma} 
\label{cotangent GAGA}
Let $\Yy$ be a derived geometric stack with analytification $\Yy^{\an}$. Then in $\Perf(\Yy^{\an})$ there exists a natural morphism 
\[
(\LL_{\Yy})^{\an} \to \LL_{\Yy^{\an}}, 
\]
and moreover this morphism is an equivalence. 
\end{lemma}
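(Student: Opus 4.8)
The plan is to construct the comparison morphism functorially in $\Yy$, and then to prove it is an equivalence by smooth descent, reducing to the case of derived affine schemes, where the statement is essentially the GAGA comparison of the cotangent complex already established by Porta--Yu \cite{porta&yu}.

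\emph{Construction of the arrow.} Analytification $(\cdot)^{\an}\colon\dSt\to\dAnSt$ is, at least locally on a smooth atlas, induced by a morphism of ringed $\infty$-topoi $\Yy^{\an}\to\Yy$ — on a derived affine $\Spec A$ of finite presentation it underlies the algebra map $A\to\Gamma(\Oo_{(\Spec A)^{\an}})$ from functions to entire functions — and any such morphism of ringed topoi induces a canonical map on cotangent complexes, here $(\LL_\Yy)^{\an}\to\LL_{\Yy^{\an}}$, natural in $\Yy$. Since $(\cdot)^{\an}$ on quasi-coherent sheaves is colimit-preserving and symmetric monoidal \cite{holstein&porta, porta&yu}, it preserves perfectness, so for $\Yy$ geometric and locally of finite presentation both sides lie in $\Perf(\Yy^{\an})$; naturality is precisely what will let the arrow be assembled from a simplicial presentation of $\Yy$.

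\emph{The affine and Deligne--Mumford case.} For $\Yy=\Spec A$ with $A$ finitely presented over $\CC$, take a resolution $A\simeq\colim A_\bullet$ by polynomial algebras. For a polynomial ring one has $(\LL_{\AA^n})^{\an}\cong\Oo_{\CC^n}^{\oplus n}\cong\LL_{\CC^n}$, since $(\AA^n)^{\an}=\CC^n$ and both cotangent complexes are free of rank $n$. Because $(\cdot)^{\an}$ commutes with the colimit defining $A$, hence with the geometric realization computing $\LL_A$, and with the base changes involved, the comparison $(\LL_{\Spec A})^{\an}\to\LL_{(\Spec A)^{\an}}$ is an equivalence; \'etale descent upgrades this to all derived Deligne--Mumford stacks, which is \cite[Thm. 5.21]{porta&yu}.

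\emph{Descent to geometric stacks.} Induct on the geometric level, the Deligne--Mumford (in particular affine) case being the base. Let $p\colon U\to\Yy$ be a smooth atlas with $U$ of strictly lower level and $U_\bullet$ its \v{C}ech nerve. One invokes the structural facts that $(\cdot)^{\an}$ carries $p$ to a smooth atlas $p^{\an}\colon U^{\an}\to\Yy^{\an}$, commutes with the colimit $\Yy\simeq|U_\bullet|$ and with the fibre products defining $U_\bullet$, and satisfies base change $(p^*\Ff)^{\an}\simeq(p^{\an})^*\Ff^{\an}$ for quasi-coherent $\Ff$ \cite{holstein&porta, porta&yu}. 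As $(p^{\an})^*$ is conservative, it suffices to see that the comparison map becomes an equivalence after $(p^{\an})^*$. Base change gives $(p^{\an})^*(\LL_\Yy)^{\an}\simeq(p^*\LL_\Yy)^{\an}$, and the two cofiber sequences
\[
p^*\LL_\Yy\to\LL_U\to\LL_{U/\Yy},\qquad (p^{\an})^*\LL_{\Yy^{\an}}\to\LL_{U^{\an}}\to\LL_{U^{\an}/\Yy^{\an}},
\]
together with the naturality of the comparison, reduce the claim to two equivalences: (i) $(\LL_U)^{\an}\simeq\LL_{U^{\an}}$, which is the inductive hypothesis; and (ii) $(\LL_{U/\Yy})^{\an}\simeq\LL_{U^{\an}/\Yy^{\an}}$, which holds since smoothness of $p$ and $p^{\an}$ forces both relative cotangent complexes to be the locally free sheaves $\Omega^1_{U/\Yy}$, $\Omega^1_{U^{\an}/\Yy^{\an}}$ concentrated in degree $0$, and analytification commutes with $\Omega^1$ of a smooth morphism (again the affine computation). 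This closes the induction.

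\emph{Main obstacle.} The real content lies in the structural inputs about analytification of stacks — that $(\cdot)^{\an}$ respects geometricity (smooth atlases to smooth atlases, \v{C}ech nerves to \v{C}ech nerves) and commutes with quasi-coherent pullback, with enough naturality that the comparison is a morphism of the two cofiber sequences above. This is where one leans on Holstein--Porta and Porta--Yu; granted it, the descent argument is formal, and the polynomial-ring computation together with conservativity of $(p^{\an})^*$ is routine.
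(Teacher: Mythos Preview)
Your proof is correct, and the overall strategy (induction on geometricity level with the Deligne--Mumford case of Porta--Yu as base) is the same as the paper's, but the inductive step is genuinely different.

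The paper does \emph{not} descend along a smooth atlas. Instead, given an $n$-geometric $\Yy$ and a Stein test map $f\colon S\to\Yy^{\an}$, it forms the diagonal $\Omega_f = S^{\alg}\times_{\Yy}S^{\alg}$, which is $(n-1)$-geometric, and invokes the loop-space formula
\[
f^{*}\LL_{\Yy^{\an}}\;\cong\;\delta_f^{*}\,\LL_{\Omega_f^{\an}/S\times S}[-1]
\]
from \cite[Lemma~3.9]{porta&yu_quantum}. The inductive hypothesis applied to $\Omega_f$ (and its relative version over $S\times S$) then finishes. The paper also constructs the comparison map differently: rather than appealing to a morphism of ringed $\infty$-topoi, it uses the representing property of $\LL$ via split square-zero extensions $S[M]$, producing a canonical map $\Map(f^{*}\LL_{\Yy^{\an}},M)\to\Map(f^{\alg,*}\LL_{\Yy},M)$ directly from the description of $\Yy^{\an}$ on Stein spaces (Theorem~\ref{geometric analytic}).

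What each approach buys: the paper's diagonal argument is a single clean formula that avoids having to treat the relative term $\LL_{U/\Yy}$ separately, and its construction of the comparison map is intrinsic to the analytic setting and does not require assembling a global morphism of ringed topoi. Your atlas-plus-cofiber-sequence argument is closer to classical descent and arguably more transparent, but it leans on the auxiliary identification $(\Omega^1_{U/\Yy})^{\an}\cong\Omega^1_{U^{\an}/\Yy^{\an}}$ for smooth $p$, which, while true, is an extra ingredient the paper's route does not need.
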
 

\begin{proof} 
We first construct the morphism $(\LL_{\Yy})^{\an} \to \LL_{\Yy^{\an}}$. Since $\Yy$ is geometric, then by Theorem \ref{geometric analytic} it suffices to construct a canonical map $f^{*}(\LL_{\Yy})^{\an} \to f^{*}\LL_{\Yy^{\an}}$ for a given morphism $f : S \to \Yy^{\an}$ from a derived Stein space $S$. Given $M \in \Perf(S)$, the definition of the analytic cotangent complex (see for instance \cite[\textsection 5.2]{porta&yu_representability}) yields an identification 
\begin{equation}
\label{representing cotangents}
\Map(f^{*} \LL_{\Yy^{\an}}, M) = \Map_{S/}(S[M], \Yy^{\an}) ,
\end{equation}
where $S[M]$ is the split square-zero extension of $S$ by $M$. Now $\Gamma(S[M])$ coincides with the split square-zero extension of $\Gamma(S)$ by $\Gamma(M)$. Therefore, up to localising on $S$, any such map in \eqref{representing cotangents} gives rise, in a canonical way, to a map in
\[
\Map_{\Spec(\Gamma(S))/}( \Spec(\Gamma(S) \oplus M), \Yy) = \Map(f^{\alg,*} \LL_{\Yy}, M) , 
\]
where $f^{\alg}$ is the map $S^{\alg} = \Spec(\Gamma(S)) \to \Yy$ corresponding to $f : S \to \Yy^{\an}$, which exists after localization on $S$. Thus, we obtain the canonical map
\[
\Map(f^{*}\LL_{\Yy^{\an}}, M) \to \Map(f^{\alg,*} \LL_{\Yy}, M). 
\]
By the description of analytification of modules in \cite[\textsection 5.1]{holstein&porta} this defines the map $f^{*}(\LL_{\Yy})^{\an} \to f^{*} \LL_{\Yy^{\an}}$. To see why this is an equivalence we proceed by induction on the level of geometricity of $\Yy$. To begin the induction, note that if $\Yy$ is ($-1$)-geometric (\textit{i.e.} ($-1$)-Artin) then the statement follows from the result of Porta--Yu for Deligne--Mumford stacks \cite[Thm. 5.21]{porta&yu}. Now let $\Yy$ be $n$-geometric (\textit{i.e.} $n$-Artin) and consider the ($n-1$)-geometric diagonal $\Omega_{f} := S^{\alg} \times_{\Yy} S^{\alg}$ with analytification $\Omega_{f}^{\an} = S \times_{\Yy^{\an}} S$ and diagonal morphism $\delta_f : S \to \Omega_{f}^{\an}$. By the inductive hypothesis we have canonical equivalences 
\begin{equation}
\label{inductive hypothesis}
\delta_f^{*} (\LL_{\Omega_{f}})^{\an} \cong \delta_f^{*} \LL_{\Omega_{f}^{\an}} , 
\quad 
\delta_f^{*} (\LL_{\Omega_{f} / S^{\alg} \times S^{\alg} })^{\an} \cong \delta_f^{*} \LL_{\Omega_{f}^{\an} / S \times S } , 
\end{equation}
in $\Perf(S)$. The inductive step is performed by applying the statement of \cite[Lemma 3.9]{porta&yu_quantum}: 
\[
f^{*}\LL_{\Yy^{\an}} \cong \delta_f^{*} \LL_{\Omega_f^{\an} / S \times S}[-1] . 
\]
This allows us to conclude that \eqref{inductive hypothesis} induces an equivalence
\[
f^{*}(\LL_{\Yy})^{\an} 
\cong \delta_f^{*} (\LL_{\Omega_{f} / S^{\alg} \times S^{\alg} })^{\an}
\cong \delta_f^{*} \LL_{\Omega_{f}^{\an} / S \times S }
\cong f^{*} \LL_{\Yy^{\an}}. \qedhere
\]
\end{proof}

\subsubsection{Analytification and singularities}

\begin{lemma}
\label{an and Sing}
Given a quasi-smooth geometric stack $\Yy$ with analytification $\Yy^{\an}$, then there exists a canonical pullback square 
\[
\begin{tikzcd}
\Sing(\Yy)^{\an} \arrow[d] \arrow[r] & \Sing(\Yy^{\an}) \arrow[d] \\
(T^{*}\Yy)^{\an} \arrow[r] & T^{*}(\Yy^{\an}) 
\end{tikzcd} ,
\]
in which both horizontal arrows are equivalences. 
\end{lemma}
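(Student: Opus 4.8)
The plan is to construct the two horizontal arrows as canonical equivalences; the pullback assertion is then formal, since in a commuting square whose lower arrow is an equivalence the square is cartesian precisely when the upper arrow is an equivalence. As a preliminary remark, Lemma \ref{cotangent GAGA} identifies $\LL_{\Yy^{\an}}$ with $(\LL_\Yy)^{\an}$, which is perfect of the same cohomological amplitude as $\LL_\Yy$ since analytification of sheaves preserves perfectness; hence $\Yy^{\an}$ is again quasi-smooth and the analytic objects $T^{*}(\Yy^{\an})$ and $\Sing(\Yy^{\an})$ are indeed defined. Moreover the square commutes by functoriality of $(\cdot)^{\an}$ applied to the Arinkin--Gaitsgory closed embedding $\Sing(\Yy)\hookrightarrow T^{*}\Yy$, once the corners have been identified. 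So it suffices to produce the two horizontal equivalences.

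For the lower arrow, recall $T^{*}\Yy = \tot(\LL_\Yy) = \Spec_\Yy(\Sym^{\bullet}_{\Oo_\Yy}(\TT_\Yy))$. The analytification functor on sheaves is symmetric monoidal and colimit-preserving; being symmetric monoidal it carries the dualizable object $\TT_\Yy = \LL_\Yy^{\vee}$ to $(\LL_\Yy^{\an})^{\vee} \simeq \TT_{\Yy^{\an}}$ (invoking Lemma \ref{cotangent GAGA}), and being colimit-preserving it commutes with the formation of the symmetric algebra. Together with the fact that analytification commutes with the total-space/relative-$\Spec$ construction for affine morphisms locally of finite presentation (Holstein--Porta, Porta--Yu) — which applies here because $\TT_\Yy$ is perfect, so $\tot(\LL_\Yy)\to\Yy$ is locally of finite presentation — this yields a canonical equivalence $(T^{*}\Yy)^{\an}\simeq T^{*}(\Yy^{\an})$.

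For the upper arrow, recall $\Sing(\Yy) = t_{0}\big(\tot(\LL_\Yy[1])\big)$, where $\tot(\LL_\Yy[1]) = \Spec_\Yy(\Sym^{\bullet}_{\Oo_\Yy}(\TT_\Yy[1]))$ and $t_{0}$ denotes the classical $1$-stack truncation. Since analytification is exact it commutes with the shift, and by exactly the reasoning above it commutes with $\Sym^{\bullet}$ and with the locally finitely presented affine morphism $\tot(\LL_\Yy[1])\to\Yy$; hence $\tot(\LL_\Yy[1])^{\an}\simeq\tot(\LL_{\Yy^{\an}}[1])$. Finally, analytification commutes with classical truncation (Holstein--Porta, Porta--Yu; this also follows directly from the functor-of-points description of Theorem \ref{geometric analytic}, since for a classical Stein space $S$ the ring $\Gamma(S)$ is classical), whence
\[
\Sing(\Yy)^{\an} \ \simeq\ t_{0}\big(\tot(\LL_\Yy[1])^{\an}\big) \ \simeq\ t_{0}\big(\tot(\LL_{\Yy^{\an}}[1])\big) \ =\ \Sing(\Yy^{\an}).
\]
Each identification used is natural in $\Yy$ and compatible with the canonical closed embeddings into the cotangent stacks, so the square commutes, both horizontals are equivalences, and the square is cartesian.

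The main obstacle is wholly one of bookkeeping for the comparison functor $(\cdot)^{\an}$: one must know it commutes with symmetric algebras, with the total space of the perfect complex $\TT_\Yy[1]$ (equivalently, relative $\Spec$ of the free commutative algebra it generates), and with classical truncation, and one must check these identifications are coherent enough to transport the Arinkin--Gaitsgory embedding $\Sing(\Yy)\hookrightarrow T^{*}\Yy$ and its ambient identifications $(T^{*}\Yy)^{\an}\simeq T^{*}(\Yy^{\an})$ to the analytic side. The individual statements are available in Holstein--Porta and Porta--Yu, but their combination requires care, in particular because $H^{1}(\TT_\Yy)$ need not be locally free so $\Sing(\Yy)$ is genuinely only a coherent (not vector-bundle) cone over $t_{0}\Yy$; the one genuinely new input that makes the comparison go through is the cotangent GAGA equivalence $(\LL_\Yy)^{\an}\cong\LL_{\Yy^{\an}}$ of Lemma \ref{cotangent GAGA}.
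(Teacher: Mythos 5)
Your proposal is correct and follows essentially the same route as the paper: both arguments reduce to the cotangent GAGA equivalence $(\LL_\Yy)^{\an}\cong\LL_{\Yy^{\an}}$ of Lemma \ref{cotangent GAGA}, which induces an equivalence of symmetric algebras $\Sym^\bullet(\TT_\Yy)^{\an}\cong\Sym^\bullet(\TT_{\Yy^{\an}})$ (and likewise with the shift $[1]$) from which the equivalences of $T^*$ and of $\Sing$ follow by taking relative $\Spec$ and classical truncation. The paper's proof is a single sentence citing precisely this symmetric-algebra equivalence; you have simply spelled out the bookkeeping (symmetric monoidality and colimit-preservation of $(\cdot)^{\an}$, compatibility with $\Spec_\Yy$ and with $t_0$) that the paper leaves implicit.
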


\begin{proof}
Follows from an equivalence of symmetric algebras $\Sym^{\bullet}(\TT_{\Yy})^{\an} \cong \Sym^{\bullet}(\TT_{\Yy^{\an}})$ induced by the equivalence $(\LL_{\Yy})^{\an} \cong \LL_{\Yy^{\an}}$ from Lemma \ref{cotangent GAGA}.
\end{proof}

\begin{lemma}
Let $\Yy_1, \Yy_2$ be quasi-smooth geometric stacks with respective analytifications $\Yy_1^{\an}, \Yy_2^{\an}$. Then the result of Lemma \ref{an and Sing} is functorial in the following sense: a morphism $f : \Yy_1 \to \Yy_2$ and its analytification $f^{\an} : \Yy^{\an}_1 \to \Yy_2$ admit compatible singular codifferentials 
\[
\Sing(f)^{\an} \simeq \Sing(f^{\an} ) , 
\]
as morphisms $\Sing(\Yy^{\an}_1) \to \Sing(\Yy^{\an}_2)$. 
\end{lemma}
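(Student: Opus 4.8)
The plan is to unwind the construction of the singular codifferential and to verify that every operation entering it is compatible with analytification, thereby reducing the claim to the naturality, in the stack variable, of the comparison equivalence $(\LL_{\Yy})^{\an}\simeq\LL_{\Yy^{\an}}$ supplied by Lemma \ref{cotangent GAGA}. Recall, following \cite{arinkin&gaitsgory}, that $\Sing(f)$ is produced from the canonical map $\TT_{\Yy_1}\to f^{*}\TT_{\Yy_2}$ in $\Perf(\Yy_1)$ by passing to the shift $\TT_{\Yy_1}[1]\to f^{*}\TT_{\Yy_2}[1]$, forming the associated relative symmetric $\Oo_{\Yy_1}$-algebras, applying $\Spec_{\Yy_1}$, and taking classical $1$-truncations; together with the projection $\Sing(\Yy_2)_{\Yy_1}\to\Sing(\Yy_2)$ induced by base change along $f$, this is the codifferential datum. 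Thus only the following ingredients occur: (i) the tangent complex; (ii) pullback of perfect complexes along a morphism; (iii) the shift $[1]$; (iv) relative symmetric $\Oo$-algebras and relative $\Spec$; and (v) classical truncation of stacks. The base change $\Sing(\Yy_2)\times_{\Yy_2}\Yy_1$ is itself the relative $\Spec$ over $\Yy_1$ of a pullback of a sheaf of algebras, and so is subsumed by (ii) and (iv).

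First I would record that analytification is compatible with (ii)--(v). On quasi-coherent sheaves the analytification functor is symmetric monoidal and colimit-preserving, hence commutes --- naturally --- with shifts, with pullback along an arbitrary morphism (so $(f^{*}\Mm)^{\an}\simeq(f^{\an})^{*}\Mm^{\an}$), and with the formation of symmetric $\Oo$-algebras; by the results of Holstein--Porta and Porta--Yu it commutes with relative $\Spec$; and it commutes with the passage to the underlying classical stack, a compatibility already implicit in Lemma \ref{an and Sing}. Consequently, applying $(\cdot)^{\an}$ to the diagram defining $\Sing(f)$ builds $\Sing(f)^{\an}$ out of the morphism $(\TT_{\Yy_1}\to f^{*}\TT_{\Yy_2})^{\an}$ by exactly the recipe (iii)--(v) that builds $\Sing(f^{\an})$ out of $\TT_{\Yy_1^{\an}}\to(f^{\an})^{*}\TT_{\Yy_2^{\an}}$; moreover the source and target of the two resulting maps are identified by Lemma \ref{an and Sing}, together with its evident relative version along $f$, which follows again from the relative-$\Spec$ description of the base change.

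It remains to match the input morphisms, i.e.\ to show that under the equivalences of Lemma \ref{cotangent GAGA} and the canonical identification $(f^{*}\LL_{\Yy_2})^{\an}\simeq(f^{\an})^{*}(\LL_{\Yy_2})^{\an}\simeq(f^{\an})^{*}\LL_{\Yy_2^{\an}}$, the analytified cotangent pullback $(f^{*}\LL_{\Yy_2}\to\LL_{\Yy_1})^{\an}$ is carried to the cotangent pullback $(f^{\an})^{*}\LL_{\Yy_2^{\an}}\to\LL_{\Yy_1^{\an}}$; dualising then handles the tangent complexes, and applying (iii)--(v) finishes. This last point --- the naturality of the comparison map of Lemma \ref{cotangent GAGA} with respect to morphisms of stacks --- is the only step requiring genuine argument rather than bookkeeping, and I expect it to be the main obstacle. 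If one does not simply read it off from the word ``natural'' in the statement of Lemma \ref{cotangent GAGA}, it follows from that lemma's proof: the comparison map is extracted, via the relation of Theorem \ref{geometric analytic} between a derived Stein space $S$ and the affine $\Spec(\Gamma(S))$, from the universal property of the cotangent complex applied to split square-zero extensions of Stein test spaces; both this universal property and the passage $S\mapsto\Spec(\Gamma(S))$ are functorial in the target stack, so post-composing a test map $S\to\Yy_1^{\an}$ with $f^{\an}$, and its algebraic avatar $S^{\alg}\to\Yy_1$ with $f$, yields the required commuting naturality square, and running the induction on geometricity of the proof of Lemma \ref{cotangent GAGA} --- now uniformly in $f$ --- upgrades the comparison of squares to an equivalence. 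Combining the three paragraphs yields $\Sing(f)^{\an}\simeq\Sing(f^{\an})$, compatibly with the projections to $\Sing(\Yy_2^{\an})$, as asserted.
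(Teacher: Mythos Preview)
Your proposal is correct and follows essentially the same route as the paper: both reduce the comparison $\Sing(f)^{\an}\simeq\Sing(f^{\an})$ to the naturality, in the stack variable, of the equivalence $(\LL_{\Yy})^{\an}\simeq\LL_{\Yy^{\an}}$ from Lemma~\ref{cotangent GAGA}, and then read off the result from the commuting square of tangent-complex maps. The paper's argument is terser---it simply records the square
\[
\begin{tikzcd}
(\TT_{\Yy_1})^{\an} \arrow[r] \arrow[d, "\cong"] & (f^{*}\TT_{\Yy_2})^{\an} \arrow[d, "\cong"] \\
\TT_{\Yy_1^{\an}} \arrow[r] & (f^{\an})^{*}\TT_{\Yy_2^{\an}}
\end{tikzcd}
\]
and invokes a ``relative application'' of the cotangent comparison---whereas you spell out explicitly why analytification commutes with each of the operations (pullback, shift, $\Sym^{\bullet}$, relative $\Spec$, truncation) and argue more carefully for the naturality by revisiting the proof of Lemma~\ref{cotangent GAGA}. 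Your added detail is sound and arguably more honest about where the content lies, but the underlying strategy is the same.
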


\begin{proof}
By taking spectra of the respective symmetric algebras, $\Sing(f)^{\an}$ is induced by the canonical map $(\TT_{\Yy_2})^{\an} \to (f^{*}\TT_{\Yy_1})^{\an}$, and $\Sing(f^{\an})$ is induced by the canonical map $\TT_{\Yy_2^{\an}} \to (f^{\an})^{*}\TT_{\Yy_1}$. The proof therefore follows from a relative application of Lemma \ref{geometric analytic}, in which the two maps commute with the equivalences 
\[
\begin{tikzcd}
(\TT_{\Yy_2})^{\an} \arrow[r] \arrow[d, "\cong"] & (f^{*}\TT_{\Yy_1})^{\an} \arrow[d, "\cong"] \\
\TT_{\Yy_2^{\an}} \arrow[r] & (f^{\an})^{*}\TT_{\Yy_1^{\an}}
\end{tikzcd} ,
\]
where $(f^{*}\TT_{\Yy_1})^{\an} \cong (f^{\an})^{*}\TT_{\Yy_1^{\an}}$ factors through the natural equivalence $(f^{*}\TT_{\Yy_1})^{\an} \cong (f^{\an})^{*}\TT_{\Yy_1}^{\an}$.  
\end{proof}

\subsubsection{Analytification and singular support}

\begin{proposition}
\label{an and sing supp}
Let $\Yy$ be a quasi-smooth geometric stack with analytification $\Yy^{\an}$. Moreover let $\Lambda \subset \Sing(\Yy)$ be a conic with analytification $\Lambda^{\an} \subset \Sing(\Yy^{\an})$. Then analytification induces a pullback square  
\[
\begin{tikzcd}
\IndCoh_{\Lambda}(\Yy) \arrow[d] \arrow[r, "(\cdot)^{\an}"] & \IndCoh_{\Lambda^{\an}}(\Yy^{\an}) \arrow[d] \\
\IndCoh(\Yy) \arrow[r, "(\cdot)^{\an}"] & \IndCoh(\Yy^{\an})
\end{tikzcd} , 
\]
and so $\IndCoh_{\Lambda^{\an}}(\Yy^{\an})$ is the analytification of $\IndCoh_{\Lambda}(\Yy)$. 
\end{proposition}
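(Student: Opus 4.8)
The plan is to reduce the statement to the combination of two facts: (i) the analytification functor $(\cdot)^{\an} \colon \IndCoh(\Yy) \to \IndCoh(\Yy^{\an})$ is compatible with the tensor action of $\Gamma(\Yy, \Sym^{\bullet} H^1(\TT_{\Yy}))$ under the algebra identification coming from Lemma~\ref{cotangent GAGA}, so that it sends sheaves supported on $\Lambda$ to sheaves supported on $\Lambda^{\an}$; and (ii) the analytified functor realises $\IndCoh(\Yy^{\an})$ as a base change of $\IndCoh(\Yy)$ along the map of ``spectral'' base stacks, so that imposing the support condition on one side is the pullback of imposing it on the other. Concretely, $\IndCoh_{\Lambda}(\Yy)$ sits inside $\IndCoh(\Yy)$ as the full subcategory of objects set-theoretically supported over $\Lambda \subset \Sing(\Yy)$, and dually $\IndCoh_\Lambda(\Yy)$ can be described (Arinkin--Gaitsgory \cite[\textsection 4--7]{arinkin&gaitsgory}) via the action of $\QCoh(\Sing(\Yy))$ on $\IndCoh(\Yy)$; the square in question is the image of this description under $(\cdot)^{\an}$.

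First I would record that, by Lemma~\ref{an and Sing}, analytification identifies $\Sing(\Yy)^{\an}$ with $\Sing(\Yy^{\an})$ and $\Lambda^{\an}$ with a closed conic inside it, and that the $\Sym^{\bullet} H^{1}(\TT_{\Yy})$-module structure on an object $\Ff \in \IndCoh(\Yy)$ analytifies to the $\Sym^{\bullet} H^{1}(\TT_{\Yy^{\an}})$-module structure on $\Ff^{\an}$; this is a direct consequence of the functoriality of analytification for the $\IndCoh$ six-functor formalism of Porta--Yu \cite{porta&yu} together with the cotangent comparison of Lemma~\ref{cotangent GAGA}. Hence $(\cdot)^{\an}$ carries $\IndCoh_{\Lambda}(\Yy)$ into $\IndCoh_{\Lambda^{\an}}(\Yy^{\an})$, giving the commutative square. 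Second, I would invoke that both vertical inclusions are the inclusions of ``$\Lambda$-supported'' (resp. ``$\Lambda^{\an}$-supported'') objects, each of which is cut out by the same localisation: $\IndCoh_{\Lambda}(\Yy)$ is the kernel of the localisation of $\IndCoh(\Yy)$ away from $\Lambda$, equivalently the full subcategory generated by objects of $\Coh(\Yy)$ with singular support in $\Lambda$. To prove the square is a pullback it then suffices to show that an object $\Gg \in \IndCoh(\Yy)$ lies in $\IndCoh_{\Lambda}(\Yy)$ \emph{if and only if} $\Gg^{\an}$ lies in $\IndCoh_{\Lambda^{\an}}(\Yy^{\an})$ --- i.e. that the support condition descends along analytification.

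The ``only if'' direction is immediate from the first step. For the ``if'' direction --- which I expect to be the main obstacle --- one needs faithfulness of analytification on the relevant support data: the point is that $\SingSupp(\Gg^{\an}) = \SingSupp(\Gg)^{\an}$ as closed conic substacks of $\Sing(\Yy^{\an}) = \Sing(\Yy)^{\an}$, not merely an inclusion. Here I would argue that analytification of coherent sheaves is conservative and $t$-exact on the relevant (quasi-compact, finite-type) pieces, by the GAGA comparison of Holstein--Porta and Porta--Yu \cite{porta_GAGA, porta&yu}, so that the cohomology modules $H^{\bullet}\big(\Ext^{\bullet}_{\IndCoh}(\Gg,\Gg)\big)$ over $\Sym^{\bullet}H^{1}(\TT_{\Yy})$ have their support preserved and reflected by analytification (analytification of a finitely generated graded module over a finitely generated graded algebra has the analytified support, by faithful flatness of $\Oo^{\an}$ over $\Oo$ on affines, i.e. classical GAGA for supports). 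This gives $\SingSupp(\Gg^{\an}) \subseteq \Lambda^{\an} \iff \SingSupp(\Gg) \subseteq \Lambda$, whence the pullback. Finally, the last assertion --- that $\IndCoh_{\Lambda^{\an}}(\Yy^{\an})$ \emph{is} the analytification of $\IndCoh_{\Lambda}(\Yy)$ --- is just a restatement: analytification of a presentable dg category along a pullback square of this form is computed by the base change, so the square being a pullback is exactly the claim that $\IndCoh_{\Lambda^{\an}}(\Yy^{\an}) \simeq \IndCoh_{\Lambda}(\Yy) \otimes_{\IndCoh(\Yy)} \IndCoh(\Yy^{\an})$.
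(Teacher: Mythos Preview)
Your proposal is correct and follows essentially the same route as the paper: both arguments hinge on the identification $\SingSupp(\Ff)^{\an} \cong \SingSupp(\Ff^{\an})$, obtained by transporting the $\Sym^{\bullet}(H^1(\TT_{\Yy}))$-module structure along the equivalence $(\LL_{\Yy})^{\an} \cong \LL_{\Yy^{\an}}$ of Lemma~\ref{cotangent GAGA}. The paper simply asserts this equivalence of supports and deduces the containment $\SingSupp(\Ff^{\an}) \subset \Lambda^{\an}$ from $\SingSupp(\Ff) \subset \Lambda$; you are more explicit in separating the two directions and in justifying the reverse implication via faithful flatness of analytification on supports, which the paper leaves implicit in the word ``equivalence''. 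Your final tensor-product reading of the phrase ``is the analytification of'' goes a step beyond what the paper actually claims (it is used informally there), but it is a reasonable and correct strengthening.
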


\begin{proof}
Fix $\Ff \in \IndCoh_{\Lambda}(\Yy)$. We recall that $\SingSupp(\Ff^{\an})$ is computed as a module over the symmetric algebra $\Sym^{\bullet}(H^1(\LL_{\Yy^{\an}}))$, and $\SingSupp(\Ff)$ is computed as a module over $\Sym^{\bullet}(H^1(\LL_{\Yy}))$. The canonical equivalence $(\LL_{\Yy})^{\an} \cong \LL_{\Yy^{\an}}$ provided by Lemma \ref{cotangent GAGA} and the resultant equivalence $\Sym^{\bullet}(H^1(\LL_{\Yy}))^{\an} \cong \Sym^{\bullet}(H^1(\LL_{\Yy^{\an}}))$ identifies the two defining module structures, inducing an equivalence of stacks 
\[
\SingSupp(\Ff)^{\an} \cong \SingSupp(\Ff^{\an}).
\]
The containment $\SingSupp(\Ff) \subset \Lambda$ therefore naturally induces a containment 
\[
\SingSupp(\Ff^{\an}) \cong \SingSupp(\Ff)^{\an} \subset \Lambda^{\an},
\]
from which the statement follows. 
\end{proof}

\subsubsection{GAGA sheaves}
\label{GAGA sheaves}
Let $\Yy$ be a quasi-smooth geometric stack with analytification $\Yy^{\an}$. Over $\Yy^{\an}$ we define the category $\IndCoh^{\GAGA}(\Yy^{\an})$ of so-called \textit{GAGA-sheaves} to be the essential image of 
\[
(\cdot)^{\an} : \IndCoh(\Yy) \to \IndCoh(\Yy^{\an}). 
\]
If $\Yy$ is additionally proper over $\Spec(\CC)$ then by a GAGA theorem of Porta \cite[Thm. 7.2]{porta_GAGA}, analytification induces an equivalence of categories 
\[
\IndCoh(\Yy) \cong \IndCoh(\Yy^{\an}), 
\]
and so in this case our definition yields $\IndCoh^{\GAGA}(\Yy^{\an}) = \IndCoh(\Yy^{\an})$. For $\Yy$ not necessarily proper, $\IndCoh^{\GAGA}(\Yy^{\an})$ can be viewed as a rigid substitute for $\IndCoh(\Yy^{\an})$.  

Given a conic $\Delta \subset \Sing(\Yy^{\an})$ we also define the singular support GAGA categories 
\[
\IndCoh^{\GAGA}_{\Delta}(\Yy^{\an}) := \IndCoh_{\Delta}(\Yy^{\an}) \cap \IndCoh^{\GAGA}(\Yy^{\an}).
\]
In the case where $\Delta = \Lambda^{\an} \subset \Sing(\Yy^{\an})$ for some algebraic conic $\Lambda \subset \Sing(\Yy)$, Proposition \ref{an and sing supp} yields the following. 

\begin{corollary}
\label{an and GAGA}
If $\Yy$ is a quasi-smooth geometric stack then analytification induces a commutative square  
\[
\begin{tikzcd}
\IndCoh_{\Lambda}(\Yy) \arrow[d] \arrow[r, "(\cdot)^{\an}"] & \IndCoh^{\GAGA}_{\Lambda^{\an}}(\Yy^{\an}) \arrow[d] \\
\IndCoh(\Yy) \arrow[r, "(\cdot)^{\an}"] & \IndCoh^{\GAGA}(\Yy^{\an})
\end{tikzcd} , 
\]
and so $\IndCoh^{\GAGA}_{\Lambda^{\an}}(\Yy^{\an})$ is the analytification of $\IndCoh_{\Lambda}(\Yy)$. 
\end{corollary}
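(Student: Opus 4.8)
The plan is to deduce the Corollary formally from Proposition \ref{an and sing supp} together with the definitions introduced in \ref{GAGA sheaves}, with essentially no new geometric input. First I would observe that the bottom horizontal functor $(\cdot)^{\an}\colon \IndCoh(\Yy)\to\IndCoh^{\GAGA}(\Yy^{\an})$ is well defined and essentially surjective tautologically, since $\IndCoh^{\GAGA}(\Yy^{\an})$ was \emph{defined} as the essential image of analytification; the two vertical maps are the defining fully faithful inclusions, so the only thing to check for commutativity is the naturality of $(\cdot)^{\an}$, which is immediate.

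The substantive step is to identify $\IndCoh^{\GAGA}_{\Lambda^{\an}}(\Yy^{\an})$ with the essential image of $\IndCoh_{\Lambda}(\Yy)$ under analytification. For one inclusion: if $\Ff\in\IndCoh_{\Lambda}(\Yy)$ then Proposition \ref{an and sing supp} gives $\SingSupp(\Ff^{\an})\cong\SingSupp(\Ff)^{\an}\subset\Lambda^{\an}$, so $\Ff^{\an}$ lies in $\IndCoh_{\Lambda^{\an}}(\Yy^{\an})$ and is a GAGA sheaf, hence belongs to $\IndCoh^{\GAGA}_{\Lambda^{\an}}(\Yy^{\an})$. For the reverse inclusion: any $\Gg\in\IndCoh^{\GAGA}_{\Lambda^{\an}}(\Yy^{\an})$ is, by GAGA-ness, of the form $\Ff^{\an}$ for some $\Ff\in\IndCoh(\Yy)$, and then $\SingSupp(\Ff)^{\an}\cong\SingSupp(\Ff^{\an})=\SingSupp(\Gg)\subset\Lambda^{\an}$, from which one concludes $\SingSupp(\Ff)\subset\Lambda$, i.e. $\Ff\in\IndCoh_{\Lambda}(\Yy)$. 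Having done this, the top arrow of the square is exactly $(\cdot)^{\an}\colon\IndCoh_{\Lambda}(\Yy)\to\IndCoh^{\GAGA}_{\Lambda^{\an}}(\Yy^{\an})$ and is essentially surjective, which is the assertion that the target is the analytification of the source; and since the pullback $\IndCoh(\Yy)\times_{\IndCoh^{\GAGA}(\Yy^{\an})}\IndCoh^{\GAGA}_{\Lambda^{\an}}(\Yy^{\an})$ unwinds, using full faithfulness of the inclusions and the automatic GAGA-ness of $\Ff^{\an}$, to the full subcategory of those $\Ff$ with $\SingSupp(\Ff)\subset\Lambda$, the square is moreover cartesian, in parallel with Proposition \ref{an and sing supp}.

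The one point that is not purely formal is the implication $\SingSupp(\Ff)^{\an}\subset\Lambda^{\an}\ \Rightarrow\ \SingSupp(\Ff)\subset\Lambda$ used in the reverse inclusion, i.e. that analytification reflects containments among closed conics of $\Sing(\Yy)$. I expect this to be the (minor) main obstacle. I would handle it via Lemma \ref{an and Sing}, which identifies $\Sing(\Yy)^{\an}$ with $\Sing(\Yy^{\an})$, reducing the claim to the statement that for closed substacks $Z_1, Z_2$ of a common algebraic stack, $Z_1^{\an}\subset Z_2^{\an}$ forces $Z_1\subset Z_2$; this follows because analytification preserves open and closed immersions and fiber products and detects emptiness of stacks locally of finite type over $\CC$ (a nonempty such stack has a $\CC$-point, hence nonempty analytification), so that $Z_1\setminus Z_2$ is empty as soon as $(Z_1\setminus Z_2)^{\an}$ is. With that in hand the Corollary follows by assembling the pieces above.
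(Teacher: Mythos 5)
Your proof is correct and follows essentially the same route as the paper, which presents the Corollary as an immediate consequence of Proposition \ref{an and sing supp} and the definition $\IndCoh^{\GAGA}_{\Lambda^{\an}}(\Yy^{\an}) := \IndCoh_{\Lambda^{\an}}(\Yy^{\an}) \cap \IndCoh^{\GAGA}(\Yy^{\an})$, with no written argument. The one step you isolate as not purely formal — that analytification \emph{reflects} containments of closed conics in $\Sing(\Yy)$, so that $\SingSupp(\Ff)^{\an}\subset\Lambda^{\an}$ forces $\SingSupp(\Ff)\subset\Lambda$ — is indeed the ingredient needed for the essential-surjectivity (equivalently, pullback) half of both Proposition \ref{an and sing supp} and the Corollary, and the paper leaves it implicit. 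Your justification for it (analytification preserves open/closed immersions and fiber products, and detects emptiness) is sound, with the tacit standing assumption, satisfied by every moduli stack appearing in the paper, that $\Sing(\Yy)$ is locally of finite type over $\CC$ so that a nonempty locally closed substack has a $\CC$-point; if you want to make the proof fully self-contained you should record that hypothesis explicitly.
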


\section{Non-abelian Hodge theory}
\label{Twistors NAH}

\subsection{Moduli} 
\label{NAH}
\label{spectral}

Over a smooth projective curve\footnote{The constructions hold over far more general spaces than smooth curves -- for instance see \cite{porta&sala_notes}.} denoted by $X$ we present the geometries of de Rham, Betti, Dolbeault and Hodge type from non-abelian Hodge theory. We do so by associating to $X$ the \textit{Simpson shapes} $X_{\dR}$, $X_\B$, $X_{\Dol}$ and $X_{\Hod}$, introduced by Simpson in the series of papers including \cite{simpson_secondary, simpson_algebraic, simpson_geometricity}. In each instance, our moduli are defined to be mapping stacks of the form $\Maps(X_{\Sim}, BG)$, for $\Sim$ a formal variable taking values in $\{ \dR, \B, \Dol, \Hod\}$. 

\subsubsection{de Rham} The \textit{de Rham shape} $X_{\dR}$ is the pre-stack with $S$-points $X_{\dR}(S) = \Hom(h^{0}(S)_{red}, X)$ where $h^0(S)_{red}$ is the reduced and classically truncated subscheme of $S$. $X_{\dR}$ can also be presented as the formal completion of the diagonal embedding $X \hookrightarrow X \times X$. In the de Rham theory established by Gaitsgory and Rozenblyum \cite{gaitsgory&rozenblyum}, the dg category of $D$-modules on $X$ can be defined by $\DMod(X) = \QCoh(X_{\dR})$, and contains, within $\DMod^{\heart}(X)$, the subcategory of flat connections on vector bundles over $X$. Given an algebraic group $G$ with classifying stack $BG = [pt / G]$, the mapping stack 
\[
\LocSys_G = \Maps(X_{\dR}, BG) ,
\]
is the quasi-smooth geometric stack that parametrises flat connections on $G$-bundles over $X$. 

\subsubsection{Betti} The \textit{Betti shape} $X_\B$ is defined to be the constant stack associated to the underlying topological space of $X$. The category $\Shv(\Yy) = \QCoh(\Yy_\B)$ is the category of sheaves of $\CC$-vector spaces on $X$, which contains the subcategory of locally constant sheaves. The mapping stack 
\[
\Rep_{G} = \Maps(X_{\op{B}}, BG) ,
\]
is the quasi-smooth geometric stack that parametrises representations $\pi_1(X) \to G$. 

\subsubsection{Dolbeault} The Dolbeault shape $X_{\Dol}$ is defined to be the relative classifying stack for the formal group scheme $\widehat{TX} \to X$, where $\widehat{TX}$ is the completion of the tangent bundle of $X$ along the zero section. By the spectral correspondence, as per \cite{simpson_II}, the category $\QCoh^{\heart}(X_{\Dol})$ is the category of $\Sym^{\bullet}(T_{X})$-modules, where a $\Sym^{\bullet}(T_{X})$-action is determined by a twisted endomorphism $\phi : \Ee \to \Ee \otimes \Omega_X$. Following Hitchin and Simpson's foundational works \cite{hitchin_self, simpson_higgs}, such a $\phi$ is called a \textit{Higgs field}, and such pairs $(\Ee, \phi)$ are called \textit{Higgs sheaves}. Locally free objects are called \textit{Higgs bundles}, and moreover a \textit{$G$-Higgs bundle} is a pair $(P, \theta)$ consisting of a $G$-bundle $P \to X$ equipped with an adjoint-valued section $\theta \in H^{0}(\ad(P) \otimes \Omega_X)$. The mapping stack
\[
\Higgs_G = \Maps(X_{\Dol}, BG ) ,
\]
is the quasi-smooth geometric stack that parametrises $G$-Higgs bundles on $X$. 

\subsubsection{Hodge}
\label{NAH Hodge}

The Hodge shape $X_{\Hod}$ is defined to be the deformation to the normal bundle of the natural map $X \to X_{\dR}$ (see \cite{gaitsgory_VolII} for such deformations). It inherits a structural morphism $X_{\Hod} \to \AA^1_{\lambda}$ with general fiber $X_{\dR} \cong X_{\Hod} \times_{\AA^1} \{ \lambda \}$ for $\lambda \neq 0$ and central fiber $X_{\Dol} = X_{\Hod} \times_{\AA^1} \{0\}$. A description of the category $\QCoh^{\heart}(X_{\Hod})$ can be given in two equivalent ways: firstly as filtered objects and secondly as $\GG_m$-equivariant families over $\AA^1$. In the first instance, $\QCoh^{\heart}(X_{\Hod})$ consists of Rees modules taken over the algebra $\Rees(D_{X})$ obtained by filtering $D_X$ by the order of differential operators. In the second instance, $\QCoh^{\heart}(X_{\Hod})$ consists of families of $D_{\lambda}$-modules indexed by $\lambda \in \AA^1$, where $D_{\lambda}$ is the sheaf of $\lambda$-differential operators. For a study of $D_{\lambda}$ see for instance notes of Beilinson--Drinfeld \cite{beilinson&drinfeld_opers}. The locally free objects are $\lambda$-connections, introduced by Deligne and Simpson \cite{simpson_twistor} as $\Omega_X$-valued endomorphims that satisfy a $\lambda$-rescaled Leibniz rule. The mapping stack 
\[
\Hodge_G = \Maps_{/\AA^1}(X_{\Hod} , BG) ,
\]
is the quasi-smooth geometric stack that parametrises $G$-bundles equipped with a $\lambda$-connection. It inherits a $\GG_m$-equivariant structural morphism $\Hodge_G \to \AA^1$, for which the rescaling $\GG_m$-action over the fibers away from $\lambda = 0$ yields an equivalence 
\begin{equation}
\label{Hodge trivialisation}
\Hodge_G \times_{\AA^1} \GG_m \cong \LocSys_G \times \GG_m, 
\end{equation}
which to a $\lambda$-connection $(E, \nabla^{\lambda})$ assigns the flat bundle $(E, \lambda^{-1} \cdot \nabla^{\lambda})$ and the value of $\lambda$.  
\subsubsection{Analytic moduli}
\label{Analytic prelude}

Let $\Sim$ be a formal variable equal to one of $\dR, \B, \Dol, \Hod$. The non-abelian Hodge moduli stacks introduced in Section \ref{NAH} can be constructed verbatim in the category $\dAnSt$ of derived analytic stacks by simply replacing the mapping stacks $\Maps(X_{\Sim}, BG)$ with their analytic counterparts $\anMaps((X^{\an})_{\Sim}, (BG)^{\an})$, where $X^{\an}$ is the underlying complex analytic curve of $X$, as studied in Serre's GAGA theory \cite{serre_GAGA}. 

These moduli satisfy a \textit{universal GAGA property}, defined and proven by Holstein and Porta \cite{holstein&porta}, which follows from the existence of the equivalences 
\begin{equation}
\label{universal GAGA}
\Maps(X_{\Sim}, BG)^{\an} \cong \Maps((X_{\Sim})^{\an}, (BG)^{\an}) \cong \anMaps((X^{\an})_{\Sim}, (BG)^{\an}) . 
\end{equation}
We therefore introduce the notation  
\[
\LocSys_G^{\an} := \anMaps\big( (X^{\an})_{\dR} , BG^{\an} \big) , 
\]
\[
\Rep_G^{\an} := \anMaps\big( (X^{\an})_{\B} , BG^{\an} \big) , 
\]
\[
\Higgs_G^{\an} := \anMaps\big( (X^{\an})_{\Dol}  , BG^{\an} \big) , 
\]
\[
\Hodge_G^{\an} := \anMaps\big( (X^{\an})_{\Hod}  , BG^{\an} \big) ,
\]
and use \eqref{universal GAGA} to conclude that each analytic moduli stack in non-abelian Hodge theory is the analytification of its algebraic counterpart. 

\subsubsection{Riemann--Hilbert} 
\label{se: twistor stack} 
We recall the construction of the Deligne twistor stack from \cite{simpson_twistor, FH1}. Passage to the analytic topology allows us to apply the Riemann--Hilbert correspondence of Porta \cite[Thm. 2]{porta_RH} and Holstein--Porta \cite[Thm. 1.5]{holstein&porta}, in which the Riemann--Hilbert transformation morphism 
\[
\nu_{\RH} : (X^{\an})_{\dR} \to (X^{\an})_{\B} , 
\]
is constructed and shown to induce the equivalence of analytic stacks  
\begin{equation}
\label{RH stacks}
\RH := \nu_{\RH}^{*} : \Rep_G^{\an} \xrightarrow{\cong} \LocSys_G^{\an},
\end{equation}
formally extending the Riemann--Hilbert correspondence of Deligne \cite{deligne_RH} to families of local systems. 

\subsubsection{Twistors} 
\label{se twistor stack}
We apply the same moduli constructions over the curve $\ol{X}^{\an}$ with complex conjugate holomorphic structure. These moduli are denoted by the notation 
\[
\ol{\LocSys}^{\an}_G := \anMaps\big( (\ol{X}^{\an})_{\dR} , BG^{\an} \big) , 
\]
\[
\ol{\Rep}^{\an}_G  := \anMaps\big( (\ol{X}^{\an})_{\B} , BG^{\an} \big) , 
\]
\[
\ol{\Higgs}^{\an}_G := \anMaps\big( (\ol{X}^{\an})_{\Dol}  , BG^{\an} \big) , 
\]
\[
\ol{\Hodge}^{\an}_G := \anMaps\big( (\ol{X}^{\an})_{\Hod}  , BG^{\an} \big) .
\]
The identification $\pi_1(X^{\an}) = \pi_1(\ol{X}^{\an})$ of fundamental groups gives rise to an identification $\Rep_G = \ol{\Rep}_G$ between the moduli of representations $\pi_1(X^{\an}) = \pi_1(\ol{X}^{\an}) \to G$. We can use this so-called \textit{Betti invariance} property to adjoin two complex conjugate Riemann--Hilbert correspondences into the diagram 
\begin{equation}
\label{gluing derived}
\LocSys^{\an}_G \xleftarrow{\RH} \Rep^{\an}_G = \ol{\Rep}^{\an}_G \xrightarrow{\ol{\RH}} \ol{\LocSys}^{\an}_G. 
\end{equation}
We recall from \ref{NAH Hodge} that deformation to the normal cone provides a $\GG_m$-equivariant map $\Hodge^{\an}_G \to \AA^1$ and a natural equivalence $\Hodge_G^{\an} \times_{\AA^1} \GG_m = \LocSys_G^{\an} \times \GG_m$. A pullback of the Riemann--Hilbert correspondence therefore defines an equivalence of derived analytic stacks 
\[
\RH : \Rep_G^{\an} \times \GG_m \xrightarrow{\cong} \Hodge_G^{\an} \times_{\AA^1} \GG_m.
\]
This allows us to present the \textit{Deligne twistor stack} \cite[Defn. 3.2]{FH1} as the pushout 
\begin{equation}
\label{twistor stack}
\begin{tikzcd}
\Rep_G^{\an} \times \GG_m = \ol{\Rep}_G \times \GG_m \arrow[r, "\ol{\RH}"] \arrow[d, "\RH"'] &  \ol{\Hodge}_G^{\an} \arrow[d] \\
 \Hodge_G^{\an} \arrow[r] & \Twistor_G 
\end{tikzcd},
\end{equation}
which is a quasi-smooth analytic stack, equipped with a pushout structural morphism $\Twistor_G \to \PP^1$ with general fiber 
\[
\LocSys_G^{\an} \cong \Twistor_G \times_{\PP^1} \{ \lambda \} ,
\]
for the values $\lambda \neq 0, \infty$, and special fibers 
\[
\Higgs_G^{\an} = \Twistor_G \times_{\PP^1} \{ 0 \}, \quad 
\ol{\Higgs}_G^{\an} = \Twistor_G \times_{\PP^1} \{ \infty \} .
\]

\subsubsection{Algebraic twistors} 
\label{algebraic construction}
The analytic topology is essential for the pushout in \eqref{twistor stack} to be well-defined. It fails algebraically because $\LocSys_G$ and $\Rep_G$ are not isomorphic as algebraic stacks, and so $\Twistor_G$ cannot be expressed as the analytification of an algebraic stack. Nonetheless the construction can be 'forced' to work algebraically for moduli of local systems with \textit{restricted variation}, as introduced by Arinkin--Gaitsgory--Kazhdan--Raskin--Rozenblyum--Varshavsky \cite{AGKRRV}, who construct a pair of substacks $\LocSys_G^{\restr} \to \LocSys_G$ and $\Rep_G^{\restr} \to \Rep_G$ that admit an algebraic Riemann--Hilbert correspondence 
\begin{equation}
\label{RH restr}
\RH^{\alg} : \LocSys_G^{\restr} \xrightarrow{\cong} \Rep_G^{\restr} . 
\end{equation}
The existence and construction of $\RH^{\alg}$ is as reflection of the fact that the Riemann--Hilbert correspondence is algebraic within formal neighbourhoods of points. 

For our twistor setting let us define the moduli $\Hodge_G^{\restr} \subset \Hodge_G$ by restricting the substack 
\[
\Hodge_G \times_{\AA^1} \GG_m = \LocSys_G \times \GG_m \subset \Hodge_G
\]
to $\LocSys_G^{\restr} \times \GG_m$. A base change of \eqref{RH restr} induces the equivalence
\[
\RH^{\alg} : \Rep_G^{\restr} \times \GG_m \cong \LocSys_G^{\restr} \times \GG_m = \Hodge_G^{\restr} \times_{\AA^1} \GG_m ,  
\]
which in turn defines an immersion 
\[
\RH^{\alg} : \Rep_G^{\restr} \times \GG_m \xrightarrow{\cong} \Hodge_G^{\restr} \times_{\AA^1} \GG_m \to \Hodge_G^{\restr} .
\]
By repeating the above constructions over $\ol{X}^{\an}$ we then take the pushout of derived algebraic stacks 
\begin{equation}
\label{twistor stack restr}
\begin{tikzcd}
\Rep_G^{\restr} \times \GG_m = \ol{\Rep}^{\restr}_G \times \GG_m \arrow[r, "\ol{\RH}^{\alg}"] \arrow[d, "\RH^{\alg}"'] &  \ol{\Hodge}_G^{\restr} \arrow[d] \\
\Hodge^{\restr}_G \arrow[r] & \Twistor_G^{\restr, \alg}
\end{tikzcd},
\end{equation}
to define the \textit{algebraic Deligne twistor stack} $\Twistor_G^{\restr, \alg} \in \dSt$. 

By comparison with the analytic pushout construction of \eqref{twistor stack}, an application of the analytification functor $(\cdot)^{\an} : \dSt \to \dAnSt$ and the equivalence $(\RH^{\alg})^{\an} \simeq \RH$ naturally yields a substack 
\[
\Twistor_G^{\restr} := (\Twistor_G^{\restr, \alg})^{\an} \subset \Twistor_G .
\]
We summarise the main conclusion of our constructions as follows. 

\begin{corollary}
$\Twistor_G$ contains a substack $\Twistor_G^{\restr}$ parametrising local systems with restricted variation that can be expressed as the analytification of an algebraic stack. 
\end{corollary}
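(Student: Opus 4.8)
The plan is to read this corollary as the formal upshot of the construction in \ref{algebraic construction}, and to verify the three assertions it packages: that \eqref{twistor stack restr} is a legitimate colimit in $\dSt$, that its analytification is canonically a substack of the analytic pushout \eqref{twistor stack} defining $\Twistor_G$, and that this substack carries the stated moduli interpretation.

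First I would check that the pushout \eqref{twistor stack restr} exists in $\dSt$. Both legs of the span factor as an equivalence ($\RH^{\alg}$, resp.\ $\ol{\RH}^{\alg}$) followed by the immersion of the $\lambda\neq 0$ (resp.\ $\lambda\neq\infty$) locus $\Hodge_G^{\restr}\times_{\AA^1}\GG_m\hookrightarrow\Hodge_G^{\restr}$; since $\dSt$ is cocomplete this colimit is a well-defined derived algebraic stack $\Twistor_G^{\restr,\alg}$, equipped with a structural map to $\PP^1=\AA^1_\lambda\sqcup_{\GG_m}\AA^1_{\lambda^{-1}}$ whose general fibre is $\LocSys_G^{\restr}$ and whose two special fibres are $\Higgs_G^{\restr}$ and $\ol{\Higgs}_G^{\restr}$. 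Next I would analytify: the functor $(\cdot)^{\an}\colon\dSt\to\dAnSt$ of Holstein--Porta is a left adjoint, hence preserves colimits, so it carries \eqref{twistor stack restr} to a pushout square in $\dAnSt$. Combining this with the universal GAGA property \eqref{universal GAGA} (so that $(\Hodge_G^{\restr})^{\an}$ etc.\ really are the expected analytic mapping stacks) and with the identification $(\RH^{\alg})^{\an}\simeq\RH$ on the restricted loci, one sees that the analytified square is, term by term, the restriction of \eqref{twistor stack}. The universal property of the pushout defining $\Twistor_G$ then produces a canonical $\PP^1$-morphism $\Twistor_G^{\restr}:=(\Twistor_G^{\restr,\alg})^{\an}\to\Twistor_G$.

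To see this morphism exhibits $\Twistor_G^{\restr}$ as a substack, I would argue that $\LocSys_G^{\restr}\to\LocSys_G$ and $\Rep_G^{\restr}\to\Rep_G$ are monomorphisms in $\dSt$ (AGKRRV), that analytification and pushout in the $\infty$-topos $\dAnSt$ preserve monomorphisms glued along monomorphisms, and that the two Riemann--Hilbert gluings on the restricted side are the restrictions of those on the full side; hence the comparison map is injective on functors of points. The moduli interpretation is then read off fibrewise over $\PP^1$: for $\lambda\neq 0,\infty$ the fibre is $(\LocSys_G^{\restr})^{\an}$, which by \cite{AGKRRV} parametrises $G$-local systems with restricted variation, while the special fibres are $(\Higgs_G^{\restr})^{\an}$ and $(\ol{\Higgs}_G^{\restr})^{\an}$; thus $\Twistor_G^{\restr}$ is an analytification of an algebraic stack and parametrises restricted-variation local systems, as claimed.

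The hard part will be the interchange in the second step: one must know not merely that both pushouts exist but that $(\cdot)^{\an}$ genuinely preserves the specific colimit \eqref{twistor stack restr} and that the induced map to $\Twistor_G$ is a monomorphism onto a substack. This rests on three inputs — $(\cdot)^{\an}$ being colimit-preserving, the precise compatibility between the algebraic Riemann--Hilbert of AGKRRV and the analytic one of Porta (i.e.\ the equivalence $(\RH^{\alg})^{\an}\simeq\RH$ on the restricted loci), and stability of monomorphisms of geometric stacks under analytification and pushout — and it is the last of these, where the geometricity hypotheses rather than soft formal nonsense are used, that I expect to require the most care.
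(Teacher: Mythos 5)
Your overall route — verify the algebraic pushout \eqref{twistor stack restr}, analytify it, and exhibit the result as a substack of $\Twistor_G$ — is the approach the paper takes, but you have misread what that pushout actually is. You assert its special fibres are $\Higgs_G^{\restr}$ and $\ol{\Higgs}_G^{\restr}$; the paper never constructs such objects, and Remark \ref{Dol restr} says explicitly that defining a restricted-variation Dolbeault stack is outside the scope of the paper. The substack $\Hodge_G^{\restr}\subset\Hodge_G$ of \ref{algebraic construction} only shrinks the locus over $\GG_m\subset\AA^1$ (where $\Hodge_G\times_{\AA^1}\GG_m\cong\LocSys_G\times\GG_m$); the fibre over $\lambda=0$ is left as all of $\Higgs_G$, so $\Twistor_G^{\restr}\times_{\PP^1}\{0\}=\Higgs_G$ and likewise at $\infty$ — precisely what the proof of Proposition \ref{Tw lines algebraic} later relies on. Your description of the $\PP^1$-family is therefore wrong over the poles, which undercuts the fibrewise moduli interpretation you propose at the end.

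You also justify $(\cdot)^{\an}$ preserving the pushout by claiming it is "a left adjoint, hence preserves colimits". No such adjointness is proved or cited anywhere in the paper or in Holstein--Porta: Theorem \ref{geometric analytic} defines $\Yy^{\an}$ by sheafifying $S\mapsto\Map(\Spec\Gamma(S),\Yy)$, and $\Map(X,-)$ preserves limits, not colimits, so the naive argument does not go through. You are right to flag the interchange as the hard part, but the resolution is not adjointness; it is that both legs of the span in \eqref{twistor stack restr} are open immersions (an equivalence $\RH^{\alg}$ composed with the inclusion of the $\GG_m$-locus), so the pushout is a Zariski gluing along an open substack. Preservation of such gluings under analytification then follows from descent together with the universal GAGA property \eqref{universal GAGA}, and this is what should replace the adjointness claim in order to obtain the comparison map $\Twistor_G^{\restr}\to\Twistor_G$ from the universal property of \eqref{twistor stack}.
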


\begin{remark}
\label{Dol restr}
It would be interesting to construct a "more precise" restricted variation twistor stack by also modifying the Dolbeault loci over $0$ and $\infty$. This would require a moduli stack $\Higgs_G^{\restr}$ of Higgs bundles with restricted variation that can be understood in terms of a Hodge degeneration of $\LocSys_G^{\restr}$ (\textit{i.e.}  constructing the classical limit of restricted de Rham geometric Langlands). 
\end{remark}

\subsubsection{$\GG_m$-action}
\label{GG_m spectral} 
We now describe how to perform a 'hyperk\"ahler rotation' on $\Twistor_G$, obtained by varying the complex structure via a $\GG_m$-action. Consider the conjugate pair of $\GG_m$-equivariant structural morphisms 
\[
\Hodge_G^{\an} \to \AA^1, \quad \quad \ol{\Hodge}_G^{\an} \to \AA^1, 
\]
where the pair of affine lines are considered with complex conjugate coordinates. Given $\mu \in \GG_m$ consider the complex conjugate point $\ol{\mu} = 1/\mu \in \GG_m$. One has a conjugate pairs of automorphisms 
\[
\mu : \Hodge_G^{\an} \to \Hodge_G^{\an}, \quad \quad \ol{\mu} : \ol{\Hodge}_G^{\an} \to \ol{\Hodge}_G^{\an}, 
\]
that are equivariant with respect to dilations on $\AA^1$, thus giving rise to commutative squares 
\[
\begin{tikzcd}
\Hodge_G^{\an} \arrow[d, "\mu"] \arrow[r] & \AA^1 \arrow[d, "\mu"] \\
\Hodge_G^{\an} \arrow[r] & \AA^1 
\end{tikzcd}, 
\quad \quad 
\begin{tikzcd}
\ol{\Hodge}_G^{\an} \arrow[d, "\ol{\mu}"] \arrow[r] & \AA^1 \arrow[d, "\ol{\mu}"] \\
\ol{\Hodge}_{G}^{\an} \arrow[r] & \AA^1 
\end{tikzcd}, 
\]
which glue to an automorphism $\mu : \Twistor_{G} \to \Twistor_{G}$ that lives over the dilating $\GG_m$-action on $\PP^1$.

\subsection{Singularities}
\label{singularities}
We recall from \cite{arinkin&gaitsgory} how to define the \textit{global nilpotent cone} and study the corresponding singular support categories of so-called nilpotent sheaves. We observe that the de Rham constructions from \cite{arinkin&gaitsgory} can be extended to other moduli from non-abelian Hodge theory under our consideration -- \textit{i.e.} those of type Dolbeault, Betti, Hodge and twistor.  

\subsubsection{Nilpotent singularities} 
\label{Hodge nilpotent cone}
\label{nilpotents}

The definition of the global nilpotent cone in $\Sing(\LocSys_G)$ as a locus of nilpotent singularities \cite[\textsection 11.1.1]{arinkin&gaitsgory} can be stated verbatim in the following generality. Take any quasi-smooth mapping stack $\Mm = \Maps(Y, BG)$ such that the tangent complex can be presented as 
\[
\TT_{\Mm} = p_{2, *}\ad(\Uu_{\Mm})[1], 
\]
where $\ad(\Uu_{\Mm})$ is the universal $G$-bundle $\Uu_{\Mm}$ on $Y \times \Mm$ considered in the adjoint representation and $p_{2}$ is the projection onto the second factor. The $S$-points of the cotangent stack $T^{*}\Mm$ are given by pairs $(f,A)$ where $f : Y \times S \to BG$ is an $S$-point of $\Mm$ and $A$ is an object of  
\[
A \in H^{\bullet}(Y \times S, f^{*}\ad(\Uu_{\Mm}^{\vee})) . 
\]
Moreover the $1$-stack of singularities $\Sing(\Mm) \subset T^{*}\Mm$ is parametrised by the subclass of objects for which $A$ lies in $H^{0}(Y, f^{*}\ad(\Uu_{\Mm}^{\vee}))$ (see \cite[\textsection 10.3.2]{arinkin&gaitsgory} for the de Rham version of this statement). Given an $S$-point $(f, A)$ of $\Sing(\Mm)$, we call the section $A \in H^{0}(Y, f^{*}\ad(\Uu_{\Mm}^{\vee}))$ \textit{nilpotent} if, for any local trivialisation of the $G$-bundle $f^{*}\ad(\Uu_{\Mm}^{\vee}) \to S \times Y$, the induced local section takes values in the conical set of nilpotent elements $\Nilp(\gG^{\vee}) \subset \gG^{\vee}$. 

\begin{definition}
The \textit{global nilpotent cone} $\Nn_{\Mm} \subset \Sing(\Mm)$ associated to $\Mm$ is the $1$-stack parametrising pairs $(f, A)$ for which $A$ is nilpotent. 
\end{definition}
The definition applies for $\Mm$ equal to any of the stacks $\LocSys_G$, $\Higgs_G$, $\Rep_G$ and $\Hodge_G$ from non-abelian Hodge theory. We therefore obtain the family of global nilpotent cones 
\[
\Nn_{\dR} \subset \Sing(\LocSys_G), 
\quad \Nn_{\Dol} \subset \Sing(\Higgs_G), 
\quad \Nn_{\B} \subset \Sing(\Rep_G), 
\quad \Nn_{\Hod} \subset \Sing(\Hodge_G) ,
\]
which parametrise nilpotent sections of the respective universal families denoted by $\Uu_{\dR}$, $\Uu_{\Dol}$, $\Uu_{\B}$ and $\Uu_{\Hod}$. Also for $\Yy = \Bun_G$ we obtain the global nilpotent cone 
\[
\Nn_{\Bun_G} \subset T^{*}\Bun_G = \Higgs_G
\]
which famously coincides with the zero fiber of the Hitchin fibration\footnote{$\Nn_{\Bun} \subset T^{*}\Bun_G$ will appear only in passing in this article because its singular support categories belong to the automorphic side of geometric Langlands and our work lies strictly on the spectral side.} \cite{hitchin_self}.  In \ref{twistor nilp} we also define a conic $\Nn_{\Tw} \subset \Sing(\Twistor_G)$ by a twistoral gluing procedure, and to this end we first provide a series of comparison results between the non-abelian Hodge family of nilpotent cones.  

\subsubsection{de Rham to Dolbeault comparison} $\Nn_{\Hod}$ is the natural $\AA^1$-interpolation between $\Nn_{\Dol}$ and $\Nn_{\dR}$ which can be canonically recovered via the pullback squares 
\[
\begin{tikzcd}
\Nn_{\Dol} \arrow[d] \arrow[r] & \Nn_{\Hod} \arrow[d] & \Nn_{\dR} \arrow[l] \arrow[d] \\
\{ 0 \} \arrow[r] & \AA^1 & \{ 1 \} \arrow[l]
\end{tikzcd} , 
\]
determined by the structural map $\Sing(\Hodge_G) \to \Hodge_G \to \AA^1$. The corresponding singular support categories therefore inherit the restriction functors 
\[
\IndCoh_{\Nn_{\Hod}}(\Hodge_G) \to \IndCoh_{\Nn_{\Dol}}(\Higgs_G) , 
\]
\[
\IndCoh_{\Nn_{\Hod}}(\Hodge_G) \to \IndCoh_{\Nn_{\dR}}(\LocSys_G) , 
\]
which can be understood, respectively, as the associated graded construction and forgetting the data of a good filtration. In this sense $\IndCoh_{\Nn_{\Hod}}(\Hodge_G)$ can be interpreted as a Hodge deformation of the de Rham category $\IndCoh_{\Nn_{\dR}}(\LocSys_G)$ that degenerates to the Dolbeault category $\IndCoh_{\Nn_{\Dol}}(\Higgs_G)$. 

\subsubsection{de Rham to Betti comparison} Recall from \ref{algebraic construction} the algebraic Riemann--Hilbert correspondence 
\[
\RH^{\alg} : \LocSys_G^{\restr} \to \Rep_G^{\restr} . 
\]
We record the following key conclusions regarding $\RH^{\alg}$ from \cite{AGKRRV, GLC1}. The canonical map  
\[
\Nn^{\restr}_{\dR} := \Nn_{\dR} \times_{\LocSys_G} \times \Sing(\LocSys_G^{\restr}) \to \Nn_{\dR}, 
\]
is a closed immersion and surjective at geometric points. We apply the slight misuse of notation of identifying $\Nn^{\restr}_{\dR}$ with $\Nn_{\dR}$ and similarly $\Nn^{\restr}_{\B}$ with $\Nn_{\B}$. 

\begin{proposition} 
\label{RH and Nilp alg}
The Riemann--Hilbert correspondence $\RH^{\alg} : \Rep_G^{\restr} \xrightarrow{\cong} \LocSys_G^{\restr}$ induces a canonical isomorphism $\RH^{\alg} : \Nn_{\B} \xrightarrow{\cong} \Nn_{\dR}$ and an equivalence of categories 
\[
\IndCoh_{\Nn_{\B}}(\Rep_G^{\restr}) \cong \IndCoh_{\Nn_{\dR}}(\LocSys_G^{\restr}) . 
\]
\end{proposition}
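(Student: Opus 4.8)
The plan is to deduce the whole statement from one structural input — that $\RH^{\alg}$ is an \emph{equivalence} of quasi-smooth algebraic stacks — together with the compatibility of the two concrete descriptions of the stacks of singularities from \ref{nilpotents}. First I would record the general principle: if $f\colon \Yy_1 \xrightarrow{\cong} \Yy_2$ is an equivalence of quasi-smooth geometric stacks, then the singular codifferential of Section~\ref{singularities of stacks} is an isomorphism $\Sing(f)\colon \Sing(\Yy_2) \xrightarrow{\cong} \Sing(\Yy_1)$ — it is obtained by applying $\Spec_{\Yy_1}\Sym^{\bullet}_{\Oo}(-[1])$ to the equivalence $f^{*}\TT_{\Yy_2} \cong \TT_{\Yy_1}$ — and the (co)pullback $f^{!} \cong f^{*}$ restricts to an equivalence $\IndCoh_{\Sing(f)(\Lambda)}(\Yy_1) \cong \IndCoh_{\Lambda}(\Yy_2)$ for every conic $\Lambda \subset \Sing(\Yy_2)$, since singular support is defined intrinsically as the support of an object viewed as a module over $\Sym^{\bullet}(H^1(\TT))$ and $f$ matches these module structures. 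Applied to $\RH^{\alg}$, this reduces everything to the single identification $\Sing(\RH^{\alg})(\Nn_{\dR}) = \Nn_{\B}$ inside $\Sing(\Rep_G^{\restr})$.

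To establish that identification I would use the presentation of \ref{nilpotents}: at a restricted local system $\sigma$ with monodromy representation $\rho = \RH^{\alg}(\sigma)$, the fibre of $\Sing(\LocSys_G^{\restr})$ is the space of sections of the de Rham $\ad$-complex of $\sigma$ lying in $H^0$, and symmetrically the fibre of $\Sing(\Rep_G^{\restr})$ involves the Betti $\ad$-complex of $\rho$; the classical de Rham--Betti comparison identifies these complexes together with their cup products, and this is exactly the isomorphism realised by $\Sing(\RH^{\alg})$, because $\RH^{\alg}$ is, on underlying objects, the comparison between a flat $G$-bundle and its monodromy and hence identifies the universal adjoint bundles $\ad(\Uu_{\dR})$ and $\ad(\Uu_{\B})$ as $G$-bundles with adjoint action. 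Since nilpotency of a section $A \in H^0(Y, f^{*}\ad(\Uu^{\vee}))$ is the pointwise condition of taking values in the $\Ad$-invariant cone $\Nilp(\gG^{\vee}) \subset \gG^{\vee}$, this identification of adjoint bundles carries $\Nn_{\dR}$ onto $\Nn_{\B}$. Combined with the closed immersions $\Nn_{\dR}^{\restr} \hookrightarrow \Nn_{\dR}$ and $\Nn_{\B}^{\restr} \hookrightarrow \Nn_{\B}$ recalled above (surjective at geometric points, and suppressed in our notation), this gives the asserted isomorphism $\RH^{\alg}\colon \Nn_{\B} \xrightarrow{\cong} \Nn_{\dR}$, and feeding it into the first paragraph produces the equivalence $\IndCoh_{\Nn_{\B}}(\Rep_G^{\restr}) \cong \IndCoh_{\Nn_{\dR}}(\LocSys_G^{\restr})$.

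The genuine content — and the step I expect to be the main obstacle — is the claim that the abstract equivalence $\Sing(\RH^{\alg})$ coincides with the de Rham--Betti comparison on the $\ad$-complexes compatibly with the algebra structure governing singular support. This is precisely where the restricted-variation hypothesis is essential: the algebraic Riemann--Hilbert correspondence of \cite{AGKRRV} is constructed from the fact that Riemann--Hilbert is algebraic on formal neighbourhoods, so the natural argument is to check the compatibility on formal completions, where $\LocSys_G^{\restr}$ and $\Rep_G^{\restr}$ carry literally identified pro-cotangent complexes, and then globalise. In practice I would cite \cite{AGKRRV} (and \cite{GLC1} for the de Rham incarnation) for this compatibility rather than reprove it, so that the proof becomes: (i) recall that $\RH^{\alg}$ is an equivalence; (ii) recall from \cite{AGKRRV} that it matches the global nilpotent cones; (iii) invoke the intrinsic characterisation of singular support to transport the equivalence of categories through it.
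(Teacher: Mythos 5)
Your proof is correct and takes essentially the same route as the paper's: both reduce to the single fact that $\RH^{\alg}$ induces a universal equivalence of the (co)adjoint universal bundles $\Uu_{\dR}^{\vee}$ and $\Uu_{\B}^{\vee}$ over the restricted loci, which matches the $1$-stacks of singularities and carries the pointwise nilpotency condition across, with the underlying algebraic compatibility sourced from \cite{AGKRRV} (the paper cites \cite[(4.2)]{GLC1} for the same point). Your write-up merely makes explicit the formal step — transporting singular support along an equivalence via the singular codifferential — that the paper leaves implicit.
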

 
\begin{proof}
Both are consequences of the fact that $\RH^{\alg}$ induces a universal equivalence 
\[
\RH^{\alg, *} ( {\Uu_{\dR}^{\vee}}|_{X_{\dR} \times \LocSys_G^{\restr}} ) \cong \Uu_{\B}^{\vee}|_{X_{\B} \times \Rep_G^{\restr}} , 
\]
and moreover this equivalence preserves nilpotent sections. See also \cite[(4.2)]{GLC1}. 
\end{proof}

\begin{remark} In \textit{loc. cit.}, Proposition \ref{RH and Nilp alg} part of the deep result that the Riemann--Hilbert correspondence can be used to prove a logical equivalence between the (restricted and non-restricted) Betti and de Rham geometric Langlands correspondences \cite[Thm. 3.5.6]{GLC1}. 
\end{remark}
Analytification of Proposition \ref{RH and Nilp alg} induces the corresponding statement in the analytic topology.  

\begin{proposition}
\label{RH and Nilp an}
\label{de Rham Betti nilp sheaves}
The Riemann--Hilbert correspondence $\RH : \Rep_G^{\an} \xrightarrow{\cong} \LocSys_G^{\an}$ induces a canonical isomorphism $\RH : \Nn^{\an}_{\B} \xrightarrow{\cong} \Nn^{\an}_{\dR}$ and an equivalence 
\[
\IndCoh_{\Nn_{\B}^{\an}}(\Rep_G^{\an}) \cong \IndCoh_{\Nn_{\dR}^{\an}}(\LocSys_G^{\an}) . 
\]
\end{proposition}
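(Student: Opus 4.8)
The plan is to obtain Proposition~\ref{RH and Nilp an} by analytifying the algebraic Proposition~\ref{RH and Nilp alg}, transporting the statement across the analytification functor by means of the compatibilities assembled in Section~\ref{sing and an}. The inputs I would isolate first are: that the analytic Riemann--Hilbert equivalence $\RH : \Rep_G^{\an} \xrightarrow{\cong} \LocSys_G^{\an}$ of Porta and Holstein--Porta is the analytification of the algebraic correspondence $\RH^{\alg} : \Rep_G^{\restr} \xrightarrow{\cong} \LocSys_G^{\restr}$ of \cite{AGKRRV}, under the identifications $(\Rep_G^{\restr})^{\an} \simeq \Rep_G^{\an}$ and $(\LocSys_G^{\restr})^{\an} \simeq \LocSys_G^{\an}$ (the restricted-variation loci are closed and surjective on geometric points, and carry the full formal structure precisely because Riemann--Hilbert is algebraic in formal neighbourhoods); and that under these identifications the algebraic nilpotent cones $\Nn_{\dR}^{\restr}$, $\Nn_{\B}^{\restr}$ are the ones denoted $\Nn_{\dR}$, $\Nn_{\B}$, as recalled in \ref{nilpotents}.

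With this in hand I would argue in two steps. For the statement on nilpotent cones: Proposition~\ref{RH and Nilp alg} says the singular codifferential of $\RH^{\alg}$ restricts to an isomorphism $\Nn_{\B} \xrightarrow{\cong} \Nn_{\dR}$; applying Lemma~\ref{an and Sing} together with the functoriality of singular codifferentials under analytification (the lemma following Lemma~\ref{an and Sing}) gives a corresponding isomorphism $(\Nn_{\B})^{\an} \xrightarrow{\cong} (\Nn_{\dR})^{\an}$ intertwined with $\Sing(\RH)$. It then remains to identify $(\Nn_{\dR})^{\an}$ with the analytic global nilpotent cone $\Nn_{\dR}^{\an} \subset \Sing(\LocSys_G^{\an})$ defined directly as in \ref{nilpotents}, and similarly on the Betti side; this holds because the condition of nilpotency on a section of $\ad(\Uu^{\vee})$ is cut out fibrewise by the vanishing of the $G$-invariant polynomials on $\gG^{\vee}$ and hence is a closed conic condition preserved by analytification, while the analytic universal adjoint bundle is the analytification of the algebraic one by the universal GAGA property \eqref{universal GAGA}. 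For the equivalence of categories: since $\RH$ is an equivalence of quasi-smooth analytic stacks whose singular codifferential carries $\Nn_{\dR}^{\an}$ onto $\Nn_{\B}^{\an}$, the pullback $\RH^{*} : \IndCoh(\LocSys_G^{\an}) \xrightarrow{\cong} \IndCoh(\Rep_G^{\an})$ restricts to an equivalence $\IndCoh_{\Nn_{\dR}^{\an}}(\LocSys_G^{\an}) \xrightarrow{\cong} \IndCoh_{\Nn_{\B}^{\an}}(\Rep_G^{\an})$; alternatively one can invoke Proposition~\ref{an and sing supp} to exhibit both categories as the analytifications of $\IndCoh_{\Nn_{\dR}}(\LocSys_G^{\restr})$ and $\IndCoh_{\Nn_{\B}}(\Rep_G^{\restr})$ and transport the algebraic equivalence of Proposition~\ref{RH and Nilp alg} directly.

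The main obstacle I anticipate is not in the formal transport but in the two identifications at the boundary between the restricted and the full analytic moduli, namely $(\LocSys_G^{\restr})^{\an} \simeq \LocSys_G^{\an}$ (likewise for $\Rep_G$) and $(\Nn_{\dR}^{\restr})^{\an} \simeq \Nn_{\dR}^{\an}$: one must take care that analytification, which does not annihilate nilpotent structure, nevertheless collapses the restricted locus onto the whole analytic stack, and this is exactly where the results of \cite{AGKRRV} on restricted variation and the universal GAGA property of Holstein--Porta enter essentially. Granting these, the remainder is a purely formal consequence of Lemma~\ref{cotangent GAGA}, Lemma~\ref{an and Sing} and Proposition~\ref{an and sing supp}.
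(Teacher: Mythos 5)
You correctly locate the crux of your argument, but the identification you grant at that crux --- $(\LocSys_G^{\restr})^{\an} \simeq \LocSys_G^{\an}$, and likewise for $\Rep_G$ --- is false, and the paper itself treats these stacks as genuinely different. In \ref{algebraic construction} the stack $\Twistor_G^{\restr} := (\Twistor_G^{\restr, \alg})^{\an}$ is introduced as a \emph{substack} of $\Twistor_G$, and Proposition \ref{Tw lines algebraic} is stated precisely because this inclusion is not an equivalence: it is only asserted that $\Twistor_G^{\restr}$ contains the geometric points. The reason is that $\LocSys_G^{\restr} \to \LocSys_G$ is bijective on $\CC$-points and on formal completions, but the restricted stack has, by design, a different infinitesimal structure away from those completions (that is the entire content of restricted variation), and analytification is not a formal or pointwise operation: a derived analytic stack carries genuine open analytic neighbourhoods, which are not reconstructed from a union of formal neighbourhoods of points. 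So the ``formal transport'' of Proposition \ref{RH and Nilp alg} across analytification, as you outline it, lands in $\IndCoh_{\Nn}$ of the \emph{restricted} analytic moduli, a proper subcategory of the intended targets on $\LocSys_G^{\an}$ and $\Rep_G^{\an}$.

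The paper's terse proof (``analytification of Proposition \ref{RH and Nilp alg}'') should therefore be read as an instruction to rerun the \emph{argument} of Proposition \ref{RH and Nilp alg} directly in the analytic topology, not to apply the functor $(\cdot)^{\an}$ to the restricted statement. Concretely, the Holstein--Porta Riemann--Hilbert equivalence is $\RH = \nu_{\RH}^{*}$, pullback along $\nu_{\RH}:(X^{\an})_{\dR} \to (X^{\an})_{\B}$, and this produces the universal identification $\RH^{*}(\Uu_{\dR}^{\vee, \an}) \cong \Uu_{\B}^{\vee, \an}$ of the analytic universal adjoint bundles with no appeal to the restricted stacks. Since nilpotency of a covector is a fibrewise condition on sections of the adjoint bundle (Section \ref{nilpotents}), this identification carries nilpotent sections to nilpotent sections, giving $\Sing(\RH): \Nn_{\B}^{\an} \xrightarrow{\cong} \Nn_{\dR}^{\an}$. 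The categorical equivalence is then the final step you already record as an alternative: $\RH$ is an equivalence of quasi-smooth analytic stacks intertwining the two cones, so $\RH^{*}$ restricts to the desired equivalence of $\IndCoh_{\Nn}$-categories. None of Lemma \ref{cotangent GAGA}, Lemma \ref{an and Sing} or Proposition \ref{an and sing supp} is needed here, since nothing is being analytified; those results compare a single algebraic stack with its own analytification, which is a different comparison from the Betti/de Rham one at hand.
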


\subsubsection{Twistor nilpotent cone} 
\label{twistor nilp}
Following the twistor gluing constructions of \ref{se twistor stack} we now apply Proposition \ref{RH and Nilp an} as an instance of Betti gluing data. Let $\Nn_{\B \times \GG_m}^{\an}$ and $\Nn_{\dR \times \GG_m}^{\an}$ denote the $\GG_m$-extensions to the trivial families $\Rep_G \times \GG_m$ and $\LocSys_G \times \GG_m$. Then Proposition \ref{RH and Nilp an} induces an equivalence 
\[
\RH : \Nn_{\B \times \GG_m}^{\an} \xrightarrow{\cong} \Nn_{\dR \times \GG_m}^{\an} .  
\]
Via the isomorphism $\LocSys^{\an}_G \times \GG_m \cong \Hodge^{\an}_G \times_{\AA^1} \GG_m$ one also has an equivalence 
\[
\Nn_{\dR \times \GG_m}^{\an} \cong \Nn_{\Hod}^{\an} \times_{\AA^1} \GG_m ,  
\]
and we denote the resultant immersion also by $\RH : \Nn_{\B \times \GG_m}^{\an} \to \Nn_{\Hod}$. By considering the complex conjugate constructions we define the following pushout of global nilpotent cones.

\begin{definition}
The twistor nilpotent cone $\Nn_{\Tw} \subset \Sing(\Twistor_G)$ is defined to be the pushout of analytic 1-stacks 
\[
\begin{tikzcd}
\Nn_{\B \times \GG_m}^{\an} = \ol{\Nn}_{\B \times \GG_m}^{\an} \arrow[d, "\RH"'] \arrow[r, "\ol{\RH}"] 
& \ol{\Nn}_{\Hod}^{\an} \arrow[d] 
\\
\Nn_{\Hod}^{\an} \arrow[r] & \Nn_{\Tw}
\end{tikzcd} . 
\]
\end{definition}

Following Proposition \ref{RH and Nilp alg} the same construction works in the algebraic topology to define a conic 
\[
\Nn_{\Tw}^{\alg} \subset \Sing(\Twistor_G^{\restr, \alg})
\]
over the algebraic twistor stack defined in \ref{algebraic construction}. Then $\Nn_{\Tw}$ can be recovered as the analytification
\[
\Nn_{\Tw} = (\Nn_{\Tw}^{\alg})^{\an} . 
\]

\subsubsection{Nilpotent sheaves} 
\label{nilpotent sheaves}
By the functoriality of singular support \cite[Propn. 7.1.3]{arinkin&gaitsgory} the nilpotent singular support categories admit a pullback functor 
\[
\IndCoh_{\Nn_{\Tw}}(\Twistor_G) \to \IndCoh_{\Nn^{\an}_{\Hod}}(\Hodge^{\an}_G) ,
\]
given by restriction of the structural map $\Twistor_G \to \PP^1$ over the standard affine chart $\PP^1 - \{\infty\} \subset \PP^1$. From the pushout presentations of $\Twistor_G$ and $\Nn_{\Tw}$ one has a pullback square 
\begin{equation}
\label{eq pullback nilps}
\begin{tikzcd}
\IndCoh_{\Nn_{\Tw}}(\Twistor_G) \arrow[d] \arrow[r] & \IndCoh_{\ol{\Nn}^{\an}_{\Hod}}(\ol{\Hodge}^{\an}_G) \arrow[d] \\
\IndCoh_{\Nn^{\an}_{\Hod}}(\Hodge^{\an}_G) \arrow[r] & \IndCoh_{\Nn_{\B \times \GG_m}^{\an}}(\Rep^{\an}_G \times \GG_m)
\end{tikzcd} , 
\end{equation}
where the lower horizontal arrow factors through the Riemann--Hilbert equivalence $\IndCoh_{\Nn^{\an}_{\dR}}( \LocSys_G^{\an} ) \cong \IndCoh_{\Nn^{\an}_{\B}}( \Rep_G^{\an} )$ as stated in Proposition \ref{RH and Nilp an}.  This determines the natural decomposition of nilpotent sheaves on $\Twistor_G$ over the two hemispherical affine charts $\PP^1 - \{\infty\} \subset \PP^1$ and $\PP^1 - \{\ 0 \} \subset \PP^1$. 

\subsubsection{GAGA sheaves} 
\label{GAGA sheaves Tw_G}
\label{Nilp GAGA}
Over an algebraic stack $\Yy$ with analytification $\Yy^{\an}$, we have defined in \ref{GAGA sheaves} the category $\IndCoh^{\GAGA}(\Yy^{\an})$ of \textit{GAGA sheaves} to be the essential image of the analytification functor $\IndCoh(\Yy) \to \IndCoh(\Yy^{\an})$. This construction applies to $\Yy^{\an} = \Hodge_G^{\an}$ but not to $\Twistor_G$, for the twistor stack cannot globally be presented as the analytification of an algebraic stack. Instead, we make the modification of defining $\IndCoh^{\GAGA}(\Twistor_G)$ to be the pullback
\begin{equation}
\label{GAGA on Tw_G}
\begin{tikzcd}
\IndCoh^{\GAGA}(\Twistor_G) \arrow[d] \arrow[r] & \IndCoh^{\GAGA}(\ol{\Hodge}^{\an}_G) \arrow[d] \\
\IndCoh^{\GAGA}(\Hodge^{\an}_G) \arrow[r] & \IndCoh(\Rep^{\an}_G \times \GG_m)
\end{tikzcd} .
\end{equation}
Objects of $\IndCoh^{\GAGA}(\Twistor_G)$ are analytic sheaves that admit an algebraic structure after restriction to the hemispherical affine charts. One can view the subcategory $\IndCoh^{\GAGA}(\Twistor_G)$ as a rigid substitute for $\IndCoh(\Twistor_G)$. 

We then define the category of \textit{nilpotent GAGA sheaves} to be 
\[
\IndCoh^{\GAGA}_{\Nn_{\Tw}}(\Twistor_G) := \IndCoh^{\GAGA}(\Twistor_G) \cap \IndCoh_{\Nn_{\Tw}}(\Twistor_G) ,
\]
which from \eqref{eq pullback nilps} and \eqref{GAGA on Tw_G} can naturally be presented as a pullback 
\[
\begin{tikzcd}
\IndCoh^{\GAGA}_{\Nn_{\Tw}}(\Twistor_G) \arrow[d] \arrow[r] & \IndCoh^{\GAGA}_{\Nn^{\an}_{\Hod}}(\ol{\Hodge}^{\an}_G) \arrow[d] \\
\IndCoh^{\GAGA}_{\Nn^{\an}_{\Hod}}(\Hodge^{\an}_G) \arrow[r] & \IndCoh(\Rep^{\an}_G \times \GG_m)
\end{tikzcd} .
\]
Thus $\IndCoh^{\GAGA}_{\Nn_{\Tw}}(\Twistor_G)$ is an analytic projectivisation of the deformation family $\IndCoh_{\Nn_{\Hod}}(\Hodge_G)$ that expresses $\IndCoh_{\Nn_{\Dol}}(\Higgs_G)$ as a degeneration of $\IndCoh_{\Nn_{\dR}}(\LocSys_G)$. In other words, an extension of deformation coordinate from $\AA^1$ to $\PP^1$ that adjoins $\IndCoh_{\ol{\Nn}^{\an}_{\Dol}}(\ol{\Higgs}^{\an}_G)$ at infinity. 
 
From this point onwards we simplify our notation by dropping the non-abelian Hodge subscripts $(\cdot)_{\dR}, (\cdot)_{\Hod}$, \textit{etc.} and the analytification superscripts $(\cdot)^{\an}$ from the family of global nilpotent cones. 

\subsection{Stability} 
\label{Instability strata}

Harder--Narasimhan theory provides the following fundamental structure theorem for $G$-bundles with extra structure: an object is either semistable, or admits a canonical semistable reduction to a parabolic subgroup of $G$. Harder--Narasimhan theory in a geometric Langlands context is often used to verify finiteness conditions, an idea that goes back to Laumon \cite{laumon_automorphic} who used the Harder--Narasimhan stratification to verify finiteness conditions for geometric Eisenstein series. The stratification is also a central feature of the classical Drinfeld--Gaitsgory proof that $\DMod(\Bun_G)$ is compactly generated \cite{drinfeld&gaitsgory_compact}. Our uses follow Laumon and will be taken up in Section \ref{spectral Eis Hodge}. 

\subsubsection{Harder--Narasimhan strata}
\label{HN strata}

Given a vector bundle $E$ on a space $\Xx$, the Harder--Narasimhan filtration is a canonically constructed increasing filtration of vector bundles  
\[
E_0 \subset E_1 \subset \cdots \subset E_{n-1} \subset E_n = E ,  
\]
such that the quotients $E_i / E_{i - 1}$ are semistable sheaves and the slope function $\{\mu(E_i / E_{i - 1})\}_{1\leq i \leq n}$ is a strictly decreasing series. One then defines the Harder--Narasimhan type $\mu(E)$ to be the ordered sequence of slopes. One can generalise the Harder--Narasimhan filtration to points on the stack of $G$-bundles $\Maps(\Xx, BG)$, as in \cite{anchouche_HN}, where one considers the $G$-induction maps 
\[
\Maps^{\sst, \mu}(\Xx, BP) \to \Maps(\Xx, BG) , 
\]
from the substacks $\Maps^{\sst, \mu}(\Xx, BP) \subset \Maps(\Xx, BP)$ of semistable $P$-bundles with fixed Harder--Narasimhan type $\mu$. By existence and uniqueness of the Harder--Narasimhan filtration, such maps define a covering of $\Maps(\Xx, BG)$ by locally closed immersions. 

\subsubsection{Dolbeault and Hodge strata} We apply the Harder--Narasimhan filtration to $G$-bundles on $\Xx = X_{\Dol}$ and $\Xx = X_{\Hod}$, \textit{i.e.} to $G$-Higgs bundles and $(G, \lambda)$-connections on $X$. The Harder--Narasimhan filtration for $G$-Higgs bundles on curves was constructed by Dey and Parthasarathi \cite{dey_harder} and extended to $(G, \lambda)$-connections on curves (or more generally $\Lambda$-modules) by Gurjar and Nitsure \cite{gurjar&nitsure}. Our strata are parametrised by the topological invariant given by pairs $\nu = (\chi, \mu)$ consisting of Harder--Narasimhan type $\mu$ and an object $\chi \in \pi_1(G)$ that parametrises the irreducible components $\Higgs^{\chi}_G$ and $\Hodge^{\chi}_G$ of $\Higgs_G$ and $\Hodge_G$ respectively. We then introduce the notation 
\[
\Higgs_P^{\sst, \nu} := \Higgs_P^{\sst, \mu} \cap \Higgs_P^{\sst, \chi} , 
\quad \quad 
\Higgs_P^{\sst, \nu} := \Higgs_P^{\sst, \mu} \cap \Higgs_P^{\sst, \chi}, 
\]
where for each $\nu = (\chi, \mu)$ the corresponding stacks are quasi-compact and geometric. We obtain the pair of Dolbeault and Hodge type stratifications 
\begin{equation}
\label{eq: Hodge strata}
\bigsqcup_{P, \nu} \Higgs_P^{\sst, \nu} \to \Higgs _G, 
\quad \quad 
\bigsqcup_{P, \nu} \Hodge_P^{\sst, \nu} \to \Hodge_G .   
\end{equation}

\subsubsection{Twistor strata} A $(G, \lambda)$-connection $(E, \nabla^{\lambda})$ on $X^{\an}$ is semistable if and only if the complex conjugate $\ol{(E, \nabla^{\lambda})}$ on $\ol{X}^{\an}$ is semistable. Indeed, complex conjugation is functorial and preserves the topological data of degree, rank and slope, while also preserving subbundles and filtrations. Therefore the Harder--Narasimhan stratifications in \eqref{eq: Hodge strata} glue to define a stratification of $\Twistor_G$ determined by the strata
\[
\Twistor_{P}^{\sst, \nu} := 
\Hodge_P^{\sst, \nu} \bigsqcup_{\Rep_G \times \GG_m} \ol{\Hodge}_P^{\sst, \nu} .
\]
By taking $G$-inductions there is an induced covering  
\[
\bigsqcup_{P, \nu} \Twistor_{P}^{\sst, \nu} \to \Twistor_G , 
\]
that we refer to as the \textit{Harder--Narasimhan stratification} on $\Twistor_G$. 

\subsubsection{Twistor space} 
\label{twistor space}
\label{spectral Betti gluing}
The locus $\Maps^{\sst}(\Xx, BG)$ of semistable $G$-bundles corresponds to the strata for which the parabolic is given by $P = G$ and the Harder--Narasimhan filtration is trivial. On $\Twistor_G$ this locus can be presented as a pushout 
\[
\Twistor_G^{\sst} = \Hodge_G^{\sst} \sqcup_{\Betti} \, \ol{\Hodge}_G^{\sst} . 
\]
After a 1-stack truncation one has a coarse moduli space structure 
\begin{equation}
\label{coarse moduli}
t_{0}(\Twistor_G^{\sst}) \to \Twistor_G^{\coarse} , 
\end{equation}
where $\Twistor_G^{\coarse}$ is represented by a classical complex analytic space (See \cite[Propn. 3.3]{FH1}). At the level of coarse moduli, the twistor construction is given by a gluing of classical analytic spaces  
\[
\Twistor_G^{\coarse} = \Hodge_G^{\coarse} \sqcup_{\Rep_G^{\coarse}} \ol{\Hodge}_G^{\coarse} , 
\]
as constructed and studied in the seminal work of Simpson \cite{simpson_twistor}. This space is realised as the twistor space of the moduli space $\Higgs_G^{\coarse}$ of semistable $G$-Higgs bundles, equipped with Hitchin's hyperk\"ahler metric \cite{hitchin_self}, where verifying the twistor axioms \cite[Thm. 4.2]{simpson_twistor} involves a $C^{\infty}$-trivialisation of the Hodge filtration obtained from the non-abelian Hodge correspondence. 

\subsubsection{Irreducibles} The irreducible or \textit{cuspidal} parts of our moduli stacks $\Maps(X_{\Sim}, BG)$ parametrise objects that do not admit a parabolic reduction, i.e. one considers $S$-points $S \times X_{\Sim} \to BG$ that do not factor through $BP$ for any proper parabolic $P \subset G$. By considering each of $\Sim \in \{\Dol, \dR, \B, \Hod \}$, we define the irreducible components of each non-abelian Hodge moduli by
\[
\Higgs_G^{\irred} \subset \Higgs_G ,
\quad \LocSys_G^{\irred} \subset \LocSys_G , 
\quad \Rep_G^{\irred} \subset \Rep_G , 
\quad \Hodge_G^{\irred} \subset \Hodge_G .
\]
The irreducible corresponding on the Deligne twistor stack can then be presented as a pushout 
\[
\Twistor_G^{\irred} = \Hodge_G^{\irred} \sqcup_{\Rep_G^{\irred} \times \GG_m} \, \ol{\Hodge}_G^{\irred} \subset \Twistor_G . 
\]
Irreducible objects are automatically stable, so for instance, $\Twistor_G^{\irred}$ is naturally a substack of $\Twistor_G^{\sst}$, and thus by restriction of \eqref{coarse moduli}, one obtains a map 
\[
t_{0}(\Twistor_G^{\irred}) \to \Twistor_G^{\coarse} , 
\]
which lands in the subspace of $\Twistor_G^{\coarse}$ that parametrises isomorphism classes of irreducible $(G, \lambda)$-connections over $X^{\an}$ and $\ol{X}^{\an}$. 

\section{Monoidal characterisation of hyperholomorphic branes}
\label{hyperholomorphic branes}

In Kapustin and Witten's study of 4D supersymmetric gauge theories \cite{kapustin&witten}, it is suggested that certain hyperholomorphic boundary conditions, known as \textit{BBB-branes}, should play some role within the geometric Langlands program. A core class of BBB-branes was initially proposed to be hyperholomorphic bundles on the coarse moduli space of semistable Higgs bundles \cite[\textsection 13]{kapustin&witten}. In this section we propose a broader definition for a dg category of BBB-branes, in following Langlands theoretic terms. We introduce a category $\Wils^{\Tw}_G \subset \QCoh(\Twistor_G)$ of \textit{twistor Wilson eigensheaves}, defined by skyscraper sheaves on embedded projective lines. The category $\IndCoh^{\BBB}(\Twistor_G)$ of \textit{ind-coherent BBB-branes} is then defined by prescribing conditions on how sheaves intersect with objects of $\WW^{\Tw}_G$. 

\subsection{Classical twistors} 
\label{classical twistors}
Our definition of BBB-branes is meaningful in the classical setting of twistor theory. We describe this first as motivation for our constructions. 

\subsubsection{Twistor transform} A hyperk\"ahler manifold $\Mm = (M ,g, I, J, K)$ has an associated twistor space $\Tt = \Tw(\Mm)$, from which the hyperk\"ahler structure can be reconstructed \cite[Thm 3.3]{HKLR}. One can thus pass hyperk\"ahler geometry back and forth between $\Mm$ and $\Tt$. For instance, in classical work of Kaledin and Verbitsky \cite{kaledin&verbitsky}, one has an equivalence of categories 
\begin{equation}
\label{twistor transform}
\left\{ 
\begin{array}{l}
\text{hyperholomorphic bundles on the}
\\
\qquad \text{hyperk\"ahler manifold $\Mm$} 
\end{array}
\right\}
\xrightarrow{\cong }
\left\{ 
\begin{array}{l}
\text{holomorphic bundles on twistor space $\Tt$} \\
\text{that are trivial on horizontal twistor lines}
\end{array}
\right\} ,
\end{equation}
known as the \textit{twistor transform.} This involves the following classical objects from twistor theory. A \textit{twistor line} is a section of the twistor projection $\Tt \to \PP^1$. A \textit{horizontal twistor line} is of the form $\lambda \mapsto (\lambda, m)$ for fixed $m \in \Mm$ after composition with the structural diffeomorphism $\phi : \Tt \xrightarrow{\cong} \PP^1 \times \Mm$ that $C^{\infty}$-trivialises the twistor fibration $\Tt \to \PP^1$. The existence of such a $\phi$ is one of the defining properties of twistor space -- see for instance \cite[Thm. 3.3]{HKLR}. 

\subsubsection{Categorification} Via a natural extension of the condition on the right hand side of \eqref{twistor transform}, we define the following hyperholomorphic dg categorical sheaf theory on twistor space $\Tt$.  
\begin{definition}
The category $\QCoh^{\BBB}(\Tt)$ of \textit{hyperholomorphic complexes}, or \textit{BBB-branes}, is defined to be the full subcategory of $\QCoh(\Tt)$ consisting of objects $\Bb \in \QCoh(\Tt)$ such that, for every horizontal twistor line $\sigma : \PP^1 \to \Tt$, there exists a graded vector space $V_{\sigma}^{\bullet} \in \QCoh(pt)$ and an equivalence 
\[
\Bb \otimes \sigma_{*}\Oo_{\PP^1} \cong V^{\bullet}_{\sigma} \otimes \sigma_{*}\Oo_{\PP^1} . 
\]
\end{definition}
The dg category $\QCoh^{\BBB}(\Tt)$ is a natural enhancement of the classical $1$-category of hyperholomorphic bundles on $\Mm$, and by \eqref{twistor transform}, contains it as a subcategory within the heart of the natural t-structure. 

In purely physical terms, the idea of defining a category of BBB-branes using twistor space was first proposed by Gaiotto \cite[App. C]{gaiotto}, based on an analysis of the supersymmetric ground states. 

\subsubsection{Application to our setting} 
\label{se: app to coarse}
Consider the case where $\Tt = \Twistor_G^{\coarse}$ is the coarse moduli space of the Deligne twistor stack (see \ref{twistor space}). The structural diffeomorphism $\phi : \Tt \xrightarrow{\cong} \Mm \times \PP^1$ is determined by non-abelian Hodge theory. In particular, given a horizontal twistor line $\sigma : \PP^1 \to \Twistor_G^{\coarse}$, the Higgs bundle $\sigma(0)$ and the local system $\sigma(1)$ are related by the non-abelian Hodge correspondence - \textit{i.e.} $\sigma$ is compatible with the $C^{\infty}$-trivialisation of the Hodge filtration. 

The idea of this section is to extend the construction of $\QCoh^{\BBB}(\Twistor_G^{\coarse})$ to define a similar category of \textit{`hyperholomorphic ind-coherent sheaves'}\footnote{We use the terminology `hyperholomorphic sheaf on $\Twistor_G$' informally, and avoid it in general,  because we do not specify what hyperk\"ahler geometry means in the context of derived algebraic/analytic geometry. Our constructions do however seem to anticipate that twistor space as a $\PP^1$-family of shifted symplectic stacks would be an intrinsic part of any such theory -- as suggested by the proposals of Katzarkov--Pandit--Spaide in \cite[Defn. 6.3]{KPS}.} on the derived analytic stack $\Twistor_G$.

\subsection{Twistor lines} 
\label{se: twistor lines}

We begin by studying $\PP^1$-families on $\Twistor_G$. Recall that this stack comes equipped with a structural map $\Twistor_G \to \PP^1$. We call the sections of this map \textit{twistor lines.} 

In this section we review the notion of a \textit{semi-simple} local system from \cite[\textsection 3.6]{AGKRRV}, adapted from geometric points of $\LocSys_G$ to twistor lines on $\Twistor_G$. 

\subsubsection{Association} 
\label{association}
Consider two standard parabolics $P_1$ and $P_2$ of $G$ with Levi quotients $M_1$ and $M_2$. Given twistor lines $\sigma_{M_1} : \PP^1 \to \Twistor_{M_1}$ and $\sigma_{M_2} : \PP^1 \to \Twistor_{M_2}$, we say that the pairs $(P_1, \sigma_{M_1})$ and $(P_2, \sigma_{M_2})$ are \textit{associated} whenever there exists a $G$-orbit $O$ in the partial flag variety $G/P$ such that, for any pair of points $(P'_1, P_2') \in O$, 
    \begin{itemize}
        \item The maps
        \[
        M_1 \longleftarrow P_1' \longleftarrow P_1' \cap P_2' \to P_2' \to M_2 ,
        \]
        identify both $M_1$ and $M_2$ with the Levi quotient of $P_1' \cap P_2'$.  
        \item Under the resulting isomorphism $M_1 \cong M_2$, the points parametrised by $\sigma_{M_1}$ and $\sigma_{M_2}$ are isomorphic local systems.
    \end{itemize}

\begin{lemma}
\label{associated} 
The pairs $(P_1, \sigma_{M_1})$ and $(P_2, \sigma_{M_2})$ are associated if and only if there exists a twistor line $\sigma : \PP^1 \to \Twistor_G$ equipped with $P_1$ and $P_2$ reductions such that the induced $M_1$ and $M_2$ maps are isomorphic to $\sigma_{M_1}$ and $\sigma_{M_2}$ respectively. 
\end{lemma}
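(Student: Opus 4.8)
The plan is to reduce the statement to the combinatorics of parabolic subgroups together with the functoriality of the twistor induction maps, in direct analogy with the corresponding fact for $\LocSys_G$ in \cite[\textsection 3.6]{AGKRRV}. The point is that for any parabolic $P\subset G$ with Levi quotient $M$ the maps $\Twistor_P\to\Twistor_G$ and $\Twistor_P\to\Twistor_M$ are assembled, hemisphere by hemisphere, from the Hodge induction maps $\Hodge_P^{\an}\to\Hodge_G^{\an}$ and $\Hodge_P^{\an}\to\Hodge_M^{\an}$ (themselves pulled back from $BP\to BG$ and $BP\to BM$) glued along the Betti induction maps through the Riemann--Hilbert equivalence \eqref{RH stacks}. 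Hence the usual group-theoretic compatibilities hold verbatim on twistor stacks: for a Levi splitting $L\hookrightarrow R$ of a parabolic $R$ the composite $\Twistor_L\to\Twistor_R\to\Twistor_L$ is the identity, and if $L$ is identified with the Levi quotient of $R$ sitting inside a larger parabolic $P$ then $\Twistor_R\to\Twistor_P\to\Twistor_M$ agrees with the projection $\Twistor_R\to\Twistor_L$. A second, technical ingredient is Proposition \ref{Tw lines algebraic}: every twistor line, and likewise every $P$-reduction of one, factors through an algebraic model, so that constructibility and valuative arguments on the curve $\PP^1$ become available.

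For the ``only if'' direction I would unwind the definition of association. Fix a pair in the orbit $O$; after a global $G$-gauge transformation, which is harmless since $\Twistor_G$ only records objects up to isomorphism, we may assume this pair is $(P_1, P_2')$ with $P_2'$ in relative position $O$ to $P_1$. Put $R=P_1\cap P_2'$, with Levi quotient $L$; by hypothesis the maps $M_1\leftarrow P_1\leftarrow R\to P_2'\to M_2$ identify $L$ with both $M_1$ and $M_2$, and under $M_1\cong L\cong M_2$ the twistor lines $\sigma_{M_1}$ and $\sigma_{M_2}$ become isomorphic to a single $\sigma_L:\PP^1\to\Twistor_L$. Choosing a Levi splitting $L\hookrightarrow R$ and composing with $R\hookrightarrow P_1$ and $R\hookrightarrow P_2'\cong P_2$ produces maps $\Twistor_L\to\Twistor_{P_1}$ and $\Twistor_L\to\Twistor_{P_2}$ lying over one and the same map $\Twistor_L\to\Twistor_G$; letting $\sigma$ be the image of $\sigma_L$ in $\Twistor_G$ endows it with $P_1$- and $P_2$-reductions, and the compatibilities recorded above identify the induced $M_i$-maps with $\sigma_L$ viewed through $L\cong M_i$, that is, with $\sigma_{M_i}$.

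For the ``if'' direction I would start from $\sigma:\PP^1\to\Twistor_G$ with reductions $\sigma_{P_1},\sigma_{P_2}$, packaged as a single section $\tilde\sigma:\PP^1\to\Twistor_{P_1}\times_{\Twistor_G}\Twistor_{P_2}$. Pulling back the stratification of $\Twistor_{P_1}\times_{\Twistor_G}\Twistor_{P_2}$ by relative position of the two reductions covers $\PP^1$ by finitely many locally closed subsets; since $\PP^1$ is irreducible, a unique orbit $O$ has dense stratum $U\subset\PP^1$. Over $U$ the reductions sit in constant relative position, so $\tilde\sigma|_U$ lifts canonically to $\Twistor_{R}$ for $R=P_1'\cap P_2'$ a representative of $O$; pushing this lift through $\Twistor_R\to\Twistor_L$ (here one uses the semi-simplicity hypothesis on $\sigma_{M_1},\sigma_{M_2}$ underlying the notion of association, cf.\ \cite[\textsection 3.6]{AGKRRV}, to see that $R$ surjects onto each $M_i$ and that $L$ is their common Levi quotient) yields a twistor line over $U$ isomorphic to both $\sigma_{M_1}|_U$ and $\sigma_{M_2}|_U$, verifying the two conditions defining ``associated'' over the dense open $U$.

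The step I expect to be the main obstacle is promoting this from $U$ to all of $\PP^1$: on the boundary $\PP^1\setminus U$ the two reductions degenerate to a more special relative position, and one must still extract a single Levi twistor line and check it is globally, not merely generically, isomorphic to $\sigma_{M_1}$ and $\sigma_{M_2}$. This is exactly where Proposition \ref{Tw lines algebraic} enters: working inside the algebraic models of $\Twistor_G$, $\Twistor_{P_i}$ and $\Twistor_R$ one extends the lift to $\Twistor_R$ across the complete curve $\PP^1$ by the valuative criterion, using that $\Twistor_R\to\Twistor_G$ is proper with fibre $G/R$; the accompanying isomorphism of Levi-induced twistor lines extends with it, and the analytic statement then follows by analytification.
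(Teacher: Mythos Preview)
The paper's proof is the single sentence ``Identical to the proof of \cite[Lem.~3.6.4]{AGKRRV} after replacing local systems with twistor lines,'' so your proposal is not a competing argument but an attempt to unpack that reference. Your ``only if'' direction is correct and in line with the AGKRRV construction.

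For the ``if'' direction there are two problems. First, you invoke a ``semi-simplicity hypothesis on $\sigma_{M_1},\sigma_{M_2}$ underlying the notion of association'' to force the generic relative-position orbit $O$ to satisfy the Levi condition $L\cong M_1\cong M_2$. But no such hypothesis appears either in the definition of association (\S\ref{association}) or in the Lemma itself; irreducibility only enters later, in Proposition~\ref{associated iff equivalent}(2)--(3). If the AGKRRV argument goes through without it, you have not shown how; if it does not, you have imported a hypothesis the statement does not grant. Second, you claim $\Twistor_R\to\Twistor_G$ is proper in order to extend the $R$-lift across $\PP^1$ via the valuative criterion. This is false: the paper notes in \S\ref{spectral Eis Hodge} that already $\Hodge_P\to\Hodge_G$ is not proper for parabolic $P$, so neither is the twistor map for the smaller group $R=P_1'\cap P_2'$. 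That said, the extension concern is milder than you fear: once $\sigma_{M_1}|_U\cong\sigma_{M_2}|_U$ under $M_1\cong M_2$ on a dense open $U\subset\PP^1$, separatedness of $\Twistor_{M_i}$ already gives the global isomorphism, without any need to extend the $R$-lift itself.
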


\begin{proof}
Identical to the proof of \cite[Lem. 3.6.4]{AGKRRV} after replacing local systems with twistor lines.  
\end{proof}

\subsubsection{Semi-simple objects}
\label{semi simple objects}
A Levi splitting $M \to P$ induces a section $\Twistor_M \to \Twistor_P$ of the map $\Twistor_P \to \Twistor_M$ and allows us to take $G$-inductions of twistor lines on $\Twistor_M$ and obtain twistor lines on $\Twistor_G$. A \textit{Levi reduction} of a twistor line $\sigma : \PP^1 \to \Twistor_G$ with respect to a Levi splitting $M \to P$ is then a twistor line $\sigma_M : \PP^1 \to \Twistor_M$ that recovers $\sigma$ by $G$-inducing. We then make the following definition. 

\begin{definition} A twistor line $\sigma : \PP^1 \to \Twistor_G$ is called \textit{semi-simple} if, for any reduction $\sigma_P$ to a parabolic $P \subset G$, there exists a Levi reduction $\sigma_M$ of $\sigma$ with respect to some Levi splitting $M \to P$.  
\end{definition} 
The substack $\Twistor_G^{\irred} \subset \Twistor_G$ parametrises \textit{irreducible} objects that do not admit a $P$-reduction. If $\sigma : \PP^1 \to \Twistor_G$ factors through $\Twistor_G^{\irred}$ then we call $\sigma$ irreducible. If $\sigma$ is irreducible then it is automatically semi-simple. 

The following are consequences of Lemma \ref{associated}, as per \cite[Cory. 3.6.9, Cory. 3.6.10]{AGKRRV}. 

\begin{proposition} \,
\label{associated iff equivalent}
\begin{enumerate}
    \item For two pairs $(P_1, \sigma_{M_1})$ and $(P_2, \sigma_{M_2})$, the $G$-induced twistor lines $\sigma_{G,1}$ and $\sigma_{G,2} : \PP^1 \to \Twistor_G$ are isomorphic if and only if $(P_1, \sigma_{M_1})$ and $(P_2, \sigma_{M_2})$ are associated. 

    \item Association defines an equivalence relation on pairs $(P, \sigma_M)$ such that $\sigma_M$ is irreducible. 
    
    \item The assignment $(P, \sigma_M) \mapsto \sigma_G$ of taking $G$-inductions along Levi splittings establishes a bijection between classes of association of pairs $(P, \sigma_M)$ for which $\sigma_M$ is irreducible and isomorphism classes of semi-simple twistor lines $\PP^1 \to \Twistor_G$.
\end{enumerate}
\end{proposition}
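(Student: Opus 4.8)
The three assertions are consequences of Lemma \ref{associated} in exactly the way \cite[Cor.~3.6.9, Cor.~3.6.10]{AGKRRV} are deduced from \cite[Lem.~3.6.4]{AGKRRV}. The plan is to prove (1) by unwinding the definitions, then obtain (2) by a formal manipulation and (3) by combining (1) with one genuinely geometric point -- a semi-simplicity check. Throughout one uses two elementary observations. First, a $G$-induction $\sigma_G$ of a pair $(P,\sigma_M)$ along a Levi splitting $M \to P$ carries a tautological $P$-reduction that factors through $\Twistor_M \to \Twistor_P$, and whose induced $M$-line along $\Twistor_P \to \Twistor_M$ is again $\sigma_M$, since $\Twistor_M \to \Twistor_P \to \Twistor_M$ is the identity. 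Second, any two Levi splittings of $P$ differ by conjugation by the unipotent radical of $P$, so the isomorphism class of $\sigma_G$ depends only on $(P,\sigma_M)$.

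\textbf{Proof of (1).} If $(P_1,\sigma_{M_1})$ and $(P_2,\sigma_{M_2})$ are associated, Lemma \ref{associated} furnishes a single twistor line $\sigma:\PP^1 \to \Twistor_G$ carrying reductions to $P_1$ and $P_2$ whose induced $M_i$-lines are $\sigma_{M_i}$; as in \cite{AGKRRV} one checks that these reductions are Levi-split, since they arise from a common refinement to a parabolic of the form $P_1'\cap P_2'$ along the relevant $G$-orbit in $G/P_1 \times G/P_2$, so $\sigma$ is the $G$-induction of either $\sigma_{M_i}$ and hence $\sigma_{G,1} \cong \sigma \cong \sigma_{G,2}$. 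Conversely, if $\sigma_{G,1} \cong \sigma_{G,2} =: \sigma$, then by the first preliminary observation $\sigma$ carries a $P_1$-reduction inducing $\sigma_{M_1}$ and a $P_2$-reduction inducing $\sigma_{M_2}$, so Lemma \ref{associated} gives that the two pairs are associated.

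\textbf{Proof of (2) and (3).} Part (2) is now formal: by (1), the pairs $(P_1,\sigma_{M_1})$ and $(P_2,\sigma_{M_2})$ are associated precisely when $\sigma_{G,1} \cong \sigma_{G,2}$ in $\Twistor_G$, and isomorphism of twistor lines is reflexive, symmetric and transitive, hence so is association; the irreducibility hypothesis on $\sigma_M$ is not needed here, it only pins down the set of pairs on which the relation is considered in (3). For (3), the assignment $(P,\sigma_M) \mapsto \sigma_G$ descends to association classes and is injective on them by the two directions of (1), so it remains to show that $\sigma_G$ is semi-simple when $\sigma_M$ is irreducible and that the assignment is surjective. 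For semi-simplicity: given any parabolic reduction $\sigma_Q$ of $\sigma_G$, the tautological $P$-reduction and the given $Q$-reduction have a well-defined relative position (a $G$-orbit in $G/P \times G/Q$), yielding a common refinement of both to a parabolic of the form $P'\cap Q'$; restricting this refinement to the $P$-reduction exhibits a reduction of $\sigma_M$ to a parabolic of $M$, which by irreducibility of $\sigma_M$ is all of $M$, so the refinement is conjugate to $P$ itself and $P \subseteq Q$; one then $M_Q$-induces $\sigma_M$ and invokes transitivity of $G$-induction ($M \to M_Q \to G$ equals $M \to G$) to produce the required Levi reduction of $\sigma_G$ with respect to $Q$. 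For surjectivity: given a semi-simple $\sigma$, among its Levi reductions $\sigma_L$ (the trivial one, $L = G$ and $\sigma_L = \sigma$, always exists) choose one with $L$ minimal, which is possible since parabolics up to conjugacy form a finite set; were $\sigma_L$ reducible it would reduce to a proper parabolic $Q_L \subset L$, hence $\sigma$ would reduce to a parabolic $Q \subsetneq G$ with Levi the proper Levi of $Q_L$ in $L$, and semi-simplicity of $\sigma$ would upgrade this to a Levi reduction with strictly smaller Levi, contradicting minimality; thus $\sigma_L$ is irreducible and the pair $(P,\sigma_L)$, with $P$ any parabolic with Levi $L$, maps to $\sigma$.

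\textbf{Main obstacle.} The one step not formally reducible to (1) is the semi-simplicity verification inside (3), and it rests on the principle that the relative position of two parabolic reductions is constant along the twistor line -- the genuine content behind Lemma \ref{associated}. Everything else is a direct transcription of \cite[\textsection 3.6]{AGKRRV} from geometric points of $\LocSys_G$ to twistor lines on $\Twistor_G$; as with Lemma \ref{associated} itself, one should check that this transcription does not interact badly with $\Twistor_G$ being only a derived analytic stack, but since the argument proceeds geometric point by geometric point this should be harmless.
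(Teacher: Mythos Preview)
Your proposal is correct and follows exactly the approach the paper indicates: the paper gives no explicit proof beyond the sentence ``The following are consequences of Lemma~\ref{associated}, as per \cite[Cory.~3.6.9, Cory.~3.6.10]{AGKRRV},'' and your argument is precisely a spelled-out transcription of that AGKRRV deduction to the twistor-line setting. In particular your handling of the one nontrivial step---using constancy of relative position along $\PP^1$ (the content behind Lemma~\ref{associated}) to verify semi-simplicity in (3)---matches the intended route.
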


\begin{definition} Given a semi-simple twistor line $\sigma : \PP^1 \to \Twistor_G$, we call the associated irreducible twistor line $\sigma_M : \PP^1 \to \Twistor^{\irred}_M$ the \textit{Levi component of $\sigma$}. 
\end{definition}

By Proposition \ref{associated iff equivalent}, Levi components are well-defined up to the equivalence relation of association.  

\begin{remark}
\label{HN type of Levi component} 
Levi components lands in a fixed connected component of $\Twistor_M$ corresponding to some fixed $\chi \in \pi_1(G)$. Moreover the irreducible $\sigma_M$ has Harder--Narasimhan type $\mu^{triv}$ corresponding to the trivial filtration. Let $\nu^{triv} = (\chi, \mu^{triv})$. Then $\sigma_M$ can be considered as a map to the Harder--Narasimhan stratum
\[
\sigma_M : \PP^1 \to \Twistor^{\sst, \nu^{triv}}_M . 
\]
This change in notation will be useful for later Harder--Narasimhan analysis. 
\end{remark}

\subsubsection{Semi-simplification} Given two twistor lines $\sigma$ and $ss(\sigma)$ on $\Twistor_G$, we say that $ss(\sigma)$ is a \textit{semi-simplification} of $\sigma$ if
 \begin{itemize}
     \item $ss(\sigma)$ is semi-simple, 
     \item there exists a parabolic $P \subset G$ and $P$-reductions $\sigma_{P}$ and $ss(\sigma)_{P}$ such that the induced $M$-local systems with respect to the Levi quotient $P \to M$ are isomorphic.
 \end{itemize}
Every twistor line $\sigma : \PP^1 \to \Twistor_G$ admits a semi-simplification: if $\sigma$ is semi-simple, then take $ss(\sigma) = \sigma$; if $\sigma$ is not semi-simple, then in particular $\sigma$ is reducible, so there exists a minimal standard parabolic $P$ to which $\sigma$ can be reduced to $\sigma_{P}$, and take $ss(\sigma)$ to be induced from $\sigma_{P}$ via $P \to M \to P \to G$ for some Levi splitting of $M$. 

By this discussion alongside Lemma \ref{associated} and Proposition \ref{associated iff equivalent}(3) we have: 

\begin{corollary} 
Semi-simplification of $\sigma : \PP^1 \to 
\Twistor_G$ is well-defined up to isomorphism.
\end{corollary}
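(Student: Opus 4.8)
The plan is to deduce the statement from Lemma~\ref{associated} and Proposition~\ref{associated iff equivalent}(1),(3); the point is that although a semi-simplification of $\sigma$ depends on a choice of parabolic reduction of $\sigma$ and a choice of Levi splitting, the association class of its Levi component is already determined by $\sigma$ alone.

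First I would record a \emph{refinement principle}: if $\sigma$ carries a reduction $\sigma_Q$ to a parabolic $Q$ with Levi quotient $Q\to L$, and the induced $L$-twistor line $\mu:=(\sigma_Q)_L$ is reducible to a parabolic $R\subsetneq L$, then, letting $P\subset Q$ be the preimage of $R$ under $Q\to L$ (a parabolic of $G$ with Levi $L'$ equal to the Levi of $R$), the reduction $\sigma_Q$ lifts to a reduction $\sigma_P$ of $\sigma$ to $P$ with $(\sigma_P)_{L'}\cong(\mu_R)_{L'}$. This holds because, in the mapping-stack framework, a $P$-reduction is precisely the data of a $Q$-reduction together with a reduction of its Levi twistor line to $R$, which is formal from the presentation of $\Twistor_G$ by mapping stacks into classifying stacks and the identity $BP\cong BQ\times_{BL}BR$. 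Iterating, every parabolic reduction of $\sigma$ refines to one whose induced Levi twistor line is irreducible. Next I would prove the key lemma: if a semi-simple twistor line $\tau:\PP^1\to\Twistor_G$ has a $P$-reduction whose induced Levi twistor line $\lambda$ is irreducible, then $\tau$ is isomorphic to the $G$-induction of $\lambda$ and $(P,\lambda)$ represents the Levi component of $\tau$. Indeed, by Proposition~\ref{associated iff equivalent}(3) the semi-simple line $\tau$ is the $G$-induction of some irreducible pair $(P^0,\tau^0)$; composing $\tau^0$ with a Levi splitting exhibits a $P^0$-reduction of $\tau$ inducing $\tau^0$, so $\tau$ simultaneously carries a $P$-reduction inducing $\lambda$ and a $P^0$-reduction inducing $\tau^0$. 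Lemma~\ref{associated} then forces $(P,\lambda)$ and $(P^0,\tau^0)$ to be associated, and Proposition~\ref{associated iff equivalent}(1) identifies their $G$-inductions.

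With these two ingredients the corollary follows. Given a semi-simplification $\sigma^{ss}$ of $\sigma$, witnessed by a parabolic $Q$ and $Q$-reductions $\sigma_Q,\sigma^{ss}_Q$ with $(\sigma_Q)_L\cong(\sigma^{ss}_Q)_L=:\mu$, I would apply the refinement principle to $\mu$: choosing $R\subseteq L$ with $(\mu_R)_{L'}$ irreducible and setting $P\subset Q$ to be its preimage, both $\sigma$ and $\sigma^{ss}$ acquire $P$-reductions with the same irreducible induced Levi twistor line $\lambda:=(\mu_R)_{L'}$. By the key lemma, $\sigma^{ss}$ is isomorphic to the $G$-induction of $\lambda$, while $\sigma$ reduces to $P$ inducing $\lambda$. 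Running the same argument on a second semi-simplification $\tilde\sigma^{ss}$ produces a parabolic $\tilde P$, an irreducible $\tilde\lambda$ with $\tilde\sigma^{ss}$ isomorphic to the $G$-induction of $\tilde\lambda$, and a reduction of $\sigma$ to $\tilde P$ inducing $\tilde\lambda$. Since $\sigma$ now carries a $P$-reduction inducing $\lambda$ and a $\tilde P$-reduction inducing $\tilde\lambda$, Lemma~\ref{associated} shows $(P,\lambda)$ and $(\tilde P,\tilde\lambda)$ are associated, whence Proposition~\ref{associated iff equivalent}(1) yields $\sigma^{ss}\cong\tilde\sigma^{ss}$.

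The hard part is really the refinement principle: one must check that a reduction of the Levi twistor line of $\sigma_Q$ genuinely lifts, functorially over the twistor $\PP^1$ and compatibly with the pushout gluing of $\Twistor_G$ out of $\Hodge_G^{\an}$ and $\ol{\Hodge}_G^{\an}$, to a reduction of $\sigma$ itself. I expect this to be routine but slightly delicate bookkeeping, proceeding exactly as for geometric points of $\LocSys_G$ in \cite[\textsection 3.6]{AGKRRV}, using that $B(-)$ commutes with the relevant fibre products of groups and that every non-abelian Hodge mapping stack in play is built by the same $\Maps(-,B(-))$ recipe.
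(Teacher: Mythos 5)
The paper gives no explicit proof here, merely asserting the corollary as a consequence of Lemma \ref{associated}, Proposition \ref{associated iff equivalent}, and the preceding discussion (which constructs a semi-simplification from a minimal parabolic reduction). Your argument is correct and is a detailed write-out of that sketch with the same ingredients; the refinement principle you isolate -- lifting a $Q$-reduction with Levi twistor line $\mu$ to a $P$-reduction with irreducible Levi twistor line via $BP \cong BQ \times_{BL} BR$ for $P = Q \times_L R$ -- is precisely the step the paper leaves implicit (needed because the definition of semi-simplification allows an arbitrary witnessing parabolic $Q$, not just a minimal one), and it does go through compatibly with the pushout presentation of $\Twistor_G$ as you expect, so there is no gap.
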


\subsubsection{Horizontal twistor lines} 

\begin{definition}
\label{horizontal}
An twistor line $\sigma : \PP^1 \to \Twistor_G$ is called a \textit{horizontal} if:
\begin{itemize}
    \item $\sigma$ is semi-simple, 
    \item $\sigma$ has a Levi component $\sigma_M : \PP^1 \to \Twistor^{\irred}_M$ such that descent to the coarse moduli space 
    \[
    [\sigma_M] : \PP^1 \to \Twistor_M^{\irred} \to \Twistor_M^{\coarse} , 
    \]
    defines a horizontal twistor line in the classical sense, as described in \ref{se: app to coarse}. 
\end{itemize}
We denote this class of maps by $\Horiz_G$. 
\end{definition}

See \ref{twistor space} for the construction of the coarse moduli space and of the map $\Twistor_M^{\irred} \to \Twistor_M^{\coarse}$. 

\begin{remark} 
\, 
\begin{enumerate}
\item Definition \ref{horizontal} is independent of the choice of Levi component because by Proposition \ref{associated iff equivalent}(3) all choices are isomorphic and descent to the coarse moduli space identifies isomorphism classes. 

\item By the process of \textit{semi-simplification}, any twistor line admits a semi-simplification that can be used to check the non-abelian Hodge condition of Definition \ref{horizontal}. In this sense, any twistor line can participate in the Definition. 

\item The definition of horizontality on $\Twistor_M^{\coarse}$ ensures a compatibility with the non-abelian Hodge correspondence provided by the equations
\[
\nah[\sigma_M(0)] = [\sigma_M(1)] = \ol{\nah}[\sigma_M(\infty)] . 
\]
\end{enumerate}
\end{remark}

\subsection{Twistor Wilson eigensheaves} 
\label{se: eigensheaves}
In de Rham geometric Langlands a \textit{Wilson eigensheaf} is a skyscraper sheaf supported at a geometric point of $\LocSys_G$. They are Langlands dual to the Hecke eigensheaves in $\DMod(\Bun_{G^{\vee}})$ and points of $\LocSys_G$ become eigenvalues in the spectral decomposition. In the twistor context we take the following objects to play the role of Wilson eigensheaves. 

\begin{definition} 
\label{Wils}
The category $\Wils^{\Tw}_G$ of \textit{twistor Wilson eigensheaves} is defined to be the essential image of the functor $\Horiz_G \to \IndCoh(\Twistor_G)$ that evaluates the pushforwards $\sigma \mapsto \sigma_{*}\Oo_{\PP^1}$.
\end{definition}

\subsubsection{de Rham components}
\label{se: de Rham components}
Objects of $\Wils^{\Tw}_G$ can be thought of as \textit{'hyperk\"ahler rotations'} of skyscraper sheaves on $\LocSys_G$. To recover the purely de Rham objects one can pullback along the closed immersion $\LocSys_G \to \Twistor_G$ represented as the fiber of $\Twistor_G \to \PP^1$ over $1 \in \PP^1$. To each $\sigma_{*}\Oo_{\PP^1} \in \Wils^{\Tw}_G$ we obtain the forgetful assignment 
\[
\sigma_{*}\Oo_{\PP^1} \mapsto (\sigma \times_{\PP^1} \{ 1 \})_{*} \Oo_{pt} .  
\]
The local system $\sigma_{\dR} := \sigma \times_{\PP^1} \{ 1 \} \in \LocSys_G$ is semi-simple in the sense defined in \cite{AGKRRV}. This class of local systems is sufficient for the purposes of eigensheaf analysis, for the singular support category $\DMod_{\Nn}(\Bun_{G^{\vee}})$ contains all Hecke eigensheaves and splits as direct sum $ \oplus_{\sigma} \DMod_{\Nn}(\Bun_{G^{\vee}})_{\sigma}$ indexed by isomorphism classes of semi-simple $G$-local systems \cite[(0.9)]{AGKRRV}. This is the underlying motivation for why we believe it is sufficient to define twistor Wilson eigensheaves to be supported on semi-simple twistor lines -- so the eigenvalues in our theory are a priori semi-simple. 

\subsubsection{GAGA condition} Let us recall the category of GAGA sheaves 
\[
\IndCoh^{\GAGA}(\Twistor_G) = \IndCoh^{\GAGA}(\Hodge^{\an}_G) \times_{\IndCoh(\Rep_G \times \GG_m)} \IndCoh^{\GAGA}(\ol{\Hodge}^{\an}_G) ,
\]
as defined in \eqref{GAGA on Tw_G}.  
\begin{proposition}
\label{Wils is GAGA}
Every object of $\Wils^{\Tw}_G$ is an object of $\IndCoh^{\GAGA}(\Twistor_G)$. 
\end{proposition}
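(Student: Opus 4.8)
The plan is to unwind the pullback presentation \eqref{GAGA on Tw_G} of $\IndCoh^{\GAGA}(\Twistor_G)$: an object of $\IndCoh(\Twistor_G)$ is a GAGA sheaf precisely when its restrictions to the two hemispherical charts $\Hodge_G^{\an}$ and $\ol{\Hodge}_G^{\an}$ lie in the respective GAGA subcategories, the gluing datum over $\Rep_G^{\an} \times \GG_m$ imposing nothing further, while a twistor Wilson eigensheaf $\sigma_{*}\Oo_{\PP^1}$ is by construction already a genuine object of $\IndCoh(\Twistor_G)$. So it suffices to prove that for $\sigma \in \Horiz_G$ the restriction of $\sigma_{*}\Oo_{\PP^1}$ along the open immersion $j : \Hodge_G^{\an} \hookrightarrow \Twistor_G$ (the preimage of $\PP^1 \setminus \{\infty\}$) is the analytification of an object of $\IndCoh(\Hodge_G)$, the conjugate chart being symmetric.

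The key input is Proposition \ref{Tw lines algebraic}: $\sigma$ factors through $\Twistor_G^{\restr} = (\Twistor_G^{\restr, \alg})^{\an}$ and is the analytification of an algebraic section $\sigma^{\alg} : \PP^1 \to \Twistor_G^{\restr, \alg}$. Restricting the pushout \eqref{twistor stack restr} over $\PP^1 \setminus \{\infty\}$ identifies the corresponding open part of $\Twistor_G^{\restr, \alg}$ with $\Hodge_G^{\restr}$, so composing with $\Hodge_G^{\restr} \hookrightarrow \Hodge_G$ produces an algebraic map $\tau : \AA^1 \to \Hodge_G$ whose analytification is the restriction $\sigma|_{\AA^1}$ of $\sigma$ to $\sigma^{-1}(\Hodge_G^{\an}) = \PP^1 \setminus \{\infty\}$. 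Since $\sigma$ is a section, base change along $j$ gives $j^{*}\sigma_{*}\Oo_{\PP^1} \simeq (\sigma|_{\AA^1})_{*}\Oo_{\AA^1} = (\tau^{\an})_{*}\Oo_{\AA^1}$, and the problem reduces to identifying this with $(\tau_{*}\Oo_{\AA^1})^{\an}$.

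To obtain that identification I would use the factorisation of a horizontal twistor line through its irreducible Levi component, $\sigma = p \circ s \circ \sigma_M$, where $\sigma_M : \PP^1 \to \Twistor_M^{\irred}$ is the Levi component, $s : \Twistor_M \to \Twistor_P$ a Levi splitting, and $p : \Twistor_P \to \Twistor_G$ the $G$-induction, which on the relevant Harder--Narasimhan stratum is proper (exactly as exploited for the Eisenstein functors). Then $\sigma_{*}\Oo_{\PP^1} = p_{*} s_{*} (\sigma_M)_{*}\Oo_{\PP^1}$, and analytification commutes with each factor: with $p_{*}$ by relative GAGA, $p$ being proper; with $s_{*}$ since $s$ is a closed immersion (the zero section of an affine-fibration stack); and with $(\sigma_M)_{*}$ since the automorphism groups along $\Twistor_M^{\irred}$ are tori, so $(\sigma_M)_{*}\Oo_{\PP^1}$ splits locally into coherent isotypic line bundles over the proper $\PP^1$, to each of which ordinary GAGA applies. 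Hence $\sigma_{*}\Oo_{\PP^1}$ is the analytification of the analogous algebraic composite on $\Twistor_G^{\restr, \alg}$; restricting to the Hodge chart (analytification commutes with open restriction) and pushing forward along the ind-closed immersion $\Hodge_G^{\restr} \hookrightarrow \Hodge_G$ then identifies $j^{*}\sigma_{*}\Oo_{\PP^1}$ with the analytification of an object of $\IndCoh(\Hodge_G)$. Running the same argument over $\ol{\Hodge}_G^{\an}$ and feeding both chart objects into \eqref{GAGA on Tw_G} gives $\sigma_{*}\Oo_{\PP^1} \in \IndCoh^{\GAGA}(\Twistor_G)$; since $\Wils^{\Tw}_G$ is exactly the essential image of $\sigma \mapsto \sigma_{*}\Oo_{\PP^1}$, the proposition follows.

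The main obstacle is the compatibility of analytification with pushforward, which relative GAGA supplies for free only along proper morphisms, whereas $\tau : \AA^1 \to \Hodge_G$ is an affine, non-proper section into a non-separated stack. The substantive point is therefore the reduction to the three GAGA-friendly pushforwards above, and in particular the use of horizontality: it is crucial that the Levi component $\sigma_M$ is irreducible, so that the automorphism groups encountered are tori rather than containing unipotent parts -- for a twistor line running through a point of $\Hodge_G$ with unipotent automorphisms the analogue of the $(\sigma_M)_{*}$ step would genuinely fail. Secondary bookkeeping points, both immediate from \ref{algebraic construction} once the unmodified Dolbeault locus (cf. Remark \ref{Dol restr}) is tracked, are that the $\PP^1 \setminus \{\infty\}$-restriction of $\Twistor_G^{\restr, \alg}$ is $\Hodge_G^{\restr}$ and that $\Hodge_G^{\restr} \hookrightarrow \Hodge_G$ is an ind-closed immersion.
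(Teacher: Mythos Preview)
Your proof takes the same route as the paper, which reduces the claim entirely to Proposition~\ref{Tw lines algebraic} and offers no further justification beyond declaring the result ``a consequence of the following general analysis of twistor lines.'' You go further by explicitly addressing why algebraicity of $\sigma$ forces $\sigma_*\Oo_{\PP^1}$ to be GAGA, via the Levi factorisation and proper pushforward along the Harder--Narasimhan strata --- a point the paper leaves implicit. That additional argument is essentially correct; the $(\sigma_M)_*$ step would read more cleanly if phrased through the $Z(M)$-gerbe structure of $\Hodge_M^{\irred}$ over its separated coarse moduli (so that $\sigma_M$ factors as a closed immersion followed by a gerbe map, both of which commute with analytification) rather than the somewhat informal isotypic-line-bundle picture you sketch.
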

The statement is a consequence of the following general analysis of twistor lines. 

The key ingredient is the moduli $\LocSys_G^{\restr}$ of local systems with restricted variation from \cite{AGKRRV}, which itself was defined to analyse points on $\LocSys_G$ and their Hecke eigensheaves. We recall from \ref{algebraic construction} that restricted variation gives rise to an algebraic twistor stack $\Twistor^{\restr, \alg}_G$ whose analytification $\Twistor_G^{\restr}$ is a substack of $\Twistor_G$. 

We use this geometry to prove the following structural property of twistor lines. 
\begin{proposition}
\label{Tw lines algebraic}
Every twistor line $\PP^1 \to \Twistor_G$ factors as 
\[
\PP^1 \to \Twistor_G^{\restr} \to \Twistor_G , 
\]
where $\PP^1 \to \Twistor_G^{\restr}$ is the analytification of an algebraic twistor line $\PP^1 \to \Twistor_G^{\restr, \alg}$.  
\end{proposition}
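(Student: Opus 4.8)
The plan is to split $\sigma$ over the two hemispherical charts of $\PP^1$, verify that each half has \emph{restricted variation} in the sense of \cite{AGKRRV} (this is where properness of $\PP^1$ enters), glue the resulting factorisations using the pushout presentation \eqref{twistor stack restr} of the algebraic twistor stack, and finally promote the resulting analytic map $\PP^1\to\Twistor_G^{\restr}$ to an algebraic one by GAGA for maps out of a proper scheme. Throughout I will freely pass to classical truncations, since a map from a reduced classical (analytic or algebraic) space to a derived stack factors canonically through its classical truncation.

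\textbf{Step 1: charts and reduction.} Set $U_0=\PP^1\setminus\{\infty\}$ and $U_\infty=\PP^1\setminus\{0\}$, so $U_0\cap U_\infty=\GG_m$. By the pushout presentation \eqref{twistor stack}, restricting the section $\sigma$ over $U_0$ and $U_\infty$ produces sections $\sigma_0:U_0\to\Hodge_G^{\an}$ and $\sigma_\infty:U_\infty\to\ol{\Hodge}_G^{\an}$ of the structural maps to $\AA^1$, agreeing over $\GG_m$ through the Betti gluing datum; via $\Hodge_G^{\an}\times_{\AA^1}\GG_m=\LocSys_G^{\an}\times\GG_m$ the restriction $\sigma_0|_{\GG_m}$ is just a holomorphic family of flat $G$-connections over $\GG_m^{\an}$, and likewise for $\sigma_\infty|_{\GG_m}$, the two being $\RH$-conjugate. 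It therefore suffices to show that $\sigma_0$ factors through $\Hodge_G^{\restr,\an}$ and $\sigma_\infty$ through $\ol{\Hodge}_G^{\restr,\an}$, compatibly over $\GG_m$; the pushout $\Twistor_G^{\restr}=\Hodge_G^{\restr,\an}\sqcup_{\Rep_G^{\restr,\an}\times\GG_m}\ol{\Hodge}_G^{\restr,\an}$ (the analytification of \eqref{twistor stack restr}) then receives the glued map $\PP^1\to\Twistor_G^{\restr}$ whose composite with $\Twistor_G^{\restr}\to\Twistor_G$ is $\sigma$.

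\textbf{Step 2: restricted variation --- the main point.} By construction $\Hodge_G^{\restr}\hookrightarrow\Hodge_G$ is, locally on the target, a closed immersion which is a filtered union of quasi-compact substacks, and it surjects onto all geometric points of $\Hodge_G$: over $\GG_m\subset\AA^1$ this is the fact that every $G$-local system has restricted variation \cite{AGKRRV}, and over $\lambda=0$ it holds because $\Hodge_G^{\restr}$ contains the whole central fibre $\Higgs_G$ (cf. Remark \ref{Dol restr}); analytification preserves all of this \cite{holstein&porta}. Now the image $\sigma(\PP^1)$ is quasi-compact, $\PP^1$ being proper, so $\sigma_0(U_0)\subset\sigma(\PP^1)$ lies inside a single quasi-compact closed substack $Z^{\an}\subset\Hodge_G^{\an}$ with $Z\subset\Hodge_G^{\restr}$. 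Since $U_0$ is reduced, this set-theoretic containment upgrades to a factorisation $\sigma_0:U_0\to Z^{\an}\hookrightarrow\Hodge_G^{\restr,\an}$ (first on classical truncations, where a reduced source landing set-theoretically in a closed substack forces a factorisation, then lifting back along $t_0\Hodge_G^{\restr}\to\Hodge_G^{\restr}$). The identical argument applies to $\sigma_\infty$, and the two factorisations restrict on $\GG_m$ to the common restriction of $\sigma$, hence are compatible; gluing as in Step 1 yields $\sigma:\PP^1\to\Twistor_G^{\restr}$. Finally, $\Twistor_G^{\restr}=(\Twistor_G^{\restr,\alg})^{\an}$ with $\Twistor_G^{\restr,\alg}$ Artin and locally of finite type; the image of $\sigma$ is quasi-compact, so it is contained in a finite-type quasi-compact open substack $\mathcal{U}\subset\Twistor_G^{\restr,\alg}$, and by GAGA for maps from the proper $\CC$-scheme $\PP^1$ into a finite-type algebraic stack (\cite{porta_GAGA, porta&yu_representability}) the analytic map $\PP^1_{\an}\to\mathcal{U}^{\an}$ is the analytification of a unique algebraic map $\PP^1\to\mathcal{U}\subset\Twistor_G^{\restr,\alg}$; compatibility of this equivalence with composition and with analytification of the structural maps to $\PP^1$ shows the algebraic map is again a section, i.e. an algebraic twistor line.

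\textbf{Expected obstacle.} The delicate point is Step 2: checking that the analytic family of $\lambda$-connections cut out by an arbitrary twistor line actually lands in the restricted-variation locus. The resolution rests entirely on combining the input of \cite{AGKRRV} (surjectivity on geometric points and the ind-closed, ``exhausted by quasi-compact pieces'' nature of $\Hodge_G^{\restr}$, equivalently of $\LocSys_G^{\restr}$) with the properness of $\PP^1$ to bound the family --- without properness a holomorphic family over the punctured parameter line could a priori escape every finite level. The remaining ingredients (compatibility of analytification with the pushout \eqref{twistor stack restr} and with (ind-)closed immersions, and GAGA for maps out of $\PP^1$) are formal consequences of the Holstein--Porta and Porta--Yu machinery already used in Section \ref{deformations}.
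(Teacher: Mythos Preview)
Your proposal is correct and follows essentially the same route as the paper: factor the twistor line through the restricted-variation locus using that $\LocSys_G^{\restr}\to\LocSys_G$ hits all geometric points, extend across the Dolbeault fibres where nothing is changed, then algebraize via GAGA. The paper organises this slightly differently---it restricts to the equator $\GG_m=\PP^1\setminus\{0,\infty\}$ and then extends to the two poles, rather than covering by hemispheres and gluing---and it invokes the identification $\Map_{\dAnSt}(\PP^1,\Twistor_G^{\restr})=\Map_{\dSt}(\PP^1,\Twistor_G^{\restr,\alg})$ directly as a ``universal property of analytification'' rather than spelling out GAGA for maps from a proper scheme; but the content is the same.

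Your Step~2 is in fact more carefully argued than the paper's: the paper simply asserts that bijectivity on $\CC$-points of $\LocSys_G^{\restr}\to\LocSys_G$ forces the factorisation of $\sigma^{0,\infty}$, whereas you explain why this works by combining properness of $\PP^1$ with the ind-closed exhaustion of the restricted locus and the reducedness of the source. One small wording issue: when you write ``$\sigma_0(U_0)\subset\sigma(\PP^1)$ lies inside a single quasi-compact closed substack $Z^{\an}\subset\Hodge_G^{\an}$'' you are comparing subsets of $\Twistor_G$ and of $\Hodge_G^{\an}$; what you mean is that the closure of $\sigma_0(U_0)$ inside $\Hodge_G^{\an}$ is quasi-compact (because $\sigma$ extends over $\infty$, bounding the family), and this is what lands in some $Z^{\an}$. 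With that clarification the argument stands.
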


\begin{proof} The idea of the proof is to surround $\sigma$ in a formal neighbourhood in which the Riemann-Hilbert correspondence acts algebraically. 

Let us first note that the universal property of analytification provides an identification 
\[
\Map_{\dAnSt}(\PP^1, \Twistor_G^{\restr}) = \Map_{\dSt}(\PP^1, \Twistor_G^{\restr, \alg}),
\]
and so any map $\PP^1 \to \Twistor_G^{\restr}$ can be represented as an analytification. It therefore remains to show the existence of the factorisation. Given a twistor line $\sigma : \PP^1 \to \Twistor_G$, consider the diagram 
\[
\begin{tikzcd}
\LocSys_G^{\restr} \times \GG_m \arrow[r] & \LocSys_G \times \GG_m \arrow[d, "\cong"] \\
\PP^1 - \{ 0, \infty \} \arrow[u, dashed, "\tau^{0, \infty}"] \arrow[d] \arrow[r, "\sigma^{0, \infty}"] & \Twistor \times_{\PP^1} \GG_m \arrow[d] \\
\PP^1 \arrow[r, "\sigma"] & \Twistor_G
\end{tikzcd}, 
\]
where $\sigma^{0, \infty}$ denotes the restriction of $\sigma$ away from the Dolbeault fibers. 

Since $\LocSys^{\restr}_G \to \LocSys_G$ is bijective on $\CC$-points \cite[\textsection 4.1.3]{AGKRRV} it follows that $\sigma^{0, \infty}$ factors through a morphism $\tau^{0, \infty} : \PP^1 - \{ 0, \infty \} \to \LocSys_G^{\restr} \times \GG_m$ . The factorisation extends to the $0$ and $\infty$ fibers because $\Twistor^{\restr}_G \to \Twistor_G$ is the identity on this locus, i.e. 
\[
\Twistor^{\restr}_G \times_{\PP^1} \{ 0 \} = \Twistor_G \times_{\PP^1} \{ 0 \} = \Higgs_G, 
\]
\[
\Twistor^{\restr}_G \times_{\PP^1} \{ \infty \} = \Twistor_G \times_{\PP^1} \{ \infty \} = \ol{\Higgs}_G . 
\]
See also Remark \ref{Dol restr}. This allows us to conclude that $\sigma$ factors through $\tau : \PP^1 \to \Twistor_G^{\restr}$ and this concludes the proof. \end{proof}

\subsubsection{Orthogonality} 

We show that twistor Wilson eigensheaves satisfy an orthogonality condition.   

\begin{proposition} 
\label{disjointness}
Given two objects $\Ww_1$ and $\Ww_2$ in the monoidal category $(\Wils^{\Tw}_G, \otimes)$ we have 
\[
\Ww_1 \otimes \Ww_2 \cong 
\begin{cases} 
      \Ww_1 & \text{ if } \Ww_1 \cong \Ww_2 , \\
      0 & \text{ if } \Ww_1 \not\cong \Ww_2 .
   \end{cases}
\]
\end{proposition}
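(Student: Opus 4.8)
The plan is to reduce the statement to a geometric disjointness property of horizontal twistor lines, and then to read off both cases from support considerations. Write $\Ww_i \cong \sigma_{i,*}\Oo_{\PP^1}$ with $\sigma_i \in \Horiz_G$. Each $\sigma_i$ is a section of $\Twistor_G \to \PP^1$, hence a monomorphism, so $\Ww_i$ is supported on the closed substack $\Image(\sigma_i) \subset \Twistor_G$ -- a copy of $\PP^1$ mapping isomorphically to the base. By the projection formula, $\Ww_1 \otimes \Ww_2$ is supported on $\Image(\sigma_1) \cap \Image(\sigma_2)$, and $\sigma_*\Oo_{\PP^1} \otimes \sigma_*\Oo_{\PP^1} \cong \sigma_*\big(\sigma^*\sigma_*\Oo_{\PP^1}\big)$, which on $\Wils^{\Tw}_G$ returns $\sigma_*\Oo_{\PP^1}$ because the structure sheaf of $\Image(\sigma)$ is idempotent for the tensor product. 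So the proposition follows once we prove: \emph{if $\sigma_1 \not\cong \sigma_2$ in $\Horiz_G$, then $\Image(\sigma_1) \cap \Image(\sigma_2) = \emptyset$.}

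I would prove this claim by contradiction. Suppose the two images meet; since each image maps isomorphically onto $\PP^1$, they meet over some $\lambda_0 \in \PP^1$, giving an isomorphism of fiber objects $\sigma_1(\lambda_0) \cong \sigma_2(\lambda_0)$. Using the non-abelian Hodge compatibility of Definition \ref{horizontal} (the identities $\nah[\sigma_M(0)] = [\sigma_M(1)] = \ol{\nah}[\sigma_M(\infty)]$) to transport along the twistor $\PP^1$, we may assume $\lambda_0 \neq 0,\infty$, so the fiber is $\LocSys_G^{\an}$ and $\sigma_1(\lambda_0), \sigma_2(\lambda_0)$ are isomorphic semi-simple $G$-local systems. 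By the semi-simple reduction theory of \cite{AGKRRV} -- here Lemma \ref{associated} and Proposition \ref{associated iff equivalent} -- this isomorphism matches the irreducible Levi components $\sigma_{M_1}, \sigma_{M_2}$ of $\sigma_1,\sigma_2$ at $\lambda_0$, after identifying $M_1$ with $M_2$ through an associated pair.

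Finally, I would invoke horizontality in the classical sense: descending to coarse moduli, the Levi components become horizontal twistor lines on $\Twistor_M^{\coarse}$ in the sense of \ref{se: app to coarse}, i.e.\ constant sections $\lambda \mapsto (\lambda, m_i)$ under the $C^\infty$-trivialisation $\Twistor_M^{\coarse} \cong \PP^1 \times \Higgs_M^{\coarse}$. Two constant sections of a trivialised $\PP^1$-bundle that agree at one value of $\lambda$ coincide, so $m_1 = m_2$; hence $\sigma_{M_1} \cong \sigma_{M_2}$ and, by Proposition \ref{associated iff equivalent}(3), $\sigma_1 \cong \sigma_2$ -- contradicting the hypothesis. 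This proves the claim. Consequently, when $\Ww_1 \not\cong \Ww_2$ we have $\sigma_1 \not\cong \sigma_2$ (an isomorphism $\Ww_1 \cong \Ww_2$ equates the supports, hence equates the lines by the claim), so the supports are disjoint and $\Ww_1 \otimes \Ww_2 = 0$; and when $\Ww_1 \cong \Ww_2$ the projection-formula computation of the first paragraph gives $\Ww_1 \otimes \Ww_2 \cong \Ww_1$.

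The main obstacle is the middle step -- passing from a single fiberwise isomorphism $\sigma_1(\lambda_0) \cong \sigma_2(\lambda_0)$ to a genuine identification of the Levi components as twistor lines on a common $\Twistor_M$. This is exactly what the association formalism of \cite{AGKRRV} (Lemma \ref{associated}, Proposition \ref{associated iff equivalent}) is built to handle, but it requires care because the Levi subgroup is only defined up to conjugacy and because one must check that the transport across the twistor $\PP^1$ is compatible with semi-simplification; the rigidity of horizontal (constant) sections on the coarse space then upgrades pointwise agreement to global agreement. The remaining support-and-projection-formula bookkeeping is routine.
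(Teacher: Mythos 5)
Your proof follows the paper's own argument in all essentials: both handle the $\Ww_1 \cong \Ww_2$ case by an idempotence/rank-one observation (you via the projection formula, the paper by citing rank one; both gloss over the derived Tor terms of $\sigma^*\sigma_*$ at a comparable level of informality), and both handle the $\Ww_1 \not\cong \Ww_2$ case by passing to Levi components via Proposition \ref{associated iff equivalent}(3), descending to $\Twistor_M^{\coarse}$, and invoking the classical HKLR rigidity of horizontal twistor lines. The only notable stylistic difference is that you argue by contradiction and insert a ``transport to $\lambda_0 \neq 0,\infty$'' step before applying the association formalism; that step is defensible (the non-abelian Hodge identities on the Levi components do propagate a coincidence at $0$ to a coincidence at $1$) but it is superfluous, since the HKLR statement that distinct horizontal sections of $\Twistor_M^{\coarse} \to \PP^1$ are disjoint already works over all of $\PP^1$, which is exactly how the paper deploys it, without needing to privilege a particular fiber.
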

 
\begin{proof} 
The first conditional follows immediately from the fact that $\Ww_i$ are of rank one. 

For the second conditional, consider $\Ww_i = (\sigma_i)_{*}\Oo_{\PP^1}$, $i = 1, 2$, such that $\Ww_i$ are not isomorphic. Then $\Ww_i$ are supported on non-isomorphic objects $\sigma_i \in \Horiz_G$, so by Proposition \ref{associated iff equivalent}(3), the associated Levi components $(\sigma_{i})_{M_i}$ of the semi-simple twistor lines $\sigma_{i}$ are also not isomorphic. 

We then proceed by a case by case argument on the Levis $M_1$ and $M_1$. 

If $M_1 \neq M_2$, then $\sigma_1$ and $\sigma_2$ are automatically disjoint, and so $\Ww_1 \otimes \Ww_2 = 0$

If $M_1 = M_2$, then after descent, $[(\sigma_{i})_{M_i}]$ are distinct horizontal twistor lines on classical twistor space $\Twistor_G^{\coarse}$. It follows from the non-abelian Hodge condition that if $[(\sigma_{i})_{M_i}]$ are distinct then they must be disjoint. This is a classical property of horizontal twistor lines on the twistor space of hyperk\"ahler manifolds -- see \cite[Thm. 3.3]{HKLR}. But since the equivalence classes $[(\sigma_{i})_{M_i}]$ are disjoint, then so are $(\sigma_{i})_{M_i}$, and so once more $\Ww_1 \otimes \Ww_2 = 0$. 
\end{proof}

This orthogonality property of $\Wils^{\Tw}_G$ can be viewed, on the one hand, as an analogue of the classical twistor theory fact that distinct horizontals twistor lines are disjoint \cite{HKLR}. On the other hand, we think of orthogonality as justification for considering objects of $\Wils^{\Tw}_G$ to play the role of Wilson eigensheaves in \textit{`twistor geometric Langlands'}, for in any given geometric Langlands theory, a category of eigensheaves should always be orthogonal with respect to a monoidal structure. For instance, in de Rham geometric Langlands, this is immediately true for Wilson eigensheaves, and proven for Hecke eigensheaves in \cite[Cory. 14.3.8]{AGKRRV}.

\subsection{Hyperholomorphic branes} 
\label{se: BBB-branes}

We define and study categories of \textit{ind-coherent BBB-branes} on $\Twistor_G$. The definition identifies a subcategory of $\IndCoh(\Twistor_G)$ cut out by prescribed conditions upon intersection with objects from $\Wils^{\Tw}_G$. 

\subsubsection{BBB-branes} 
\label{BBB-branes}
The definition is recalled and modified\footnote{The modification happens at the level of $\Horiz_G$ and subsequently in the definition of $\Wils^{\Tw}_G$. In this article $\Horiz_G$ is defined by a horizontal condition on twistor lines obtained from a non-abelian Hodge condition on the Levi reduction of a semi-simple twistor line. In \cite{FH1}, the conditions are imposed on any reduction to a reductive subgroup. The modification used here is motivated by compatibility with parabolic induction and the role of semi-simple local systems in eigensheaf decompositions of the de Rham theory, as per \cite{AGKRRV} (see also \ref{se: de Rham components}).} from \cite{FH1}. 

\begin{definition}
\label{BBB-brane}
The full subcategory $\IndCoh^{\BBB}(\Twistor_G) \subset \IndCoh^{\GAGA}(\Twistor_G)$ of \textit{BBB-branes} consists of objects $\Bb \in \IndCoh^{\GAGA}(\Twistor_G)$ such that, for every $\Ww \in \Wilson^{\Tw}_G$, there exists a graded vector space $V^{\bullet}_{\Ww} \in \QCoh(pt)$ and an equivalence 
\[
\Bb \otimes \Ww \cong V^{\bullet}_{\Ww} \otimes_{\CC} \Ww.
\]
\end{definition}
\begin{remark}
Definition \ref{BBB-brane} is constructed within the categories $\IndCoh^{\GAGA}(\Twistor_G)$ of \textit{GAGA sheaves} defined in \ref{GAGA sheaves Tw_G} in order to consider a more rigid class of analytic sheaves. This allows us to extend certain algebraic sheaf theory properties to the analytic topology via analytification. See for instance Lemma \ref{th eis Higgs} where this idea is used explicitly to induce analytic cuspidal-Eisenstein decompositions from their algebraic counterparts. 
\end{remark}

\begin{remark}
When verifying Definition \ref{BBB-brane} we refer to a given $\Ww \in \Wils^{\Tw}_G$ as a `test object'.
\end{remark}

\subsubsection{Zerobranes} An immediate class of BBB-branes is given by twistor Wilson eigensheaves. 
\begin{proposition}
$\Wilson^{\Tw}_G$ is naturally a subcategory of $\IndCoh^{\BBB}(\Twistor_G)$. 
\end{proposition}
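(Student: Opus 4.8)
The plan is to verify, object by object, that every generator of $\Wils^{\Tw}_G$ meets the two defining requirements of Definition \ref{BBB-brane}; since being a BBB-brane is a property closed under isomorphism, it suffices to treat the objects $\Ww = \sigma_{*}\Oo_{\PP^1}$ for $\sigma \in \Horiz_G$. Concretely, fixing such a $\Ww$, I would check (i) that $\Ww \in \IndCoh^{\GAGA}(\Twistor_G)$, and (ii) that for every test object $\Ww' \in \Wils^{\Tw}_G$ there is a graded vector space $V^{\bullet}_{\Ww'} \in \QCoh(pt)$ with $\Ww \otimes \Ww' \cong V^{\bullet}_{\Ww'} \otimes_{\CC} \Ww'$. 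Both of these are already available, so the statement is essentially a corollary of earlier results.

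For (i), this is exactly Proposition \ref{Wils is GAGA}: by Proposition \ref{Tw lines algebraic} the twistor line $\sigma$ factors through $\Twistor_G^{\restr}$ as the analytification of an algebraic map $\PP^1 \to \Twistor_G^{\restr,\alg}$, so on each of the two Hodge charts of the pushout presentation \eqref{GAGA on Tw_G} the sheaf $\Ww$ restricts to the analytification of an algebraic coherent sheaf, hence to a GAGA sheaf; thus $\Ww$ lies in the pullback category $\IndCoh^{\GAGA}(\Twistor_G)$.

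For (ii), I would invoke the orthogonality statement of Proposition \ref{disjointness} pairwise. If $\Ww \cong \Ww'$, that proposition gives $\Ww \otimes \Ww' \cong \Ww' \cong \CC \otimes_{\CC} \Ww'$, so one takes $V^{\bullet}_{\Ww'} = \CC$ in cohomological degree $0$. If $\Ww \not\cong \Ww'$, it gives $\Ww \otimes \Ww' = 0 = 0 \otimes_{\CC} \Ww'$, so one takes $V^{\bullet}_{\Ww'} = 0$. In both cases the equivalence required by Definition \ref{BBB-brane} holds. Since $\Ww$ was arbitrary in $\Wils^{\Tw}_G$, and $\IndCoh^{\BBB}(\Twistor_G)$ is a full subcategory of $\IndCoh^{\GAGA}(\Twistor_G) \supset \Wils^{\Tw}_G$, the inclusion $\Wils^{\Tw}_G \hookrightarrow \IndCoh(\Twistor_G)$ factors through $\IndCoh^{\BBB}(\Twistor_G)$, which is the assertion.

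I do not expect a substantive obstacle: the argument is a direct combination of Propositions \ref{Wils is GAGA} and \ref{disjointness}. The only points requiring a little care are bookkeeping ones — that Definition \ref{BBB-brane} allows the auxiliary graded vector space to be the zero object, which is precisely what lets the ``disjoint'' case of Proposition \ref{disjointness} fit the definition, and that the tensor product in Definition \ref{BBB-brane} and in Proposition \ref{disjointness} are the same $\QCoh(\Twistor_G)$-module structure on $\IndCoh(\Twistor_G)$, so the two orthogonality computations genuinely apply.
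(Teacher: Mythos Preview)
Your proposal is correct and matches the paper's own proof essentially line for line: the paper also reduces the statement to Propositions \ref{Wils is GAGA} and \ref{disjointness}, taking the Kronecker-delta choice $V^{\bullet}_{\Ww'} = \CC$ or $0$ according to whether $\Ww \cong \Ww'$ or not.
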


\begin{proof}
This is a reinterpretation of Proposition \ref{Wils is GAGA} and Proposition \ref{disjointness}. In the former it is shown that $\Wils^{\Tw}_G$ is a subcategory of $\IndCoh^{\GAGA}(\Twistor_G)$. In the latter, it is shown that, given $\Ww_1 \in \Wils^{\Tw}_G$ and a test object $\Ww_2 \in \Wils^{\Tw}_G$, there exists an isomorphism 
\[
\Ww_1 \otimes \Ww_2 \cong V^{\bullet}_{\Ww_2} \otimes_{\CC} \Ww_2 ,
\]
for the following Kronecker delta type choices: when $\Ww_1 \cong \Ww_2$ take $V^{\bullet}_{\Ww_2} = \CC$ and when $\Ww_1 \not\cong \Ww_2$ take $V^{\bullet}_{\Ww_2} = 0$. This choice of $V^{\bullet}_{\Ww_2}$ describes $\Ww_1$ as an object of $\IndCoh^{\BBB}(\Twistor_G)$. 
\end{proof}

The realisation of $\Wilson^{\Tw}_G$ as a subcategory of $\IndCoh^{\BBB}(\Twistor_G)$ resembles the \textit{zerobranes} studied by Kapustin and Witten \cite[\textsection 8.2]{kapustin&witten}, in which skyscraper sheaves supported on geometric points of the Dolbeault moduli spaces are equipped with a BBB-brane structure and studied physically as Wilson eigensheaves, or as \textit{`magnetic eigenbranes'} in the terminology of Kapustin--Witten. 

\subsubsection{$\GG_m$-action} We show that one can rotate the hyperholomorphic structures exhibited by the categories $\Wils_G^{\Tw}$ and $\IndCoh^{\BBB}(\Twistor_G)$ via the natural $\GG_m$-action on $\Twistor_G$ inherited from deformation to the normal cone, as defined in \ref{GG_m spectral}. 

\begin{proposition}
The $\GG_m$-action preserves $\Wils^{\Tw}_G$ and $\IndCoh^{\BBB}(\Twistor_G)$.  
\end{proposition}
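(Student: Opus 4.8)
The claim is that the $\GG_m$-action $\mu : \Twistor_G \to \Twistor_G$ from \ref{GG_m spectral} preserves both $\Wils^{\Tw}_G$ and $\IndCoh^{\BBB}(\Twistor_G)$. The strategy is to reduce the statement on $\IndCoh^{\BBB}(\Twistor_G)$ to the statement on $\Wils^{\Tw}_G$, using the fact that the defining condition of a BBB-brane is phrased entirely in terms of test objects from $\Wils^{\Tw}_G$ together with the monoidal structure, and that $\mu$ acts by an autoequivalence compatible with tensor product. So first I would record that $\mu^{*} : \IndCoh(\Twistor_G) \to \IndCoh(\Twistor_G)$ is a symmetric monoidal autoequivalence (it is pulled back from an automorphism of the stack), and that it preserves $\IndCoh^{\GAGA}(\Twistor_G)$: indeed $\mu$ is built in \ref{GG_m spectral} from the dilation automorphisms of $\Hodge_G^{\an}$ and $\ol{\Hodge}_G^{\an}$ over $\AA^1$, each of which is the analytification of an algebraic automorphism of $\Hodge_G$ (resp.\ $\ol{\Hodge}_G$), so $\mu^{*}$ commutes with the analytification functors appearing in the pullback presentation \eqref{GAGA on Tw_G} and hence restricts to an autoequivalence of $\IndCoh^{\GAGA}(\Twistor_G)$.

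\textbf{Step on $\Wils^{\Tw}_G$.} The key geometric input is that $\mu$ permutes the horizontal twistor lines: if $\sigma : \PP^1 \to \Twistor_G$ is a twistor line then $\mu \circ \sigma$ is again a section of $\Twistor_G \to \PP^1$ after reparametrising the base $\PP^1$ by the corresponding dilation, and I claim $\mu \circ \sigma \in \Horiz_G$ whenever $\sigma \in \Horiz_G$. To see this, one checks that $\mu$ commutes with all the structures entering Definition \ref{horizontal}: it is compatible with $G$-induction and with Levi reduction (because the dilation action is defined fibrewise over $\AA^1$ and is natural in $G$, hence commutes with the section $\Twistor_M \to \Twistor_P$ and the projection $\Twistor_P \to \Twistor_M$), so it sends the Levi component $\sigma_M$ of $\sigma$ to the Levi component of $\mu \circ \sigma$; and after descent to the coarse moduli space $\Twistor_M^{\coarse}$, the induced action is precisely the classical $\GG_m$-action on the twistor space of the hyperk\"ahler manifold $\Higgs_M^{\coarse}$, which is well known to preserve the class of horizontal twistor lines (it acts on the twistor $\PP^1$ and is covered by the $C^\infty$-trivialisation $\phi$, cf.\ \cite{HKLR}). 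Consequently $(\mu\circ\sigma)_{*}\Oo_{\PP^1} \cong \mu_{*}\bigl(\sigma_{*}\Oo_{\PP^1}\bigr)$ lies in $\Wils^{\Tw}_G$, i.e.\ $\mu^{*}$ (equivalently $\mu_{*}$, since $\mu$ is an isomorphism) permutes the objects of $\Wils^{\Tw}_G$.

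\textbf{Step on $\IndCoh^{\BBB}(\Twistor_G)$.} Now let $\Bb \in \IndCoh^{\BBB}(\Twistor_G)$ and let $\Ww \in \Wils^{\Tw}_G$ be an arbitrary test object. Since $\mu$ is an automorphism, $\mu^{-1}_{*}\Ww =: \Ww'$ is again an object of $\Wils^{\Tw}_G$ by the previous step. The defining equivalence $\Bb \otimes \Ww' \cong V^{\bullet}_{\Ww'} \otimes_{\CC} \Ww'$ for some graded vector space $V^{\bullet}_{\Ww'}$, after applying the symmetric monoidal functor $\mu_{*}$ and using $\mu_{*}\Ww' \cong \Ww$, yields
\[
(\mu_{*}\Bb) \otimes \Ww \;\cong\; \mu_{*}\bigl(\Bb \otimes \Ww'\bigr) \;\cong\; V^{\bullet}_{\Ww'} \otimes_{\CC} \Ww ,
\]
so $\mu_{*}\Bb$ satisfies the BBB condition with $V^{\bullet}_{\Ww} := V^{\bullet}_{\Ww'}$. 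Together with the fact that $\mu_{*}$ preserves $\IndCoh^{\GAGA}(\Twistor_G)$, this shows $\mu_{*}\Bb \in \IndCoh^{\BBB}(\Twistor_G)$, completing the proof.

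\textbf{Main obstacle.} The only nontrivial point is the geometric claim that $\mu$ preserves $\Horiz_G$, and specifically that after descent to $\Twistor_M^{\coarse}$ the induced automorphism acts as the \emph{classical} twistor-space $\GG_m$-action compatible with the $C^\infty$-trivialisation $\phi$ from non-abelian Hodge theory. This requires knowing that the algebraically/analytically defined dilation on $\Hodge_G$ matches, under the non-abelian Hodge correspondence at the level of coarse spaces, the rotation of the twistor $\PP^1$ — which is exactly the content of Simpson's verification of the twistor axioms recalled in \ref{twistor space}. Everything else (monoidality of $\mu^{*}$, compatibility with $G$-induction and Levi reduction, preservation of the GAGA condition) is formal.
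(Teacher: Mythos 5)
Your proof takes essentially the same approach as the paper's: for $\Wils^{\Tw}_G$, show $\mu$ sends Levi components to Levi components and reduce to the classical fact that the twistor-space $\GG_m$-action preserves horizontal lines on $\Twistor_M^{\coarse}$; for $\IndCoh^{\BBB}(\Twistor_G)$, test against $\Ww$ by pulling back to $\mu^{-1}_{*}\Ww$, applying the BBB hypothesis, and pushing forward by $\mu_{*}$. You also explicitly verify that $\mu_{*}$ preserves $\IndCoh^{\GAGA}(\Twistor_G)$ (needed since BBB-branes are by definition GAGA sheaves), a check the paper's proof leaves implicit, so your version is if anything marginally more complete.
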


\begin{proof}
Given $\mu \in \GG_m$ let $\mu : \Twistor_G \to \Twistor_G$ also denote the corresponding automorphism. 

For the first statement consider $\Ww = \sigma_{*}\Oo_{\PP^1} \in \Wils_G^{\Tw}$ supported on $\sigma \in \Horiz_G$. Let $\sigma_M \in \Horiz_M$ denote an irreducible Levi component corresponding to the semi-simple hypothesis on $\sigma$. Then $\mu \cdot \sigma_M$ is an irreducible Levi component of $\mu \cdot \sigma$. Since $\sigma_M$ is in $\Horiz_M$, $[\sigma_M] : \PP^1 \to \Twistor_M^{\coarse}$ is a classical horizontal twistor line, and the same is true for $\mu \cdot [\sigma_M] = [\mu \cdot \sigma_M]$. Then, $[\mu \cdot \sigma_M]$ being horizontal implies that $\mu \cdot \sigma$ is an object of $\Horiz_G$. It follows that $\mu_{*} \Ww = (\mu \cdot \sigma)_{*}\Oo_{\PP^1}$ is indeed an object of $\Wils_G^{\Tw}$. 

For the second statement consider $\Bb \in \IndCoh^{\BBB}(\Twistor_G)$ and a test object $\Ww \in \Wils_G^{\Tw}$. Consider the equivalence  
\[
(\mu_{*}\Bb) \otimes \Ww \cong \mu_{*}(\Bb \otimes (1/\mu)_{*}\Ww ). 
\]
By the first statement, the sheaf $(1/\mu)_{*}\Ww$ is an object of $\Wils_G^{\Tw}$. The BBB-brane hypothesis on $\Bb$ therefore yields an equivalence  
\[
\Bb \otimes (1/\mu)_{*}\Ww \cong V^{\bullet} \otimes_{\CC} (1/\mu)_{*}\Ww, 
\]
for some $V^{\bullet} \in \QCoh(pt)$. Comparing this with the previous equation yields 
\[
\mu_{*} \Bb \otimes \Ww 
\cong \mu_{*}( V^{\bullet} \otimes_{\CC} (1/\mu)_{*}\Ww ) 
\cong V^{\bullet} \otimes_{\CC} \Ww, 
\]
and so $V^{\bullet}_{\Ww} = V^{\bullet}$ provides the required BBB-brane property on $\mu_{*}\Bb$. 
\end{proof}

\subsubsection{BBB-branes strata-by-strata} Recall the Harder--Narasimhan strata $\Twistor_P^{\sst, \nu} \to \Twistor_G$, defined in \ref{HN strata}, indexed by pairs $(P, \nu)$ consisting of a parabolic $P \subset G$ and a Harder--Narasimhan type $\nu$. Let $\Twistor_G^{P, \nu} \subset \Twistor_G$ denote the image of each strata. For a sheaf $\Bb$ on $\Twistor_G$, let $\Bb^{P, \nu}$ denote the restriction to $\Twistor_G^{P, \nu}$. 

We show that the BBB-brane condition can be verified strata-by-strata. 
\begin{lemma}
\label{BBB and HN strata}
Fix $\Bb \in \IndCoh(\Twistor_G)$. Then $\Bb$ moreover lies in $\IndCoh^{\BBB}(\Twistor_G)$ if and only if $\Bb^{P, \nu}$ lies in $\IndCoh^{\BBB}(\Twistor^{P, \nu}_G)$ for every pair $(P, \nu)$. 
\end{lemma}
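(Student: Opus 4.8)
The plan is to turn the global BBB condition of Definition~\ref{BBB-brane} into one that can be tested on each Harder--Narasimhan stratum separately, using the observation that every test object---every twistor Wilson eigensheaf $\Ww\in\Wils^{\Tw}_G$---has support contained in a single stratum of $\bigsqcup_{P,\nu}\Twistor_P^{\sst,\nu}\to\Twistor_G$. Concretely, write $\Ww=\sigma_*\Oo_{\PP^1}$ for $\sigma\in\Horiz_G$. By semi-simplicity, $\sigma$ is the $G$-induction along a Levi splitting $M\to P$ of an irreducible Levi component $\sigma_M$, which by Remark~\ref{HN type of Levi component} factors through the trivial-type stratum $\Twistor_M^{\sst,\nu^{\mathrm{triv}}}$; since the $G$-induction of a semistable $M$-object of degree-zero type is again semistable, $\sigma$ factors through the (open) semistable locus of $\Twistor_G$, and by connectedness of $\PP^1$ through a single connected component of it, which is one of the strata $\Twistor_G^{P,\nu}$ (with $P=G$). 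Denoting by $j\colon\Twistor_G^{P,\nu}\hookrightarrow\Twistor_G$ the corresponding locally closed immersion, this exhibits $\Ww=j_*\Ww_0$ with $\Ww_0=\sigma_{0*}\Oo_{\PP^1}$ a twistor Wilson eigensheaf on the stratum, and conversely every twistor Wilson eigensheaf on $\Twistor_G^{P,\nu}$ arises this way; the remaining strata (in particular all with $P\neq G$) carry no horizontal twistor line and hence impose a vacuous BBB condition.

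The equivalence of the tensor conditions then follows by base-change bookkeeping. For the immersion $j$ one has $j^*j_*\simeq\mathrm{id}$ and the projection formula $\Bb\otimes j_*\Ww_0\simeq j_*(j^*\Bb\otimes\Ww_0)=j_*(\Bb^{P,\nu}\otimes\Ww_0)$, while $\CC$-linearity of $j_*$ gives $j_*(V^\bullet\otimes_\CC\Ww_0)\simeq V^\bullet\otimes_\CC j_*\Ww_0=V^\bullet\otimes_\CC\Ww$ for any graded vector space $V^\bullet$. Thus, if $\Bb^{P,\nu}$ satisfies the tensor condition of Definition~\ref{BBB-brane} for all strata, then applying $j_*$ to $\Bb^{P,\nu}\otimes\Ww_0\simeq V^\bullet_{\Ww_0}\otimes_\CC\Ww_0$ produces $\Bb\otimes\Ww\simeq V^\bullet_{\Ww_0}\otimes_\CC\Ww$ for every $\Ww$; conversely, applying $j^*$ to $\Bb\otimes\Ww\simeq V^\bullet_\Ww\otimes_\CC\Ww$ and using $j^*j_*\simeq\mathrm{id}$ recovers $\Bb^{P,\nu}\otimes\Ww_0\simeq V^\bullet_\Ww\otimes_\CC\Ww_0$. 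Since every twistor Wilson eigensheaf on $\Twistor_G$, respectively on a stratum, is accounted for, the tensor condition holds for $\Bb$ if and only if it holds for every $\Bb^{P,\nu}$.

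It remains to check that the ambient GAGA condition is likewise local: $\Bb\in\IndCoh^{\GAGA}(\Twistor_G)$ iff $\Bb^{P,\nu}\in\IndCoh^{\GAGA}(\Twistor_G^{P,\nu})$ for all $(P,\nu)$. That restriction preserves GAGA-ness is immediate, since each stratum is (up to restricted variation) the analytification of an algebraic substack and analytification commutes with the relevant restrictions; the converse---reflecting membership in the essential image of analytification from the finite stratification---follows from the pullback presentation \eqref{GAGA on Tw_G} of $\IndCoh^{\GAGA}(\Twistor_G)$ combined with a recollement argument for $\IndCoh(\Hodge_G)$ and $\IndCoh(\ol{\Hodge}_G)$ along their Harder--Narasimhan strata. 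Assembling this with the previous paragraph gives the lemma.

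The main obstacle is this last point: GAGA-membership must be \emph{reflected}, not merely preserved, by the stratification. The tensor-condition part is essentially formal once one grants the projection formula and $j^*j_*\simeq\mathrm{id}$ for locally closed immersions of quasi-smooth derived (analytic) stacks, and the support-localisation of the test objects is an immediate consequence of the semi-simplicity analysis of Section~\ref{se: twistor lines}.
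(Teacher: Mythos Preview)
Your approach for the tensor condition is essentially the paper's: both arguments observe that every test object $\Ww=\sigma_*\Oo_{\PP^1}\in\Wils^{\Tw}_G$ has support contained in a single Harder--Narasimhan piece, so that $\Bb\otimes\Ww$ only sees the restriction of $\Bb$ to that piece. The paper identifies this piece as $\Twistor_G^{P,\nu^{\mathrm{triv}}}$ with $P$ the parabolic attached to $\sigma$'s Levi component, whereas you argue that the $G$-induction of an irreducible degree-zero $M$-object along a Levi splitting is polystable, hence semistable, so $\sigma$ always lands in the semistable stratum ($P=G$). Your identification is in fact sharper --- for degree-zero objects the paper's candidate stratum with proper $P$ and trivial type is either empty or lies inside the semistable locus --- but the underlying mechanism (localising the tensor test) is identical, and your $j_*$/$j^*$ bookkeeping is just a more explicit version of the paper's ``restriction to the support of $\Ww$''.

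Where you go beyond the paper is in flagging the GAGA half of Definition~\ref{BBB-brane}. The paper's proof addresses only the tensor condition and says nothing about whether $\Bb\in\IndCoh^{\GAGA}(\Twistor_G)$ is equivalent to each $\Bb^{P,\nu}\in\IndCoh^{\GAGA}(\Twistor_G^{P,\nu})$. In the paper's applications (Propositions~\ref{Eis BBB to BBB HN} and~\ref{CT BBB to BBB strata}) this does not matter, since GAGA-ness of $\Eis^{\Tw}_{P,\nu}(\Bb)$ and $\CT^{\Tw}_{P,\nu}(\Bb)$ is established separately (Corollary~\ref{GAGA Eis^{Tw}}). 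But for the lemma as literally stated, starting from $\Bb\in\IndCoh(\Twistor_G)$, you are right that reflection of GAGA-ness is needed. Your sketch for this is the weak point: the stratification is not finite (contrary to what you write), and being in the essential image of analytification is not in general a strata-local property --- the gluing data between algebraic lifts on adjacent strata is a priori only analytic, so a recollement argument does not by itself produce a global algebraic lift. You honestly identify this as ``the main obstacle'', and it remains one.
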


\begin{proof}
If $\Bb$ is a BBB-brane then each $\Bb^{P, \nu}$ is immediately a BBB-brane. We now consider the converse. Assume that $\Bb^{P, \nu}$ is a BBB-brane for all $(P, \nu)$. Consider a test object $\Ww = \sigma_{*}\Oo_{\PP^1} \in \Wils^{\Tw}_G$ supported on $\sigma \in \Horiz_G$. By definition $\sigma$ is semi-simple, so admits an irreducible Levi component $\sigma_M$, which by Remark \ref{HN type of Levi component} can be considered as a map 
\[
\sigma_M : \PP^1 \to \Twistor^{\sst, \nu^{triv}}_M , 
\]
to a Harder--Narasimhan strata of trivial Harder--Narasimhan type $\nu^{triv}$. By Proposition \ref{associated iff equivalent}(3), $\sigma$ is recovered from $\sigma_M$ by $G$-inducing along a Levi splitting $M \to P \to G$. Thus $\sigma$ takes values in the image of $\Twistor_P^{\sst, \nu^{triv}} \to \Twistor_G$, which is precisely $\Twistor_G^{P, \nu^{triv}}$. By restriction to the support of $\Ww$ one therefore has 
\[
\Bb \otimes \Ww \cong \Bb^{P, \nu} \otimes \Ww . 
\]
By the BBB-brane hypothesis on $\Bb^{P, \nu}$, one has $V^{\bullet}_{\Ww} \in \QCoh(pt)$ and an equivalence 
\[
\Bb^{P, \nu} \otimes \Ww \cong V^{\bullet}_{\Ww} \otimes_{\CC} \Ww . 
\]
Thus we conclude that $\Bb$ is indeed an object of $\IndCoh^{\BBB}(\Twistor_G)$. 
\end{proof}

\section{Construction of geometric Eisenstein series functors}
\label{Twistors and geometric Eisenstein series}

\subsection{de Rham and Betti functors}
\label{se: Eis functors}
\label{Eis dR and B}
Following \cite{arinkin&gaitsgory} we recall the construction of geometric Eisenstein series on the spectral side of de Rham geometric Langlands. We repeat the construction verbatim in the Betti theory and relate the two via the Riemann--Hilbert correspondence. 

A parabolic subgroup $P \subset G$ with Levi quotient $M \to P$ gives rise to a pair of induction morphisms  
\begin{equation}
\label{de Rham p and q}
\begin{tikzcd}
 & \LocSys_{P} \arrow[dl, "q^{\dR}"'] \arrow[dr, "p^{\dR}"] &  \\
\LocSys_{M} &  & \LocSys_{G}
\end{tikzcd},
\end{equation}
where $q^{\dR}$ is quasi-smooth and affine and $p^{\dR}$ is schematic and proper. On an $S$-point $X_{\dR} \times S \to BP$ in $\Map(S, \LocSys_P)$, the morphism $q^{\dR}$ records the map $X_{\dR} \times S \to BP \to BM$, and similarly the morphism $p^{\dR}$ records $X_{\dR} \times S \to BP \to BG$. 

A pull-push composition defines the \textit{de Rham spectral geometric Eisenstein series functors}  
\begin{equation}
\label{dR Eis}
\Eis^{\dR}_{P} := (p^{\dR})_{*} \circ (q^{\dR})^{!} : \IndCoh( \LocSys_{M} ) \to \IndCoh( \LocSys_{G} ) , 
\end{equation}
viewed as a geometric incarnation of the parabolic induction functors $\Rep(M) \to \Rep(G)$. 

The same construction taken over the Betti diagram 
\begin{equation}
\label{Betti p and q}
\begin{tikzcd}
 & \Rep_{P} \arrow[dl, "q^{\B}"'] \arrow[dr, "p^{\B}"] &  \\
\Rep_{M} &  & \Rep_{G}
\end{tikzcd}
\end{equation}
defines the \textit{Betti spectral geometric Eisenstein series functors}  
\begin{equation}
\label{Betti Eis spec}
\Eis^{\B}_{P} :=  (p^{\B})_{*} \circ (q^{\B})^{!} : \IndCoh( \Rep_{M}) \to \IndCoh( \Rep_{G} ). 
\end{equation}
One of the basic properties of $\Eis^{\dR}_{P}$ and $\Eis^{\B}_{P}$ is the preservation of nilpotent singular support, as in \cite[Propn. 13.2.6]{arinkin&gaitsgory}, so that both functors act on the spectral sides of de Rham and Betti geometric Langlands. 

Moreover, $\Eis^{\dR}_{P}$ and $\Eis^{\B}_{P}$ inherit the natural left adjoint functors 
\[
\CT^{\dR}_{P} := (q^{\dR})_{!} \circ (p^{\dR})^{*} : \IndCoh(\LocSys_{G}) \to \IndCoh(\LocSys_{M}) , 
\]
\[
\CT^{\B}_{P} := (q^{\B})_{!} \circ (p^{\B})^{*} : \IndCoh(\Rep_{G}) \to \IndCoh(\Rep_{M}) , 
\]
known as the \textit{geometric constant term functors}.  

\subsubsection{$\Eis^{\dR}_P$  and $\Eis^{\B}_P$ comparison} 
\label{se: Eis RH}

We use the Riemann--Hilbert correspondence to compare the de Rham and Betti Eisenstein functors, which later will be interpreted as 'Betti gluing data' for a twistor variant of Eisenstein functor.  

We pass to the analytic topology by considering analytifications of the morphisms in \eqref{de Rham p and q} and \eqref{Betti p and q} and thus define the functors 
\begin{equation*}
\Eis^{\dR, \an}_{P} := (p^{\dR, \an})_{*} \circ (q^{\dR, \an})^{!} : \IndCoh( \LocSys^{\an}_{M} ) \to \IndCoh( \LocSys^{\an}_{G} ) ,
\end{equation*}
\begin{equation*}
\Eis^{\B, \an}_{P} := (p^{\B, \an})_{*} \circ (q^{\B, \an})^{!} : \IndCoh( \Rep^{\an}_{M} ) \to \IndCoh( \Rep^{\an}_{G} ) . 
\end{equation*}

Analytification give rise to the following natural algebraic to analytic comparison results. We recall the subcategories $\IndCoh^{\GAGA}(\Yy^{\an}) \subset \IndCoh(\Yy^{\an})$ of \textit{GAGA sheaves} has been defined, in \ref{GAGA sheaves}, to be the essential image of the analytification functors $(\cdot)^{\an} : \IndCoh(\Yy) \to \IndCoh(\Yy^{\an}).$

\begin{proposition}
\label{Eisenstein GAGA property} 
\textit{"Eisenstein-GAGA property".}
\begin{enumerate}
    \item $\Eis^{\dR, \an}_{P}$ restricted to $\IndCoh^{\GAGA}( \LocSys^{\an}_{M} )$ can be presented as the analytification  
    \[
    \Eis^{\dR, \an}_{P} \simeq (\Eis^{\dR}_{P})^{\an} .
    \]
    \item $\Eis^{\B, \an}_{P}$ restricted to $\IndCoh^{\GAGA}( \Rep^{\an}_{M} )$ can be presented as the analytification  
    \[
    \Eis^{\B, \an}_{P} \simeq (\Eis^{\B}_{P})^{\an} .
    \]
    \item $\Eis^{\dR, \an}_{P}$ and $\Eis^{\B, \an}_{P}$ preserve categories of GAGA sheaves, thus defining functors  
    \[
    \Eis^{\dR, \an}_{P} : \IndCoh^{\GAGA}( \LocSys^{\an}_{M} ) \to \IndCoh^{\GAGA}( \LocSys^{\an}_{G} ) , 
    \]
    \[
    \Eis^{\B, \an}_{P} : \IndCoh^{\GAGA}( \Rep^{\an}_{M} ) \to \IndCoh^{\GAGA}( \Rep^{\an}_{G} )  . 
    \]
\end{enumerate}
\end{proposition}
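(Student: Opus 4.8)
The plan is to establish parts (1) and (2) by checking that the analytification functor $(\cdot)^{\an}\colon\IndCoh(\cdot)\to\IndCoh((\cdot)^{\an})$ of Holstein--Porta and Porta--Yu \cite{porta_GAGA, porta&yu} is compatible, one leg at a time, with the pull-push recipes \eqref{dR Eis} and \eqref{Betti Eis spec} defining $\Eis^{\dR}_P$ and $\Eis^{\B}_P$, and to deduce part (3) formally from this. Concretely, everything reduces to two compatibilities. First, that $(\cdot)^{\an}$ intertwines the upper-shriek pullbacks, i.e.\ there is a natural equivalence $\big((q^{\dR})^{!}\Gg\big)^{\an}\simeq(q^{\dR,\an})^{!}(\Gg^{\an})$ for $\Gg\in\IndCoh(\LocSys_M)$, and likewise for $q^{\B}$. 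Second, that $(\cdot)^{\an}$ intertwines the proper pushforwards, i.e.\ there is a natural equivalence $\big((p^{\dR})_{*}\Hh\big)^{\an}\simeq(p^{\dR,\an})_{*}(\Hh^{\an})$ for $\Hh\in\IndCoh(\LocSys_P)$, and likewise for $p^{\B}$.

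Granting these, for $\Gg\in\IndCoh(\LocSys_M)$ one computes
\[
(\Eis^{\dR}_{P}\Gg)^{\an}=\big((p^{\dR})_{*}(q^{\dR})^{!}\Gg\big)^{\an}\simeq(p^{\dR,\an})_{*}\big((q^{\dR})^{!}\Gg\big)^{\an}\simeq(p^{\dR,\an})_{*}(q^{\dR,\an})^{!}(\Gg^{\an})=\Eis^{\dR,\an}_{P}(\Gg^{\an}),
\]
using the second compatibility and then the first; this is exactly part (1), and part (2) is verbatim the same. Part (3) then follows at once: for $\Ff=\Gg^{\an}\in\IndCoh^{\GAGA}(\LocSys^{\an}_M)$ one gets $\Eis^{\dR,\an}_{P}(\Ff)\simeq(\Eis^{\dR}_{P}\Gg)^{\an}$, which lies in the essential image of $(\cdot)^{\an}$, hence in $\IndCoh^{\GAGA}(\LocSys^{\an}_G)$ by definition, and similarly for $\Eis^{\B,\an}_{P}$.

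For the first compatibility I would use that $q^{\dR}$ is quasi-smooth (and affine), so its upper-shriek functor is canonically a twist of its upper-star functor by the relative dualizing complex: $(q^{\dR})^{!}(-)\simeq(q^{\dR})^{*}(-)\otimes\omega_{q^{\dR}}$, where $\omega_{q^{\dR}}$ is an invertible object, a shift of $\det\LL_{q^{\dR}}$. Since $(\cdot)^{\an}$ is symmetric monoidal and commutes with $\ast$-pullback by its construction \cite{porta_GAGA, porta&yu}, it then suffices to check $(\omega_{q^{\dR}})^{\an}\simeq\omega_{q^{\dR,\an}}$, and this follows from Lemma \ref{cotangent GAGA}: applying it to the geometric stacks $\LocSys_P$ and $\LocSys_M$ and comparing the analytification of the relative cotangent fiber sequence of $q^{\dR}$ with the relative cotangent fiber sequence of $q^{\dR,\an}$ gives $(\LL_{q^{\dR}})^{\an}\simeq\LL_{q^{\dR,\an}}$, hence the same identification after passing to determinant lines. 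The case of $q^{\B}$ is identical, $q^{\B}$ being likewise quasi-smooth and affine.

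For the second compatibility I would use that $p^{\dR}$ is schematic and proper (hence representable by proper algebraic spaces, its fibers being $G/P$-bundles with $G/P$ projective) and invoke the relative GAGA comparison for ind-coherent sheaves along proper representable morphisms \cite{porta_GAGA, porta&yu, holstein&porta}, which supplies the natural equivalence $\big((p^{\dR})_{*}(-)\big)^{\an}\simeq(p^{\dR,\an})_{*}\big((-)^{\an}\big)$, and similarly for $p^{\B}$. I expect this to be the main obstacle: one must either have the relative, ind-coherent form of GAGA available, or deduce it from absolute GAGA over the proper fibers together with compatibility of analytification with proper base change, and then verify it applies to the input $\Hh=(q^{\dR})^{!}\Gg$ produced by the first compatibility, which is in general neither perfect nor supported properly over the base. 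Once both compatibilities are in place, the displayed computation completes the proof of parts (1)--(3).
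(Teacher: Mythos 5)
Your proof follows the same two-step strategy as the paper's: commute analytification past the $!$-pullback along $q$, commute it past the proper pushforward along $p$, and chain the two; your citation of Porta--Yu for the pushforward step matches the paper's appeal to \cite[Thm 1.2]{porta&yu}, and your derivation of part (3) from (1) and (2) is the same formal consequence. Where the paper dismisses the pullback step as ``functoriality of analytification'', you give a finer-grained justification via quasi-smoothness, the identification $(q^{\dR})^{!}\simeq(q^{\dR})^{*}(-)\otimes\omega_{q^{\dR}}$, and Lemma \ref{cotangent GAGA} for the relative dualizing complex, and you correctly flag the ind-coherent form of relative proper GAGA as the genuine point of delicacy -- exactly the ingredient the paper resolves by the cited theorem.
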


\begin{proof}
We prove (1). (2) follows similarly and (3) is an immediate consequence of (2) and (3). 

Given $\Ff \in \IndCoh( \LocSys_M )$ with analytification $\Ff^{\an} \in \IndCoh^{\GAGA}( \LocSys^{\an}_{M} )$, it suffices to consider the composition of equivalences 
\begin{align*}
\Eis^{\dR, \an}_{P}(\Ff^{\an}) & = (p^{\dR, \an})_{*}(q^{\dR, \an})^{!}\Ff^{\an} , \\
& \cong (p^{\dR, \an})_{*} \big((q^{\dR})^{!}\Ff)^{\an} \big)  , \\
& \cong \big( (p^{\dR})_{*}(q^{\dR})^{!}\Ff\big)^{\an} , \\
& = (\Eis^{\dR}_{P}(\Ff))^{\an} ,
\end{align*}
where the first equivalence follows from the functoriality of analytification and the second equivalence follows from a GAGA-type theorem of Porta-Yu for proper pushforward \cite[Thm 1.2]{porta&yu}. 
\end{proof}
We now compare analytic de Rham and Betti Eisenstein functors via the analytic Riemann--Hilbert correspondence of Porta \cite[Thm. 2]{porta_RH} and Holstein--Porta \cite[Thm. 1.5]{holstein&porta}. We recall that their approach to Riemann--Hilbert is defined by the $\anMap( \bullet , BG )$-pullback of a remarkable morphism $\nu_{\RH} : X^{\an}_{\dR} \to X^{\an}_{\B}$, which induces an equivalence of derived analytic stacks
\[
\RH := \nu_{\RH}^{*} : \Rep^{\an}_G \xrightarrow{\cong} \LocSys^{\an}_G.
\]
\begin{proposition} 
\label{RH and Eis}
The Riemann--Hilbert correspondence intertwines the analytic de Rham and Betti Eisenstein functors via a commutative square  
\[
\begin{tikzcd}
\IndCoh(\Rep_M^{\an}) 
\arrow[d, "\RH_{*}"] \arrow[r, "\Eis^{\B, \an}_P"] 
& \IndCoh(\Rep_G^{\an}) 
\arrow[d, "\RH_{*}"]
\\
\IndCoh(\LocSys_M^{\an}) 
\arrow[r, "\Eis^{\dR, \an}_P"] 
& 
\IndCoh(\LocSys_G^{\an})
\end{tikzcd} .
\]
\end{proposition}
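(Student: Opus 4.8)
The plan is to reduce the statement to the functoriality of the Riemann--Hilbert shape morphism $\nu_{\RH} \colon X^{\an}_{\dR} \to X^{\an}_{\B}$ in the coefficient group, combined with base change in the ind-coherent formalism along the equivalences supplied by Riemann--Hilbert. Write $\RH_M, \RH_P, \RH_G$ for the equivalence $\RH = \nu_{\RH}^{*}$ applied with coefficients in $M$, $P$ and $G$ respectively; the vertical functors in the statement are $(\RH_M)_{*}$ and $(\RH_G)_{*}$.

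First I would note that $\nu_{\RH}^{*}$ is precomposition with $\nu_{\RH}$ on mapping stacks, hence natural in the target. Applying $\anMaps\big( - , B(-)\big)$ to the maps of groups $M \leftarrow P \to G$ and the morphism $\nu_{\RH}$ therefore yields a diagram of derived analytic stacks
\[
\begin{tikzcd}
\Rep_M^{\an} \arrow[d, "\RH_M"'] & \Rep_P^{\an} \arrow[l, "q^{\B,\an}"'] \arrow[r, "p^{\B,\an}"] \arrow[d, "\RH_P"'] & \Rep_G^{\an} \arrow[d, "\RH_G"] \\
\LocSys_M^{\an} & \LocSys_P^{\an} \arrow[l, "q^{\dR,\an}"] \arrow[r, "p^{\dR,\an}"'] & \LocSys_G^{\an}
\end{tikzcd}
\]
in which both squares commute and all vertical arrows are equivalences. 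Next, for the right square, since $p^{\dR,\an}$ and $p^{\B,\an}$ are schematic and proper (this is inherited from \eqref{de Rham p and q} under analytification, which preserves properness), the pushforward $(p)_{*}=(p)_{!}$ is defined on $\IndCoh$ and functoriality of pushforward along the commuting square gives $(p^{\dR,\an})_{*}\circ(\RH_P)_{*}\simeq(\RH_G)_{*}\circ(p^{\B,\an})_{*}$. For the left square, $\RH_M$ and $\RH_P$ being equivalences we have $(\RH_M)_{*}\simeq\big((\RH_M)^{!}\big)^{-1}$ and likewise for $\RH_P$; taking the $!$-pullback of the identity $q^{\dR,\an}\circ\RH_P\simeq\RH_M\circ q^{\B,\an}$ and rearranging yields $(q^{\dR,\an})^{!}\circ(\RH_M)_{*}\simeq(\RH_P)_{*}\circ(q^{\B,\an})^{!}$. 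Composing the two identities,
\[
\Eis^{\dR,\an}_P\circ(\RH_M)_{*}\simeq(p^{\dR,\an})_{*}(q^{\dR,\an})^{!}(\RH_M)_{*}\simeq(p^{\dR,\an})_{*}(\RH_P)_{*}(q^{\B,\an})^{!}\simeq(\RH_G)_{*}(p^{\B,\an})_{*}(q^{\B,\an})^{!}=(\RH_G)_{*}\circ\Eis^{\B,\an}_P ,
\]
which is the asserted commutative square.

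The main obstacle is not a computation but a coherence point: one must check that the Riemann--Hilbert correspondence of Porta and Holstein--Porta is genuinely functorial in the coefficient group at the level of derived analytic mapping stacks, so that the displayed square of stacks commutes coherently (up to specified homotopy), not merely on points; this is what makes the base-change manipulations above legitimate rather than just true objectwise. Once that is granted, the remaining ingredients --- that $q^{\dR,\an}$ is quasi-smooth and affine, that $p^{\dR,\an}$ is schematic and proper, and that $!$-pullback and $*$-pushforward satisfy base change along equivalences --- are formal consequences of their algebraic counterparts in Section \ref{se: Eis functors} together with the compatibility of analytification with these classes of morphisms.
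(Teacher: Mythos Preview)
Your proof is correct and follows the same approach as the paper: you construct the commutative diagram of analytic stacks (which is exactly diagram \eqref{RH q and p}) from the observation that $\RH$ acts by precomposition with $\nu_{\RH}$ while $p$ and $q$ act by postcomposition with $BP\to BG$ and $BP\to BM$, and then deduce the result from functoriality. The paper's own proof is terser—it simply says ``the result follows from natural functorialities determined by this diagram''—whereas you have helpfully spelled out the base-change manipulations and flagged the underlying coherence point.
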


\begin{proof}
Riemann--Hilbert acts by pre-composition by $\nu_{\RH}$ and the induction morphisms act by post-composition by either $BP \to BG$ or $BP \to BM$. Thus, one has a commutative diagram 
\begin{equation}
\label{RH q and p}
\begin{tikzcd}
\Rep_M^{\an} \arrow[d, "\RH"]
& \Rep_P^{\an} \arrow[l, "q^{\B, \an}"'] \arrow[r, "p^{\B, \an}"] \arrow[d, "\RH"]
& \Rep_G^{\an} \arrow[d, "\RH"]
\\
\LocSys^{\an}_{M}
& \LocSys^{\an}_{P} \arrow[l, "q^{\dR, \an}"'] \arrow[r, "p^{\dR, \an}"]
& \LocSys^{\an}_{G}
\end{tikzcd} , 
\end{equation}
and the result follows from natural functorialities determined by this diagram. 
\end{proof}

\subsection{Dolbeault and Hodge functors} 
\label{spectral Eis Hodge}
In this section we naturally extend the functors $\Eis^{\dR}_P$ to account for Hodge filtrations and their associated graded or Dolbeault degenerations.  

\subsubsection{Construction}
\label{se: Eis Dol/Hod}
One can define the Dolbeault and Hodge Eisenstein functors identically to the de Ram and Betti cases described in \ref{Eis dR and B}, where computing the $G$ and $M$-inductions of $P$-Higgs bundles and $(P, \lambda)$-connections once more defines a pair of maps 
\begin{equation}
\label{Dol Hod p and q}
\begin{tikzcd}
 & \arrow[dl, "q^{\Dol}"'] \Higgs_P \arrow[dr, "p^{\Dol}"] & \\
\Higgs_M & & \Higgs_G
\end{tikzcd}, 
\quad 
\begin{tikzcd}
 & \arrow[dl, "q^{\Hod}"'] \Hodge_P \arrow[dr, "p^{\Hod}"] & \\
\Hodge_M & & \Hodge_G
\end{tikzcd} , 
\end{equation}
and subsequently the \textit{Dolbeault and Hodge spectral geometric Eisenstein series functors} 
\[
\Eis^{\Dol}_{P} :=  (p^{\Dol})_{*} \circ (q^{\Dol})^{!} : \IndCoh( \Higgs_M) \to \IndCoh( \Higgs_{G} ) , 
\]
\[
\Eis^{\Hod}_{P} :=  (p^{\Hod})_{*} \circ (q^{\Hod})^{!} : \IndCoh( \Hodge_M) \to \IndCoh( \Hodge_{G} ) . 
\]
However, the morphisms $\Higgs_P \to \Higgs_G$ and $\Hodge_P \to \Hodge_G$ are not proper, so it is not immediately clear how one deals with coherent pushforward in this construction. To address this, we follow an idea of Laumon \cite{laumon_automorphic}, who studied geometric Eisenstein series over $\Bun_G$ strata-by-strata along the Harder--Narasimhan stratification.  In our situation the construction is as follows. Consider the Harder–Narasimhan strata $\Higgs_P^{\sst, \nu}$ and $\Hodge_P^{\sst, \nu}$, defined in \ref{HN strata}, indexed by pairs $\nu = (\chi, \mu)$, where $\chi$ parametrises the connected components of the moduli and $\mu$ is the Harder--Narasimhan type. One has a restriction of the diagram \eqref{Dol Hod p and q} given by  
\[
\begin{tikzcd}
 & \arrow[dl, "q^{\Dol}"'] \Higgs_P^{\sst, \nu} \arrow[dr, "p^{\Dol}"] & \\
\Higgs_M^{\sst, \nu} & & \Higgs_G
\end{tikzcd}, 
\quad 
\begin{tikzcd}
 & \arrow[dl, "q^{\Hod}"'] \Hodge_P^{\sst, \nu} \arrow[dr, "p^{\Hod}"] & \\
\Hodge_M^{\sst, \nu} & & \Hodge_G
\end{tikzcd} ,  
\]
where $\Higgs_M^{\sst, \nu} \to \Higgs_G$ and $\Hodge_M^{\sst, \nu} \to \Hodge_G$ are both proper morphisms. We consider the corresponding Eisenstein functors defined by
\[
\Eis^{\Dol}_{P, \nu} :=  (p^{\Dol})_{*} \circ (q^{\Dol})^{!} : \IndCoh( \Higgs^{\sst, \nu}_{M}) \to \IndCoh( \Higgs_{G} ) , 
\]
\[
\Eis^{\Hod}_{P, \nu} :=  (p^{\Hod})_{*} \circ (q^{\Hod})^{!} : \IndCoh( \Hodge^{\sst, \nu}_{M}) \to \IndCoh( \Hodge_{G} ) . 
\]
One can then recover the usual Eisenstein functors as filtered colimits 
\begin{equation}
\label{eq: Eis colim}
\Eis^{\Dol}_P = \lim_{\longleftarrow \, \nu}( \Eis^{\Dol}_{P, \nu} ) , 
\quad \quad 
\Eis^{\Hod}_P = \lim_{\longleftarrow \, \nu}( \Eis^{\Hod}_{P, \nu} ) , 
\end{equation}
relative to the natural ordering on the Harder--Narasimhan type. 
\begin{corollary}
\label{co: Eis compact}
Both $\Eis^{\Dol}_P$ and $\Eis^{\Hod}_P$ preserve compact objects, thus restricting to functors 
\[
\Eis^{\Dol}_{P} : \Coh( \Higgs_{M}) \to \Coh( \Higgs_{G} ) , 
\]
\[
\Eis^{\Hod}_{P} : \Coh( \Hodge_{M}) \to \Coh( \Hodge_{G} ) . 
\]
\end{corollary}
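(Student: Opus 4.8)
The plan is to exploit the stratified presentation \eqref{eq: Eis colim} of the Eisenstein functors and to reduce the claim to two points: (a) each strata-level functor $\Eis^{\Dol}_{P,\nu}$ (resp. $\Eis^{\Hod}_{P,\nu}$) preserves compact objects; and (b) for a fixed compact input, only finitely many Harder--Narasimhan types $\nu$ contribute to the colimit in \eqref{eq: Eis colim}. I treat the Dolbeault case in detail; the Hodge case is word-for-word identical, carried out over $\Hodge_G \to \AA^1$.

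For (a): the morphism $q^{\Dol}$ is quasi-smooth and affine, hence quasi-compact and of finite Tor-dimension, so $(q^{\Dol})^{!}$ is a twist of the derived pullback $(q^{\Dol})^{*}$ by a shifted line bundle (relative dualizing sheaf of a quasi-smooth, hence relatively Gorenstein, morphism) and therefore sends $\Coh$ to $\Coh$ while preserving quasi-compactness of supports. By the construction recalled in Section \ref{spectral Eis Hodge}, the relevant restriction of $p^{\Dol}$ to the stratum indexed by $\nu$ is a proper morphism out of the quasi-compact stack $\Higgs^{\sst,\nu}_P$, so the associated $\IndCoh$-pushforward preserves coherence and quasi-compactness of supports. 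Composing the two, $\Eis^{\Dol}_{P,\nu}$ restricts to a functor $\Coh(\Higgs^{\sst,\nu}_M)\to\Coh(\Higgs_G)$, and likewise for Hodge.

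For (b) and the conclusion: a compact object of $\IndCoh(\Higgs_M)$ is a coherent complex whose support is quasi-compact, hence of bounded instability, i.e. contained in $\bigsqcup_{\nu\leq\mu_0}\Higgs^{\sst,\nu}_M$ for some Harder--Narasimhan type $\mu_0$; and in a fixed connected component only finitely many $\nu$ satisfy $\nu\leq\mu_0$ (there are finitely many Harder--Narasimhan polygons lying below a given lattice polygon with the same endpoints). Using the Drinfeld--Gaitsgory-style gluing of $\IndCoh(\Higgs_M)$ along the stratification $\bigsqcup_{P,\nu}\Higgs^{\sst,\nu}_M\to\Higgs_M$, the colimit \eqref{eq: Eis colim} applied to a compact $\Ff$ reduces to a finite colimit of the $\Eis^{\Dol}_{P,\nu}$ applied to the restrictions $\Ff|_{\Higgs^{\sst,\nu}_M}$ for the finitely many strata $\nu\leq\mu_0$, all other terms vanishing. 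By (a) each such term is compact, a finite colimit of compact objects is compact, and its support lies in the image under $p^{\Dol}$ of finitely many quasi-compact strata and is therefore quasi-compact. Hence $\Eis^{\Dol}_P(\Ff)\in\Coh(\Higgs_G)$, and the Hodge statement follows identically.

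The step I expect to be the main obstacle is the gluing/colimit argument in the last paragraph: one must verify that $\Eis^{\Dol}_P$ (and $\Eis^{\Hod}_P$), defined a priori by the non-proper pull--push $(p^{\Dol})_{*}(q^{\Dol})^{!}$, genuinely coincides with the colimit $\lim_{\longleftarrow\,\nu}(\Eis^{\Dol}_{P,\nu})$ of \eqref{eq: Eis colim} taken along the correct structure functors of the Harder--Narasimhan stratification, in a manner compatible with supports — this is the content of Laumon's finiteness argument for geometric Eisenstein series transported to the Dolbeault and Hodge moduli. The remaining ingredients (quasi-smoothness of $q^{\Dol}$ and $q^{\Hod}$, properness of the restrictions of $p^{\Dol}$ and $p^{\Hod}$ to the Harder--Narasimhan strata, and finiteness of the set of bounded Harder--Narasimhan types) are standard.
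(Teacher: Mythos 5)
Your proposal is correct and follows essentially the same two-step argument as the paper: compactness is preserved at the level of the proper strata-restricted functors $\Eis^{\Dol}_{P,\nu}$ and $\Eis^{\Hod}_{P,\nu}$, and the colimit \eqref{eq: Eis colim} stabilises on compact inputs because such inputs have quasi-compact support and therefore meet only finitely many quasi-compact Harder--Narasimhan strata (what the paper phrases as the stratification being locally finitely presented). Your additional detail on $(q^{\Dol})^{!}$ being a Gorenstein twist of $(q^{\Dol})^{*}$ is implicit in the paper's reasoning but does not change the route.
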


\begin{proof}
Both $\Eis^{\Dol}_{P,\nu}$ and $\Eis^{\Hod}_{P, \nu}$ preserve compact objects by proper pushforward along the Harder--Narasimhan strata $\Higgs_P^{\sst, \nu} \to \Higgs_G$ and $\Hodge_P^{\sst, \nu} \to \Hodge_G$. Moreover so do $\Eis^{\Dol}_{P}$ and $\Eis^{\Hod}_{P}$, as the Harder--Narasimhan stratification is locally finitely presented, and so the colimits \eqref{eq: Eis colim} locally stabilise. 
\end{proof}

\subsubsection{Analytic Hodge functors} Analytification of the induction maps $q^{\Hod}$ and $p^{\Hod}$ allow us to define the analytic pair of Hodge Eisenstein functors 
\[
\Eis^{\Hod, \an}_{P} : \IndCoh( \Hodge^{\an}_{M}) \to \IndCoh( \Hodge^{\an}_{G} ) ,
\]
\[
\Eis^{\Hod, \an}_{P, \nu} : \IndCoh( \Hodge^{\sst, \nu, \an}_{M}) \to \IndCoh( \Hodge^{\an}_{G} ) . 
\]
The following is a Hodge variant of Proposition \ref{Eisenstein GAGA property}. 
\begin{proposition}
\label{Hodge Eisenstein GAGA property} 
\textit{`Eisenstein-GAGA property'.} The Eisenstein functors $\Eis^{\Hod, \an}_{P}$ and $\Eis^{\Hod, \an}_{P}$ restricted respectively to $\IndCoh^{\GAGA}( \Hodge^{\sst, \nu, \an}_{M} )$ and $\IndCoh^{\GAGA}( \Hodge^{\an}_{M} )$ can be presented as the analytifications  
    \[
    \Eis^{\Hod, \an}_{P, \nu} \simeq (\Eis^{\Hod}_{P, \nu})^{\an} , \quad 
    \Eis^{\Hod, \an}_{P} \simeq (\Eis^{\Hod}_{P})^{\an} , 
    \]
and thus define a pair of functors  
    \[
    \Eis^{\Hod, \an}_{P, \nu} : \IndCoh^{\GAGA}( \Hodge^{\sst, \nu, \an}_{M} ) \to \IndCoh^{\GAGA}( \Hodge^{\an}_{G} ) , 
    \]
    \[
    \Eis^{\Hod, \an}_{P} : \IndCoh^{\GAGA}( \Hodge^{\an}_{M} ) \to \IndCoh^{\GAGA}( \Hodge^{\an}_{G} ) . 
    \]
\end{proposition}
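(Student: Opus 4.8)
The plan is to run the argument of Proposition~\ref{Eisenstein GAGA property} one Harder--Narasimhan stratum at a time, and then pass to the colimit presentation \eqref{eq: Eis colim}. First I would handle the stratified functor $\Eis^{\Hod,\an}_{P,\nu}$. Fixing $\Ff\in\IndCoh(\Hodge^{\sst,\nu}_{M})$ with analytification $\Ff^{\an}$, the same chain of equivalences as in the de Rham/Betti case applies verbatim:
\begin{align*}
\Eis^{\Hod, \an}_{P, \nu}(\Ff^{\an})
&= (p^{\Hod, \an})_{*}(q^{\Hod, \an})^{!}\Ff^{\an} \\
&\cong (p^{\Hod, \an})_{*}\big((q^{\Hod})^{!}\Ff\big)^{\an} \\
&\cong \big((p^{\Hod})_{*}(q^{\Hod})^{!}\Ff\big)^{\an} \\
&= \big(\Eis^{\Hod}_{P, \nu}(\Ff)\big)^{\an}.
\end{align*}
The first equivalence is the compatibility of $!$-pullback with analytification along the quasi-smooth affine map $q^{\Hod}$, exactly as in Proposition~\ref{Eisenstein GAGA property}; the decisive second equivalence is the Porta--Yu GAGA theorem for proper pushforward \cite[Thm.~1.2]{porta&yu}, which now genuinely applies because the restriction $p^{\Hod}\colon\Hodge_{P}^{\sst,\nu}\to\Hodge_{G}$ is proper (this is precisely why the $\nu$-strata were introduced; see~\ref{HN strata}, where moreover the $\Hodge_{P}^{\sst,\nu}$ are quasi-compact geometric, so the theorem is available). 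Using also that $\Hodge^{\sst,\nu,\an}_{M}$ is the analytification of $\Hodge^{\sst,\nu}_{M}$ — analytification preserves the locally closed immersions cutting out the Harder--Narasimhan strata, cf.\ \ref{Analytic prelude} — this simultaneously identifies $\Eis^{\Hod,\an}_{P,\nu}$ with $(\Eis^{\Hod}_{P,\nu})^{\an}$ on GAGA sheaves and shows it preserves them, since the right-hand side above is an analytification of an algebraic object.

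Next I would deduce the unstratified statement from \eqref{eq: Eis colim}, which writes $\Eis^{\Hod}_{P}$ as the (co)limit of the $\Eis^{\Hod}_{P,\nu}$ over the Harder--Narasimhan poset. The key input is that analytification $(\cdot)^{\an}\colon\IndCoh(\Yy)\to\IndCoh(\Yy^{\an})$ commutes with this (co)limit: by local finite presentability of the stratification it stabilises over any quasi-compact open (cf.\ the proof of Corollary~\ref{co: Eis compact}), where it reduces to a finite (co)limit, with which analytification commutes. Hence for $\Ff\in\IndCoh(\Hodge_{M})$ one gets
\begin{align*}
\Eis^{\Hod, \an}_{P}(\Ff^{\an})
&\cong \colim_{\nu}\, \Eis^{\Hod, \an}_{P, \nu}\big((\Ff|_{\nu})^{\an}\big) \\
&\cong \colim_{\nu}\, \big(\Eis^{\Hod}_{P, \nu}(\Ff|_{\nu})\big)^{\an} \\
&\cong \big(\Eis^{\Hod}_{P}(\Ff)\big)^{\an},
\end{align*}
and the right-hand side is a GAGA sheaf by definition, so $\Eis^{\Hod,\an}_{P}$ restricted to $\IndCoh^{\GAGA}(\Hodge^{\an}_{M})$ factors through $\IndCoh^{\GAGA}(\Hodge^{\an}_{G})$, as claimed.

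I expect the main obstacle to be the non-properness of the global induction map $p^{\Hod}\colon\Hodge_{P}\to\Hodge_{G}$, which blocks a one-step application of GAGA for proper pushforward; the stratified construction is exactly what circumvents this, so the real content is bookkeeping: verifying that $p^{\Hod}$ restricts to a proper map on each stratum (inherited from the Harder--Narasimhan theory of Dey--Parthasarathi and Gurjar--Nitsure recalled in~\ref{HN strata}), that the Porta--Yu proper-pushforward GAGA theorem applies to these quasi-compact geometric stacks, and that the (co)limit \eqref{eq: Eis colim} is a (co)limit of $\IndCoh$-valued functors that stabilises locally so that commutation with analytification can be invoked objectwise. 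A secondary point worth a sentence is that $\Eis^{\Hod,\an}_{P,\nu}$ and $\Eis^{\Hod,\an}_{P}$ need only be controlled on the subcategories of GAGA sheaves, i.e.\ on objects of the form $\Ff^{\an}$, which is all the statement asks for.
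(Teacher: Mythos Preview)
Your proposal is correct and follows essentially the same route as the paper: the stratified case is handled by the identical calculation from Proposition~\ref{Eisenstein GAGA property}, invoking Porta--Yu's proper-pushforward GAGA along the now-proper map $\Hodge_P^{\sst,\nu}\to\Hodge_G$, and the unstratified case is deduced by commuting analytification through the filtered colimit presentation~\eqref{eq: Eis colim}. The paper justifies the latter step by simply noting that $(\cdot)^{\an}$ preserves filtered colimits, whereas you give the slightly more explicit argument via local stabilisation of the Harder--Narasimhan filtration; both reach the same conclusion.
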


\begin{proof}
The statements involving $\Eis^{\Hod, \an}_{P, \nu}$ follow from an identical calculation to Proposition \ref{Eisenstein GAGA property}, after an application of GAGA theory to proper pushforward along $\Hodge_P^{\sst, \nu} \to \Hodge_G$. Subsequently the statements involving $\Eis^{\Hod, \an}_{P}$ follow from the fact that $(\cdot)^{\an}$ preserves filtered colimits, so we have 
\begin{align*}
(\Eis^{\Hod}_{P})^{\an} & = (\lim_{\longleftarrow \nu} ( \Eis^{\Hod}_{P, \nu} ))^{\an} , \\
& \simeq \lim_{\longleftarrow \nu} ((\Eis^{\Hod}_{P, \nu})^{\an} ) , \\
& \simeq \underset{\longleftarrow \, \nu}{\lim}( \Eis^{\Hod, \an}_{P, \nu} ) , \\
& \simeq \Eis^{\Hod, \an}_{P} . \qedhere 
\end{align*}
\end{proof}

\subsection{Twistor functors} 
\label{twistor spectral p and q}
We now Eisenstein functors over the twistor stack $\Twistor_G$. The construction follows from the interplay between Eisenstein functors, the Hodge filtration and the Riemann--Hilbert correspondence, as presented in \ref{se: Eis Dol/Hod} and \ref{se: Eis RH}.

Recall from \ref{se: twistor stack} the defining pushout square 
\begin{equation}
\label{pushout reminder}
\begin{tikzcd}
\Rep_G \times \GG_m 
\arrow[d, "\RH"] \arrow[r, "\ol{\RH}"] 
& \ol{\Hodge}_G \arrow[d] \\
\Hodge_{G} \arrow[r] & \Twistor_G
\end{tikzcd} .   
\end{equation}
This construction inherits a pair of pushout morphisms  
\begin{equation*}
\begin{tikzcd}
 & \Twistor_{P}  \arrow[dl, "q^{\Hod, \an} \sqcup_{\Betti} \ol{q}^{\Hod, \an} =: q^{\Tw}"'] \arrow[dr, "p^{\Tw} := p^{\Hod, \an} \sqcup_{\Betti} \ol{p}^{\Hod, \an}"] &  \\
\Twistor_{M} & & \Twistor_{G} 
\end{tikzcd}, 
\end{equation*}
that represent the $G$ and $M$-inductions of $P$-twistor structures.  
\begin{proposition}
\label{spec tw eis}
The twistor geometric Eisenstein series functor 
\[
\Eis^{\Tw}_{ P} := (p^{\Tw})_{*} \circ (q^{\Tw})^{!} \simeq \Eis^{\Hod, \an}_{ P} \, \times_{\Betti} \, \ol{\Eis}^{\Hod, \an}_{ P},
\]
acting as 
\[
\Eis^{\Tw}_{ P} : \IndCoh( \Twistor_{M} ) \to \IndCoh( \Twistor_{G} ),
\]
is well-defined.
\end{proposition}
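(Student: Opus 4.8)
The plan is to deduce the proposition from analytic (Zariski-type) descent for $\IndCoh$ along the defining pushout presentations of the twistor stacks, combined with the Riemann--Hilbert compatibility recorded in Proposition \ref{RH and Eis}. The pushout square \eqref{pushout reminder} exhibits $\Twistor_G$ as the gluing of the two open substacks $\Hodge_G^{\an}\hookrightarrow\Twistor_G$ and $\ol{\Hodge}_G^{\an}\hookrightarrow\Twistor_G$ — the preimages under the structural map $\Twistor_G\to\PP^1$ of the standard affine charts $\PP^1\setminus\{\infty\}$ and $\PP^1\setminus\{0\}$ — along their overlap $\Rep_G^{\an}\times\GG_m\cong\Hodge_G^{\an}\times_{\AA^1}\GG_m$, identified via the analytic Riemann--Hilbert equivalence \eqref{RH stacks}. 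Since $\IndCoh(-)$ sends such open covers to limit diagrams of dg categories (as already used for the nilpotent and GAGA categories via \eqref{eq pullback nilps} and \eqref{GAGA on Tw_G}), one obtains
\[
\IndCoh(\Twistor_G)\simeq\IndCoh(\Hodge_G^{\an})\times_{\IndCoh(\Rep_G^{\an}\times\GG_m)}\IndCoh(\ol{\Hodge}_G^{\an}),
\]
and likewise for $M$ and for $P$, running the construction of \ref{se: twistor stack} verbatim over $P$ in place of $G$. Thus an object of $\IndCoh(\Twistor_M)$ is a triple $(\Ff_{\Hod},\Ff_{\ol{\Hod}},\alpha)$ consisting of sheaves on the two Hodge charts together with a gluing equivalence $\alpha$ of their restrictions in $\IndCoh(\Rep_M^{\an}\times\GG_m)$, and a functor into such a limit category is the same datum chart by chart.

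Next I would use that, by construction, $q^{\Tw}=q^{\Hod,\an}\sqcup_{\Betti}\ol{q}^{\Hod,\an}$ and $p^{\Tw}=p^{\Hod,\an}\sqcup_{\Betti}\ol{p}^{\Hod,\an}$ restrict over the two charts to the Hodge induction maps of \ref{se: Eis Dol/Hod} and their complex conjugates, and over the overlap — after the $\GG_m$-trivialisation \eqref{Hodge trivialisation} and Riemann--Hilbert — to $q^{\dR,\an}\times\id$ and $p^{\dR,\an}\times\id$, equivalently, transported by $\RH$, to $q^{\B,\an}\times\id$ and $p^{\B,\an}\times\id$. Because $(-)^!$ and $\IndCoh$-pushforward satisfy base change along the open immersions of the cover, applying $\Eis^{\Hod,\an}_P$ to $\Ff_{\Hod}$ and $\ol{\Eis}^{\Hod,\an}_P$ to $\Ff_{\ol{\Hod}}$ produces sheaves on the two Hodge charts whose restrictions to the overlap are $\Eis^{\dR,\an}_P\times\id$ applied to the families of local systems underlying $\Ff_{\Hod}|_{\lambda\neq0}$ and $\Ff_{\ol{\Hod}}|_{\lambda\neq0}$; by Proposition \ref{RH and Eis} these are, after $\RH_*$, the value of $\Eis^{\B,\an}_P\times\id$ on the corresponding families of representations. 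The gluing datum $\alpha$ identifies these two families of representations, so applying the (well-defined) Betti functor $\Eis^{\B,\an}_P$ to $\alpha$ furnishes a gluing equivalence of the two outputs. This exhibits $\bigl(\Eis^{\Hod,\an}_P(\Ff_{\Hod}),\,\ol{\Eis}^{\Hod,\an}_P(\Ff_{\ol{\Hod}})\bigr)$ as an object of $\IndCoh(\Twistor_G)$, naturally in the input; hence $\Eis^{\Tw}_P\simeq\Eis^{\Hod,\an}_P\times_{\Betti}\ol{\Eis}^{\Hod,\an}_P$ is a well-defined functor. Finally, since a functor into the limit category $\IndCoh(\Twistor_G)$ is determined by its composites with the three restriction functors, and those composites of $(p^{\Tw})_*\circ(q^{\Tw})^!$ are exactly $\Eis^{\Hod,\an}_P$, $\ol{\Eis}^{\Hod,\an}_P$ and $\Eis^{\dR,\an}_P\times\id\simeq_{\RH}\Eis^{\B,\an}_P\times\id$ by the chartwise description above, the glued functor agrees with $(p^{\Tw})_*\circ(q^{\Tw})^!$.

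The descent statement and base change along open immersions are routine inputs. The step I expect to be the main obstacle is the overlap compatibility: one must check that restricting the \emph{Hodge} Eisenstein pull-push to $\lambda\neq0$ and transporting it across the conjugate pair of Riemann--Hilbert equivalences reproduces the \emph{same} Betti Eisenstein datum coming from each of the two Hodge charts. This is precisely where the $\GG_m$-trivialisation \eqref{Hodge trivialisation} of the Hodge stacks and Proposition \ref{RH and Eis} are both needed, and some care is required that these identifications are compatible with the full $\infty$-categorical gluing data — coherences between the equivalences, not merely agreement on objects — so that $\Eis^{\Tw}_P$ is well-defined as a functor and not only as an assignment on objects.
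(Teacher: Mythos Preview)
Your proposal is correct and follows essentially the same route as the paper: use the limit presentation $\IndCoh(\Twistor_M)\simeq\IndCoh(\Hodge_M^{\an})\times_{\Betti}\IndCoh(\ol{\Hodge}_M^{\an})$, apply $\Eis^{\Hod,\an}_P$ and $\ol{\Eis}^{\Hod,\an}_P$ chartwise, and produce the overlap gluing by reducing to $\lambda\neq 0$ via \eqref{Hodge trivialisation}, commuting Riemann--Hilbert past Eisenstein via Proposition~\ref{RH and Eis}, and invoking the Betti invariance $\Eis^{\B}_P=\ol{\Eis}^{\B}_P$. Your version is in fact a bit more careful than the paper's, in that you explicitly note the coherence issue and also check that the glued functor agrees with $(p^{\Tw})_*\circ(q^{\Tw})^!$ by restricting to the three charts.
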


\begin{proof} We confirm the universal property for the existence of $\Eis^{\Hod, \an}_{ P} \, \times_{\Betti} \, \ol{\Eis}^{\Hod, \an}_{ P}$ acting on  
\[
\IndCoh(\Twistor_M) = \IndCoh(\Hodge^{\an}_M) \times_{\Betti} \IndCoh(\ol{\Hodge}^{\an}_M) , 
\]
as determined by the pushout \eqref{pushout reminder}. A sheaf $\Ff \in \IndCoh(\Twistor_M)$ determines the data
\[
\Ff_{\Hod} \in \IndCoh(\Hodge^{\an}_M), \quad \ol{\Ff}_{\Hod} \in \IndCoh(\ol{\Hodge}^{\an}_M), 
\]
\[
f_{\Hod} : \RH_{*}(\Ff_{\Hod}|_{\GG_m}) \xrightarrow{\cong} \ol{\RH}_{*} (\ol{\Ff}_{\Hod}|_{\GG_m}), 
\]
where this formula involves the restriction functors $(\cdot)|_{\GG_m}$ to the substacks 
\[
\Hodge^{\an}_{M} \times_{\AA^1} \GG_m \subset  \Hodge^{\an}_{M}, \quad \ol{\Hodge}^{\an}_{M} \times_{\AA^1} \GG_m \subset \ol{\Hodge}^{\an}_{M} .
\]
To confirm the universal property, we must show that $f_{\Hod}$ induces an equivalence
\begin{equation}
\label{univ property}
\RH_{*} \circ \Eis^{\Hod}_{ P} (\Ff_{\Hod}|_{\GG_m}) \cong \ol{\RH}_{*} \circ \ol{\Eis}^{\Hod}_{P} (\ol{\Ff}_{\Hod}|_{\GG_m}). 
\end{equation}
By application of the $\GG_m$-equivariant equivalences $\LocSys^{\an}_G \times \GG_m \cong \Hodge^{\an}_G \times_{\AA^1} \GG_m$, as in \eqref{Hodge trivialisation}, \eqref{univ property}, this is equivalent to a de Rham valued condition on the restrictions 
\[
\Ff_{\dR} \in \IndCoh(\LocSys^{\an}_M), \quad \ol{\Ff}_{\dR} \in \IndCoh(\ol{\LocSys}^{\an}_M), 
\]
\[
f_{\dR} : \RH_{*}\Ff_{\dR} \xrightarrow{\cong} \ol{\RH}_{*} \ol{\Ff}_{\dR} .
\]
It then suffices to show that $f_{\dR}$ induces an equivalence 
\[
\RH_{*} \circ \Eis^{\dR}_{ P} (\Ff_{\dR}) \cong \ol{\RH}_{*} \circ \ol{\Eis}^{\dR}_{P} (\ol{\Ff}_{\dR}) , 
\]
as objects in $\IndCoh(\Rep_G \times \GG_m) = \IndCoh(\ol{\Rep}_G \times \GG_m)$. By Proposition \ref{RH and Eis}, we may commute Eisenstein functors with Riemann--Hilbert, so the claim is equivalent to establishing an equivalence  
\begin{equation}
\label{checking pullback}
\Eis^{\B}_{ P} \circ \RH_{*} (\Ff_{\dR}) \cong \ol{\Eis}^{\B}_{P} \circ \ol{\RH}_{*}  (\ol{\Ff}_{\dR}) .
\end{equation}
The proof concludes by noticing that the Betti invariance property $\Rep_G = \ol{\Rep}_G$ determines an identification $\Eis^{\B}_{ P} = \ol{\Eis}^{\B}_{ P}$ and so the equivalence \eqref{checking pullback} is naturally induced by $f_{\dR}$. 
\end{proof}

We now restrict this construction to the Harder--Narasimhan strata $\Twistor^{\sst, \nu}_{P}$ from \ref{HN strata}. The pullback of Proposition \ref{spec tw eis} to the diagram 
\begin{equation*}
\begin{tikzcd}
 & \Twistor^{\sst, \nu}_{P} \arrow[dl, "q^{\Tw}"'] \arrow[dr, "p^{\Tw}"] &  \\
\Twistor^{\sst, \nu}_{M} & & \Twistor_{G} 
\end{tikzcd}, 
\end{equation*}
immediately yields the following restriction of $\Eis^{\Tw}_{P}$. 

\begin{corollary}
\label{spec tw eis HN}
The twistor Eisenstein functor 
\[
\Eis^{\Tw}_{P, \nu} := (p^{\Tw})_{*} \circ (q^{\Tw})^{!} \simeq \Eis^{\Hod, \an}_{P, \nu} \, \times_{\Betti} \, \ol{\Eis}^{\Hod, \an}_{P, \nu} 
\]
acting as 
\[ 
\Eis^{\Tw}_{P, \nu} : \IndCoh( \Twistor_{M}^{\sst, \nu} ) \to \IndCoh( \Twistor_{G} ),
\]
is well-defined.
\end{corollary}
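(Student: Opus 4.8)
The plan is to deduce Corollary \ref{spec tw eis HN} as a literal restriction of the proof of Proposition \ref{spec tw eis} along the Harder--Narasimhan stratification, so that almost nothing new is needed. First I would record the pushout presentation
$$\Twistor_M^{\sst,\nu} \;=\; \Hodge_M^{\sst,\nu,\an} \sqcup_{\Betti} \ol{\Hodge}_M^{\sst,\nu,\an},$$
which is available from Section \ref{Instability strata}: semistability, the slope filtration, and hence the Harder--Narasimhan type $\nu$, are invariant under complex conjugation, so the strata $\Hodge_M^{\sst,\nu}$ and $\ol{\Hodge}_M^{\sst,\nu}$ glue along their common $\GG_m$-locus exactly as $\Hodge_M$ and $\ol{\Hodge}_M$ do in \eqref{twistor stack}. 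Consequently $\IndCoh(\Twistor_M^{\sst,\nu})$ inherits the fiber product decomposition $\IndCoh(\Hodge_M^{\sst,\nu,\an}) \times_{\Betti} \IndCoh(\ol{\Hodge}_M^{\sst,\nu,\an})$, just as $\IndCoh(\Twistor_M)$ does in the proof of Proposition \ref{spec tw eis}.

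Next I would observe that the twistor induction maps $q^{\Tw}, p^{\Tw}$ restrict to the substack $\Twistor_P^{\sst,\nu}$, and that on each of the two Hodge charts this restriction is precisely the pair of morphisms defining the strata-wise Hodge Eisenstein functor $\Eis^{\Hod,\an}_{P,\nu}$ from Section \ref{spectral Eis Hodge} and its complex conjugate $\ol{\Eis}^{\Hod,\an}_{P,\nu}$. Granting that the fiber-product functor $\Eis^{\Hod,\an}_{P,\nu} \times_{\Betti} \ol{\Eis}^{\Hod,\an}_{P,\nu}$ exists, the identification $\Eis^{\Tw}_{P,\nu} = (p^{\Tw})_* \circ (q^{\Tw})^! \simeq \Eis^{\Hod,\an}_{P,\nu} \times_{\Betti} \ol{\Eis}^{\Hod,\an}_{P,\nu}$ then follows by base change along the defining pushout square, exactly as for $\Eis^{\Tw}_P$.

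To establish existence of the fiber-product functor I would rerun the universal-property verification from the proof of Proposition \ref{spec tw eis} verbatim: a sheaf $\Ff \in \IndCoh(\Twistor_M^{\sst,\nu})$ is the data of $\Ff_{\Hod} \in \IndCoh(\Hodge_M^{\sst,\nu,\an})$, $\ol{\Ff}_{\Hod} \in \IndCoh(\ol{\Hodge}_M^{\sst,\nu,\an})$ and a gluing isomorphism over the $\GG_m$-part of the Betti data; restricting to the $\GG_m$-locus and applying the $\GG_m$-equivariant trivialisation \eqref{Hodge trivialisation} turns the required compatibility into a de Rham-valued statement, Proposition \ref{RH and Eis} commutes the de Rham Eisenstein functor past Riemann--Hilbert, and Betti invariance $\Rep_G = \ol{\Rep}_G$ forces $\Eis^{\B}_P = \ol{\Eis}^{\B}_P$, so the gluing isomorphism for $\Ff$ induces the one for $\Eis^{\Tw}_{P,\nu}(\Ff)$.

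The only point needing attention --- and the sole way this proof departs from a pure transcription of Proposition \ref{spec tw eis} --- is the $\GG_m$-chart: there the stratum $\Hodge_P^{\sst,\nu}$ restricts to $\LocSys_P^{\sst,\nu} \times \GG_m$, a proper sublocus of $\LocSys_P \times \GG_m$, so one must check that the de Rham reduction above only invokes the restriction of $(p^{\dR})_* \circ (q^{\dR})^!$ along $\LocSys_P^{\sst,\nu} \hookrightarrow \LocSys_P$. This causes no trouble: $p^{\dR}$ is already proper, so this restricted functor is defined with no stratification device, and Proposition \ref{RH and Eis} applies equally to the restricted functors. I therefore expect no genuine obstacle; all the content of the corollary sits in the prior construction of $\Eis^{\Hod,\an}_{P,\nu}$, the gluing of the Harder--Narasimhan strata in Section \ref{Instability strata}, and Proposition \ref{spec tw eis}.
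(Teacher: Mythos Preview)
Your proposal is correct and follows exactly the paper's approach: the paper states the corollary as an immediate consequence of restricting Proposition \ref{spec tw eis} to the Harder--Narasimhan strata $\Twistor_P^{\sst,\nu}$, and you have simply unpacked that restriction in more detail than the paper does. Your extra care about the $\GG_m$-chart is not strictly necessary but does no harm.
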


\subsubsection{Constant term functors} As in the de Rham theory, $\Eis_{P}^{\Tw}$ and $\Eis_{P, \nu}^{\Tw}$ admit natural left adjoints, the \emph{twistor constant term functors}, defined to be 
\[
\CT^{\Tw}_{P} := (q^{\Tw})_{!} \circ (p^{\Tw})^{*} : \IndCoh(\Twistor_{G}) \to \IndCoh(\Twistor_{M}) , 
\]
\[
\CT^{\Tw}_{P, \nu} := (q^{\Tw})_{!} \circ (p^{\Tw})^{*} : \IndCoh(\Twistor_{G}) \to \IndCoh(\Twistor^{\sst, \nu}_{M}) .
\]

\subsubsection{Action on GAGA sheaves} Recall from \ref{GAGA sheaves Tw_G} the pullback square 
\begin{equation}
\label{eq: GAGA reminder}
\begin{tikzcd}
\IndCoh^{\GAGA}(\Twistor_G) \arrow[d] \arrow[r] & \IndCoh^{\GAGA}(\ol{\Hodge}^{\an}_G) \arrow[d] \\
\IndCoh^{\GAGA}(\Hodge^{\an}_G) \arrow[r] & \IndCoh(\Rep^{\an}_G \times \GG_m)
\end{tikzcd} , 
\end{equation} 
which defines the category $\IndCoh^{\GAGA}(\Twistor_G)$ of GAGA sheaves on $\Twistor_G$. 

Proposition \ref{spec tw eis} provides the defining presentation 
\[
\Eis^{\Tw}_{P, \nu} = \Eis^{\Hod, \an}_{P, \nu} \times_{\Eis^{\B, \an}_{P, \nu}} \ol{\Eis}^{\Hod, \an}_{P, \nu} .
\]
By Proposition \ref{Hodge Eisenstein GAGA property}, the Hodge components $\Eis^{\Hod, \an}_{P, \nu}$ and $\ol{\Eis}^{\Hod, \an}_{P, \nu}$ both preserve GAGA sheaves, and thus, acting on the pullback \eqref{eq: GAGA reminder}, we are able to conclude the same is true for $\Eis^{\Tw}_{P, \nu}$. 
\begin{corollary}
\label{GAGA Eis^{Tw}}
$\Eis^{\Tw}_{P, \nu}$ sends $\IndCoh^{\GAGA}( \Twistor_{M}^{\sst, \nu} )$ to $\IndCoh^{\GAGA}( \Twistor_{G} )$, thus defining a functor 
    \[
    \Eis^{\Tw}_{P, \nu} : \IndCoh^{\GAGA}( \Twistor_{M}^{\sst, \nu} ) \to \IndCoh^{\GAGA}( \Twistor_{G} )  . 
    \]
\end{corollary}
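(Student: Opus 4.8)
The plan is to exploit the presentation of $\Eis^{\Tw}_{P, \nu}$ from Corollary \ref{spec tw eis HN} as the fibre product $\Eis^{\Hod, \an}_{P, \nu} \times_{\Eis^{\B, \an}_{P, \nu}} \ol{\Eis}^{\Hod, \an}_{P, \nu}$ of a complex-conjugate pair of Hodge Eisenstein functors glued along Betti data, and to check that this fibre product respects the analogous pullback presentation \eqref{eq: GAGA reminder} of the GAGA categories. First I would unwind \eqref{eq: GAGA reminder}, together with its counterpart for $\Twistor_M^{\sst, \nu}$, so that an object of $\IndCoh^{\GAGA}(\Twistor_M^{\sst, \nu})$ is the datum of $\Ff_{\Hod} \in \IndCoh^{\GAGA}(\Hodge^{\sst, \nu, \an}_M)$, $\ol{\Ff}_{\Hod} \in \IndCoh^{\GAGA}(\ol{\Hodge}^{\sst, \nu, \an}_M)$, and a gluing equivalence between their restrictions to $\Rep^{\an}_M \times \GG_m$ (after the Hodge trivialisation $\Hodge^{\an}_M \times_{\AA^1} \GG_m \cong \LocSys^{\an}_M \times \GG_m$ and Riemann--Hilbert).

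Next I would apply $\Eis^{\Hod, \an}_{P, \nu}$ to $\Ff_{\Hod}$ and $\ol{\Eis}^{\Hod, \an}_{P, \nu}$ to $\ol{\Ff}_{\Hod}$: by the Hodge Eisenstein--GAGA property (Proposition \ref{Hodge Eisenstein GAGA property}), the outputs again lie in $\IndCoh^{\GAGA}(\Hodge^{\an}_G)$ and $\IndCoh^{\GAGA}(\ol{\Hodge}^{\an}_G)$ respectively. It then remains to produce the gluing equivalence over $\Rep^{\an}_G \times \GG_m$, but this is precisely the equivalence already constructed in the proof of Proposition \ref{spec tw eis}: restrict to $\GG_m$, pass through the Hodge trivialisation to the de Rham picture, commute the de Rham Eisenstein functor past Riemann--Hilbert via Proposition \ref{RH and Eis}, and invoke the Betti invariance identification $\Eis^{\B}_P = \ol{\Eis}^{\B}_P$. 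The resulting triple therefore assembles to an object of the pullback \eqref{eq: GAGA reminder}, i.e.\ to an object of $\IndCoh^{\GAGA}(\Twistor_G)$, and naturality of the equivalences involved upgrades this pointwise statement to the asserted functor.

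The main obstacle is bookkeeping rather than conceptual: one must verify that the GAGA-preservation of the two Hodge legs is compatible with the Betti-gluing identification in the lower edge of \eqref{eq: GAGA reminder}, namely that restricting $\Eis^{\Hod, \an}_{P, \nu}(\Ff_{\Hod})$ to $\GG_m$ and applying $\RH_*$ yields a sheaf in the essential image of de Rham analytification, agreeing with the conjugate construction under the given gluing datum. This follows by combining the Eisenstein--GAGA property for $\Eis^{\dR}_P$ (Proposition \ref{Eisenstein GAGA property}(1)) with the Riemann--Hilbert intertwiner of Proposition \ref{RH and Eis}, so no new input is required; the only point needing care is that the restriction-along-$\GG_m \subset \AA^1$ functors used to form the pullback squares commute with the Eisenstein functors, which holds because open restriction commutes with the proper pushforwards and $!$-pullbacks defining $\Eis^{\Hod}_{P, \nu}$ and $\Eis^{\dR}_P$.
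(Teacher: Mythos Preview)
Your proposal is correct and follows essentially the same approach as the paper: the paper simply invokes the presentation $\Eis^{\Tw}_{P,\nu} = \Eis^{\Hod,\an}_{P,\nu} \times_{\Eis^{\B,\an}_{P,\nu}} \ol{\Eis}^{\Hod,\an}_{P,\nu}$, observes via Proposition~\ref{Hodge Eisenstein GAGA property} that both Hodge legs preserve GAGA sheaves, and concludes that the fibre product acts on the pullback~\eqref{eq: GAGA reminder}. Your version unpacks the gluing datum more explicitly than the paper does, but the argument is the same.
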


\subsubsection{Action on nilpotent sheaves} We now show that $\Eis^{\Hod}_{P}$ and $\Eis^{\Tw}_{P, \nu}$ preserve the categories of sheaves with nilpotent singular support, understood with respect to the Hodge and twistor nilpotent cones defined in \ref{Hodge nilpotent cone} and \ref{twistor nilp} respectively. 

\begin{proposition}
\label{nilp to nilp}
"Nilpotent-Eisenstein properties". 
\begin{itemize}
\item $\Eis^{\Hod}_{P}$ preserves nilpotent singular support, thus defining a functor
\[
\Eis^{\Hod}_{P} : \IndCoh_{\Nn}(\Hodge_M) \to \IndCoh_{\Nn}(\Hodge_G).
\]
    
\item $\Eis^{\Tw}_{P}$ preserves GAGA sheaves with nilpotent singular support, thus defining a functor 
\[
\Eis^{\Tw}_{P, \nu} : \IndCoh^{\GAGA}_{\Nn}(\Twistor_M) \to \IndCoh^{\GAGA}_{\Nn}(\Twistor_G).
\]
\end{itemize}
\end{proposition}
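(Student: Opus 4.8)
The plan for the first bullet is to transcribe the Arinkin--Gaitsgory argument \cite[Prop.\ 13.2.6]{arinkin&gaitsgory} to the Hodge setting, so I only need to pinpoint where the Hodge geometry enters. Working with the stratified functors $\Eis^{\Hod}_{P,\nu}=(p^{\Hod})_{*}\circ(q^{\Hod})^{!}$ of \ref{se: Eis Dol/Hod}, attached to $\Hodge^{\sst,\nu}_{M}\xleftarrow{q^{\Hod}}\Hodge^{\sst,\nu}_{P}\xrightarrow{p^{\Hod}}\Hodge_{G}$, I would invoke the functoriality of singular support under $!$-pullback and proper $*$-pushforward \cite[\textsection 7--8]{arinkin&gaitsgory} and reduce to two containments of conics: (i) the singular codifferential of $q^{\Hod}$ carries $\Nn_{\Hod}\times_{\Hodge_{M}}\Hodge^{\sst,\nu}_{P}$ into the nilpotent cone inside $\Sing(\Hodge^{\sst,\nu}_{P})$; and (ii) the singular codifferential of $p^{\Hod}$ carries the latter into $\Nn_{\Hod}\subset\Sing(\Hodge_{G})$. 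Since $q^{\Hod}$ is quasi-smooth affine and $p^{\Hod}$ is proper on each stratum (\ref{se: Eis Dol/Hod}), these are the only inputs the formalism needs. To verify (i)--(ii) I would use the uniform description of \ref{nilpotents}: the tangent complexes of $\Hodge^{\sst,\nu}_{M}$, $\Hodge^{\sst,\nu}_{P}$ and $\Hodge_{G}$ are all of the form $p_{2,*}\ad(\Uu)[1]$, and $\Sing$ together with its nilpotent locus is cut out by pointwise-nilpotent $H^{0}$-sections of the coadjoint bundle; under this presentation the two codifferentials are induced fibrewise over $X_{\Hod}$ by $\pP\hookrightarrow\gG$ and $\pP\twoheadrightarrow\mM$ (equivalently their transposes on coadjoint bundles), so (i)--(ii) become the elementary facts that these maps preserve nilpotency --- literally the computation of \cite{arinkin&gaitsgory}. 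Finally I would pass from $\Eis^{\Hod}_{P,\nu}$ to $\Eis^{\Hod}_{P}$ using the presentation \eqref{eq: Eis colim} and the fact that $\IndCoh_{\Nn}(\Hodge_{G})$ is closed under the relevant filtered colimits.

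The part of the first bullet that genuinely needs attention is not the Lie theory but checking that the Arinkin--Gaitsgory singular-support machinery applies verbatim to the relative mapping stacks $\Maps_{/\AA^{1}}(X_{\Hod},BG)$ and to their Harder--Narasimhan strata; here one uses quasi-smoothness and affineness of $q^{\Hod}$, properness of $p^{\Hod}\colon\Hodge^{\sst,\nu}_{P}\to\Hodge_{G}$, and local finite presentation of the stratification so that \eqref{eq: Eis colim} locally stabilises, exactly as in the proof of Corollary \ref{co: Eis compact}.

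For the second bullet the plan is to argue entirely by pullback squares in the analytic category. I would combine the presentation of $\IndCoh^{\GAGA}_{\Nn_{\Tw}}(\Twistor_{G})$ as the fibre product $\IndCoh^{\GAGA}_{\Nn_{\Hod}}(\Hodge^{\an}_{G})\times_{\IndCoh(\Rep^{\an}_{G}\times\GG_{m})}\IndCoh^{\GAGA}_{\Nn_{\Hod}}(\ol{\Hodge}^{\an}_{G})$ from \ref{Nilp GAGA} with the presentation $\Eis^{\Tw}_{P,\nu}\simeq\Eis^{\Hod,\an}_{P,\nu}\times_{\Eis^{\B,\an}_{P,\nu}}\ol{\Eis}^{\Hod,\an}_{P,\nu}$ from Proposition \ref{spec tw eis} and Corollary \ref{GAGA Eis^{Tw}}. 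It then suffices to check three compatibilities. First, $\Eis^{\Hod,\an}_{P,\nu}$ and its conjugate preserve $\IndCoh^{\GAGA}_{\Nn_{\Hod}}$: their restriction to GAGA sheaves is $(\Eis^{\Hod}_{P,\nu})^{\an}$ by Proposition \ref{Hodge Eisenstein GAGA property}, which preserves $\IndCoh_{\Nn_{\Hod}}$ by the first bullet, and analytification matches singular support by Proposition \ref{an and sing supp} and Corollary \ref{an and GAGA}. Second, the functor on the Betti overlap --- which after Riemann--Hilbert is the Betti Eisenstein functor $\Eis^{\B,\an}_{P,\nu}$ --- preserves nilpotent singular support, which is the Betti incarnation of \cite[Prop.\ 13.2.6]{arinkin&gaitsgory}. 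Third, the gluing datum transforms correctly, which is precisely Proposition \ref{RH and Eis} (Riemann--Hilbert intertwines the Hodge and Betti Eisenstein functors after restriction to $\GG_{m}$) together with Proposition \ref{RH and Nilp an} (Riemann--Hilbert identifies $\Nn^{\an}_{\B}\cong\Nn^{\an}_{\dR}$ and the associated singular-support categories). Granting these, the image under $\Eis^{\Tw}_{P,\nu}$ of an object of the fibre product again lies in the fibre product, hence in $\IndCoh^{\GAGA}_{\Nn_{\Tw}}(\Twistor_{G})$; the unstratified $\Eis^{\Tw}_{P}$ is then handled by assembling over $\nu$ as in the Hodge case.

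The main obstacle I anticipate in the second bullet is bookkeeping rather than anything conceptual: the whole argument must stay inside the rigid GAGA subcategories so that the analytic Riemann--Hilbert intertwiner of Proposition \ref{RH and Eis} restricts to it, and one must confirm that every functor involved --- $\Eis^{\Hod,\an}_{P,\nu}$, its conjugate, and the restriction functors to $\Rep^{\an}_{G}\times\GG_{m}$ --- preserves GAGA sheaves; this is guaranteed by Propositions \ref{Eisenstein GAGA property}, \ref{Hodge Eisenstein GAGA property} and Corollary \ref{GAGA Eis^{Tw}}, so the fibre-product argument closes up. By contrast, if one tried to deduce the Hodge bullet by degeneration along $\Hodge_{G}\to\AA^{1}$ from the de Rham case, the real difficulty would be controlling a possible jump of singular support at the central fibre $\lambda=0$; the codifferential computation above sidesteps this entirely, which is why I would organise the proof around it.
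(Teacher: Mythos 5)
Your first bullet follows the paper's own route: two codifferential containments (Lemmas \ref{N_M to N_P} and \ref{N_P to N_G}) fed into the singular-support functoriality machinery of \cite{arinkin&gaitsgory}, then assembly over $\nu$ via \eqref{eq: Eis colim}. One small but real imprecision: in your item (ii) you describe the codifferential of $p^{\Hod}$ as ``carrying'' the nilpotent cone of $\Sing(\Hodge^{\sst,\nu}_P)$ into $\Nn_G\subset\Sing(\Hodge_G)$, and later gloss both codifferential conditions as ``these maps preserve nilpotency.'' The codifferential $\Sing(p^{\Hod})$ goes $\Sing(\Hodge_G)_{\Hodge_P}\to\Sing(\Hodge_P)$, so for the proper pushforward $(p^{\Hod})_{*}$ one needs the \emph{preimage} containment $\Sing(p^{\Hod})^{-1}(\Nn_P)\subset\Nn_G\times_{\Hodge_G}\Hodge_P$ (\cite[Lem.\ 8.4.5]{arinkin&gaitsgory}), which at the Lie-algebra level is the converse of ``preservation'': if $a\in\gG^{*}$ restricts to a nilpotent element of $\pP^{*}$ then $a$ itself is nilpotent. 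This is exactly the content of Lemma \ref{N_P to N_G}; the forward direction (nilpotents of $\gG^{*}$ restrict to nilpotents of $\pP^{*}$) is not what the $p$-pushforward lemma asks for. Your (i), the isomorphism $\Sing(q^{\Hod})(\Nn_M\times_{\Hodge_M}\Hodge_P)=\Nn_P$ for the affine pullback, is stated correctly.

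Your second bullet is the paper's argument made slightly more explicit. The paper reduces directly to the fibre-product presentations of $\Eis^{\Tw}_{P}$ (Proposition \ref{spec tw eis}) and of $\IndCoh^{\GAGA}_{\Nn}(\Twistor_G)$, and deduces preservation of nilpotent singular support from the Hodge bullet together with the GAGA compatibility of Proposition \ref{Hodge Eisenstein GAGA property} and Corollary \ref{an and GAGA}. Your extra checks --- that the Betti overlap functor preserves nilpotent singular support and that the gluing datum transforms correctly via Propositions \ref{RH and Eis} and \ref{RH and Nilp an} --- are already packaged into the well-definedness of $\Eis^{\Tw}_P$ in Proposition \ref{spec tw eis}, and the corner of the GAGA pullback square \eqref{GAGA on Tw_G} is the unrestricted $\IndCoh(\Rep^{\an}_G\times\GG_m)$ rather than a singular-support category, so the Betti nilpotency condition is not an independent constraint. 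These checks do no harm, but they are redundant given the paper's setup.
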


The proof presented below is identical to the corresponding result for $\Eis_{P}^{\dR}$ provided by Arinkin and Gaitsgory in \cite[Propn. 13.2.6]{arinkin&gaitsgory}. We restate the main ideas here for convenience and later reference.  

It will be temporarily useful to introduce subscripts of the form $\Nn_G \subset \Sing(\Hodge_G)$ to help keep track of the structure group.

We require the following analysis of the \textit{singular codifferentials} (see \ref{singularities of stacks}) of the maps
\begin{equation*}
\begin{tikzcd}
 & \Hodge_{P}  \arrow[dl, "q^{\Hod}"'] \arrow[dr, "p^{\Hod}"] &  \\
\Hodge_{M} & & \Hodge_{G} 
\end{tikzcd}.
\end{equation*}

\begin{lemma} 
\label{N_M to N_P}
The singular codifferential 
\[
\Sing(q^{\Hod}) : \Sing(\Hodge_{M})_{\Hodge_{P}} \to \Sing(\Hodge_{P})
\]
acts on the global nilpotent cone $\Nn_M \subset \Sing(\Hodge_{M})$ via a canonical isomorphism  
\[
\Sing(q^{\Hod})\big( \Nn_M \times_{\Hodge_M} \Hodge_P \big) = \Nn_P. 
\]
\end{lemma}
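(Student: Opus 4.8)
The plan is to reduce the statement to an explicit description of the singular codifferential of $q^{\Hod}$ in terms of the universal $(G,\lambda)$-connections, exactly as in the de Rham case treated in \cite[\S 11--13]{arinkin&gaitsgory}. Concretely, since $\Hodge_M = \Maps_{/\AA^1}(X_{\Hod}, BM)$ and $\Hodge_P = \Maps_{/\AA^1}(X_{\Hod}, BP)$, the tangent complexes have the Atiyah-type presentations $\TT_{\Hodge_M} = p_{2,*}\ad(\Uu_M)[1]$ and $\TT_{\Hodge_P} = p_{2,*}\ad(\Uu_P)[1]$, where $\Uu_M$, $\Uu_P$ are the universal bundles pulled back along $BP \to BM$. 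First I would spell out that an $S$-point of $\Sing(\Hodge_P)$ is a pair $(f_P, A_P)$ with $f_P : X_{\Hod}\times S \to BP$ and $A_P \in H^0(X_{\Hod}\times S, f_P^{*}\ad(\Uu_P^{\vee}))$, and similarly for $M$, and that $\Sing(q^{\Hod})$ is induced by the codifferential map $q_P^{*}\ad(\Uu_M^{\vee}) \to \ad(\Uu_P^{\vee})$ coming from the surjection $\pP \twoheadrightarrow \mM$ of Lie algebras, dualized to an inclusion $\mM^{\vee} \hookrightarrow \pP^{\vee}$. The key point is that this inclusion is $P$-equivariant and splits the projection $\pP^{\vee} \to \uU_P^{\vee}$, identifying $\mM^{\vee}$ with $(\pP/\uU_P)^{\vee} \subset \pP^{\vee}$, where $\uU_P$ is the nilradical.

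**Main steps.** (1) Record the above presentation of $\Sing(q^{\Hod})$ on $S$-points: it sends $(f_P, A_M)$ with $A_M \in H^0(f_P^{*}\ad(\Uu_M^{\vee}))$ to $(f_P, \iota\circ A_M)$ where $\iota : f_P^{*}\ad(\Uu_M^{\vee}) \hookrightarrow f_P^{*}\ad(\Uu_P^{\vee})$ is the canonical $P$-equivariant inclusion. (2) Observe that $\iota$ is a closed immersion of $1$-stacks over $\Hodge_P$, so $\Sing(q^{\Hod})$ restricted to $\Nn_M \times_{\Hodge_M}\Hodge_P$ is a closed immersion; hence it suffices to identify its image with $\Nn_P$ as a subset, the scheme structure being reduced (both being conical subvarieties cut out inside $\Sing(\Hodge_P)$ by the nilpotency equations). (3) Identify the two sides fiberwise over geometric points: after a local trivialization, $\iota$ is the fixed linear inclusion $\mM^{\vee}\hookrightarrow\pP^{\vee}$, and the claim becomes the purely Lie-theoretic statement that a section lands in $\Nilp(\mM^{\vee})$ if and only if its image under $\mM^{\vee}\cong(\pP/\uU_P)^{\vee}\hookrightarrow\pP^{\vee}$ lands in $\Nilp(\pP^{\vee})$; equivalently, under the identification $\pP^{\vee}\cong\mM^{\vee}\oplus\uU_P^{\vee}$ the nilpotent cone of $\pP^{\vee}$ meets the subspace $\mM^{\vee}$ exactly in $\Nilp(\mM^{\vee})$. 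This is \cite[\S 11--12]{arinkin&gaitsgory} applied with $X_{\dR}$ replaced by $X_{\Hod}$; it uses only that nilpotency is checked pointwise on $X$ and is invariant under the relevant adjoint actions.

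**The main obstacle.** The essential content, and the only step that is not bookkeeping, is step (3): verifying that the global nilpotent cone is \emph{exactly} (not just contained in, not just containing) the image, i.e. that the closed immersion $\Sing(q^{\Hod})|_{\Nn_M\times_{\Hodge_M}\Hodge_P}$ surjects onto $\Nn_P$. One direction -- that nilpotent sections of $\ad(\Uu_M^{\vee})$ map to nilpotent sections of $\ad(\Uu_P^{\vee})$ -- is immediate since $\Nilp(\mM^{\vee})$ maps into $\Nilp(\pP^{\vee})$ (a nilpotent element of a Levi quotient lifts to a nilpotent element of the parabolic, and the induced coadjoint description is compatible). The converse requires that every nilpotent element of $\pP^{\vee}$ that happens to lie in the distinguished Levi subspace $\mM^{\vee}$ is already nilpotent \emph{as an element of} $\mM^{\vee}$; this follows because the $M$-nilpotent cone is the intersection $\Nilp(\gG^{\vee})\cap\mM^{\vee}$ (nilpotency of a semisimple-Lie-algebra element is intrinsic and detected by vanishing of invariant polynomials, which restrict compatibly), and $\Nilp(\pP^{\vee})\cap\mM^{\vee}\subseteq\Nilp(\gG^{\vee})\cap\mM^{\vee}$ since $\Nilp(\pP^{\vee})\subseteq\Nilp(\gG^{\vee})$ under the inclusion $\pP^{\vee}\hookrightarrow\gG^{\vee}$. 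Globalizing this pointwise identity over $X_{\Hod}\times S$ and checking it respects the conic and derived structure is then routine, and the isomorphism of $1$-stacks follows.
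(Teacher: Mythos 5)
Your overall approach is the same as the paper's: reduce the statement to an explicit description of $\Sing(q^{\Hod})$ at the level of the universal coadjoint bundles, and then to a Lie-theoretic fact about $\mM^{\vee}$, $\pP^{\vee}$ and $\uU^{\vee}$. You also correctly identify the codifferential fiberwise as the \emph{inclusion} $\mM^{\vee}\hookrightarrow\pP^{\vee}$ (dual to $\pP\twoheadrightarrow\mM$), and are a bit more careful about this than the paper's own sketch, which describes it instead as the ``projection $\pP^{*}\to\mM^{*}$''.

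There is, however, a gap in step (3) precisely at the point you flag as the essential content. You correctly observe that the issue is to show the closed immersion $\Sing(q^{\Hod})|_{\Nn_M\times_{\Hodge_M}\Hodge_P}$ \emph{surjects} onto $\Nn_P$, but the ``converse'' you actually prove is a different statement. What you establish is that
\[
\Nilp(\pP^{\vee}) \cap \mM^{\vee} \;=\; \Nilp(\mM^{\vee}),
\]
i.e.\ that the inclusion $\iota:\mM^{\vee}\hookrightarrow\pP^{\vee}$ meets the nilpotent cone of $\pP^{\vee}$ cleanly, equivalently $\Sing(q^{\Hod})^{-1}(\Nn_P)=\Nn_M\times_{\Hodge_M}\Hodge_P$. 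This gives only
\[
\Sing(q^{\Hod})\bigl(\Nn_M\times_{\Hodge_M}\Hodge_P\bigr) \;=\; \Nn_P\cap \operatorname{im}(\iota),
\]
which equals $\Nn_P$ only if $\Nilp(\pP^{\vee})$ is entirely contained in the subspace $\mM^{\vee}\subset\pP^{\vee}$. Under the decomposition $\pP^{\vee}\cong\mM^{\vee}\oplus\uU^{\vee}$ that you (and the paper) invoke, $\Nilp(\pP^{\vee})=\Nilp(\mM^{\vee})\ltimes\uU^{\vee}$ has a nontrivial $\uU^{\vee}$-component, so the containment $\Nilp(\pP^{\vee})\subset\mM^{\vee}$ is exactly the missing assertion and it is not addressed. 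The statement you do prove — intersection is clean — is the right input for the companion Lemma on $\Sing(p^{\Hod})$, but it does not by itself yield the claimed equality. The resolution requires pinning down how $\Nilp(\pP^{\vee})$ (and hence $\Nn_P$) is actually defined on the non-reductive moduli $\Hodge_P$; if $\Nn_P$ is taken to be the image $\Sing(q^{\Hod})(\Nn_M\times_{\Hodge_M}\Hodge_P)$ (which is what is ultimately used in the Eisenstein argument), the equality is definitional and your step (3) is doing the work for the wrong half of the argument, whereas if $\Nilp(\pP^{\vee})$ is the preimage of $\Nilp(\mM^{\vee})$ under the Levi-split projection, the asserted surjectivity fails and an additional identification is needed. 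Either way, the passage from clean intersection to surjectivity is the missing step.
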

\begin{proof}
$\Nn_M$ and $\Nn_P$ parametrise adjoint-valued sections valued in the respective nilpotent elements $\Nilp(\pP^{*}) \subset \pP^{*}$ and $\Nilp(\mM^{*}) \subset \mM^{*}$ (see \ref{Hodge nilpotent cone}) . The singular codifferential acts via extension of structure group along the projection $\pP^{*} \to \mM^{*}$ that quotients by the unipotent radical $\uU^{*} \subset \pP^{*}$. The statement of the lemma then follows from the fact that $\Nilp(\pP^{*}) = \Nilp(\mM^{*}) \ltimes \uU^{*}$ so the projection sends $\Nilp(\pP^{*})$ to $\Nilp(\mM^{*})$, and moreover, $\Nilp(\pP^{*})$ is the inverse image of $\Nilp(\mM^{*})$.   
\end{proof}

By similar reasoning we prove: 

\begin{lemma} 
\label{N_P to N_G}
The singular codifferential 
\[
\Sing(p^{\Hod}) : \Sing(\Hodge_{G})_{\Hodge_{P}} \to \Sing(\Hodge_{P}),
\]
gives rise to a containment  
\[
\Sing(p^{\Hod})^{-1}( \Nn_{P}) \subset \Nn_{G} \times_{\Hodge_G} \Hodge_P . 
\]
\end{lemma}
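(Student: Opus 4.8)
The plan is to mirror the Arinkin--Gaitsgory computation \cite[Propn. 13.2.6]{arinkin&gaitsgory} for the singular codifferential of the proper map $p^{\dR} : \LocSys_P \to \LocSys_G$, transported verbatim to the Hodge setting since everything is controlled by the pointwise linear algebra of the adjoint representations. First I would unwind the description of $\Sing(\Hodge_G)$ and $\Sing(\Hodge_P)$ from \ref{Hodge nilpotent cone}: a point of $\Sing(\Hodge_H)$ over $(f : X_{\Hod} \times S \to BH)$ is a section $A \in H^0\big(X_{\Hod} \times S,\ f^{*}\ad(\Uu_{\Hod,H}^{\vee})\big)$, and the cone $\Nn_H$ is cut out by the condition that $A$ lands pointwise in $\Nilp(\hH^{\vee})$. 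For a point of $\Hodge_P$ with underlying $P$-bundle, pullback of the tangent complex along $p^{\Hod}$ is induced by the inclusion $\gG \hookrightarrow$ nothing — rather, dually, the codifferential $\Sing(p^{\Hod})$ is induced by the $P$-equivariant \emph{restriction} map $\gG^{\vee} \twoheadrightarrow \pP^{\vee}$ dual to $\pP \hookrightarrow \gG$. So on $S$-points, $\Sing(p^{\Hod})$ sends $(f_G, A)$ with $A \in H^0(f_G^{*}\ad(\Uu^{\vee}))$, together with a $P$-reduction $f_P$, to the image of $A$ under $\gG^{\vee} \to \pP^{\vee}$.

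The second step is the key point: I must show that if $A \in \gG^{\vee}$ is nilpotent, then its image $\bar{A} \in \pP^{\vee}$ under the restriction map is nilpotent in the sense of \ref{Hodge nilpotent cone} (i.e.\ lands in $\Nilp(\pP^{\vee})$ for every local trivialisation). Here I would use an invariant-theoretic characterisation of nilpotence: an element of $\gG^{\vee}$ is nilpotent iff it is annihilated by all $G$-invariant polynomials of positive degree, equivalently (identifying $\gG \cong \gG^{\vee}$ via a nondegenerate invariant form) it lies in the nilpotent cone $\Nilp(\gG)$, equivalently it is $\ad$-nilpotent. The restriction $\gG^{\vee} \to \pP^{\vee}$ corresponds, under the invariant form, to a $P$-equivariant projection whose image of $\Nilp(\gG)$ must be shown to lie in $\Nilp(\pP)$. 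Concretely, using $\gG = \pP \oplus \uU^{-}$ (opposite unipotent radical) as $P$-modules, and $\pP^{\vee} \cong \gG/\uU^{-} \cong \pP$ with the induced $P$-action, one checks that the projection $\Nilp(\gG) \to \pP$ lands in $\Nilp(\pP)$ because $\pP$-conjugation preserves both and the projection of an $\ad$-nilpotent element, being in the image of a $\pP$-equivariant map with source consisting of elements whose closure of the orbit contains $0$, is again such. Alternatively — and this is cleaner — I would note that $\uU$ is a two-sided ideal complement situation: writing $A = A_\mM + A_\uU + A_{\uU^{-}}$ in the triangular decomposition and using that restriction to $\pP^{\vee}$ kills the $\uU^{-}$-component while remembering $A_\mM + A_\uU$, nilpotence of the full $A$ forces nilpotence of $A_\mM$ (its image in $\mM^{\vee}$, which is the semisimple-Levi part), and then $A_\mM + A_\uU \in \Nilp(\pP^{\vee}) = \Nilp(\mM^{\vee}) \ltimes \uU^{\vee}$ by the same identification used in Lemma \ref{N_M to N_P}.

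Having established the pointwise statement, the third step is purely formal: the containment $\Sing(p^{\Hod})^{-1}(\Nn_P) \subset \Nn_G \times_{\Hodge_G} \Hodge_P$ says exactly that any $(f_G, A, f_P)$ whose $\pP^{\vee}$-restriction is nilpotent must already have $A$ nilpotent in $\gG^{\vee}$. This is the \emph{converse} direction of the preimage statement, so what I actually need is: if $\bar A \in \Nilp(\pP^{\vee})$ then $A \in \Nilp(\gG^{\vee})$. This follows because the restriction map $\gG^{\vee} \to \pP^{\vee}$, composed with further restriction $\pP^{\vee} \to \mM^{\vee}$, records the image of $A$ in $\mM^{\vee}$, and $\gG$-nilpotence of $A$ can be detected on a parabolic: an element of $\gG$ lying in $\pP$ (after conjugation every element with a $P$-reduction of its "semisimplification" does) is $\ad_\gG$-nilpotent iff its Levi projection is $\ad_\mM$-nilpotent — this is the standard fact that $\Nilp(\gG) \cap \pP$ maps onto $\Nilp(\mM)$ under $\pP \to \mM$ with the reverse inclusion $\Nilp(\gG)\cap\pP \supset$ preimage being exactly Lemma \ref{N_M to N_P} read backwards. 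I expect the main obstacle to be bookkeeping the duals correctly — keeping straight that $\Sing(p^{\Hod})$ goes in the direction $\Sing(\Hodge_G)_{\Hodge_P} \to \Sing(\Hodge_P)$ and is induced by $\gG^{\vee} \to \pP^{\vee}$ (surjective), as opposed to $\Sing(q^{\Hod})$ which is induced by $\mM^{\vee} \hookrightarrow \pP^{\vee}$ (injective, the codifferential going the other way), so that the preimage in Lemma \ref{N_P to N_G} genuinely is a containment rather than an equality, the failure of equality coming precisely from the non-properness-irrelevant but representation-theoretically real fact that the $\uU^{-}$-component of $A$ is unconstrained by $\bar A$ yet must vanish up to $\mM$-conjugacy for $A$ to be nilpotent. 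I would double-check this against the $\LocSys$ case in \cite[\S 13.2]{arinkin&gaitsgory} to make sure the inclusion is in the stated direction.
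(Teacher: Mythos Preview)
Your overall approach matches the paper's: unwind the singular codifferential of $p^{\Hod}$ as the $P$-equivariant surjection $\gG^{*} \twoheadrightarrow \pP^{*}$ and reduce the containment to a pointwise Lie-algebraic assertion. The paper's own proof is the single sentence ``reduces to: if $a \in \gG^{*}$ is such that its projection to $\pP^{*}$ is nilpotent, then $a$ itself is nilpotent'', deferring entirely to \cite[Propn.\ 13.2.6]{arinkin&gaitsgory} for that fact rather than supplying any argument.

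Where your proposal diverges is in attempting to supply that Lie-algebra argument yourself, and here there is a genuine gap. Your Step~2 argues the wrong implication (nilpotent $A \Rightarrow$ nilpotent $\bar A$), which you correctly catch and reverse in Step~3. But the argument you then give --- ``an element of $\gG$ lying in $\pP$ is $\ad_{\gG}$-nilpotent iff its Levi projection is $\ad_{\mM}$-nilpotent'' --- only applies to elements of the subalgebra $\pP \subset \gG$. In the lemma, $A$ is an \emph{arbitrary} element of $\gG^{*}$: the $P$-structure is a reduction of the underlying $\lambda$-connection $\sigma_P$, not a constraint forcing the covector $A$ into any parabolic subalgebra, and $\Sing(p^{\Hod})$ records the image of $A$ under the surjection $\gG^{*} \to \pP^{*}$ with kernel $\pP^{\perp}$, not a restriction to a subspace containing $A$. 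Your parenthetical ``after conjugation every element with a $P$-reduction of its semisimplification does'' conflates the $P$-reduction of $\sigma$ with a (nonexistent) constraint on $A$. The required statement is genuinely about the quotient map $\gG^{*} \to \pP^{*}$, and your triangular-decomposition argument, which tacitly treats $A$ as already sitting inside $\pP$, does not establish it; this is precisely the point at which the paper hands off to Arinkin--Gaitsgory rather than arguing directly.
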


\begin{proof}
Follows verbatim from \cite[Propn. 13.2.6]{arinkin&gaitsgory}, which reduces to the following statement: if an element $a \in \gG^{*}$ is such that its projection to $\pP^{*}$ is nilpotent, then $a$ itself is nilpotent. 
\end{proof}

We are now able to prove that $\Eis_{P}^{\Hod}$ and $\Eis_{P}^{\Tw}$ preserve nilpotent singular support. 

\begin{proof} (\textit{Of Proposition \ref{nilp to nilp}}). By Lemma \ref{N_M to N_P} the map $q^{\Hod}$ satisfies the hypothesis of \cite[Lem. 8.4.2]{arinkin&gaitsgory} and so $(q^{\Hod})^{!}$ preserves nilpotent singular support. By Lemma \ref{N_P to N_G} the map $p^{\Hod}$ satisfies the hypothesis of \cite[Lem. 8.4.5]{arinkin&gaitsgory} and so $(p^{\Hod})_{*}$ also preserves nilpotent singular support. Combining these statements, we conclude that $\Eis^{\Hod}_{P}$ restricts to a functor 
\begin{equation}
\label{Eis^Hod nilp}
\Eis^{\Hod}_{P} : \IndCoh_{\Nn}(\Hodge_M) \to \IndCoh_{\Nn}(\Hodge_G) , 
\end{equation}
which proves the first statement. 

For the second statement, first recall from Proposition \ref{Hodge Eisenstein GAGA property} that $\Eis^{\Hod, \an}_{P}$ acting on GAGA sheaves satisfies the GAGA-type equivalence 
\begin{equation}
\label{}
(\Eis^{\Hod}_{P})^{\an} \simeq \Eis^{\Hod, \an}_{P}.
\end{equation}
Also by Corollary \ref{an and GAGA} we have the essentially surjective analytification functor 
\[
\IndCoh_{\Nn}(\Hodge_G) \to \IndCoh^{\GAGA}_{\Nn}(\Hodge^{\an}_G), 
\]
so the analytification of \eqref{Eis^Hod nilp} restricted to the strata labelled by $\nu$ yields a functor 
\[
\Eis^{\Hod, \an}_{P} : \IndCoh^{\GAGA}_{\Nn^{\an}}(\Hodge_M^{\an}) \to \IndCoh^{\GAGA}_{\Nn^{\an}}(\Hodge_G^{\an}). 
\]
We claim this description of $\Eis^{\Hod, \an}_{P}$ is sufficient to conclude that $\Eis^{\Tw}_{P}$ also preserves nilpotent singular support. 

Indeed, by Proposition \ref{spec tw eis}, $\Eis^{\Tw}_{P}$ is equivalent to the pullback $\Eis^{\Hod, \an}_{P} \times_{\Eis^{\B, \an}_{P}}\ol{\Eis}^{\Hod, \an}_{P}$ acting on 
\[
\IndCoh^{\GAGA}_{\Nn}(\Twistor_M) = \IndCoh^{\GAGA}_{\Nn}(\Hodge^{\an}_M) \times_{\Betti} \IndCoh^{\GAGA}_{\Nn}(\Hodge^{\an}_M) , 
\]
and so the fact that $\Eis^{\Hod, \an}_{P}$ and $\ol{\Eis}^{\Hod, \an}_{P}$ preserves nilpotent singular support implies that $\Eis^{\Tw}_{P}$ also preserves singular support. 
\end{proof}
 
\subsubsection{Action on $\GG_m$-equivariant sheaves} We remark that $\Eis^{\Tw}_P$ is compatible with the $\GG_m$-equivariant structure obtained from deformation to the normal cone. Recall from \ref{NAH Hodge} and \ref{GG_m spectral} that the structural map $\Hodge_G \to \AA^1$ is $\GG_m$-equivariant with respect to dilations and extends to $\Twistor_G$ in a manner such that $\Twistor_G \to \PP^1$ is also $\GG_m$-equivariant. Since $q^{\Hod}$, $p^{\Hod}$, $q^{\Tw}$, $p^{\Tw}$ are also $\GG_m$-equivariant maps, it follows that the action of an element $\mu \in \GG_m$ induces the equivalences  
\[
\mu^{*} \circ \Eis^{\Hod}_{P} \simeq \Eis^{\Hod}_{P} \circ \mu^{*} , 
\quad 
\mu^{*} \circ \Eis^{\Tw}_{P} \simeq \Eis^{\Tw}_{P} \circ \mu^{*} ,
\]
and so both functors preserve the categories $\IndCoh^{\GG_m}(\cdot)$ of $\GG_m$-equivariant sheaves. 
\begin{corollary}
\label{pr eis and equi}
The Hodge and twistor Eisenstein series functors induce well-defined functors
\[
\Eis^{\Hod}_{ P}: \IndCoh^{\GG_m}(\Hodge_{M}) \to \IndCoh^{\GG_m}(\Hodge_{G}), 
\]
\[
\Eis^{\Tw}_{ P}: \IndCoh^{\GG_m}(\Twistor_{M}) \to \IndCoh^{\GG_m}(\Twistor_{G}) .
\]
\end{corollary}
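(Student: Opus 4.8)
The plan is to upgrade the pointwise equivalences $\mu^{*} \circ \Eis^{\Hod}_{P} \simeq \Eis^{\Hod}_{P} \circ \mu^{*}$ and $\mu^{*} \circ \Eis^{\Tw}_{P} \simeq \Eis^{\Tw}_{P} \circ \mu^{*}$, noted above for each individual $\mu \in \GG_m$, to a genuine $\GG_m$-equivariant structure, and the cleanest route is to descend the entire pull-push construction to quotient stacks by $\GG_m$. First I would record that the rescaling action on $\AA^1$ (resp. on $\PP^1$, as in \ref{GG_m spectral}) lifts compatibly through the structural maps $\Hodge_{\bullet} \to \AA^1$ and $\Twistor_{\bullet} \to \PP^1$ for $\bullet \in \{M, P, G\}$, and that the induction maps $q^{\Hod}, p^{\Hod}, q^{\Tw}, p^{\Tw}$ are maps of $\GG_m$-stacks: this is immediate because they are defined by post-composition with $BP \to BM$ and $BP \to BG$, whereas the $\GG_m$-action only rescales the $\lambda$-coordinate and hence commutes with these classifying-space maps. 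Passing to quotients yields induction diagrams $[\Hodge_{P}/\GG_m] \to [\Hodge_{M}/\GG_m]$, $[\Hodge_{P}/\GG_m] \to [\Hodge_{G}/\GG_m]$ (and analogously for $\Twistor$), in which $q$ is still quasi-smooth and affine and $p$ is still schematic (resp. proper along the Harder--Narasimhan strata), since all these properties are preserved under forming quotients by a smooth affine group. The same pull-push then defines $\Eis^{\Hod}_{P}$ and $\Eis^{\Tw}_{P}$ on $\IndCoh$ of the quotient stacks, and via the identification $\IndCoh^{\GG_m}(\Yy) \simeq \IndCoh([\Yy/\GG_m])$ this is exactly the claimed equivariant functor.

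Second, I would check that the Harder--Narasimhan strata $\Higgs^{\sst,\nu}_{P}$ and $\Hodge^{\sst,\nu}_{P}$ are $\GG_m$-stable: the action rescales $\nabla^{\lambda}$ and $\lambda$ simultaneously, so it fixes the underlying $G$-bundle together with its slopes, and therefore fixes both the component label $\chi \in \pi_1(G)$ and the Harder--Narasimhan type $\mu$; hence $\nu$ is $\GG_m$-invariant and each proper map $\Hodge^{\sst,\nu}_{P} \to \Hodge_G$ descends to a proper map $[\Hodge^{\sst,\nu}_{P}/\GG_m] \to [\Hodge_G/\GG_m]$. This makes the strata-by-strata functors $\Eis^{\Hod}_{P,\nu}$ and $\Eis^{\Tw}_{P,\nu}$ equivariant, and since the colimit presentation \eqref{eq: Eis colim} runs over the action-independent poset of Harder--Narasimhan types and $\IndCoh^{\GG_m}$ commutes with filtered colimits, the equivariant structure descends to $\Eis^{\Hod}_{P}$ and $\Eis^{\Tw}_{P}$ themselves.

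For the twistor functor I would then invoke the Betti-gluing presentation $\Eis^{\Tw}_{P} \simeq \Eis^{\Hod, \an}_{P} \times_{\Betti} \ol{\Eis}^{\Hod, \an}_{P}$ from Proposition \ref{spec tw eis}: the $\GG_m$-action on $\Twistor_G$ was constructed in \ref{GG_m spectral} by gluing the $\mu$-action on $\Hodge_G^{\an}$ to the conjugate $\ol{\mu} = 1/\mu$-action on $\ol{\Hodge}_G^{\an}$ over the Betti locus, on which $\GG_m$ acts trivially. Since both Hodge factors are equivariant by the previous paragraph and the gluing datum $\Rep_G^{\an} \times \GG_m$ carries the trivial-on-$\Rep$ action compatibly with both, the pullback of equivariant functors is again equivariant; equivalently, the defining pullback square \eqref{GAGA on Tw_G} for $\IndCoh^{\GAGA}(\Twistor_G)$ (and its nilpotent variant) is a square of $\GG_m$-equivariant categories and $\GG_m$-equivariant functors, so $\Eis^{\Tw}_{P}$ restricts to the equivariant subcategories.

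I expect the main obstacle to be purely organizational: promoting the pointwise equivalences to a coherent $\GG_m$-action forces one to work systematically with the quotient stacks (or with the bar resolution of $B\GG_m$) rather than with individual group elements, and to know that $\IndCoh$, proper pushforward and $!$-pullback all interact with $\GG_m$-descent in the expected way. For the algebraic Hodge stacks this is standard, and for the analytic twistor stack one reduces to the Hodge pieces through the gluing square, so no genuinely new input beyond the $\GG_m$-invariance of the Harder--Narasimhan stratification and of the Betti gluing data is required.
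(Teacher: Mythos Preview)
Your proposal is correct and in fact more thorough than the paper's own treatment. The paper's argument is essentially the single sentence preceding the corollary: since $q^{\Hod}$, $p^{\Hod}$, $q^{\Tw}$, $p^{\Tw}$ are $\GG_m$-equivariant, one has $\mu^{*} \circ \Eis \simeq \Eis \circ \mu^{*}$ for each $\mu \in \GG_m$, and hence the Eisenstein functors preserve equivariant sheaves. You take the same starting observation (equivariance of the induction maps) but then address precisely the coherence issue the paper leaves implicit, by descending the pull-push to quotient stacks and identifying $\IndCoh^{\GG_m}(\Yy) \simeq \IndCoh([\Yy/\GG_m])$; you also verify that the Harder--Narasimhan strata are $\GG_m$-stable and that the Betti gluing respects the action, neither of which the paper spells out. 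The extra care buys you a genuine equivariant functor rather than a family of pointwise equivalences, at the cost of invoking standard descent facts for $\IndCoh$ along $\GG_m$-quotients.
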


\subsubsection{Transitivity property} 
The following is a standard property of Eisenstein functors. Its proof is a short calculation that we include for completeness. To state the property neatly, it is helpful to apply a change in our notation: given a parabolic $P \subset G$ with Levi $M$, let us temporarily denote the twistor Eisenstein functor by $\Eis^{\Tw}_{M \rightarrow G} : \IndCoh(\Twistor_M) \to \IndCoh(\Twistor_G)$.  

\begin{proposition}
\label{pr: transitivity}
Let $P \subset G$ be a parabolic with Levi $M$, and let $P' \subset M$ be a parabolic with Levi $M'$. The corresponding twistor Eisenstein functors satisfy the relation 
\[
\Eis^{\Tw}_{M' \rightarrow G} \simeq \Eis^{\Tw}_{M \rightarrow G} \circ \Eis^{\Tw}_{M' \rightarrow M} , 
\]
where $\Eis^{\Tw}_{M' \rightarrow G}$ is defined by viewing $P'$ as a parabolic of $G$ via a choice of Levi splitting $M \to P$. 
\end{proposition}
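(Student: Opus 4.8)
The plan is to reduce the transitivity of $\Eis^{\Tw}$ to the transitivity of $\Eis^{\Hod,\an}$ and $\ol{\Eis}^{\Hod,\an}$ via the pushout presentation $\Eis^{\Tw}_{\bullet} \simeq \Eis^{\Hod,\an}_{\bullet} \times_{\Betti} \ol{\Eis}^{\Hod,\an}_{\bullet}$ from Proposition \ref{spec tw eis}, and in turn to reduce the Hodge-theoretic transitivity to the elementary fact that structure-group induction is transitive at the level of classifying stacks. First I would fix the Levi splittings $M' \to P' \subset M$ and $M \to P \subset G$, and observe that composing them realises $P'$ as a parabolic of $G$ (this is exactly the hypothesis in the statement), so that $q^{\Tw}_{M' \to G}$ and $p^{\Tw}_{M' \to G}$ factor through $\Twistor_M$: concretely, there is a commutative diagram
\[
\begin{tikzcd}
& \Twistor_{P'} \arrow[dl, "q^{\Tw}_{M' \to M}"'] \arrow[dr] & & \Twistor_P \arrow[dl] \arrow[dr, "p^{\Tw}_{M \to G}"] & \\
\Twistor_{M'} & & \Twistor_M & & \Twistor_G
\end{tikzcd}
\]
in which the middle two maps are the $M$-induction $\Twistor_{P'} \to \Twistor_M$ and the section $\Twistor_M \to \Twistor_P$ induced by the Levi splitting, and the outer composite recovers $q^{\Tw}_{M'\to G}$ and $p^{\Tw}_{M'\to G}$. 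This diagrammatic factorisation is the geometric heart of the argument and holds because $\Maps(\Xx, B(-))$ is functorial in the group argument.

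Next I would run the standard pull-push bookkeeping. Writing out $\Eis^{\Tw}_{M\to G}\circ \Eis^{\Tw}_{M'\to M}$ as a composition of four functors $(p^{\Tw}_{M\to G})_* (q^{\Tw}_{M\to G})^! (p^{\Tw}_{M'\to M})_* (q^{\Tw}_{M'\to M})^!$, the only nontrivial manipulation is to commute $(q^{\Tw}_{M\to G})^!$ past $(p^{\Tw}_{M'\to M})_*$ using a base-change isomorphism. For this I need the relevant square
\[
\begin{tikzcd}
\Twistor_{P'}\times_{\Twistor_M}\Twistor_P \arrow[r] \arrow[d] & \Twistor_P \arrow[d, "q^{\Tw}_{M\to G}"] \\
\Twistor_{P'} \arrow[r] & \Twistor_M
\end{tikzcd}
\]
to be Cartesian, and then the usual $!$-pullback/$*$-pushforward base change in $\IndCoh$ applies; identifying $\Twistor_{P'}\times_{\Twistor_M}\Twistor_P$ with $\Twistor_{P''}$ for the appropriate parabolic $P''\subset G$ whose Levi is $M'$ (the preimage of $P'$ under $P \to M$) completes the matching of the two composites. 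Rather than verify all of this directly on the twistor stack, I would instead perform the entire calculation on the Hodge side, where $q^{\Hod}$ is affine and $p^{\Hod}$ is proper (restricting to the Harder--Narasimhan strata as in \ref{se: Eis Dol/Hod} if one wants to stay within coherent sheaves), invoke the analogous transitivity $\Eis^{\Hod}_{M\to G}\circ \Eis^{\Hod}_{M'\to M}\simeq \Eis^{\Hod}_{M'\to G}$, analytify, and do the same over $\ol{X}$; then Proposition \ref{spec tw eis} glues these two equivalences over the common Betti datum, using that $\Eis^{\B}_{M\to G}\circ\Eis^{\B}_{M'\to M}\simeq\Eis^{\B}_{M'\to G}$ is compatible with the Betti-invariance identification $\Rep_G=\ol{\Rep}_G$.

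The main obstacle I anticipate is not conceptual but a matter of carefully tracking the Levi splittings: the functor $\Eis^{\Tw}_{M'\to G}$ depends on a choice of splitting $M\to P$ to regard $P'$ as a parabolic of $G$, and one must check that the composite splitting $M' \to P' \to M \to P \subset G$ coincides (up to the canonical conjugation) with the splitting used to define the right-hand side, so that the base-change square above really is the one governing $\Eis^{\Tw}_{M'\to G}$. Once the splittings are pinned down, the base-change and the gluing are formal. A secondary, minor point is that when working with compact objects one should carry the Harder--Narasimhan index $\nu$ through the argument and check the colimit presentations \eqref{eq: Eis colim} are compatible with composition, but since the strata behave functorially under induction this follows from the quasi-compact case together with Corollary \ref{co: Eis compact}.
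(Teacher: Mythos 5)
Your core argument matches the paper's: the paper also observes that the Levi splitting induces $\Twistor_{\tilde P'} \cong \Twistor_{P'} \times_{\Twistor_M} \Twistor_P$ (with $\tilde P'$ the preimage of $P'$ under $P\to M$, which you call $P''$) and derives the identity from base change around that Cartesian square together with functoriality along the two-step induction diagram, which is exactly the bottom portion of your diagram. Two small remarks. First, in your prose you describe the arrow $\Twistor_P\to\Twistor_M$ in the diagram as ``the section $\Twistor_M\to\Twistor_P$ induced by the Levi splitting''; as drawn this arrow is the Levi-reduction map $q^{\Tw}_{M\to G}$, not its section, and one should not invert it (the Levi splitting enters only through the identification $\Twistor_{\tilde P'}\cong\Twistor_{P'}\times_{\Twistor_M}\Twistor_P$, not as an extra edge of the commuting diagram). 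Second, your preferred fallback of proving transitivity on the Hodge side and transporting it to $\Twistor_G$ via the Betti-gluing presentation $\Eis^{\Tw}_P\simeq\Eis^{\Hod,\an}_P\times_{\Betti}\ol{\Eis}^{\Hod,\an}_P$ is valid, but it is strictly more work than necessary: the pushout $\Twistor_\bullet=\Hodge^{\an}_\bullet\sqcup_{\Betti}\ol{\Hodge}^{\an}_\bullet$ already makes the induction maps $p^{\Tw},q^{\Tw}$ and the relevant Cartesian square well defined on the twistor stack directly, which is why the paper runs the base-change argument there without descending to the Hodge charts.
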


The formula and proof is identical in the case of de Rham, Betti, Dolbeault or Hodge Eisenstein functors. 

\begin{proof}
The Levi splitting induces an equivalence 
\[
\Twistor_{P'} \cong \Twistor_{P'} \times_{\Twistor_M} \Twistor_P , 
\]
and so the formula follows by base change and functoriality with respect to the commutative diagram 
\[
\begin{tikzcd}[column sep = 0.5cm, row sep = 0.4cm]
 & & \Twistor_{P'} \arrow[d, "\cong"] \arrow[dddll, bend right] \arrow[dddrr, bend left] & & \\
 & & \Twistor_{P'} \times_{\Twistor_M} \Twistor_{P} \arrow[dr] \arrow[dl] & & \\
 & \Twistor_{P'} \arrow[dl] \arrow[dr] & & \Twistor_P \arrow[dl] \arrow[dr] & \\
\Twistor_{M'} & & \Twistor_M & & \Twistor_G 
\end{tikzcd} . \qedhere
\]
\end{proof}

\begin{remark} The same calculation in the de Rham theory is mentioned in \cite[\textsection 0.2.3]{braverman&gaitsgory}. The main result of loc. cit. is that the same formula holds for compactified Eisenstein functors over $\Bun_G$.  
\end{remark} 

\subsection{Hyperholomorphic action} 
\label{se: hyperholomorphic induction}

This section is dedicated to showing that $\Eis^{\Tw}_{ P}$ and $\CT^{\Tw}_{P}$ preserve the categories of ind-coherent BBB-branes introduced in Definition \ref{BBB-brane}. One could say that BBB-branes therefore pass the \textit{`Eisenstein test'} for a reasonable category to be considered as one side of a geometric Langlands correspondence. 

\subsubsection{Parabolic induction for eigensheaves} We first prove some preliminary results on horizontal twistor lines -- the objects of $\Horiz_G$ as per Definition \ref{horizontal}, and therefore on twistor Wilson eigensheaves -- the objects of $\Wils^{\Tw}_G$ as per Definition \ref{Wils}. The results are almost immediate consequences of the formalism of semi-simple twistor lines introduced in \ref{semi simple objects}. 

\begin{lemma} 
\label{induction eigensheaves}
Let $P \subset G$ be a parabolic with Levi $M$ and choose a Levi splitting $M \to P$. Take $\sigma_M : \PP^1 \to \Twistor_M$ to be a twistor line with $G$-induction $\sigma_G : \PP^1 \to \Twistor_G$. Then we have: 
\begin{enumerate}
    \item If $\sigma_M$ is semi-simple, then $\sigma_G$ is semi-simple. 

    \item If $\sigma_M$ is an object of $\Horiz_M$, then $\sigma_G$ is an object of $\Horiz_G$.

    \item If $\Ww_M := (\sigma_M)_{*}\Oo_{\PP^1}$ is an object of $\Wils^{\Tw}_M$, then $\Ww_G := (\sigma_G)_{*}\Oo_{\PP^1}$ is an object of $\Wils^{\Tw}_G$, and moreover there exists an equivalence
    \[
    \CT^{\Tw}_P(\Ww_G) \cong \Ww_M  .
    \]
\end{enumerate}
We also have the following partial converse. Take $\sigma_G : \PP^1 \to \Twistor_G$ to be a semi-simple twistor line with a fixed choice of Levi component $\sigma_M : \PP^1 \to \Twistor_M^{\irred}$. Then we have: 
\begin{enumerate}
    \item[(4)] If $\sigma$ lies in $\Horiz_G$ then $\sigma_M$ lies in $\Horiz_M$.
    
    \item[(5)] If $\sigma_{*}\Oo_{\PP^1}$ lies in $\Wils_G^{\Tw}$ then $(\sigma_M)_{*}\Oo_{\PP^1}$ lies in $\Wils_M^{\Tw}$. 
        \end{enumerate}
\end{lemma}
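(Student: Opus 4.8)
The plan is to establish the five assertions in turn, reducing each to the formalism of semi-simple twistor lines from~\ref{semi simple objects} together with the transitivity of induction (Proposition~\ref{pr: transitivity}); the only genuinely computational point is the constant-term identity in~(3). \textbf{Parts (1) and (2):} choose an irreducible Levi component $\sigma_{M'} \colon \PP^1 \to \Twistor_{M'}^{\irred}$ of the semi-simple line $\sigma_M$, with $M'$ the Levi quotient of a parabolic $P' \subset M$, and let $Q \subset G$ be the parabolic with Levi $M'$ obtained as the preimage of $P'$ under the splitting $M \to P$. Transitivity identifies the $G$-induction $\sigma_G$ of $\sigma_M$ with the $G$-induction of $\sigma_{M'}$ along $M' \to Q \to G$, exhibiting $\sigma_G$ as the induction of an irreducible twistor line, so by Proposition~\ref{associated iff equivalent}(3) it is semi-simple with Levi component $\sigma_{M'}$ up to association; this is~(1). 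For~(2): if $\sigma_M \in \Horiz_M$ then by Definition~\ref{horizontal} the component $\sigma_{M'}$ descends to a classical horizontal twistor line on $\Twistor_{M'}^{\coarse}$, and since $\sigma_{M'}$ is also a Levi component of $\sigma_G$ the same descent verifies $\sigma_G \in \Horiz_G$. The first half of~(3) is then immediate from Definition~\ref{Wils}, $\Wils^{\Tw}_G$ being by construction the essential image of $\Horiz_G$ under $\sigma \mapsto \sigma_*\Oo_{\PP^1}$.

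\textbf{The constant-term identity.} Let $s \colon \Twistor_M \to \Twistor_P$ be the section of $q^{\Tw}$ coming from the Levi splitting, so $\sigma_G = p^{\Tw} \circ s \circ \sigma_M$ and $\Ww_G = (p^{\Tw})_*(s\circ\sigma_M)_*\Oo_{\PP^1}$. Forming the Cartesian square of $p^{\Tw}$ against the (proper) section $\sigma_G$ and applying base change identifies $(p^{\Tw})^{*}\Ww_G$ with $(\pi_2)_*\Oo_W$, where $W = \PP^1 \times_{\Twistor_G}\Twistor_P$ is the space of $P$-reductions of $\sigma_G$, with projections $\pi_1 \colon W \to \PP^1$ and $\pi_2 \colon W \to \Twistor_P$; the lift $s\circ\sigma_M$ supplies a section $\iota \colon \PP^1 \hookrightarrow W$ of $\pi_1$ with $q^{\Tw}\circ\pi_2\circ\iota = \sigma_M$, using $q^{\Tw}\circ s = \id$. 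Since $\pi_2$ is proper, $\CT^{\Tw}_P(\Ww_G) \simeq (q^{\Tw}\circ\pi_2)_!\Oo_W$ while $\Ww_M \simeq (q^{\Tw}\circ\pi_2)_!\iota_*\Oo_{\PP^1}$, so the claim reduces to showing that the restriction map $\Oo_W \to \iota_*\Oo_{\PP^1}$ becomes an equivalence after $(q^{\Tw}\circ\pi_2)_!$, equivalently that the fibres of $W$ over $\Twistor_M$ contract onto the canonical section $\iota$. This fibrewise analysis of the induction map $p^{\Tw}$ is the step I expect to be the main obstacle; I would carry it out by reducing to the unipotent radical of $P$ and modelling the argument on the computation of constant terms of Wilson sheaves in the de Rham and Betti settings of \cite{AGKRRV}.

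\textbf{Parts (4) and (5):} for the partial converse, suppose $\sigma_G$ is semi-simple and lies in $\Horiz_G$, so some Levi component of $\sigma_G$ descends to a classical horizontal line. By Proposition~\ref{associated iff equivalent}(3) all Levi components of $\sigma_G$ are mutually associated, and descent to the coarse moduli space identifies association classes, so the prescribed component $\sigma_M \colon \PP^1 \to \Twistor_M^{\irred}$ also descends to a classical horizontal line; as an irreducible line is its own semi-simple Levi component, this is exactly $\sigma_M \in \Horiz_M$, proving~(4). Finally, if $(\sigma_G)_*\Oo_{\PP^1} \in \Wils_G^{\Tw}$ then it is isomorphic to $\tau_*\Oo_{\PP^1}$ for some $\tau \in \Horiz_G$; comparing supports forces $\sigma_G \cong \tau$ as sections of $\Twistor_G \to \PP^1$, so $\sigma_G \in \Horiz_G$ (a condition invariant under isomorphism of twistor lines), and then~(4) together with Definition~\ref{Wils} yields $(\sigma_M)_*\Oo_{\PP^1} \in \Wils_M^{\Tw}$, which is~(5).
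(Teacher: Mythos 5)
Your parts (1), (2), (4), and (5) follow essentially the same route as the paper: (1) by transitivity of parabolics and passing to an irreducible Levi component of $\sigma_M$; (2) by observing that $\sigma_G$ and $\sigma_M$ share a Levi component, so the non-abelian Hodge descent condition is inherited; (4) by noting $\sigma_M$ is its own Levi component and appealing to Proposition~\ref{associated iff equivalent}(3); (5) as an immediate consequence. These are correct and match the paper's argument closely.

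The one genuine gap is the constant-term identity in (3). You set up the right geometry --- base change against $\sigma_G$, the $P$-reduction space $W = \PP^1 \times_{\Twistor_G} \Twistor_P$ with its canonical section $\iota$ supplied by the Levi splitting, and the reduction to showing the restriction $\Oo_W \to \iota_*\Oo_{\PP^1}$ becomes an equivalence after $(q^{\Tw}\circ\pi_2)_!$ --- but you explicitly stop there, flagging the fibrewise contraction as ``the step I expect to be the main obstacle.'' A proof cannot end on an expectation. The paper instead runs a short adjunction chain: writing $\sigma_G = p^{\Tw} \circ r^{\Tw} \circ \sigma_M$, it computes
\[
\CT^{\Tw}_{P}(\Ww_G)
= (q^{\Tw})_{!}(p^{\Tw})^{*}(p^{\Tw})_{*}(r^{\Tw})_{*}(\sigma_{M})_{*}\Oo_{\PP^1}
\;\cong\; (q^{\Tw})_{!}(r^{\Tw})_{*}(\sigma_{M})_{*}\Oo_{\PP^1}
\;=\; (\sigma_{M})_{*}\Oo_{\PP^1},
\]
where the middle isomorphism comes from the counit $(p^{\Tw})^{*}(p^{\Tw})_{*}\to\id$ and the last step uses $q^{\Tw}\circ r^{\Tw}=\id$ with $r^{\Tw}$ a closed immersion. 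Your setup via $W$ is the geometric content behind why that counit map becomes an isomorphism after $(q^{\Tw})_!$, so you are on the right track; to close the gap you should either carry out the contraction of the fibres of $W \to \Twistor_M$ onto $\iota(\PP^1)$ (reducing, as you suggest, to the unipotent radical), or replace that discussion with the direct counit computation above.
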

\begin{proof}
(1) follows from the transitivity of parabolic subgroups. To see this, let us take $\sigma_M$ to be semi-simple, so by Proposition \ref{associated iff equivalent}(3), there exists a parabolic $P' \subset M$, with Levi $M'$, such that $\sigma_M$ admits an irreducible Levi component $\sigma_{M'}$. This means, for some Levi splitting $M' \to P'$, $\sigma_{M}$ is the $M$-induction of $\sigma_{M'}$ along $M' \to P' \to M$ . It follows that $\sigma$ is the $G$-induction of $\sigma_{M'}$ along $M' \to P' \to M \to P \to G$, which is a Levi splitting for $P'$, when viewed as a parabolic subgroup of $G$. Thus $\sigma_{M'}$ defines a Levi component of $\sigma_G$, so once more by Proposition \ref{associated iff equivalent}(3), $\sigma_G$ is semi-simple. 

(2) follows from (1) and the fact that \textit{horizontal} is defined by conditions on an associated Levi component of a semi-simple twistor line, and as the proof of (1) shows, $\sigma_{G}$ and $\sigma_{M}$ share a Levi component. 

(3) follows from (2) and the following calculation. The Levi splitting $M \to P$ induces a section $r^{\Tw} : \Twistor_M \to \Twistor_P$ of the map $q^{\Tw} : \Twistor_P \to \Twistor_M$. This allows us to factor $\sigma_G$ as the composition $p^{\Tw} \circ r^{\Tw} \circ \sigma_M$. Then, the counit $(p^{\Tw})^{*}(p^{\Tw})_{!} \to \id$ induces an isomorphism
\begin{align*}
\CT^{\Tw}_{P}( \sigma_{*}\Oo_{\PP^1} ) & = (q^{\Tw})_{!}(p^{\Tw})^{*} (p^{\Tw})_{*} (r^{\Tw})_{*} (\sigma_{M})_{*} \Oo_{\PP^1}, \\
& \cong (q^{\Tw})_{!} (r^{\Tw})_{*} (\sigma_{M})_{*} \Oo_{\PP^1} , \\
& = (\sigma_{M})_{*} \Oo_{\PP^1} . 
\end{align*}
(4) follows from two observations: firstly that being an object of $\Horiz_M$ is, by definition, a condition on Levi components; secondly that $\sigma_M$ can be taken as its own Levi component. 

(5) Is an immediate consequence of (4). 
\end{proof}

\subsubsection{Parabolic induction for BBB-branes} We are now ready for the main result of this section. 
 
\begin{proposition}
\label{Eis BBB to BBB}
For every parabolic subgroup $P \subset G$ with Levi $M$, the Eisenstein functor $\Eis^{\Tw}_{ P}$ sends $\IndCoh^{\BBB}(\Twistor_M)$ to $\IndCoh^{\BBB}(\Twistor_G)$, thus defining a functor 
\[
\Eis^{\Tw}_{ P} : \IndCoh^{\BBB}(\Twistor_M) \to \IndCoh^{\BBB}(\Twistor_G) .
\]
\end{proposition}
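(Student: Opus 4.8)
The plan is to check the defining condition of Definition \ref{BBB-brane} one test object at a time. Since $\Bb \in \IndCoh^{\BBB}(\Twistor_M)$ lies in particular in $\IndCoh^{\GAGA}(\Twistor_M)$, Corollary \ref{GAGA Eis^{Tw}} together with the colimit presentation \eqref{eq: Eis colim} of $\Eis^{\Tw}_P$ already shows $\Eis^{\Tw}_P(\Bb) \in \IndCoh^{\GAGA}(\Twistor_G)$, so only the intersection condition against $\Wils^{\Tw}_G$ remains. Fix a test object $\Ww_G = (\sigma_G)_{*}\Oo_{\PP^1}$ with $\sigma_G \in \Horiz_G$. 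By the projection formula for the (locally closed) immersion $\sigma_G$, producing a graded vector space $V^{\bullet}$ with $\Eis^{\Tw}_P(\Bb) \otimes \Ww_G \simeq V^{\bullet} \otimes_{\CC} \Ww_G$ is the same as showing that $\sigma_G^{*}\Eis^{\Tw}_P(\Bb)$ is a \emph{constant} complex $V^{\bullet} \otimes \Oo_{\PP^1}$ on $\PP^1$.

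Next I would pass from the non-proper pull--push to a proper one. As $\sigma_G$ is semi-simple with irreducible Levi component $\sigma_{M_0} : \PP^1 \to \Twistor^{\irred}_{M_0}$, Remark \ref{HN type of Levi component} and the argument of Lemma \ref{BBB and HN strata} place $\sigma_G$ in the image of a single stratum $\Twistor^{\sst,\nu^{triv}}_{P_0} \to \Twistor_G$; hence only finitely many terms $\Eis^{\Tw}_{P,\nu}$ of \eqref{eq: Eis colim} contribute to $\sigma_G^{*}\Eis^{\Tw}_P(\Bb)$, and for each such $\nu$ the map $p^{\Tw} : \Twistor^{\sst,\nu}_P \to \Twistor_G$ is proper (Corollary \ref{spec tw eis HN}). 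Forming the cartesian square defining $Z := \PP^1 \times_{\Twistor_G} \Twistor^{\sst,\nu}_P$, base change and the formula $(q^{\Tw})^{!} \simeq (q^{\Tw})^{*}(-) \otimes \omega_{q^{\Tw}}$ for the quasi-smooth map $q^{\Tw}$ give
\[
\sigma_G^{*}\,\Eis^{\Tw}_{P,\nu}\bigl(\Bb|_{\Twistor^{\sst,\nu}_M}\bigr) \;\simeq\; \pi_{*}\bigl( \phi^{*}\Bb \otimes \widetilde{\sigma}^{\,*}\omega_{q^{\Tw}} \bigr),
\]
where $\pi : Z \to \PP^1$ and $\phi : Z \to \Twistor^{\sst,\nu}_M$ are the induced maps and $\widetilde{\sigma} : Z \to \Twistor^{\sst,\nu}_P$ is the remaining projection.

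I would then identify $\phi^{*}\Bb$ as constant on each component of $Z$. The stack $Z$ parametrises fibrewise $P$-reductions of the semi-simple twistor line $\sigma_G$, and $\phi$ records the induced $M$-twistor line. By Lemma \ref{associated} and Proposition \ref{associated iff equivalent}, together with the compatibility of reductions with semi-simplification, every $M$-twistor line arising this way is semi-simple with Levi component a conjugate of $\sigma_{M_0}$; combining Lemma \ref{induction eigensheaves}(4) with Lemma \ref{induction eigensheaves}(2) then shows each such $M$-twistor line lies in $\Horiz_M$, so its image is the support of a twistor Wilson eigensheaf in $\Wils^{\Tw}_M$. These supports are pairwise disjoint (Proposition \ref{disjointness}), so $\phi$ factors through a finite disjoint union $\bigsqcup_j \im \sigma^{(j)}_M$ of horizontal twistor lines, with each connected component $Z_j$ of $Z$ mapping to a single $\im\sigma^{(j)}_M \cong \PP^1$. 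The $\BBB$-hypothesis on $\Bb$ gives $(\sigma^{(j)}_M)^{*}\Bb \simeq V^{\bullet}_j \otimes \Oo_{\PP^1}$, whence $\phi^{*}\Bb|_{Z_j} \simeq V^{\bullet}_j \otimes \Oo_{Z_j}$.

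It remains to show that $\pi_{*}\bigl(\phi^{*}\Bb \otimes \widetilde{\sigma}^{\,*}\omega_{q^{\Tw}}\bigr)$ --- now a sum over components of $V^{\bullet}_j \otimes (\pi_j)_{*}\bigl(\widetilde{\sigma}^{\,*}\omega_{q^{\Tw}}|_{Z_j}\bigr)$ --- is again constant on $\PP^1$, which, assembled over the finitely many contributing $\nu$, yields the required $V^{\bullet} \otimes \Oo_{\PP^1}$. I expect this to be the main obstacle: it is the $\PP^1$-family version of the cleanness of constant terms of semi-simple eigensheaves established in \cite[\S 13]{arinkin&gaitsgory} and \cite[\S 3.6, \S 12--14]{AGKRRV}, and carrying it out calls for an explicit description of the space $Z$ of $P$-reductions of a semi-simple twistor line --- fibrewise a product of partial-flag-type varieties whose dualizing complexes have constant pushforward --- together with the observation that horizontality of $\sigma_G$ makes this uniform along $\PP^1$. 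The analogue for the left adjoint $\CT^{\Tw}_P$ follows by the same method, using that, strata-by-strata, the constant term of a test object is a finite sum of test objects on $\Twistor_M$ twisted by graded vector spaces (the many-component version of Lemma \ref{induction eigensheaves}(3)).
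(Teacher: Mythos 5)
Your plan follows the paper's outline up to a point -- both argue strata-by-strata via Lemma \ref{BBB and HN strata}, both isolate the single stratum $\Twistor^{\sst,\nu^{triv}}_{P_0}$ supporting the test object, both ultimately reduce to the BBB-hypothesis on the Levi component. But where the paper then makes a purely formal adjunction argument, you commit to a base-change calculation over the fiber $Z = \PP^1 \times_{\Twistor_G}\Twistor^{\sst,\nu}_P$, and that is where a genuine gap opens.

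The paper's mechanism is the observation (from Lemma \ref{induction eigensheaves}(3)) that in the matching case $(P',\nu^{triv})=(P,\nu)$ the test object literally satisfies $\Ww_M \cong \CT^{\Tw}_{P,\nu}(\Ww)$, so one can apply the BBB-hypothesis not to $\phi^{*}\Bb$ but to $\Bb\otimes \Ww_M$ upstairs on $\Twistor^{\sst,\nu}_M$, getting $\Bb\otimes\Ww_M\cong V^{\bullet}\otimes_{\CC}\Ww_M$ as an honest equivalence of sheaves. Then $\Eis^{\Tw}_{P,\nu}$ is applied to both sides, and the counit $(q^{\Tw})^{!}(q^{\Tw})_{*}\to\id$ exhibits $\Eis^{\Tw}_{P,\nu}(\Bb)\otimes\Ww$ and $V^{\bullet}\otimes_{\CC}\Ww$ as cofibers of a pair of maps whose sources and ``kernels'' (via the auxiliary $\Ww^{\perp}$) are identified functorially from the starting equivalence -- no geometric analysis of the reduction space is needed. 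You, instead, must (a) prove that every $M$-object arising from the $P$-reductions parametrised by $Z$ is horizontal, which you assert via Lemma \ref{associated} / Proposition \ref{associated iff equivalent} but do not actually establish (being a $P$-reduction of a semi-simple $G$-object does not make the $M$-image a Levi component, and does not obviously make it semi-simple, without using the HN constraint more carefully); and (b) compute $\pi_{*}\bigl(\widetilde\sigma^{\,*}\omega_{q^{\Tw}}|_{Z_j}\bigr)$ and show it is constant along $\PP^1$. You explicitly flag (b) as ``the main obstacle'' and do not carry it out; this is precisely the geometric input the paper's counit/cone maneuver is designed to avoid. As written, your proposal therefore does not constitute a proof; to finish it you would need a concrete description of $Z$ (as a family of partial-flag-type schemes over $\PP^1$, uniform because $\sigma_G$ is horizontal), which is considerably more work than Lemma \ref{induction eigensheaves}(3) together with the two exact triangles the paper actually uses.
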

By Proposition \ref{nilp to nilp}, $\Eis^{\Tw}_{P}$ also preserves nilpotent singular support, and thus preserves the categories 
\[
\IndCoh^{\BBB}_{\Nn}(\Twistor_G) := \IndCoh^{\BBB}(\Twistor_G) \cap \IndCoh_{\Nn}(\Twistor_G),
\]
of \textit{nilpotent BBB-branes.} We record this conclusion as follows. 

\begin{corollary}
\label{BBB nilp Eis}
$\Eis^{\Tw}_{ P}$ sends $\IndCoh^{\BBB}_{\Nn}(\Twistor_M)$ to $\IndCoh^{\BBB}_{\Nn}(\Twistor_G)$, thus defining a functor 
\[
\Eis^{\Tw}_{ P} : \IndCoh^{\BBB}_{\Nn}(\Twistor_M) \to \IndCoh^{\BBB}_{\Nn}(\Twistor_G).
\]
\end{corollary}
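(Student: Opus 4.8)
\textbf{Proof plan for Corollary~\ref{BBB nilp Eis}.} The plan is to deduce this corollary by combining the two preservation results that have already been established, namely Proposition~\ref{Eis BBB to BBB} (which says $\Eis^{\Tw}_P$ sends $\IndCoh^{\BBB}(\Twistor_M)$ to $\IndCoh^{\BBB}(\Twistor_G)$) and Proposition~\ref{nilp to nilp} (whose second bullet, after restriction to the relevant Harder--Narasimhan strata and passing to the colimit over $\nu$ as in \eqref{eq: Eis colim}, gives that $\Eis^{\Tw}_P$ sends $\IndCoh^{\GAGA}_{\Nn}(\Twistor_M)$ to $\IndCoh^{\GAGA}_{\Nn}(\Twistor_G)$). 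Since $\IndCoh^{\BBB}(\Twistor_G)$ is by Definition~\ref{BBB-brane} a full subcategory of $\IndCoh^{\GAGA}(\Twistor_G)$, both of these preservation statements live inside the ambient GAGA category, so they can be intersected.

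Concretely, I would argue as follows. Let $\Bb \in \IndCoh^{\BBB}_{\Nn}(\Twistor_M) = \IndCoh^{\BBB}(\Twistor_M) \cap \IndCoh_{\Nn}(\Twistor_M)$. First, $\Bb$ is in particular a GAGA sheaf with nilpotent singular support, i.e. an object of $\IndCoh^{\GAGA}_{\Nn}(\Twistor_M)$, so by Proposition~\ref{nilp to nilp} its image $\Eis^{\Tw}_P(\Bb)$ lies in $\IndCoh^{\GAGA}_{\Nn}(\Twistor_G) = \IndCoh^{\GAGA}(\Twistor_G) \cap \IndCoh_{\Nn}(\Twistor_G)$; in particular $\Eis^{\Tw}_P(\Bb) \in \IndCoh_{\Nn}(\Twistor_G)$. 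Second, $\Bb$ is an object of $\IndCoh^{\BBB}(\Twistor_M)$, so by Proposition~\ref{Eis BBB to BBB} its image $\Eis^{\Tw}_P(\Bb)$ lies in $\IndCoh^{\BBB}(\Twistor_G)$. Combining the two containments, $\Eis^{\Tw}_P(\Bb) \in \IndCoh^{\BBB}(\Twistor_G) \cap \IndCoh_{\Nn}(\Twistor_G) = \IndCoh^{\BBB}_{\Nn}(\Twistor_G)$, which is exactly the assertion. Functoriality of the restricted functor is inherited from the functoriality of $\Eis^{\Tw}_P$ on the ambient $\IndCoh^{\GAGA}(\Twistor_G)$.

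The only point requiring a word of care is the compatibility of domains: Proposition~\ref{nilp to nilp} is stated for $\Eis^{\Tw}_{P,\nu}$ on the strata $\Twistor_M^{\sst,\nu}$, whereas here I want the unstratified $\Eis^{\Tw}_P$ on all of $\Twistor_M$. This is handled by the same colimit formalism used to pass from $\Eis^{\Hod}_{P,\nu}$ to $\Eis^{\Hod}_P$ in \eqref{eq: Eis colim} and Corollary~\ref{co: Eis compact}: the preservation of $\IndCoh^{\GAGA}_{\Nn}$ on each stratum assembles, under the filtered colimit over Harder--Narasimhan types, into preservation of $\IndCoh^{\GAGA}_{\Nn}$ for the total functor, because both the analytification functor and the formation of singular-support subcategories commute with filtered colimits. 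Since $\IndCoh^{\BBB}$ is a full subcategory of $\IndCoh^{\GAGA}$ (Definition~\ref{BBB-brane}) and the BBB condition is checked pointwise against the fixed set of test objects in $\Wils^{\Tw}_G$, there is no further compatibility to verify. I do not anticipate a genuine obstacle here: the corollary is a formal intersection of two results already in hand, and the mild stratification bookkeeping is the only thing to spell out.
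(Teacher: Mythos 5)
Your proposal is correct and matches the paper's own argument: the corollary is obtained precisely by intersecting Proposition~\ref{Eis BBB to BBB} (preservation of the BBB condition) with Proposition~\ref{nilp to nilp} (preservation of nilpotent singular support), using $\IndCoh^{\BBB}_{\Nn} = \IndCoh^{\BBB} \cap \IndCoh_{\Nn}$. The extra stratification bookkeeping you include is harmless but not needed, since the second bullet of Proposition~\ref{nilp to nilp} is already proved for the unstratified functor $\Eis^{\Tw}_{P}$ acting on $\IndCoh^{\GAGA}_{\Nn}(\Twistor_M)$ (the subscript $\nu$ in its display is only a notational slip).
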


Proposition \ref{Eis BBB to BBB} is, by Lemma \ref{BBB and HN strata}, equivalent to verifying the BBB-brane condition strata-by-strata along the Harder--Narasimhan stratification. It is therefore equivalent to proving the following.   

\begin{proposition}
\label{Eis BBB to BBB HN}
For every pair $(P ,\nu)$, the Eisenstein functor $\Eis^{\Tw}_{P, \nu}$ sends $\IndCoh^{\BBB}(\Twistor^{\sst, \nu}_M)$ to $\IndCoh^{\BBB}(\Twistor_G)$, thus defining a functor 
\[
\Eis^{\Tw}_{P, \nu} : \IndCoh^{\BBB}(\Twistor^{\sst, \nu}_M) \to \IndCoh^{\BBB}(\Twistor_G).
\]
\end{proposition}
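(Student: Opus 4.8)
## Proof proposal

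The plan is to verify the defining brane condition of Definition \ref{BBB-brane} for an object of the form $\Eis^{\Tw}_{P, \nu}(\Bb_M)$, where $\Bb_M \in \IndCoh^{\BBB}(\Twistor^{\sst, \nu}_M)$, by testing against an arbitrary $\Ww \in \Wils^{\Tw}_G$. First I would reduce to a single test object: fix $\Ww = \sigma_{*}\Oo_{\PP^1}$ for $\sigma \in \Horiz_G$. By Proposition \ref{Wils is GAGA} and Corollary \ref{GAGA Eis^{Tw}} the target object is a GAGA sheaf, so the only content is producing a graded vector space $V^\bullet_\Ww$ with $\Eis^{\Tw}_{P,\nu}(\Bb_M) \otimes \Ww \cong V^\bullet_\Ww \otimes_\CC \Ww$.

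The key computational step is a projection-formula manipulation that pushes the tensor product with $\Ww$ across the correspondence $\Twistor^{\sst,\nu}_M \xleftarrow{q^{\Tw}} \Twistor^{\sst,\nu}_P \xrightarrow{p^{\Tw}} \Twistor_G$. Concretely, $\Eis^{\Tw}_{P,\nu}(\Bb_M) \otimes \Ww = (p^{\Tw})_*(q^{\Tw})^!\Bb_M \otimes \sigma_*\Oo_{\PP^1}$, and by the projection formula for the proper map $p^{\Tw}$ this is $(p^{\Tw})_*\big((q^{\Tw})^!\Bb_M \otimes (p^{\Tw})^*\sigma_*\Oo_{\PP^1}\big)$. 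Now I would analyse $(p^{\Tw})^*\sigma_*\Oo_{\PP^1}$: since $\sigma$ lies in the image of a single Harder--Narasimhan stratum — this is exactly the content of Remark \ref{HN type of Levi component} together with Proposition \ref{associated iff equivalent}(3), which says a semi-simple $\sigma$ with irreducible Levi component $\sigma_M$ is $G$-induced from $\Twistor^{\sst,\nu^{triv}}_M$ — the fibre product $\PP^1 \times_{\Twistor_G} \Twistor_P$ decomposes according to the relative position of $P$ with the parabolic carrying $\sigma$, and Lemma \ref{induction eigensheaves}(3) (the constant-term computation $\CT^{\Tw}_P(\Ww_G) \cong \Ww_M$) controls the nonvanishing contribution. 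When $\sigma$ is $G$-induced from $\Twistor_M$ itself, the relevant fibre of $p^{\Tw}$ over $\sigma$ collapses to the induced eigensheaf $\Ww_M = (\sigma_M)_*\Oo_{\PP^1} \in \Wils^{\Tw}_M$, so that $(q^{\Tw})^!\Bb_M \otimes (p^{\Tw})^*\sigma_*\Oo_{\PP^1}$ is computed from $\Bb_M \otimes \Ww_M'$ for a suitable test object $\Ww_M'$ on $\Twistor^{\sst,\nu}_M$; otherwise $p^{\Tw}$ and $\sigma$ are disjoint and the tensor product vanishes, giving $V^\bullet_\Ww = 0$.

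In the nonvanishing case I would invoke the hypothesis that $\Bb_M$ is a BBB-brane on $\Twistor^{\sst,\nu}_M$: testing against $\Ww_M' \in \Wils^{\Tw}_M$ (which is a legitimate test object by Lemma \ref{induction eigensheaves}(5), read in reverse) yields $\Bb_M \otimes \Ww_M' \cong V^\bullet \otimes_\CC \Ww_M'$ for some graded vector space $V^\bullet \in \QCoh(pt)$. Pushing this equivalence back through $(q^{\Tw})^!$ and $(p^{\Tw})_*$ via the projection formula, and using Lemma \ref{induction eigensheaves}(3) to identify the $G$-induction of $\Ww_M'$ with $\Ww$, produces $\Eis^{\Tw}_{P,\nu}(\Bb_M) \otimes \Ww \cong V^\bullet \otimes_\CC \Ww$, so $V^\bullet_\Ww := V^\bullet$ does the job. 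I expect the main obstacle to be the geometric bookkeeping in the previous paragraph: identifying precisely the fibre product $\PP^1 \times_{\Twistor_G}\Twistor^{\sst,\nu}_P$ when $\sigma$ admits several non-conjugate $P$-reductions, and checking that the pullback $(p^{\Tw})^*\sigma_*\Oo_{\PP^1}$ is supported on (a disjoint union of shifts of) structure sheaves of sections rather than something with higher singular support — here one leans on the fact that the relevant pieces of $\Twistor_P$ meeting $\sigma$ are themselves parametrised by horizontal twistor lines (Lemma \ref{induction eigensheaves}(4)), so the twistor-line geometry of Section \ref{se: twistor lines}, and in particular the orthogonality Proposition \ref{disjointness}, governs the decomposition cleanly.
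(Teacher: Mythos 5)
Your overall scheme matches the paper's: fix a test object $\Ww = \sigma_{*}\Oo_{\PP^1}$, use the semi-simple/Levi-component structure of $\sigma$ (Remark \ref{HN type of Levi component}, Proposition \ref{associated iff equivalent}(3)) to see that $\Ww$ is supported on a single stratum $\Twistor_G^{P',\nu^{triv}}$, kill the mismatched case $(P',\nu^{triv})\neq(P,\nu)$ by disjointness of supports, and in the matched case apply the BBB hypothesis to $\Bb_M$ against $\Ww_M := \CT^{\Tw}_{P,\nu}(\Ww) \in \Wils^{\Tw}_M$ (Lemma \ref{induction eigensheaves}(3),(5)) and then induce back. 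All of that lines up with Arinkin--Gaitsgory-style bookkeeping and with the proof in the paper.

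The genuine gap is in the final ``push back through'' step. You apply the projection formula to write $\Eis^{\Tw}_{P,\nu}(\Bb_M)\otimes\Ww = (p^{\Tw})_*\big((q^{\Tw})^!\Bb_M\otimes(p^{\Tw})^!\Ww\big)$, then attempt to replace $(p^{\Tw})^!\Ww$ by $(q^{\Tw})^!\Ww_M$ and ``identify the $G$-induction of $\Ww_M'$ with $\Ww$.'' But $\Eis^{\Tw}_{P,\nu}(\Ww_M) \not\cong \Ww$, and $(q^{\Tw})^!\Ww_M \not\cong (p^{\Tw})^!\Ww$: the former is supported on the $q^{\Tw}$-fibre over $\sigma_M$, the latter on the $p^{\Tw}$-fibre over $\sigma$, and these are different substacks of $\Twistor_P^{\sst,\nu}$. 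There is only a counit map $(q^{\Tw})^!(q^{\Tw})_* \to \id$ inducing a surjection $\Eis^{\Tw}_{P,\nu}(\Bb\otimes\Ww_M) \to \Eis^{\Tw}_{P,\nu}(\Bb)\otimes\Ww$. You flag this as ``geometric bookkeeping'' on the fibre product $\PP^1\times_{\Twistor_G}\Twistor_P^{\sst,\nu}$ that you would need to control — which is precisely what the paper avoids doing. Instead, the paper forms the $(-1)$-shifted cone $\Ww^{\perp} := \Cone(\Eis^{\Tw}_{P,\nu}(\Ww_M)\to\Ww)[-1]$ and observes that applying $\Eis^{\Tw}_{P,\nu}$ to both sides of the BBB equivalence $\Bb\otimes\Ww_M\cong V^{\bullet}_{\Ww_M}\otimes_\CC\Ww_M$ produces a map of exact triangles in which the $\Ww^{\perp}$-twisted terms already match; the desired identification of the right-hand pieces $\Eis^{\Tw}_{P,\nu}(\Bb)\otimes\Ww\cong V^{\bullet}_{\Ww_M}\otimes_\CC\Ww$ then falls out without ever computing the fibre product. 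That cancellation device is the missing ingredient; as written, your argument would only establish an equivalence after tensoring by the ``larger'' object $\Eis^{\Tw}_{P,\nu}(\Ww_M)$, not by $\Ww$ itself.
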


\begin{proof} Fix an object $\Bb \in \IndCoh^{\BBB}(\Twistor^{\sst, \nu}_M)$. We verify the BBB-brane condition on $\Eis^{\Tw}_{P, \nu}(\Bb)$ against a test object $\Ww = \sigma_{*}\Oo_{\PP^1} \in \Wils^{\Tw}_G$ supported on $\sigma \in \Horiz_G$. That is to say, we show there exists a graded vector space $V^{\bullet}_\Ww \in \QCoh(pt)$ and an equivalence
\begin{equation}
\label{eigensheaf test}
\Eis^{\Tw}_{P, \nu}(\Bb) \otimes \Ww \cong V^{\bullet}_{\Ww} \otimes_{\CC} \Ww , 
\end{equation}
in $\IndCoh(\Twistor_G)$. The idea is to naturally induce \eqref{eigensheaf test} via parabolic induction.

By hypothesis $\sigma$ is semi-simple. Let $\sigma_{M'}$ be a Levi component of $\sigma$, for some parabolic $P' \subset G$ equipped with a Levi splitting $M' \to P'$. By Proposition \ref{associated iff equivalent}(3), $\sigma$ is recovered from $\sigma_{M'}$ by $G$-inducing along $M' \to P' \to G$, so $\sigma$ can be written as a composition 
\[
\PP^1 \xrightarrow{\sigma_{M'}} \Twistor_{M'} \xrightarrow{r^{\Tw, '}} \Twistor_{P'} \xrightarrow{p^{\Tw}} \Twistor_G , 
\]
where $r^{\Tw, '} : \Twistor_{M'} \to \Twistor_{P'}$ is the section of $q^{\Tw}$ induced by the Levi splitting. 

By Remark \ref{HN type of Levi component},1
$\sigma_{M'}$ defines a map $\sigma_{M'} : \PP^1 \to \Twistor^{\sst, \nu^{triv}}_{M'}$, where $\nu^{triv}$ indexes trivial Harder--Narasimhan type. In other words, since $\sigma_{M'}$ is irreducible, it parametrises objects with trivial Harder--Narasimhan filtration. Thus $\Ww = \sigma_{*}\Oo_{\PP^1}$ is supported on the image of $\Twistor_{P'}^{\sst, \nu^{triv}} \to \Twistor_G$, which is denoted by $\Twistor_G^{P', \nu^{triv}}$. 

The support of the expression \eqref{eigensheaf test} is contained within the substack $\Twistor_G^{P, \nu} \cap \Twistor_G^{P', \nu^{triv}}$, which is empty for $(P' , \nu^{triv}) \neq (P, \nu)$, and so \eqref{eigensheaf test} holds vacuously in this case with $V^{\bullet}_{\Ww} = 0$. 

We proceed in the remaining case where $(P' , \nu^{triv}) = (P, \nu)$. By Lemma \ref{induction eigensheaves}(3) we have
\[
\Ww_{M} := (\sigma_{M})_{*}\Oo_{\PP^1} \cong \CT_{P, \nu}^{\Tw}(\Ww) , 
\]
and moreover by Lemma \ref{induction eigensheaves}(5) we have that $\Ww_{M}$ lies in $\Wils^{\Tw}_{M}$. By hypothesis, $\Bb$ is an object of $\IndCoh^{\BBB}(\Twistor^{\sst, \nu}_M)$, so there exists a graded vector space $V^{\bullet}_{\Ww_{M}}$ and an equivalence 
\begin{equation}
\label{eq: BBB condition hypothesis}
\Bb \otimes \Ww_{M} \cong V^{\bullet}_{\Ww_{M}} \otimes_{\CC} \Ww_{M} , 
\end{equation}
in $\IndCoh(\Twistor^{\sst, \nu}_M)$. Then, $\Eis^{\Tw}_{P, \nu}$ applied to the left hand side of \eqref{eq: BBB condition hypothesis} has a natural surjection  
\begin{align*}
\Eis^{\Tw}_{P, \nu}(\Bb \otimes \Ww_{M} ) 
& \cong (p^{\Tw})_{*} ((q^{\Tw})^{!}\Bb \otimes (q^{\Tw})^{!}\Ww_M ) , \\
& \cong (p^{\Tw})_{*} ((q^{\Tw})^{!}\Bb \otimes (q^{\Tw})^{!} (q^{\Tw})_{*} (p^{\Tw})^{!} \Ww ) , \\
& \xrightarrow{(q^{\Tw})^{!}(q^{\Tw})_{*} \to \id} (p^{\Tw})_{*} ((q^{\Tw})^{!}\Bb \otimes (p^{\Tw})^{!} \Ww ) , \\
& \cong (p^{\Tw})_{*} ((q^{\Tw})^{!}\Bb \otimes (p^{\Tw})^{!} \Ww ) , \\
& \cong (p^{\Tw})_{*} (q^{\Tw})^{!}\Bb \otimes \Ww ) , \\
& = \Eis^{\Tw}_{P, \nu}( \Bb ) \otimes \Ww , 
\end{align*}
induced by the counit $(q^{\Tw})^{!}(q^{\Tw})_{*} \to \id$ along the cohomologically affine morphism $q^{\Tw}$. The 
$-1$-shifted cone of this morphism is given by 
\[
\Cone\Big( \Eis^{\Tw}_{P, \nu}(\Bb \otimes \Ww_{M} ) \to \Eis^{\Tw}_{P, \nu}( \Bb ) \otimes \Ww \Big)\big[-1\big] 
\cong \Eis^{\Tw}_{P, \nu}( \Bb ) \otimes \Ww^{\perp} , 
\]
where $\Ww^{\bot} := \Cone(\Eis^{\Tw}_{P, \nu}(\Ww_M) \to \Ww) [-1]$ of the natural surjection $\Eis^{\Tw}_{P, \nu}(\Ww_M) \to \Ww$. 

By a similar calculation, $\Eis^{\Tw}_{P, \nu}$ applied to the right hand side of \eqref{eq: BBB condition hypothesis} has a natural surjection 
\begin{align*}
\Eis^{\Tw}_{P, \nu}(V^{\bullet}_{\Ww_M} \otimes_{\CC} \Ww_M) 
& \cong V^{\bullet}_{\Ww_M} \otimes_{\CC} \Eis^{\Tw}_{P, \nu}(\Ww_M) \\
& \xrightarrow{ (q^{\Tw})^{!}(q^{\Tw})_{*} \to \id } V^{\bullet}_{\Ww_M} \otimes_{\CC} \Ww ,
\end{align*}
with $-1$-shifted cone $V^{\bullet}_{\Ww_M} \otimes \Ww^{\perp}$. We arrive at a pair of exact triangles
\[
\begin{tikzcd}
\Eis^{\Tw}_{P, \nu}(\Bb) \otimes \Ww^{\bot} \arrow[r] 
& 
\Eis^{\Tw}_{P, \nu}(\Bb \otimes \Ww_M) \arrow[r, "(q^{\Tw})^{!}(q^{\Tw})_{!} \to \id"] 
\hspace{5mm}
& \hspace{5mm}
\Eis^{\Tw}_{P, \nu}(\Bb) \otimes \Ww 
\\
V^{\bullet}_{\sigma_M} \otimes_{\CC} \Ww^{\bot} \arrow[r] \arrow[u, "\cong"]
& V^{\bullet}_{\sigma_M} \otimes_{\CC} \Eis^{\Tw}_{P, \nu}(\Ww_M) \arrow[u, "\cong"] \arrow[r, "(q^{\Tw})^{!}(q^{\Tw})_{!} \to \id"]
\hspace{5mm}
& \hspace{5mm} V^{\bullet}_{\sigma_M} \otimes_{\CC} \Ww \arrow[u, "\cong"]
\end{tikzcd} .
\]
The equivalence of the middle terms is induced functorially by $\Eis^{\Tw}_{P, \nu}$ acting on \eqref{eq: BBB condition hypothesis}. This in turn induces the displayed equivalence of the other two terms. For the equivalence on the right hand side, this follows via the functoriality of $(\cdot) \otimes \Ww$ and the presentations 
\[
\Eis_{P, \nu}^{\Tw}(\Bb) \otimes \Ww \cong \Eis^{\Tw}_{P, \nu}(\Bb \otimes \Ww_M) \otimes \Ww , 
\quad 
V^{\bullet}_{\sigma_M} \otimes_{\CC} \Ww \cong V^{\bullet}_{\sigma_M} \otimes_{\CC} \Eis^{\Tw}_{P, \nu}(\Ww_M) \otimes \Ww , 
\]
both of which follow from the relation $(q^{\Tw})^{!}\Ww_M \otimes (r^{\Tw})_{*} \Ww \cong (r^{\Tw})_{*}\Ww$. In other words, both of the displayed surjections induced by the counit can be presented by restriction to the subspace $\Supp(\Ww) \subset \Supp(\Eis^{\Tw}_{P, \nu}(\Ww_M))$, which is precisely $\Image(\sigma) \subset p(q^{-1}( \Image(\sigma_M))$. The left hand side equivalence then follows by functoriality of the cone construction. The right hand side equivalence $\Eis^{\Tw}_{P, \nu}(\Bb) \otimes \Ww \cong V^{\bullet}_{\Ww_M} \otimes_{\CC} \Ww$ allows us to set $V^{\bullet}_{\Ww} := V^{\bullet}_{\Ww_M} \in \QCoh(pt)$ and conclude that \eqref{eigensheaf test} holds for this choice of $V^{\bullet}_{\Ww}$. This concludes the proof. 
\end{proof}

\subsubsection{Parabolic reduction for BBB-branes} We now provide the adjoint version of Proposition \ref{Eis BBB to BBB}. 

\begin{proposition}
\label{CT BBB to BBB}
$\CT^{\Tw}_{ P}$ sends $\IndCoh^{\BBB}(\Twistor_G)$ to $\IndCoh^{\BBB}(\Twistor_M)$, thus defining a functor 
\[
\CT^{\Tw}_{ P} : \IndCoh^{\BBB}(\Twistor_G) \to \IndCoh^{\BBB}(\Twistor_M) .
\]
\end{proposition}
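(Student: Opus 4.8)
The plan is to dualise the proof of Proposition \ref{Eis BBB to BBB}, using the adjunction $\CT^{\Tw}_P \dashv \Eis^{\Tw}_P$ together with Lemma \ref{induction eigensheaves}. First I would record that $\CT^{\Tw}_P$ preserves GAGA sheaves, so that $\IndCoh^{\BBB}(\Twistor_M) \subset \IndCoh^{\GAGA}(\Twistor_M)$ is the right target: this follows, just as for $\Eis^{\Tw}_P$ in Corollary \ref{GAGA Eis^{Tw}}, from a pushout presentation $\CT^{\Tw}_P \simeq \CT^{\Hod, \an}_P \times_{\Betti} \ol{\CT}^{\Hod, \an}_P$ obtained from \eqref{pushout reminder} exactly as in Proposition \ref{spec tw eis}, together with a GAGA-compatibility statement for the Hodge constant term functors proved in parallel with the Eisenstein--GAGA property of Proposition \ref{Hodge Eisenstein GAGA property}. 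By Lemma \ref{BBB and HN strata} applied on $\Twistor_M$ it then suffices, for a fixed $\Bb \in \IndCoh^{\BBB}(\Twistor_G)$, to verify the BBB-condition against a single test object $\Ww_M = (\sigma_M)_{*}\Oo_{\PP^1} \in \Wils^{\Tw}_M$ supported on some $\sigma_M \in \Horiz_M$; that is, to produce a graded vector space $V^{\bullet}_{\Ww_M} \in \QCoh(pt)$ and an equivalence $\CT^{\Tw}_P(\Bb) \otimes \Ww_M \cong V^{\bullet}_{\Ww_M} \otimes_{\CC} \Ww_M$.

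The core step is to transport the BBB-condition across the Eisenstein correspondence. Fixing a Levi splitting $M \to P$ with its section $r^{\Tw} : \Twistor_M \to \Twistor_P$ of $q^{\Tw}$, I would $G$-induce $\sigma_M$ to $\sigma_G : \PP^1 \to \Twistor_G$ and set $\Ww_G := (\sigma_G)_{*}\Oo_{\PP^1}$, so that by Lemma \ref{induction eigensheaves}(2),(3) we have $\Ww_G \in \Wils^{\Tw}_G$ and $\CT^{\Tw}_P(\Ww_G) \cong \Ww_M$. Applying the BBB-equivalence for $\Bb$ tested against $\Ww_G$ and then the functor $\CT^{\Tw}_P$ gives $\CT^{\Tw}_P(\Bb \otimes \Ww_G) \cong V^{\bullet}_{\Ww_G} \otimes_{\CC} \CT^{\Tw}_P(\Ww_G) \cong V^{\bullet}_{\Ww_G} \otimes_{\CC} \Ww_M$. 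It then remains to compare $\CT^{\Tw}_P(\Bb) \otimes \Ww_M$ with $\CT^{\Tw}_P(\Bb \otimes \Ww_G)$: running the projection formula for the affine morphism $q^{\Tw}$, base change along the pullback of $q^{\Tw}$ over $\sigma_M$, and the co/unit maps of the adjunctions $(q^{\Tw})_{!} \dashv (q^{\Tw})^{!}$ and $(p^{\Tw})^{*} \dashv (p^{\Tw})_{*}$ --- the adjoint mirror of the manipulations in the proof of Proposition \ref{Eis BBB to BBB HN} --- produces an exact triangle whose ``main'' vertex is $\CT^{\Tw}_P(\Bb \otimes \Ww_G)$ and whose third vertex is a correction term built from $\Ww^{\perp} := \Cone(\Eis^{\Tw}_P(\Ww_M) \to \Ww_G)[-1]$. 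Tensoring this triangle with $\Ww_M$ kills the correction term by the orthogonality Proposition \ref{disjointness} (concretely via $\Ww_M \otimes \Ww_M \cong \Ww_M$ together with the disjointness of distinct objects of $\Wils^{\Tw}_{\bullet}$, just as $\Ww^{\perp}$ is killed on restriction to $\Supp(\Ww)$ at the end of the proof of Proposition \ref{Eis BBB to BBB HN}), leaving $\CT^{\Tw}_P(\Bb) \otimes \Ww_M \cong V^{\bullet}_{\Ww_G} \otimes_{\CC} \Ww_M$; one then sets $V^{\bullet}_{\Ww_M} := V^{\bullet}_{\Ww_G}$.

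The step I expect to be the main obstacle is the precise identification and disposal of the correction term, which is forced by the non-properness of $q^{\Tw}$: its fibres are positive-dimensional of unipotent type, so $(q^{\Tw})^{*}\Ww_M$ is a fibrewise thickening of a twistor line rather than its structure sheaf, and $\CT^{\Tw}_P(\Bb) \otimes \Ww_M$ is not literally $\CT^{\Tw}_P(\Bb \otimes \Ww_G)$. Checking that the $\Ww^{\perp}$-type term is built only from objects of $\Wils^{\Tw}_{\bullet}$ disjoint from $\Ww_M$, so that the adjoint diagram chase genuinely closes up after the final tensoring, is the delicate point, and the one I would write out in full parallel with the proof of Proposition \ref{Eis BBB to BBB HN}; everything else --- GAGA-preservation, the projection-formula and base-change bookkeeping, the reduction to strata and then to a single test object --- is routine.
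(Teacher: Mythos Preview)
Your approach has a genuine gap at exactly the point you flag as delicate. You want to kill the correction term by tensoring with $\Ww_M$ and invoking the orthogonality of Proposition \ref{disjointness}, but this fails: the sheaf $\Ww^{\perp} = \Cone\bigl(\Eis^{\Tw}_P(\Ww_M) \to \Ww_G\bigr)[-1]$ is supported on $p^{\Tw}\bigl((q^{\Tw})^{-1}(\Image(\sigma_M))\bigr) \setminus \Image(\sigma_G)$, namely on the $G$-inductions $\sigma_G^{s}$ of $\sigma_M$ along the \emph{other} Levi splittings $s \neq s_0$. By Lemma \ref{induction eigensheaves}(3) each such $\Ww_G^{s}$ satisfies $\CT^{\Tw}_P(\Ww_G^{s}) \cong \Ww_M$, so $\CT^{\Tw}_P(\Bb \otimes \Ww^{\perp})$ is supported on $\Supp(\Ww_M)$, not disjoint from it; tensoring by $\Ww_M$ therefore does nothing. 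The analogy with the end of the proof of Proposition \ref{Eis BBB to BBB HN} breaks down precisely because there the restriction $(\cdot)\otimes\Ww$ happens upstairs on $\Twistor_G$, where $\Supp(\Ww^{\perp})$ and $\Supp(\Ww)$ are genuinely disjoint, whereas in your direction the supports collapse under $\CT^{\Tw}_P$ back onto $\Image(\sigma_M)$. (Even upstairs you get no mileage: by Proposition \ref{associated iff equivalent}(1) the $\Ww_G^{s}$ are all isomorphic to $\Ww_G$, hence not orthogonal to it either.)

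The paper's argument avoids this by not singling out one Levi splitting. It instead identifies $\Eis^{\Tw}_{P,\nu}(\Ww_M) \cong \colim_{s} \Ww_G^{s}$ over \emph{all} splittings, applies the BBB hypothesis on $\Bb$ to every $\Ww_G^{s} \in \Wils^{\Tw}_G$ at once to obtain $\Bb \otimes \Eis^{\Tw}_{P,\nu}(\Ww_M) \cong \colim_{s}\bigl(V^{\bullet}_{\Ww_G^{s}} \otimes_{\CC} \Ww_G^{s}\bigr)$, and then applies $\CT^{\Tw}_{P,\nu}$ together with the identity $\CT^{\Tw}_{P,\nu}\bigl(\Bb \otimes \Eis^{\Tw}_{P,\nu}(\Ww_M)\bigr) \cong \CT^{\Tw}_{P,\nu}(\Bb) \otimes \Ww_M$. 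The resulting $V^{\bullet}_{\Ww_M}$ is a colimit over all $s$, not a single $V^{\bullet}_{\Ww_G}$. Your triangle strategy could only be salvaged by first showing that $\CT^{\Tw}_P(\Bb \otimes \Ww^{\perp})$ is itself of the form $V' \otimes_{\CC} \Ww_M$ --- but establishing that already requires the colimit decomposition the paper uses, at which point the triangle is no longer needed.
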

Once more, by Lemma \ref{BBB and HN strata}, it is equivalent to prove the following. 
\begin{proposition}
\label{CT BBB to BBB strata}
$\CT^{\Tw}_{P, \nu}$ sends $\IndCoh^{\BBB}(\Twistor_G)$ to $\IndCoh^{\BBB}(\Twistor^{\sst, \nu}_M)$, thus defining a functor 
\[
\CT^{\Tw}_{P, \nu} : \IndCoh^{\BBB}(\Twistor_G) \to \IndCoh^{\BBB}(\Twistor^{\sst, \nu}_M) .
\]
\end{proposition}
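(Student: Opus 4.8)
\emph{Proof proposal.} The plan is to mirror the argument given for $\Eis^{\Tw}_{P}$ in Proposition \ref{Eis BBB to BBB HN}, transported through the adjunction $\CT^{\Tw}_{P, \nu} \dashv \Eis^{\Tw}_{P, \nu}$. First I would fix $\Bb \in \IndCoh^{\BBB}(\Twistor_G)$ and reduce the problem, exactly as in the proof of Lemma \ref{BBB and HN strata} and Proposition \ref{Eis BBB to BBB HN}, to producing, for each test object $\Ww_M = (\sigma_M)_{*}\Oo_{\PP^1} \in \Wils^{\Tw}_M$ supported on a horizontal twistor line $\sigma_M \in \Horiz_M$ landing in $\Twistor^{\sst,\nu}_M$, a graded vector space $V^{\bullet}_{\Ww_M} \in \QCoh(pt)$ together with an equivalence $\CT^{\Tw}_{P, \nu}(\Bb) \otimes \Ww_M \cong V^{\bullet}_{\Ww_M} \otimes_{\CC} \Ww_M$. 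One also needs that $\CT^{\Tw}_{P, \nu}$ preserves $\IndCoh^{\GAGA}$; this follows from the gluing presentation $\CT^{\Tw}_{P,\nu} = (q^{\Tw})_{!}\circ (p^{\Tw})^{*}$ in terms of $(q^{\Hod, \an})_{!}$ and $(p^{\Hod, \an})^{*}$ by the same argument as Corollary \ref{GAGA Eis^{Tw}}, since $q^{\Hod}, p^{\Hod}$ are GAGA-compatible on the Hodge pieces.

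Second, I would $G$-induce. Fixing a Levi splitting $M \to P$, let $\sigma_G = p^{\Tw}\circ r^{\Tw}\circ \sigma_M$ be the induced twistor line; by Lemma \ref{induction eigensheaves}(2) it lies in $\Horiz_G$, so $\Ww_G := (\sigma_G)_{*}\Oo_{\PP^1}$ is an object of $\Wils^{\Tw}_G$, and by Lemma \ref{induction eigensheaves}(3) (in its stratified form along $\Twistor^{\sst,\nu}_M$, using that the Levi component of $\sigma_M$ carries trivial Harder--Narasimhan slopes as in Remark \ref{HN type of Levi component}) one has $\CT^{\Tw}_{P, \nu}(\Ww_G) \cong \Ww_M$. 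Now invoke the BBB-brane hypothesis on $\Bb$ against the test object $\Ww_G$: there is a graded vector space $V^{\bullet}_{\Ww_G} \in \QCoh(pt)$ and an equivalence $\Bb \otimes \Ww_G \cong V^{\bullet}_{\Ww_G} \otimes_{\CC} \Ww_G$ in $\IndCoh(\Twistor_G)$. Applying the exact functor $\CT^{\Tw}_{P, \nu}$ and using $\CT^{\Tw}_{P, \nu}(\Ww_G) \cong \Ww_M$ yields $\CT^{\Tw}_{P, \nu}(\Bb \otimes \Ww_G) \cong V^{\bullet}_{\Ww_G} \otimes_{\CC} \Ww_M$.

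Third --- and this is the crux --- I would identify $\CT^{\Tw}_{P, \nu}(\Bb \otimes \Ww_G)$ with $\CT^{\Tw}_{P, \nu}(\Bb) \otimes \Ww_M$. Writing $\CT^{\Tw}_{P, \nu} = (q^{\Tw})_{!}\circ(p^{\Tw})^{*}$, the projection formula for the cohomologically affine map $q^{\Tw}$ gives $\CT^{\Tw}_{P, \nu}(\Bb) \otimes \Ww_M \cong (q^{\Tw})_{!}\big((p^{\Tw})^{*}\Bb \otimes (q^{\Tw})^{*}\Ww_M\big)$, whereas $\CT^{\Tw}_{P, \nu}(\Bb \otimes \Ww_G) = (q^{\Tw})_{!}\big((p^{\Tw})^{*}\Bb \otimes (p^{\Tw})^{*}\Ww_G\big)$. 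After pullback to $\Twistor_P$ and restriction to the image of $r^{\Tw}\circ\sigma_M$, both $(q^{\Tw})^{*}\Ww_M$ and $(p^{\Tw})^{*}\Ww_G$ become $(r^{\Tw}\circ\sigma_M)_{*}\Oo_{\PP^1}$, and the discrepancy is governed by the object $\Ww^{\perp} = \Cone\big(\Eis^{\Tw}_{P, \nu}(\Ww_M) \to \Ww_G\big)[-1]$ from the proof of Proposition \ref{Eis BBB to BBB HN}. One then runs the dualized version of the pair of exact triangles from that proof --- now for $\CT^{\Tw}_{P, \nu}(\Bb \otimes \Ww_G)$ and $\CT^{\Tw}_{P, \nu}(V^{\bullet}_{\Ww_G} \otimes_{\CC} \Ww_G)$, with correction terms involving $\CT^{\Tw}_{P, \nu}(\Bb) \otimes \Ww^{\perp}$ and $V^{\bullet}_{\Ww_G}\otimes_{\CC}\Ww^{\perp}$ --- whose middle terms are identified by $\CT^{\Tw}_{P, \nu}$ applied to $\Bb \otimes \Ww_G \cong V^{\bullet}_{\Ww_G}\otimes_{\CC}\Ww_G$, forcing the identification of the third terms $\CT^{\Tw}_{P, \nu}(\Bb) \otimes \Ww_M \cong V^{\bullet}_{\Ww_G}\otimes_{\CC}\Ww_M$. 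Setting $V^{\bullet}_{\Ww_M} := V^{\bullet}_{\Ww_G}$ finishes the verification; a test object whose Levi component has slope type not compatible with $\nu$ is handled vacuously, with $V^{\bullet}_{\Ww_M} = 0$, since the supports of $\CT^{\Tw}_{P,\nu}(\Bb)$ and $\Ww_M$ are then disjoint, exactly as in Proposition \ref{Eis BBB to BBB HN}.

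I expect the main obstacle to be the third step: pinning down the correct projection formula for $(q^{\Tw})_{!}$ in the glued analytic setting (the twistor stack being a pushout of analytifications, the formula must be shown to descend from its Hodge and Betti legs, using that $q^{\Tw} = q^{\Hod,\an}\sqcup_{\Betti}\ol{q}^{\Hod,\an}$) and carefully bookkeeping the correction term $\Ww^{\perp}$ so that the two exact triangles line up. This is precisely the technical heart of Proposition \ref{Eis BBB to BBB HN}, and the constant-term version requires only a dualization of that bookkeeping, but the dualization must be done with care because $\CT^{\Tw}_{P, \nu}$, unlike $\Eis^{\Tw}_{P, \nu}$, is built from the non-proper affine leg $q^{\Tw}$ via $(q^{\Tw})_{!}$ rather than from the proper leg $p^{\Tw}$ via $(p^{\Tw})_{*}$. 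Should the triangle argument become unwieldy, an alternative I would try is to deduce $\CT^{\Tw}_{P,\nu}(\Bb\otimes\Ww_G)\cong\CT^{\Tw}_{P,\nu}(\Bb)\otimes\Ww_M$ directly from a Frobenius-reciprocity relation between $\CT^{\Tw}_{P,\nu}$ and tensoring by Eisenstein-type objects, though I do not expect this to bypass the support analysis.
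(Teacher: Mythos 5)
Your broad strategy (fix a test object $\Ww_M\in\Wils^{\Tw}_M$, $G$-induce, invoke the BBB hypothesis for $\Bb$, then apply $\CT^{\Tw}_{P,\nu}$) is the right starting point and agrees with the paper's setup, as does your use of Lemma \ref{induction eigensheaves}(2),(3). But from the second step onward you diverge from the paper's proof in a way that I think leaves a genuine gap.

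The paper does not fix a single Levi splitting. It ranges over \emph{all} Levi splittings $s\colon M\to P$, produces the full family of eigensheaves $\Ww_G^s$, and observes that
\[
\Eis^{\Tw}_{P,\nu}(\Ww_M)\cong \colim_s \Ww_G^{s},
\]
because the support $p^{\Tw}\bigl((q^{\Tw})^{-1}(\Image\,\sigma_M)\bigr)$ is precisely $\cup_s\Image(\sigma_G^s)$. It then applies the BBB hypothesis on $\Bb$ term-by-term across the colimit, yielding $\Bb\otimes\Eis^{\Tw}_{P,\nu}(\Ww_M)\cong \colim_s\bigl(V^\bullet_{\Ww_G^s}\otimes_\CC\Ww_G^s\bigr)$, and finally applies $\CT^{\Tw}_{P,\nu}$ together with its projection formula $\CT^{\Tw}_{P,\nu}\bigl(\Bb\otimes\Eis^{\Tw}_{P,\nu}(\Ww_M)\bigr)\cong\CT^{\Tw}_{P,\nu}(\Bb)\otimes\Ww_M$. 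The colimit over $s$ is what makes the projection-formula step clean; the answer is $V^\bullet_{\Ww_M}=\colim_s\CT^{\Tw}_{P,\nu}(V^\bullet_{\Ww_G^s})$, not a single $V^\bullet_{\Ww_G}$.

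Your third step is exactly where the single-splitting approach gets stuck, and I do not think the $\Ww^\perp$ correction as you have set it up can rescue it. First, a type mismatch: the term $\CT^{\Tw}_{P,\nu}(\Bb)\otimes\Ww^\perp$ that you propose as a correction does not parse --- $\CT^{\Tw}_{P,\nu}(\Bb)$ lives on $\Twistor^{\sst,\nu}_M$ while $\Ww^\perp=\Cone\bigl(\Eis^{\Tw}_{P,\nu}(\Ww_M)\to\Ww_G\bigr)[-1]$ lives on $\Twistor_G$, so one cannot tensor them. The discrepancy between $\CT^{\Tw}_{P,\nu}(\Bb\otimes\Ww_G)$ and $\CT^{\Tw}_{P,\nu}(\Bb)\otimes\Ww_M$ is actually controlled on $\Twistor^{\sst,\nu}_P$ by the difference between $(p^{\Tw})^{*}\Ww_G$ (supported on $(p^{\Tw})^{-1}(\Image\,\sigma_G)$) and $(q^{\Tw})^{*}\Ww_M$ (supported on $(q^{\Tw})^{-1}(\Image\,\sigma_M)$), and these supports genuinely differ. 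Second, and more fundamentally, identifying that difference with something you can discard is essentially equivalent to the observation the paper uses --- that $(q^{\Tw})^{-1}(\Image\,\sigma_M)$ is swept out by the Levi splittings --- so you cannot avoid it by fixing one $s$ and correcting afterwards. In short: the missing ingredient is the colimit presentation $\Eis^{\Tw}_{P,\nu}(\Ww_M)\cong\colim_s\Ww_G^s$, which lets you apply the BBB hypothesis \emph{before} applying $\CT^{\Tw}_{P,\nu}$, rather than after, where the support bookkeeping becomes unmanageable.
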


\begin{proof}

Fix $\Bb \in \IndCoh^{\BBB}(\Twistor_G)$. We verify the BBB-brane condition on $\CT_{P, \nu}^{\Tw}(\Bb) \in \IndCoh(\Twistor_M^{\sst, \nu})$ against a test object $\Ww = \sigma_{*}\Oo_{\PP^1} \in \Wils_M^{\Tw}$ supported on $\sigma \in \Horiz_M$. That is to say, we show there exists a graded vector space $V^{\bullet}_\Ww \in \QCoh(pt)$ and an equivalence
\begin{equation}
\label{eigensheaf test 2}
\CT^{\Tw}_{P, \nu}(\Bb) \otimes \Ww \cong V^{\bullet}_{\Ww} \otimes_{\CC} \Ww , 
\end{equation}
in $\IndCoh(\Twistor^{\sst, \nu}_M)$. The idea is to induce \eqref{eigensheaf test 2} via parabolic reduction. 

To each Levi splitting $M \xrightarrow{s} P$, denote by $\sigma^{s}_G$ the associated $G$-induction of $\sigma$, taken along the composition $M \xrightarrow{s} P \to G$. Thus $\sigma^{s}_G : \PP^1 \to \Twistor_G$ is defined to be the composition
\[
\PP^1 
\xrightarrow{\, \, \sigma \, \,} \Twistor_M 
\xrightarrow{r^{s}} \Twistor_P 
\xrightarrow{p^{\Tw}} \Twistor_G , 
\]
where $\Twistor_M \xrightarrow{r^{s}} \Twistor_P$ is the section of $\Twistor_P \xrightarrow{q^{\Tw}} \Twistor_M$ induced by $s$. By Lemma \ref{induction eigensheaves}, $\sigma_G^{s}$ is an object of $\Horiz_G$ for every $s$, and the correponding skyscraper sheaf 
\[
\Ww^{s}_G 
:= (\sigma^{s}_G)_{*}\Oo_{\PP^1} 
= (p^{\Tw})_{*}(r^{s})_{*} \sigma_{*}\Oo_{\PP^1}
= (p^{\Tw})_{*}(r^{s})_{*} \Ww , 
\]
is an object of $\Wils_G^{\Tw}$. Taking the union over all Levi splittings $s : M \to P$ yields an identification
\[
\Supp\big(\Eis^{\Tw}_{P, \nu}(\Ww)\big) = p^{\Tw}\big((q^{\Tw})^{-1}(\Image(\sigma))\big) = \cup_{s} \Image(\sigma_G^{s}) = \cup_{s} \Supp(\Ww_G^{s}) , 
\]
between the supports of length one skyscraper sheaves. Thus one has an equivalence 
\[
\Eis^{\Tw}_{P, \nu}(\Ww) \cong \colim_{s} (\Ww_G^{s}) . 
\]
Twisting this equivalence by $\Bb$ yields 
\begin{equation}
\label{eq: CT this}
\Bb \otimes \Eis^{\Tw}_{P, \nu}(\Ww) 
\cong \Bb \otimes \colim_{s}(\Ww_G^{s}) 
\cong \colim_{s} ( \Bb \otimes \Ww_G^{s} )  
\cong \colim_s ( V^{\bullet}_{\Ww_G^{s}} \otimes_{\CC} \Ww_G^{s} ), 
\end{equation}
where the final equivalence is the application of the BBB-brane hypothesis on $\Bb$ for each $\Ww_G^{s} \in \Wils^{\Tw}_G$, which yields a family $V^{\bullet}_{\Ww_G^{s}} \in \QCoh(pt)$. Then $\CT^{\Tw}_{P, \nu}$ applied to the left hand side of \eqref{eq: CT this} yields 
\[
\CT^{\Tw}_{P, \nu}\big(\Bb \otimes \Eis^{\Tw}_{P, \nu}(\Ww)
\big) \cong \CT^{\Tw}_{P, \nu}(\Bb) \otimes \Ww , 
\]
and $\CT^{\Tw}_{P, \nu}$ applied to the right hand side of \eqref{eq: CT this} yields
\[
\CT^{\Tw}_{P, \nu}\big(\oplus_s ( V^{\bullet}_{\Ww_G^{s}} \otimes_{\CC} \Ww_G^{s} )
\big) 
\cong \colim_s (\CT^{\Tw}_{P, \nu}(V^{\bullet}_{\Ww_G^{s}})) \otimes \Ww .  
\]
Thus $\CT^{\Tw}_{P, \nu}$ applied to \eqref{eq: CT this} induces the sought-after equivalence \eqref{eigensheaf test 2} for the choice of vector space 
\[
V^{\bullet}_{\Ww} 
:= \colim_s (\CT^{\Tw}_{P, \nu}(V^{\bullet}_{\Ww_G^{s}})) 
 \in \QCoh(pt) , 
\]
and this concludes the proof.  
\end{proof}

\section{Applications of parabolic induction I: cuspidal--Eisenstein decompositions}
\label{se: Hyperholomorphic applications}

\subsection{Decompositions in non-abelian Hodge theory}
\label{se structure}

We now present our main results. We describe how coherent sheaves with nilpotent singular support on moduli from non-abelian Hodge theory can be decomposed into Eisenstein and cuspidal components. We refer to the introduction for discussion on the profound influence of the seminal cuspidal-Eisenstein decompositions of Arinkin--Gaitsgory \cite{arinkin&gaitsgory}, whose work we extend and reference throughout the course of the proofs.  

\subsubsection{Statement of results} Let us recall that we have constructed the Eisenstein functors 
\begin{align*}
\Eis^{\Tw}_{P} : \IndCoh(\Twistor_M) & \to \IndCoh(\Twistor_G) , \\
\Eis^{\Hod}_{P} :  \IndCoh(\Hodge_M) & \to \IndCoh(\Hodge_G) , \\
\Eis^{\Dol}_{P} :  \IndCoh(\Higgs_M) & \to \IndCoh(\Higgs_G) , 
\end{align*}
such that all three preserve nilpotent singular support by Proposition \ref{nilp to nilp}, preserve compact objects by Corollary \ref{co: Eis compact}, and moreover $\Eis^{\Tw}_{P}$ preserves BBB-branes by Proposition \ref{Eis BBB to BBB HN}. Collectively these properties yield the Eisenstein functors used in the following structure theorems for coherent sheaves with nilpotent singular support in non-abelian Hodge theory. 

\begin{theorem}
\label{decompose BBB}
The category $\Coh^{\BBB}_{\Nn}(\Twistor_G)$ is generated by the essential image of 
\[
\Eis^{\Tw}_{P} : \Perf^{\BBB}( \Twistor_{M} ) \to \Coh^{\BBB}_{\Nn}( \Twistor_{G} ),
\]
for all parabolic subgroups $P \subset G$. 
\end{theorem}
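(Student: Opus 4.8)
The plan is to deduce the twistor statement from the Hodge-theoretic decomposition of Theorem~\ref{th eis hodge} (together with its complex-conjugate counterpart over $\ol{X}$) by transporting it through the GAGA comparison of Corollary~\ref{an and GAGA} and the pushout presentation $\Twistor_G = \Hodge^{\an}_G \sqcup_{\Rep^{\an}_G \times \GG_m} \ol{\Hodge}^{\an}_G$, using the rigidity of the BBB-brane category to control the gluing. Throughout we keep in mind that the relevant functors are available: $\Eis^{\Tw}_P$ preserves nilpotent singular support and BBB-branes (Propositions~\ref{nilp to nilp} and~\ref{Eis BBB to BBB}), and $\CT^{\Tw}_P$ preserves BBB-branes (Proposition~\ref{CT BBB to BBB}).

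First I would pass to the analytic side. By Theorem~\ref{th eis hodge}, $\Coh_{\Nn}(\Hodge_G)$ is generated by the essential images of $\Eis^{\Hod}_{P,\nu} : \Perf(\Hodge^{\sst,\nu}_M) \to \Coh_{\Nn}(\Hodge_G)$ over all parabolics $P$ and Harder--Narasimhan types $\nu$. Applying the analytification functor, which by Corollary~\ref{an and GAGA} is essentially surjective onto $\Coh^{\GAGA}_{\Nn}(\Hodge^{\an}_G)$ and which by Proposition~\ref{Hodge Eisenstein GAGA property} intertwines $\Eis^{\Hod}_{P,\nu}$ with $\Eis^{\Hod,\an}_{P,\nu}$, shows that $\Coh^{\GAGA}_{\Nn}(\Hodge^{\an}_G)$ is generated by the images of $\Eis^{\Hod,\an}_{P,\nu}$ on $\Perf^{\GAGA}(\Hodge^{\sst,\nu,\an}_M)$; the same holds over $\ol{X}$. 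Next I would reduce the twistor problem to the Harder--Narasimhan strata: by Lemma~\ref{BBB and HN strata} membership in $\IndCoh^{\BBB}(\Twistor_G)$, hence in $\Coh^{\BBB}_{\Nn}(\Twistor_G)$, is detected strata-by-strata along $\bigsqcup_{P,\nu} \Twistor^{\sst,\nu}_P \to \Twistor_G$, and the twistor Wilson eigensheaves of $\Wils^{\Tw}_G$ are supported on the image of the stratum with $\nu = \nu^{triv}$ (Remark~\ref{HN type of Levi component}). Since each $\Eis^{\Tw}_{P,\nu} \simeq \Eis^{\Hod,\an}_{P,\nu} \times_{\Betti} \ol{\Eis}^{\Hod,\an}_{P,\nu}$ preserves GAGA nilpotent sheaves and BBB-branes (Corollary~\ref{spec tw eis HN}, Proposition~\ref{Eis BBB to BBB HN}), and $\Eis^{\Tw}_P$ is assembled from the $\Eis^{\Tw}_{P,\nu}$ as in \eqref{eq: Eis colim}, the task becomes: given $\Bb \in \Coh^{\BBB}_{\Nn}(\Twistor_G)$, with its pair of GAGA nilpotent restrictions $\Bb_{\Hod}, \ol{\Bb}_{\Hod}$ and Betti gluing isomorphism, realise $\Bb$ inside the triangulated subcategory generated by the $\Eis^{\Tw}_{P,\nu}(\Perf^{\BBB}(\Twistor^{\sst,\nu}_M))$.

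The crux is that generation does not pass through an arbitrary fibre product of dg categories, so here one must exploit that the BBB condition rigidifies the twistor gluing. Through the Betti invariance $\Rep^{\an}_G = \ol{\Rep}^{\an}_G$ and the Riemann--Hilbert equivalence, the data $\Bb_{\Hod}, \ol{\Bb}_{\Hod}$ and their gluing are all recovered from the single de Rham restriction $\Bb|_{\lambda = 1} \in \Coh^{\GAGA}_{\Nn}(\LocSys^{\an}_G)$, and conversely the BBB property against $\Wils^{\Tw}_G$ is exactly what allows a nilpotent de Rham object to be so extended. I would therefore establish that restriction to the $\lambda = 1$ fibre defines a fully faithful functor $\Coh^{\BBB}_{\Nn}(\Twistor_G) \to \Coh^{\GAGA}_{\Nn}(\LocSys^{\an}_G)$ — this is the precise form of the rigidity of hyperholomorphic sheaves — which intertwines $\Eis^{\Tw}_P$ with $\Eis^{\dR,\an}_P$ and $\CT^{\Tw}_P$ with $\CT^{\dR,\an}_P$ (both compatibilities following from the Hodge trivialisation $\Hodge^{\an}_G \times_{\AA^1} \GG_m \cong \LocSys^{\an}_G \times \GG_m$). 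The Arinkin--Gaitsgory decomposition of Theorem~\ref{de Rham decompositions}, restricted to compact objects and analytified via Corollary~\ref{an and GAGA} to a decomposition of $\Coh^{\GAGA}_{\Nn}(\LocSys^{\an}_G)$, then pulls back along this fully faithful embedding — the constant-term compatibility guaranteeing that the sources needed in the decomposition already lie in the image — to give the desired generation over $\Twistor_G$. The cuspidal ($P = G$) term is accounted for because over the irreducible locus the global nilpotent cone degenerates to the zero section, so $\Coh_{\Nn}(\Twistor^{\irred}_G) = \Perf(\Twistor^{\irred}_G)$ and a nilpotent BBB-brane there lies automatically in $\Perf^{\BBB}$; combined with Proposition~\ref{CT BBB to BBB}, adjunction identifies the quotient of $\Coh^{\BBB}_{\Nn}(\Twistor_G)$ by the proper-parabolic Eisenstein subcategory with $\Perf^{\BBB}(\Twistor^{\irred}_G)$, yielding the exact sequence~\eqref{eq: cusp-Eis BBB intro}.

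I expect the main obstacle to be this last step — showing that restriction to $\lambda = 1$ is fully faithful (equivalently generation-reflecting) on the BBB subcategory, i.e. that the BBB condition rigidifies the twistor gluing tightly enough for Eisenstein-generation to descend from $\LocSys^{\an}_G$ to $\Twistor_G$ even though $\Coh^{\GAGA}_{\Nn}(\Twistor_G)$ is only a fibre product of the Hodge and conjugate-Hodge categories over the Betti category, and checking this descent is compatible with the constant-term functors used to build the Arinkin--Gaitsgory decomposition \cite{arinkin&gaitsgory}. A secondary technical point is verifying that $\Nn_{\Tw}$ restricts to the zero section over $\Twistor^{\irred}_G$, which should follow from the corresponding de Rham fact propagated through the Hodge and twistor interpolations of the nilpotent cones constructed in Section~\ref{singularities}.
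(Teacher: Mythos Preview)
Your proposal has a genuine gap at exactly the point you flag as the main obstacle: the claim that restriction to $\lambda=1$ defines a fully faithful functor $\Coh^{\BBB}_{\Nn}(\Twistor_G)\to\Coh^{\GAGA}_{\Nn}(\LocSys^{\an}_G)$. Nothing in the paper establishes such a rigidity statement, and it is far from clear that it holds --- the BBB condition constrains how a sheaf behaves along horizontal twistor lines, but it does not obviously say that a BBB-brane is determined by its $\lambda=1$ restriction, nor that an arbitrary nilpotent de Rham sheaf extends to a BBB-brane. Without this, your strategy of ``pulling back'' the Arinkin--Gaitsgory decomposition along restriction to $\lambda=1$ does not go through: generation does not transfer along a functor that is merely conservative or compatible with constant terms, and you have no independent mechanism to ensure that the Eisenstein pieces you produce on $\LocSys^{\an}_G$ lift to Eisenstein pieces on $\Twistor_G$.

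The paper avoids this difficulty entirely. Its proof of Lemma~\ref{concluding decompositions}(3) works directly on $\Twistor_G$, using only the adjunction between $\Eis^{\Tw}_{P,\nu}$ and $\CT^{\Tw}_{P,\nu}$ together with the crucial Proposition~\ref{CT BBB to BBB} (which you cite but do not exploit in this way): given $\Bb\in\Coh^{\BBB}_{\Nn}(\Twistor_G)$, one forms $\CT^{\Tw}_{P,\nu}(\Bb)$, which remains a BBB-brane, and then the counit map $\Eis^{\Tw}_{P,\nu}\circ\CT^{\Tw}_{P,\nu}(\Bb^{P,\nu})\to\Bb^{P,\nu}$ (built from the counits of $(p^{\Tw})_*(p^{\Tw})^*\to\id$ and $(q^{\Tw})^!(q^{\Tw})_!\to\id$) exhibits $\Bb$, stratum by stratum, as generated by Eisenstein images of BBB-branes on the Levi side. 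An induction on semi-simple rank then reduces the source from $\Coh^{\BBB}_{\Nn}$ to $\Perf^{\BBB}$. No comparison with the de Rham theory and no full-faithfulness statement is needed.
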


\begin{theorem}
\label{th eis and generate} 
The category $\Coh^{\GAGA}_{\Nn}(\Twistor_{G})$ is generated by the essential image of 
\[
\Eis^{\Tw}_{P} : \Perf^{\GAGA}( \Twistor_{M} ) \to \Coh^{\GAGA}_{\Nn}( \Twistor_{G} ),
\]
for all parabolic subgroups $P \subset G$. 
\end{theorem}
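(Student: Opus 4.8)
## Proof proposal for Theorem \ref{th eis and generate}

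The plan is to reduce the statement to the algebraic Hodge-theoretic decomposition (Theorem \ref{th eis hodge}) via the pullback presentation of $\Twistor_G$ and the GAGA comparison results. The core observation is that $\Coh^{\GAGA}_{\Nn}(\Twistor_G)$ is, by construction, a pullback of Hodge-type categories over a Betti-type category, and the twistor Eisenstein functor $\Eis^{\Tw}_P$ is the corresponding pullback of Hodge Eisenstein functors; so generation on $\Twistor_G$ should follow from generation on each factor, provided the gluing is compatible.

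First I would set up the reduction: recall from \ref{GAGA sheaves Tw_G} (combined with \ref{Nilp GAGA}) that $\Coh^{\GAGA}_{\Nn}(\Twistor_G)$ is presented as the pullback of $\Coh^{\GAGA}_{\Nn}(\Hodge^{\an}_G)$ and $\Coh^{\GAGA}_{\Nn}(\ol{\Hodge}^{\an}_G)$ over $\Coh_{\Nn}(\Rep^{\an}_G \times \GG_m)$, where the latter maps factor through the Riemann--Hilbert equivalence of Proposition \ref{RH and Nilp an}. Then by Corollary \ref{an and GAGA}, $\Coh^{\GAGA}_{\Nn}(\Hodge^{\an}_G)$ is the analytification of the algebraic category $\Coh_{\Nn}(\Hodge_G)$, and likewise on the conjugate side. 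Next I would invoke Theorem \ref{th eis hodge}: $\Coh_{\Nn}(\Hodge_G)$ is generated by the essential images of $\Eis^{\Hod}_P : \Perf(\Hodge_M) \to \Coh_{\Nn}(\Hodge_G)$ over all parabolics $P$ and all Harder--Narasimhan types $\nu$ (using the strata-wise functors $\Eis^{\Hod}_{P,\nu}$ of \ref{se: Eis Dol/Hod}). Since analytification $(\cdot)^{\an}$ is a functor preserving colimits and is essentially surjective onto $\Coh^{\GAGA}_{\Nn}(\Hodge^{\an}_G)$, and since by Proposition \ref{Hodge Eisenstein GAGA property} we have $\Eis^{\Hod,\an}_{P,\nu} \simeq (\Eis^{\Hod}_{P,\nu})^{\an}$, it follows that $\Coh^{\GAGA}_{\Nn}(\Hodge^{\an}_G)$ is generated by the essential images of $\Eis^{\Hod,\an}_{P,\nu}$ from $\Perf^{\GAGA}(\Hodge^{\an}_M)$ (restricted to the relevant strata). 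The conjugate statement holds by the same argument over $\ol{X}$.

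The main step is then to upgrade these generation statements on the two Hodge factors to a generation statement on the pullback category. Given $\Ff \in \Coh^{\GAGA}_{\Nn}(\Twistor_G)$, its restrictions $\Ff_{\Hod}$ and $\ol{\Ff}_{\Hod}$ to the two hemispherical charts are each generated by Hodge-Eisenstein images, and the gluing isomorphism $f_{\Hod} : \RH_*(\Ff_{\Hod}|_{\GG_m}) \cong \ol{\RH}_*(\ol{\Ff}_{\Hod}|_{\GG_m})$ is, via Proposition \ref{RH and Eis} and the Betti invariance $\Eis^{\B}_P = \ol{\Eis}^{\B}_P$, compatible with these Eisenstein presentations. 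The point is that the restriction functor $(\cdot)|_{\GG_m}$ on the Hodge side intertwines $\Eis^{\Hod}_P$ with $\Eis^{\dR}_P$ (forgetting the filtration), and each Harder--Narasimhan stratum of the twistor stratification is $\Twistor^{\sst,\nu}_P = \Hodge^{\sst,\nu}_P \sqcup_{\Betti} \ol{\Hodge}^{\sst,\nu}_P$ by \ref{Instability strata}; so the strata-wise twistor Eisenstein functors $\Eis^{\Tw}_{P,\nu}$ of Corollary \ref{spec tw eis HN} are precisely the pullbacks of $\Eis^{\Hod,\an}_{P,\nu}$ and $\ol{\Eis}^{\Hod,\an}_{P,\nu}$ over the common Betti restriction. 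Concretely, I would argue that the subcategory of $\Coh^{\GAGA}_{\Nn}(\Twistor_G)$ generated by the images of $\Eis^{\Tw}_{P,\nu}$ maps, under the two projection functors to the Hodge factors, essentially surjectively onto the generating subcategories there, and that any object of the pullback can be built from its two projections together with the gluing datum — which itself lies in the Eisenstein-generated part of the Betti category. Finally, I would collapse the $\nu$-indexing: by Corollary \ref{co: Eis compact} and the local finiteness of the Harder--Narasimhan stratification, the strata-wise functors assemble (via the colimit presentation \eqref{eq: Eis colim}) into $\Eis^{\Tw}_P : \Perf^{\GAGA}(\Twistor_M) \to \Coh^{\GAGA}_{\Nn}(\Twistor_G)$, whose essential images over all $P$ therefore generate.

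The hard part will be making precise the claim that generation is inherited by a pullback (fiber product) of dg categories — this is not formal in general, and one must use the specific geometry: that the Betti restriction functor $\Coh^{\GAGA}_{\Nn}(\Hodge^{\an}_G) \to \Coh_{\Nn}(\Rep^{\an}_G \times \GG_m)$ sends the Hodge-Eisenstein generators to de Rham/Betti-Eisenstein generators (via Proposition \ref{RH and Eis} and the Eisenstein--GAGA property of Proposition \ref{Eisenstein GAGA property}), so that the gluing data never forces one outside the Eisenstein-generated locus, together with the compatibility of Harder--Narasimhan strata across the Betti gluing established in \ref{Instability strata}. I would handle this by a direct generation argument on objects rather than an abstract statement about fiber products of presentable categories: given $\Ff$ in the twistor category, produce an explicit filtration/resolution by twistor-Eisenstein images by gluing the filtrations produced by Theorem \ref{th eis hodge} on the two charts along the compatible gluing datum, invoking that $\Eis^{\Tw}_{P,\nu}$ is itself the pullback of the two Hodge functors. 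Everything else — the analytification bookkeeping, the passage between $\Coh$ and $\Perf$, the colimit over $\nu$ — is routine given the earlier results.
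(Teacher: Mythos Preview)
Your proposal is correct and follows essentially the same route as the paper: reduce to Theorem \ref{th eis hodge} via the pullback presentation $\Coh^{\GAGA}_{\Nn}(\Twistor_G) = \Coh^{\GAGA}_{\Nn}(\Hodge^{\an}_G) \times_{\Betti} \Coh^{\GAGA}_{\Nn}(\ol{\Hodge}^{\an}_G)$ together with the matching pullback presentation $\Eis^{\Tw}_P = (\Eis^{\Hod}_P)^{\an} \times_{\Betti} (\ol{\Eis}^{\Hod}_P)^{\an}$, and then transport generation through analytification using Proposition \ref{Hodge Eisenstein GAGA property} and Corollary \ref{an and GAGA}. The paper's argument (Lemma \ref{concluding decompositions}(2)) is terser than yours --- it asserts that generation on the pullback follows once the Hodge factors are handled, without dwelling on the point you flag as the ``hard part'' --- whereas you are more explicit about why the gluing datum stays Eisenstein-compatible (via Proposition \ref{RH and Eis} and Betti invariance) and propose a direct object-by-object argument; this extra care is warranted, since generation through fiber products of dg categories is indeed not formal, but the underlying strategy and ingredients are the same.
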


\begin{theorem}
\label{th eis hodge} 
The category $\Coh_{\Nn}(\Hodge_{G})$ is generated by the essential image of 
\[
\Eis^{\Hod}_{P} : \Perf( \Hodge_{M} ) \to \Coh_{\Nn}( \Hodge_{G} ),
\]
for all parabolic subgroups $P \subset G$. 
\end{theorem}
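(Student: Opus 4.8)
The plan is to emulate the strategy of Arinkin--Gaitsgory \cite[Thm. 13.3.6]{arinkin&gaitsgory} for $\IndCoh_{\Nn}(\LocSys_G)$, but (i) transported to $\Hodge_G$ via the Hodge-theoretic structural morphism $\Hodge_G \to \AA^1$, and (ii) carried out only on the compact/coherent level, so as to sidestep the non-compact-generation issues flagged in \ref{finiteness intro}. The key geometric inputs are already established: $\Eis_P^{\Hod}$ preserves nilpotent singular support (Proposition \ref{nilp to nilp}), preserves compact objects (Corollary \ref{co: Eis compact}), and the Harder--Narasimhan stratification $\bigsqcup_{P,\nu}\Hodge_P^{\sst,\nu}\to\Hodge_G$ is available (Section \ref{Instability strata}), making $\Hodge_P^{\sst,\nu}\to\Hodge_G$ proper. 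The crux of the Arinkin--Gaitsgory argument is that every object of $\IndCoh_{\Nn}$ lies in the subcategory generated by the essential images of Eisenstein functors together with $\QCoh$ of the irreducible locus; our version keeps the analogous dichotomy \textit{irreducible vs.\ reducible} but uses coherence to avoid ind-completions.

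First I would set up the cuspidal piece: the moduli $\Hodge_G^{\irred}\subset\Hodge_G$ is smooth (irreducible $\lambda$-connections are stable and rigid in the appropriate sense), so $\Coh_{\Nn}(\Hodge_G^{\irred})=\Perf(\Hodge_G^{\irred})$ and there is no obstruction there. The content is entirely in the \textit{reducible} part $\Hodge_G^{\red}:=\Hodge_G\setminus\Hodge_G^{\irred}$, whose geometric points all admit a reduction to a proper parabolic. Here the plan is to argue, exactly as in \cite[\S 13]{arinkin&gaitsgory}, that the union of the images of the (proper!) maps $p^{\Hod}:\Hodge_P^{\sst,\nu}\to\Hodge_G$ over all proper $P$ and all HN-types $\nu$ covers $\Hodge_G^{\red}$, and that any $\Ff\in\Coh_{\Nn}(\Hodge_G)$ supported on this reducible locus can be built out of objects pushed forward from the strata. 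Concretely, one runs the inductive device of loc.\ cit.: the counit $\Eis_{P,\nu}^{\Hod}\circ\CT_{P,\nu}^{\Hod}\to\id$ (or rather its analogue with $(p^{\Hod})_*(p^{\Hod})^!\to\id$), whose cone is supported on a locus of strictly smaller dimension / strictly more special HN-type, so Noetherian induction on the (finite, by local finite presentation) poset of HN-strata terminates; along the way one uses that $(q^{\Hod})^!$ and $(p^{\Hod})_*$ preserve $\Nn$ via Lemmas \ref{N_M to N_P} and \ref{N_P to N_G}, and that on each stratum the relevant Eisenstein functor maps $\Perf(\Hodge_M^{\sst,\nu})$ into $\Coh_{\Nn}(\Hodge_G)$. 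The colimit presentation $\Eis_P^{\Hod}=\varprojlim_\nu\Eis_{P,\nu}^{\Hod}$ of \eqref{eq: Eis colim} then upgrades the strata-wise statement to the stated one for $\Eis_P^{\Hod}:\Perf(\Hodge_M)\to\Coh_{\Nn}(\Hodge_G)$.

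A cleaner alternative — and probably the one I would actually write — is to \textbf{deduce} Theorem \ref{th eis hodge} from the de Rham theorem (Theorem \ref{de Rham decompositions}) together with the Dolbeault case, using the $\GG_m$-equivariant degeneration $\Hodge_G\to\AA^1$ with general fibre $\LocSys_G$ and special fibre $\Higgs_G$. Since $\Eis_P^{\Hod}$ is $\GG_m$-equivariant and compatible with restriction to $\lambda=1$ and $\lambda=0$ (this is exactly the content of the commuting squares in \eqref{big cusp-Eis diagram}, which one should establish first), and since $\Coh_{\Nn}(\Hodge_G)$ is a $\GG_m$-equivariant family over $\AA^1$ whose fibres are $\Coh_{\Nn}(\LocSys_G)$ and $\Coh_{\Nn}(\Higgs_G)$, generation on the total space can be checked fibrewise by a standard Rees-module / filtered-colimit argument: an object of $\Coh_{\Nn}(\Hodge_G)$ whose restriction to both the generic and special fibre lies in the Eisenstein-generated subcategory lies there globally. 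This reduces the theorem to the de Rham statement (known) plus Theorem \ref{th eis Higgs} (proved separately in the companion section), at the cost of a lemma on fibrewise generation for $\GG_m$-equivariant families of dg categories over $\AA^1$.

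The main obstacle, in either route, is the first one: proving that the reducible locus is genuinely exhausted by the images of the \emph{proper} stratumwise maps $p^{\Hod}:\Hodge_P^{\sst,\nu}\to\Hodge_G$ in a way strong enough to run the Noetherian induction — i.e.\ the Hodge analogue of the geometric heart of \cite[\S 13.3]{arinkin&gaitsgory}, which there rests on delicate properties of $\LocSys$ (the precise structure of $\Sing(\LocSys_G)$, the behaviour of the nilpotent cone under the strata, and the finiteness of the HN poset). For $\Hodge_G$ one must check that the HN-stratification of $\Hodge_P$ (from Dey--Parthasarathi and Gurjar--Nitsure, cited in \ref{HN strata}) interacts with the global nilpotent cone $\Nn_{\Hod}$ exactly as in the de Rham case, and that the inductive cones really do decrease in the appropriate stratification order — this is where the $\lambda$-twisted Leibniz rule could in principle cause trouble, though the $\GG_m$-equivariance and the pullback squares relating $\Nn_{\Dol},\Nn_{\Hod},\Nn_{\dR}$ strongly suggest it does not. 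If the fibrewise-degeneration route is taken, the obstacle instead migrates to the lemma on $\GG_m$-equivariant families, but that is a comparatively soft homological-algebra statement.
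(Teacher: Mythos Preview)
Your first route is close to the paper's approach and is the one actually taken, though the paper organises it differently: rather than a counit/Noetherian-induction argument on HN-strata, it directly computes the essential image of each $\Eis^{\Hod}_{P,\nu}$ as $\Coh_{\Nn_G^{P,\nu}}(\Hodge_G)$ (via the singular-support functoriality lemmas you cite, handled separately for $(q^{\Hod})^!$ and $(p^{\Hod})_*$), and then reduces the theorem to showing that the map $\bigsqcup_{P,\nu}\Nn_G^{P,\nu}\to\Nn_G$ is surjective on geometric points. This surjectivity is exactly the obstacle you flag, and here your sketch has a genuine gap: the $\GG_m$-equivariance you invoke only handles $\lambda\neq 0$. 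At $\lambda=0$ the Arinkin--Gaitsgory fixed-point argument breaks down, because for Higgs bundles the space of horizontal $P$-reductions is \emph{not proper}. The paper supplies a separate constructive argument for the Dolbeault fibre: given a nilpotent covector $A$ on a Higgs bundle $(E,\phi)$, one takes the canonical Harder--Narasimhan reduction $(E_P,\phi_P)$ and uses the Atiyah--Bott/Biswas--Holla compatibility between HN filtrations and the adjoint representation to produce $A_P$ as the image of $A$ along $H^0(\ad(E,\phi)^\vee)\to H^0(\ad(E_P,\phi_P)^\vee)$, checking that $\gG^*\to\pP^*$ preserves nilpotents. This is the new technical step, and your proposal does not supply it.

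Your second route is circular as written: in the paper, Theorem \ref{th eis Higgs} is \emph{derived} from Theorem \ref{th eis hodge} (Lemma \ref{concluding decompositions}(1)), not proved independently. If you try to establish the Dolbeault case on its own you face precisely the $\lambda=0$ obstruction just described, so the degeneration strategy does not sidestep the work. The fibrewise-generation lemma you would need (generation on generic and special fibres of a $\GG_m$-equivariant family over $\AA^1$ implies generation on the total space) is also not as soft as you suggest, and the paper does not attempt it.
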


\begin{theorem}
\label{th eis Higgs} 
The category $\Coh_{\Nn}(\Higgs_{G})$ is generated by the essential image of 
\[
\Eis^{\Dol}_{P} : \Perf( \Higgs_{M} ) \to \Coh_{\Nn}( \Higgs_{G} ),
\]
for all parabolic subgroups $P \subset G$. 
\end{theorem}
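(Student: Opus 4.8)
The plan is to transplant Arinkin--Gaitsgory's proof of Theorem~\ref{de Rham decompositions} (\cite[Thm.~13.3.6]{arinkin&gaitsgory}) to the Dolbeault moduli, the only genuinely new ingredient being the Laumon-style organisation of the argument over Harder--Narasimhan strata forced by the non-properness and non-quasicompactness of $p^{\Dol}\colon\Higgs_P\to\Higgs_G$. Write $\Cc\subseteq\Coh_\Nn(\Higgs_G)$ for the full stable subcategory generated by the essential images of all the $\Eis^{\Dol}_P$, $P\subseteq G$. Since $\Eis^{\Dol}_G=\id$, the case $P=G$ already gives $\Perf(\Higgs_G)=\Coh_{\{0\}}(\Higgs_G)\subseteq\Cc$, so the real content is that the proper-parabolic Eisenstein images exhaust $\Coh_\Nn(\Higgs_G)$ modulo $\Perf(\Higgs_G)$. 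That $\Eis^{\Dol}_P$ genuinely lands in $\Coh_\Nn(\Higgs_G)$ is Proposition~\ref{nilp to nilp} (whose proof is verbatim the Hodge one with $\Dol$ in place of $\Hod$) together with Corollary~\ref{co: Eis compact}; moreover by \eqref{eq: Eis colim} each $\Eis^{\Dol}_P$ is a locally stabilising colimit of the refined functors $\Eis^{\Dol}_{P,\nu}\colon\Perf(\Higgs^{\sst,\nu}_M)\to\Coh_\Nn(\Higgs_G)$, which \emph{are} proper pull--push (along $\Higgs^{\sst,\nu}_P\to\Higgs_G$), so it suffices to work with the $\Eis^{\Dol}_{P,\nu}$.

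The geometric core is the description of the global nilpotent cone as the union of the images of the singular codifferentials. First I would record the Dolbeault analogues of Lemmas~\ref{N_M to N_P} and~\ref{N_P to N_G}; their proofs are the same statements about $\Nilp(\pP^{*})=\Nilp(\mM^{*})\ltimes\uU^{*}$ and about a class in $\gG^{*}$ being nilpotent as soon as its image in $\pP^{*}$ is, so nothing about $X_\Dol$ versus $X_\dR$ enters. Combining these with the Jacobson--Morozov correspondence one obtains that $\Nn_{\Dol}$ is covered, as a set, by the images of $\Sing(p^{\Dol}_{P,\nu})$ over all pairs $(P,\nu)$ (with $P=G$ contributing the zero section): a point of $\Nn_{\Dol}$ is a Higgs bundle $(\Ee,\phi)$ together with a nilpotent adjoint-valued class $A$; the Higgs bundle carries its canonical Harder--Narasimhan reduction to some $(P_0,\nu)$, and inside the corresponding Levi $M_0$ the nilpotent $A$ admits a parabolic reduction placing it in a nilradical, which by transitivity of parabolic reductions and of Eisenstein functors (Proposition~\ref{pr: transitivity}) upgrades to the desired reduction of $(\Ee,\phi;A)$.

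With this covering in place the generation follows by Noetherian induction on the finitely many conic strata of $\Nn_{\Dol}$, ordered by closure, exactly as in \cite[\textsection 13]{arinkin&gaitsgory}. Given $\Ff\in\Coh_\Nn(\Higgs_G)$ with $\SingSupp(\Ff)=\Lambda$, pick a stratum $S\subseteq\Lambda$ that is open in $\Lambda$ and not contained in the zero section; by the covering, $S$ lies in the image of $\Sing(p^{\Dol}_{P,\nu})$ for a proper $P$. Using the $(\CT^{\Dol}_{P,\nu},\Eis^{\Dol}_{P,\nu})$-adjunction --- with $q^{\Dol}$ cohomologically affine and $p^{\Dol}_{P,\nu}$ proper, so Lemmas~\ref{N_M to N_P} and~\ref{N_P to N_G} control singular support in both directions --- the unit/counit triangle lets one replace $\Ff$, modulo an object of $\Cc$, by an object whose singular support is strictly contained in $\Lambda$: the stratum $S$ has been absorbed into the Eisenstein image. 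The induction terminates at $\Lambda=\{0\}$, where $\Coh_{\{0\}}(\Higgs_G)=\Perf(\Higgs_G)\subseteq\Cc$, and we conclude $\Cc=\Coh_\Nn(\Higgs_G)$.

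I expect the main obstacle to be exactly this interaction of the Arinkin--Gaitsgory peeling with the Harder--Narasimhan stratification: one must check that each reduction step lands in $\Coh_\Nn$ of a quasicompact piece on which the relevant $\Eis^{\Dol}_{P,\nu}$ is an honest proper pull--push, that the resulting double induction (over HN strata, and within each stratum over the nilpotent cone of the Levi via Proposition~\ref{pr: transitivity}) is well-founded, and that working with compact objects $\Coh_\Nn$ rather than $\IndCoh_\Nn$ does not break the argument --- the finiteness point flagged in~\ref{finiteness intro}, which is precisely why the Harder--Narasimhan bookkeeping is needed in the first place. A cleaner alternative, once Theorem~\ref{th eis hodge} is available, is to restrict along the regular closed immersion $i_0\colon\Higgs_G\hookrightarrow\Hodge_G$ cut out by $\lambda=0$: the functor $i_0^{*}$ is essentially surjective, carries $\Coh_\Nn(\Hodge_G)$ into $\Coh_\Nn(\Higgs_G)$ via the pullback square $\Nn_{\Dol}=\Nn_{\Hod}\times_{\AA^1}\{0\}$, and intertwines $\Eis^{\Hod}_P$ with $\Eis^{\Dol}_P$ by base change, hence sends a generating family to a generating family.
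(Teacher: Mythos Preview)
Your ``cleaner alternative'' at the end is exactly the paper's proof: Lemma~\ref{concluding decompositions}(1) deduces Theorem~\ref{th eis Higgs} from Theorem~\ref{th eis hodge} via the commutative square intertwining $\Eis^{\Hod}_P$ with $\Eis^{\Dol}_P$ under the essentially surjective restriction to $\lambda=0$.

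Your main direct approach is also valid in outline and is essentially the $\lambda=0$ case of the paper's proof of the Hodge theorem (Lemmas~\ref{pull image}--\ref{gen by Ii}), but there is one point where the paper proceeds differently and more carefully. Where you invoke Jacobson--Morozov to place the nilpotent $A$ in a nilradical inside the Levi $M_0$, note that $A$ is a global \emph{section} of $\ad(E,\phi)^\vee$, not a single Lie-algebra element, so Jacobson--Morozov does not apply on the nose; one would need to argue pointwise and then check that the resulting parabolic can be chosen compatibly over $X$. The paper sidesteps this by showing directly that the canonical Harder--Narasimhan $P$-reduction $(E_P,\phi_P)$ already receives a nilpotent covector $A_P$, via the Biswas--Holla short exact sequence $0\to\ad(E_P,\phi_P)\to\ad(E,\phi)\to\ad(E_P,\phi_P)(\gG/\pP)\to 0$ together with the fact that $\gG^*\to\pP^*$ preserves nilpotents. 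Finally, the paper organises the generation step through the localisation formalism of \cite[Cor.~3.3.9, Prop.~8.4.19]{arinkin&gaitsgory} rather than your Noetherian induction on strata of $\Nn_{\Dol}$, but these are equivalent packagings of the same idea.
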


By the following logical dependencies we reduce the contents of this section to proving Theorem \ref{th eis hodge}. 

\begin{lemma}
\label{concluding decompositions} 
\, 
\begin{enumerate}
    \item Theorem \ref{th eis Higgs} follows from Theorem \ref{th eis hodge}, 
    \item Theorem \ref{th eis and generate} follows from Theorem \ref{th eis hodge}, 
    \item Theorem \ref{decompose BBB} follows from Theorem \ref{th eis and generate}. 
\end{enumerate}
\end{lemma}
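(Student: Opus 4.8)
The plan is to deduce each of the three implications by transporting the generation statement of Theorem~\ref{th eis hodge} along an exact functor which is essentially surjective up to thick closure and which intertwines the relevant Eisenstein functors.

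\emph{Proof of (1).} The central-fibre inclusion $i_{0}\colon\Higgs_{G}\hookrightarrow\Hodge_{G}$ is a quasi-smooth closed immersion of virtual codimension one (cut out by $\lambda$), and by the pullback squares of Section~\ref{Hodge nilpotent cone} its singular codifferential identifies $\Nn_{\Dol}$ with $\Nn_{\Hod}\times_{\AA^{1}}\{0\}$; hence $i_{0}^{*}$ restricts to an exact functor $\Coh_{\Nn}(\Hodge_{G})\to\Coh_{\Nn}(\Higgs_{G})$ and $i_{0,*}$ to an exact functor $\Coh_{\Nn}(\Higgs_{G})\to\Coh_{\Nn}(\Hodge_{G})$. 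Since $i_{0}^{*}i_{0,*}\simeq\mathrm{id}\oplus\mathrm{id}[1]$, every object of $\Coh_{\Nn}(\Higgs_{G})$ is a retract of an object in the image of $i_{0}^{*}$, so $i_{0}^{*}$ is essentially surjective up to thick closure. Base change along the Cartesian squares relating $q^{\Hod},p^{\Hod}$ to $q^{\Dol},p^{\Dol}$ over $\{0\}\subset\AA^{1}$ — applied stratum by stratum, where $p^{\Hod}$ is proper — together with the fact that $i_{0}^{*}$, being a left adjoint, commutes with the colimit~\eqref{eq: Eis colim}, gives $i_{0}^{*}\circ\Eis^{\Hod}_{P}\simeq\Eis^{\Dol}_{P}\circ i_{0}^{*}$ and shows $i_{0}^{*}$ carries $\Perf(\Hodge_{M})$ into $\Perf(\Higgs_{M})$. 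Applying $i_{0}^{*}$ to Theorem~\ref{th eis hodge} and using essential surjectivity then yields Theorem~\ref{th eis Higgs}.

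\emph{Proofs of (2) and (3).} For (2) I would first analytify Theorem~\ref{th eis hodge}: Corollary~\ref{an and GAGA} makes analytification essentially surjective (up to thick closure) onto $\Coh^{\GAGA}_{\Nn}(\Hodge^{\an}_{G})$ and Proposition~\ref{Hodge Eisenstein GAGA property} intertwines $\Eis^{\Hod}_{P}$ with $\Eis^{\Hod,\an}_{P}$, so $\Coh^{\GAGA}_{\Nn}(\Hodge^{\an}_{G})$ is generated by the images of $\Eis^{\Hod,\an}_{P}$, and likewise over $\ol{X}$. Now $\Coh^{\GAGA}_{\Nn}(\Twistor_{G})$ is the pullback of the two hemispherical categories over $\Coh_{\Nn}(\Rep^{\an}_{G}\times\GG_{m})$ (Section~\ref{Nilp GAGA}), $\Eis^{\Tw}_{P}\simeq\Eis^{\Hod,\an}_{P}\times_{\Betti}\ol{\Eis}^{\Hod,\an}_{P}$ (Proposition~\ref{spec tw eis}), and the gluing is compatible with Eisenstein functors through Riemann--Hilbert (Proposition~\ref{RH and Eis}); viewing $\Hodge^{\an}_{G}$ and $\ol{\Hodge}^{\an}_{G}$ as the two hemispherical opens in $\Twistor_{G}$ with overlap $\LocSys^{\an}_{G}\times\GG_{m}\cong\Rep^{\an}_{G}\times\GG_{m}$, a Mayer--Vietoris argument rebuilds any $\Bb\in\Coh^{\GAGA}_{\Nn}(\Twistor_{G})$ from its two hemispherical restrictions and its restriction to the overlap, all of which lie in the appropriate Eisenstein-generated subcategories (the overlap using Theorem~\ref{de Rham decompositions} on compact objects, transported by Riemann--Hilbert), giving Theorem~\ref{th eis and generate}. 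For (3): $\Eis^{\Tw}_{P}$ preserves BBB-branes (Proposition~\ref{Eis BBB to BBB}; for $P=G$ it is the inclusion of $\Perf^{\BBB}(\Twistor_{G})$), so one containment is clear, and for the converse one observes that over $\Twistor^{\irred}_{G}$ the twistor nilpotent cone is the zero section, whence $\Coh_{\Nn}(\Twistor^{\irred}_{G})=\Perf(\Twistor^{\irred}_{G})$ and cuspidal BBB-branes are already perfect BBB-branes; one then runs a downward induction on $G$, using that the cuspidal--Eisenstein recollement furnished by Theorem~\ref{th eis and generate} restricts to the BBB subcategory — because its structure functors are built from restriction to $\Twistor^{\irred}_{G}$ and from $\Eis^{\Tw}_{P},\CT^{\Tw}_{P}$, all of which preserve BBB-branes (Propositions~\ref{Eis BBB to BBB} and~\ref{CT BBB to BBB}) — so that the Eisenstein part is, by the inductive hypothesis on the proper Levi $\Twistor_{M}$ and the transitivity of Eisenstein functors (Proposition~\ref{pr: transitivity}), generated by $\Eis^{\Tw}_{P'}(\Perf^{\BBB}(\Twistor_{M'}))$.

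\emph{Main obstacle.} The crux is the Mayer--Vietoris step in (2): a fibre product of stable categories does not automatically acquire generators from its factors, so one must genuinely use the Riemann--Hilbert compatibility of the Eisenstein functors to make the hemispherical generation data glue along the overlap. A secondary point to verify, needed in (1) and implicitly in (3), is that $i_{0,*}$ preserves nilpotent singular support — which follows from the codimension-one singular-support computation above — and, in (3), that the cuspidal part of a BBB-brane stays coherent with nilpotent singular support, which is automatic here since it is supported on the smooth irreducible locus where $\Nn_{\Tw}$ degenerates to the zero section.
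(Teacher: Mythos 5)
Your parts (1) and (2) are close to the paper's approach; (3) diverges, and your (2) has a hidden issue.

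\emph{Part (1).} The paper's proof is exactly the commutative square you write, with restriction along $\{0\}\hookrightarrow\AA^{1}$; your addition of the retract argument (via $i_{0}^{*}i_{0,*}\simeq\id\oplus\id[1]$ from the Koszul complex of $\lambda$) is a legitimate refinement of the paper's bald claim that the vertical arrows are ``essentially surjective,'' and is fine.

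\emph{Part (2).} The paper does not run a Mayer--Vietoris argument. It analytifies Theorem~\ref{th eis hodge} exactly as you do (via Corollary~\ref{an and GAGA} and Proposition~\ref{Hodge Eisenstein GAGA property}), and then simply asserts that, because $\Coh^{\GAGA}_{\Nn}(\Twistor_{G})$ and $\Eis^{\Tw}_{P}$ are both presented as Betti fibre products, generation of each hemispherical factor by $(\Eis^{\Hod}_{P})^{\an}$ implies the twistor statement. You correctly flag that generation of a fibre product from its factors is not automatic, but the repair you propose has a problem you do not address: for the Mayer--Vietoris triangle to live in $\Coh$ you need the pushforwards $j_{1,*},j_{2,*},j_{12,*}$ along the hemispherical open immersions to preserve coherence, which they do not for non-proper open embeddings; you would need to work with $\IndCoh$ there and then argue separately about compacts, which is not spelled out. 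The detour through Theorem~\ref{de Rham decompositions} for the overlap is also not what the paper does and is unnecessary given that the analytified Hodge result already restricts over $\GG_{m}\subset\AA^{1}$.

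\emph{Part (3).} Here your route is genuinely different from the paper's, and as written it has a gap. You propose to restrict the cuspidal--Eisenstein recollement arising from Theorem~\ref{th eis and generate} to BBB-branes and run induction. But showing that the recollement restricts to $\Coh^{\BBB}_{\Nn}(\Twistor_{G})$ is not merely a matter of observing that $\Eis^{\Tw}_{P}$, $\CT^{\Tw}_{P}$ and restriction to $\Twistor^{\irred}_{G}$ preserve BBB-branes: one must show that the projection to the Eisenstein part of a BBB-brane lands back in the subcategory generated by $\Eis^{\Tw}_{P}(\Coh^{\BBB}_{\Nn}(\Twistor_{M}))$, i.e.\ that
\[
\Coh^{\GAGA}_{\Nn}(\Twistor_{G})_{\Eis}\cap\Coh^{\BBB}_{\Nn}(\Twistor_{G})
=\big\langle\Eis^{\Tw}_{P}\big(\Coh^{\BBB}_{\Nn}(\Twistor_{M})\big)\big\rangle_{P\neq G},
\]
which is exactly what needs to be proved. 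The paper avoids this by a direct construction: on each Harder--Narasimhan stratum it exhibits, via the counits of $(q^{\Tw})^{!}\dashv(q^{\Tw})_{!}$ and $(p^{\Tw})^{*}\dashv(p^{\Tw})_{*}$, an explicit surjection
\[
\Eis^{\Tw}_{P,\nu}\big(\CT^{\Tw}_{P,\nu}(\Bb^{P,\nu})\big)\twoheadrightarrow\Bb^{P,\nu},
\]
and then uses Proposition~\ref{CT BBB to BBB} to see that $\CT^{\Tw}_{P,\nu}(\Bb^{P,\nu})$ is already a BBB-brane, so that the surjection witnesses $\Bb$ as generated by $\Eis$ of BBB objects on $\Twistor_{M}^{\sst,\nu}$; induction on the semi-simple rank replaces $\Coh^{\BBB}_{\Nn}$ by $\Perf^{\BBB}$. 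If you want to salvage your recollement formulation, you should observe that the content of ``the recollement restricts'' is precisely this counit surjection together with Proposition~\ref{CT BBB to BBB} — i.e.\ you are not avoiding the paper's argument, just re-packaging it, and should make the counit step explicit.
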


The pivotal Theorem \ref{th eis hodge} will be proven during a sequence of results established in \ref{image of pullback} - \ref{se: cusp-eis}. For now let us prove the comparative Lemma.  

\begin{proof} \textit{Of Lemma \ref{concluding decompositions} (1)}. Follows from the commutativity of the diagram 
\[
\begin{tikzcd}
\Perf(\Hodge_M) \arrow[r, "\Eis^{\Hod}_{P}"] \arrow[d] & \Coh_{\Nn}(\Hodge_G) \arrow[d] \\
\Perf(\Higgs_M) \arrow[r, "\Eis^{\Dol}_{P}"] & \Coh_{\Nn}(\Higgs_G) 
\end{tikzcd} ,
\]
where the vertical arrows are essentially surjective forgetful functors defined by restriction along 
\[
\Higgs_M = \Hodge_M \times_{\AA^1} \{0\} \to \Hodge_M ,
\]
\[
\Higgs_G = \Hodge_G \times_{\AA^1} \{0\} \to \Hodge_G . \qedhere
\]
\end{proof}

\begin{proof}\textit{Of Lemma \ref{concluding decompositions} (2)}. We give a Hodge to twistor gluing argument. By considering the twistor decompositions
\[
\Coh^{\GAGA}_{\Nn}(\Twistor_G) = \Coh^{\GAGA}_{\Nn}(\Hodge_G) \times_{\Betti} \Coh^{\GAGA}_{\Nn}(\Hodge_G), 
\]
\[
\Eis^{\Tw}_{P}|_{\Perf^{\GAGA}(\Twistor_M)} = (\Eis^{\Hod}_{P})^{\an} \times_{\Betti} (\ol{\Eis}^{\Hod}_{P})^{\an} , 
\]
described in \ref{Nilp GAGA} and Proposition \ref{spec tw eis} respectively, the result follows from checking that $(\Eis^{\Hod}_{P})^{\an}$, and therefore symmetrically $(\ol{\Eis}^{\Hod}_{P})^{\an}$, generates the target category. We therefore prove the following statement: by hypothesis one assumes that the essential image of
\[
\Eis^{\Hod}_{P} : \Perf(\Hodge_M) \to \Coh_{\Nn}(\Hodge_G) , 
\]
for all $P$ generates the target category, then we check that the same is true for the analytifications 
\[
(\Eis^{\Hod}_{P})^{\an} : \Perf^{\GAGA}(\Hodge_M^{\an}) \to \Coh^{\GAGA}_{\Nn}(\Hodge^{\an}_G) .
\]
This statement is a consequence of the commutativity of the diagram
\[
\begin{tikzcd}
\Perf(\Hodge_M) \arrow[r, "\Eis^{\Hod}_{P}"] \arrow[d, "(\cdot)^{\an}"] & \Coh_{\Nn}(\Hodge_G) \arrow[d, "(\cdot)^{\an}"] \\
\Perf^{\GAGA}(\Hodge^{\an}_M) \arrow[r, "\, \, (\Eis^{\Hod}_{P})^{\an}"] & \Coh^{\GAGA}_{\Nn}(\Hodge^{\an}_G)
\end{tikzcd} ,
\]
provided by Proposition \ref{Hodge Eisenstein GAGA property}, in which the vertical arrows are the analytification functors landing on their essential image.
\end{proof}

For the proof of the final \textit{Part (3)} of the Lemma we recall the modified twistor Eisenstein functors 
\[
\Eis^{\Tw}_{P, \nu} : \IndCoh(\Twistor_M^{\sst, \nu}) \to \IndCoh(\Twistor_G) , 
\]
defined strata-by-strata over the Harder--Narasimhan stratification, as per Corollary \ref{spec tw eis HN}.  

\begin{proof} \textit{Of Lemma \ref{concluding decompositions} (3)}. The statement can be rephrased in the following notation. Let $\langle \Eis^{\Tw}_{P}(\Cc_{M}) \rangle_{P}$  and $\langle \Eis^{\Tw}_{P, \nu}(\Cc_{M, \nu}) \rangle_{P, \nu}$ denote the essential image generated $\Eis^{\Tw}_{P}$ by $\Eis^{\Tw}_{P, \nu}$ acting on a family of subcategories $\Cc_{M} \subset \IndCoh(\Twistor_M)$ and $\Cc_{M, \nu} \subset \IndCoh(\Twistor^{\sst, \nu}_M)$, such that $\Cc_{M, \nu}$ is the pullback of $\Cc_M$. Because $\Eis^{\Tw}_{P, \nu}$ is a left adjoint to $\CT^{\Tw}_{P , \nu}$, their essential images preserve colimits, and so the colimit \eqref{eq: Eis colim} over Harder--Narasimhan types $\nu$ induces the identification 
\[
\langle \Eis^{\Tw}_{P}(\Cc_{M}) \rangle_{P} = \langle \Eis^{\Tw}_{P, \nu}(\Cc_{M, \nu}) \rangle_{P, \nu} . 
\]
Interpreting Theorem \ref{th eis and generate} and Theorem \ref{decompose BBB} as the identifications 
\[
\big\langle \Eis^{\Tw}_{P}\big(\Perf(\Twistor_M)\big) \big\rangle_{P} = \Coh_{\Nn}(\Twistor_G) ,
\]
\[
\big\langle \Eis^{\Tw}_{P}\big(\Perf^{\BBB}(\Twistor_M)\big) \big\rangle_{P} = \Coh^{\BBB}_{\Nn}(\Twistor_G) ,
\]
the statement of Lemma \ref{concluding decompositions}(3) is that the former identification implies the existence of the latter.  

By Propositions \ref{Eis BBB to BBB} and \ref{Eis BBB to BBB}, the functors $\Eis^{\Tw}_{P}$ and $\Eis^{\Tw}_{P, \nu}$ preserve the BBB-brane condition, so there exists a pair of natural fully faithful inclusion functors 
\begin{equation}
\begin{tikzcd}
\label{fully faithful guy}
\big\langle \Eis^{\Tw}_{P}(\Coh_{\Nn}^{\BBB}(\Twistor_M) \big\rangle_{P} 
\arrow[r, "\Theta"] 
\arrow[d, equal]
& \Coh^{\BBB}_{\Nn}(\Twistor_G) \\
\big\langle \Eis^{\Tw}_{P, \nu}(\Coh_{\Nn}^{\BBB}(\Twistor^{\sst, \nu}_M) \big\rangle_{P, \nu} \arrow[ur, "\Theta_{HN}"']
\end{tikzcd}
\end{equation}
Fix $\Bb \in \Coh^{\BBB}_{\Nn}(\Twistor_G)$ and consider the restrictions $\Bb^{P, \nu}$ to the images of the morphisms $p^{\Tw} : \Twistor_P^{\sst, \nu} \to \Twistor_G$. The counit $(p^{\Tw})_{*}(p^{\Tw})^{!} \to \id$ induces an isomorphism 
\[
\Bb^{P, \nu} \cong (p^{\Tw})_{*}(p^{\Tw})^{!}\Bb .
\]
Recall that, by Proposition \ref{CT BBB to BBB}, the functors $\CT_{P, \nu}^{\Tw}$ preserve the BBB-brane condition, so the objects $\CT_{P, \nu}^{\Tw}(\Bb)$ lie in $\Coh^{\BBB}_{\Nn}(\Twistor^{\sst, \nu}_M)$, for every $(P, \nu)$. 

Moreover, the counits $(q^{\Tw})^{!}(q^{\Tw})_{!} \to \id$ and $(p^{\Tw})_{*}(p^{\Tw})^{*} \to \id$ induce a natural surjection 
\[
\Eis^{\Tw}_{P, \nu}(\CT^{\Tw}_{P, \nu}(\Bb^{P, \nu})) \to \Bb^{P , \nu} . 
\]
By varying $(P, \nu)$ this generates a surjection 
\[
\big\langle \Eis^{\Tw}_{P,\nu}(\CT^{\Tw}_{P, \nu}(\Bb^{P, \nu})) \big\rangle_{P, \nu} \to \Bb , 
\]
thus defining an essentially surjective functor 
\[
\Phi_{HN} : \big\langle \Eis^{\Tw}_{P, \nu} (\CT^{\Tw}_{P, \nu} (\Coh^{\BBB}_{\Nn}(\Twistor_G^{P, \nu}))) \big\rangle_{P ,\nu} \to \Coh^{\BBB}_{\Nn}(\Twistor_G) .
\]
Moreover $\Phi_{HN}$ and the inclusion $\Theta_{HN}$ from \eqref{fully faithful guy} form a commutative diagram  
\[
\begin{tikzcd}
\big\langle \Eis^{\Tw}_{P, \nu} ( \CT^{\Tw}_{P, \nu} (\Coh^{\BBB}_{\Nn}(\Twistor_G^{P, \nu})) \big\rangle_{P , \nu} \arrow[r, phantom, sloped, "\subset"] 
\arrow[d, two heads, "\Phi_{HN}"'] 
& \big\langle \Eis^{\Tw}_P(\Coh^{\BBB}_{\Nn}(\Twistor_M) \big\rangle_{P} \arrow[dl, hook', "\Theta_{HN}"] \\
\Coh^{\BBB}_{\Nn}(\Twistor_G) & 
\end{tikzcd} .
\]
It follows that $\Theta_{HN}$, and therefore $\Theta$, is essentially surjective, thus providing the identifications 
\[
\begin{tikzcd}
\big\langle \Eis^{\Tw}_P(\Coh^{\BBB}_{\Nn}(\Twistor_M) \big\rangle_{P} \arrow[d, equal] \arrow[r, equal] & \Coh^{\BBB}_{\Nn}(\Twistor_G) \\
\big\langle \Eis^{\Tw}_{P, \nu}(\Coh^{\BBB}_{\Nn}(\Twistor^{\sst, \nu}_M) \big\rangle_{P , \nu} \arrow[ru, equal] & 
\end{tikzcd}
\]
Induction on the semi-simple rank reduces the horizontal arrow to an identification 
\[
\big\langle \Eis^{\Tw}_P(\Perf^{\BBB}(\Twistor_M) \big\rangle_{P} = \Coh^{\BBB}_{\Nn}(\Twistor_G) ,
\]
and this concludes the proof. 
\end{proof}

\subsubsection{Harder--Narasimhan variant} We begin the proof of Theorem \ref{th eis hodge} by first reducing the statement to one involving strata-by-strata Eisenstein functors acting on coherent rather than perfect complexes. 

\begin{lemma} 
\label{le: TFAE}
The following are equivalent: 
\begin{enumerate}
    \item (\textit{The statement of Theorem \ref{th eis hodge}}). $\Coh_{\Nn}(\Hodge_{G})$ is generated by the essential image of 
\[
\Eis^{\Hod}_{P} : \Perf( \Hodge_{M} ) \to \Coh_{\Nn}( \Hodge_{G} ),
\]
for all parabolic subgroups $P \subset G$. 

\item $\Coh_{\Nn}(\Hodge_{G})$ is generated by the essential image of 
\[
\Eis^{\Hod}_{P} : \Coh_{\Nn
}( \Hodge_{M} ) \to \Coh_{\Nn}( \Hodge_{G} ),
\]
for all parabolic subgroups $P \subset G$. 

\item $\Coh_{\Nn}(\Hodge_{G})$ is generated by the essential image of 
\[
\Eis^{\Hod}_{P, \nu} : \Coh_{\Nn}( \Hodge^{\sst, \nu}_{M} ) \to \Coh_{\Nn}( \Hodge_{G} ),
\]
for all pairs $(P,\nu)$ of parabolic subgroups $P\subset G$ and Harder--Narasimhan types $\nu$.  
\end{enumerate}
\end{lemma}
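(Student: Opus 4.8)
The plan is to establish the three implications $(1)\Rightarrow(2)\Rightarrow(3)\Rightarrow(1)$, or rather the cyclic chain of equivalences, by exploiting that $\Eis^{\Hod}_P$ is a left adjoint (so its essential image is closed under colimits) and that $\Perf(\Hodge_M)$ generates $\Coh_{\Nn}(\Hodge_M)$ when the $P=G$ case is included as a degenerate instance. Let me sketch each step.

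\textbf{Step 1: $(1)\Rightarrow(2)$.} The inclusion of essential images is immediate since $\Perf(\Hodge_M)\subset\Coh_{\Nn}(\Hodge_M)$, so if the smaller source already generates then so does the larger. For the reverse reduction needed to close the loop, the key point is that $\Coh_{\Nn}(\Hodge_M)$ is itself generated by applying Eisenstein functors $\Eis^{\Hod}_{P'}$ for parabolics $P'\subset M$ to perfect complexes on the Levi quotients $M'$ of $P'$ — this is the statement of Theorem \ref{th eis hodge} \emph{for the group $M$}, which we obtain by induction on semisimple rank (the rank-zero case being $\Perf = \Coh_{\Nn}$ on a point-like moduli, i.e. when $G$ is a torus the nilpotent cone is the zero section). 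Composing with the transitivity property of Eisenstein functors (Proposition \ref{pr: transitivity}), $\Eis^{\Hod}_P \circ \Eis^{\Hod}_{P'\to M} \simeq \Eis^{\Hod}_{P'\to G}$ for $P'$ viewed as a parabolic of $G$, so any object in the essential image of $\Eis^{\Hod}_P$ applied to $\Coh_{\Nn}(\Hodge_M)$ lies in the subcategory generated by the $\Eis^{\Hod}_{P''}(\Perf(\Hodge_{M''}))$ over proper parabolics $P''\subset G$. Hence $(2)\Rightarrow(1)$.

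\textbf{Step 2: $(2)\Leftrightarrow(3)$.} This is the Laumon/Harder--Narasimhan device already set up in Section \ref{spectral Eis Hodge}. By the colimit presentation \eqref{eq: Eis colim}, $\Eis^{\Hod}_P = \lim_{\longleftarrow\,\nu}\Eis^{\Hod}_{P,\nu}$, and because $\Eis^{\Hod}_{P,\nu}$ is left adjoint to $\CT^{\Hod}_{P,\nu}$ its essential image preserves colimits; thus the subcategory generated by $\{\Eis^{\Hod}_{P,\nu}(\Coh_{\Nn}(\Hodge^{\sst,\nu}_M))\}_{P,\nu}$ coincides with the subcategory generated by $\{\Eis^{\Hod}_P(\Coh_{\Nn}(\Hodge_M))\}_P$, since $\Coh_{\Nn}(\Hodge_M) = \colim_\nu \Coh_{\Nn}(\Hodge^{\sst,\nu}_M)$ along the locally finitely presented Harder--Narasimhan stratification. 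This gives $(2)\Leftrightarrow(3)$ directly. The restriction-to-strata direction uses that the strata $\Hodge^{\sst,\nu}_M$ cover $\Hodge_M$ by locally closed immersions (existence and uniqueness of the Harder--Narasimhan filtration for $(M,\lambda)$-connections, per \cite{gurjar&nitsure}), so a coherent sheaf on $\Hodge_M$ is recovered from its restrictions.

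\textbf{Main obstacle.} The genuinely substantive content is not in this equivalence lemma but rather packaging it so that the induction on semisimple rank in Step 1 is clean — one must be careful that the base of the induction (a torus) really gives $\Perf = \Coh_{\Nn}$, i.e. that the nilpotent cone $\Nn_{\Hod}$ over $\Hodge_T$ is the zero section, which follows because $\Nilp(\tT^\vee)=\{0\}$ for a torus. The other mild subtlety is verifying that $\Coh_{\Nn}(\Hodge_M) = \colim_\nu \Coh_{\Nn}(\Hodge^{\sst,\nu}_M)$ as categories with the correct compatibility against the Eisenstein functors, and that the colimit in \eqref{eq: Eis colim} is filtered enough that it locally stabilizes on compact objects — but this is exactly Corollary \ref{co: Eis compact} and the preceding discussion, so it may be cited. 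I expect the write-up to be short: the implications $(2)\Leftrightarrow(3)$ and $(1)\Rightarrow(2)$ are formal, and $(2)\Rightarrow(1)$ is the transitivity-plus-induction argument, with the real work deferred to the proof of Theorem \ref{th eis hodge} itself in \ref{image of pullback}--\ref{se: cusp-eis}.
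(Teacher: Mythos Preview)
Your proposal is correct and follows essentially the same approach as the paper: the equivalence $(1)\Leftrightarrow(2)$ is obtained by induction on semisimple rank together with the transitivity relation for Eisenstein functors (Proposition~\ref{pr: transitivity}), and $(2)\Leftrightarrow(3)$ follows from the colimit presentation $\Eis^{\Hod}_P = \lim_{\longleftarrow\,\nu}\Eis^{\Hod}_{P,\nu}$ and preservation of colimits. Your write-up is more detailed than the paper's (which dispatches the lemma in three lines, citing \cite[Cory.~13.3.9]{arinkin&gaitsgory} for the de Rham analogue of $(1)\Leftrightarrow(2)$), but the substance is identical.
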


\begin{proof}
(1) is equivalent to (2) via induction on the semi-simple rank, the transitivity of parabolic subgroups and the evident transitivity relations of $\Eis_P^{\Hod}$, as per Proposition \ref{pr: transitivity}. The de Rham variant of (1) $\iff$ (2) appears as \cite[Cory 13.3.9]{arinkin&gaitsgory} where the proof is identical. (2) is equivalent to (3) since $\Eis^{\Hod}_{P, \nu}$ is a left adjoint and so its essential image is preserved under colimits. 
\end{proof}

To prove Theorem \ref{th eis hodge} it therefore suffices to prove the statement in Lemma \ref{le: TFAE}(3) and this is the approach we shall follow. 

From here onwards let us introduce subscripts such as $\Nn_G \subset \Sing(\Hodge_G)$ on the Hodge theoretic global nilpotent cones to keep track of structure group. 

\subsubsection{Image of $(q^{\Hod})^{!}$}
\label{image of pullback}
Our analysis of the Eisenstein functors  
\[
\Eis^{\Hod}_{P, \nu} = (p^{\Hod})_{*} \circ (q^{\Hod})^{!} : \Coh_{\Nn_{M}}( \Hodge^{\sst, \nu}_{M} ) \to \Coh_{\Nn_{G}}( \Hodge_G ) , 
\]
begins with the pullback functor. 
\begin{lemma}
\label{pull image}
For each pair $(P, \nu)$ the essential image of the functor
\[
(q^{\Hod})^{!} : \Coh_{\Nn_{M}}( \Hodge^{\sst, \nu}_{M} ) \to \Coh_{\Nn_{P}}( \Hodge^{\sst, \nu}_{P} ) ,
\]
generates the target category.
\end{lemma}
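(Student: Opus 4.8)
The plan is to reduce the statement to the analogous de Rham fact from \cite{arinkin&gaitsgory} by exploiting the $\GG_m$-equivariant structure on $\Hodge_G$ and its Harder--Narasimhan strata. First I would recall that $q^{\Hod} : \Hodge_P^{\sst, \nu} \to \Hodge_M^{\sst, \nu}$ is a quasi-smooth affine morphism (inherited from the de Rham case, since the fibration is modelled on the unipotent radical $\uU$ acting through a $\lambda$-twisted version of the de Rham construction), and that by Lemma \ref{N_M to N_P} its singular codifferential carries $\Nn_M$ isomorphically onto $\Nn_P$. Consequently, by the general criterion \cite[Lem. 8.4.2]{arinkin&gaitsgory} that was already invoked in the proof of Proposition \ref{nilp to nilp}, the functor $(q^{\Hod})^{!}$ does send $\Coh_{\Nn_M}(\Hodge_M^{\sst, \nu})$ into $\Coh_{\Nn_P}(\Hodge_P^{\sst, \nu})$; the content of the lemma is the \emph{generation} statement, i.e. that the smallest full subcategory of $\Coh_{\Nn_P}(\Hodge_P^{\sst, \nu})$ containing the image of $(q^{\Hod})^{!}$ and closed under the relevant operations is everything.

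The key step is a ``contraction onto the Levi'' argument. The morphism $q^{\Hod}$ is affine with fibers that are (twisted) affine spaces, and there is a section $r^{\Hod} : \Hodge_M^{\sst, \nu} \to \Hodge_P^{\sst, \nu}$ coming from a Levi splitting $M \to P$, together with a $\GG_m$-action contracting $\Hodge_P^{\sst, \nu}$ onto the image of $r^{\Hod}$ along the fibers of $q^{\Hod}$ (this is the $\lambda$-connection analogue of the standard contraction on $\LocSys_P$ used in \cite[\textsection 13]{arinkin&gaitsgory}; over $\lambda \neq 0$ it restricts to that contraction via \eqref{Hodge trivialisation}, and over $\lambda = 0$ it is the Dolbeault version). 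Given such a contracting action, any object $\Ff \in \Coh_{\Nn_P}(\Hodge_P^{\sst,\nu})$ can be resolved using the Koszul-type filtration associated to $q^{\Hod}$: the counit $(q^{\Hod})^{!}(q^{\Hod})_* \to \id$ and iterated cones express $\Ff$ as built from objects of the form $(q^{\Hod})^{!}\Gg$ with $\Gg \in \Coh_{\Nn_M}(\Hodge_M^{\sst,\nu})$. One must check at each stage that singular support stays within $\Nn_P$, which follows from Lemma \ref{N_M to N_P} since pullback along $q^{\Hod}$ lands in the nilpotent locus and $\Coh_{\Nn_P}$ is closed under cones and retracts. I would phrase this as: the essential image of $(q^{\Hod})^{!}$ together with shifts and cones exhausts $\Coh_{\Nn_P}(\Hodge_P^{\sst,\nu})$ because the fiberwise-affine, $\GG_m$-contractible structure of $q^{\Hod}$ makes $(q^{\Hod})_*$ conservative on this subcategory with $(q^{\Hod})^{!}$ generating under the bar resolution.

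Concretely, the sequence of steps I would carry out is: (i) record that $q^{\Hod}$ is quasi-smooth, affine, and $\GG_m$-equivariant, with a Levi section $r^{\Hod}$ and a fiberwise-contracting $\GG_m$-action, citing \ref{GG_m spectral} and \eqref{Hodge trivialisation} for the geometry; (ii) invoke Lemma \ref{N_M to N_P} to see $(q^{\Hod})^{!}$ preserves the nilpotent singular support condition into $\Coh_{\Nn_P}$; (iii) run the de Rham argument of \cite[proof of Thm. 13.3.6, in particular the analysis around \textsection 13.2--13.3]{arinkin&gaitsgory} verbatim, using the contraction to show that the monad $(q^{\Hod})^{!}(q^{\Hod})_*$ is such that every object of $\Coh_{\Nn_P}(\Hodge_P^{\sst,\nu})$ is a finite colimit (geometric realisation / totalisation of the bar complex, truncated by coherence) of objects in the image of $(q^{\Hod})^{!}$; (iv) conclude the generation statement. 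The main obstacle, I expect, is step (iii): one has to confirm that the $\GG_m$-contraction and the resulting descent-type resolution behave identically over the special fiber $\lambda = 0$ (the Dolbeault stratum $\Higgs_P^{\sst,\nu} \to \Higgs_M^{\sst,\nu}$), where the ``unipotent radical'' direction is the nilpotent cone of the Higgs field rather than an honest unipotent group, so the affineness and contractibility of $q^{\Dol}$ need a separate check; once that is in place, the interpolation over $\AA^1_\lambda$ and the restriction to the quasi-compact HN stratum $\nu$ make the finiteness of the resolution automatic, and the rest is a routine transcription of \cite{arinkin&gaitsgory}.
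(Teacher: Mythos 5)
Your proposal diverges from the paper's proof in a structurally important way, and the divergence opens a genuine gap. The paper follows Arinkin--Gaitsgory \cite[Propn.~13.4.4]{arinkin&gaitsgory} exactly: having established via Lemma~\ref{N_M to N_P} that the singular codifferential of $q^{\Hod}$ identifies $\Nn_M$-fibers with $\Nn_P$, one invokes the tensor-product identity \cite[Propn.~8.4.14]{arinkin&gaitsgory} to get an equivalence
\[
\Coh_{\Nn_M}(\Hodge_M^{\sst,\nu}) \otimes_{\Coh(\Hodge_M^{\sst,\nu})} \Coh(\Hodge_P^{\sst,\nu}) \xrightarrow{\;(q^{\Hod})^!\;} \Coh_{\Nn_P}(\Hodge_P^{\sst,\nu}),
\]
which \emph{removes the singular support constraint entirely} from the problem. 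What remains is only that $(q^{\Hod})^!:\Coh(\Hodge_M^{\sst,\nu})\to\Coh(\Hodge_P^{\sst,\nu})$ generates, i.e.\ that $(q^{\Hod})_*$ is conservative, which follows from cohomological affineness of $q^{\Hod}$ (the fiber is modeled on the unipotent radical). That is the entire argument.

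You instead attempt to run a monadic bar-resolution argument directly inside $\Coh_{\Nn_P}(\Hodge_P^{\sst,\nu})$. This has three problems you do not resolve. First, the bar complex $(q^!q_*)^{\bullet}\Ff$ involves $q_*$ at every stage, so one needs $q_*$ (not just $q^!$) to preserve nilpotent singular support to stay inside $\Coh_{\Nn_P}$; you only cite Lemma~\ref{N_M to N_P}, which controls $q^!$. Second, you are working with \emph{coherent} sheaves, and the bar resolution is a priori an infinite totalisation; your claim that it is a finite colimit ``truncated by coherence'' on the quasi-compact HN stratum is asserted, not proved, and is not automatic. Third, and most seriously, the $\GG_m$-contraction onto the Levi section that powers your resolution is not constructed: you flag it as ``the main obstacle'' over the Dolbeault fiber and leave it as a separate check. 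It is not actually needed — cohomological affineness of $q^{\Hod}$ already gives conservativity of $(q^{\Hod})_*$ with no contraction at all — but since your argument leans on the contraction for the resolution itself rather than just conservativity, you cannot simply drop it. The cleanest fix is to replace your steps (ii)--(iii) by the \cite[8.4.14]{arinkin&gaitsgory} factorization, which dissolves the singular-support, finiteness, and contraction issues simultaneously.
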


\begin{proof} 
Follows as per \cite[Propn. 13.4.4]{arinkin&gaitsgory}. Indeed, Lemma \ref{N_M to N_P} shows that $q^{\Hod}$ satisfies the hypothesis of \cite[Propn. 8.4.14]{arinkin&gaitsgory} and thus $(q^{\Hod})^{!}$ induces an equivalence 
    \begin{align*}
    \Coh_{\Nn_{M}}( \Hodge^{\sst, \nu}_{M} ) \otimes_{ \Coh( \Hodge^{\sst, \nu}_{M} ) }  \Coh( \Hodge^{\sst, \nu}_{P} ) \xrightarrow{(q^{\Hod})^{!}} \Coh_{\Nn_{P}}( \Hodge^{\sst, \nu}_{P} ).
    \end{align*}
Therefore it suffices to show that 
    \[
    (q^{\Hod})^{!} : \Coh( \Hodge^{\sst, \nu}_{M} ) \to \Coh( \Hodge^{\sst, \nu}_{P} ) ,
    \]
generates the target, or equivalently that the adjoint $(q^{\Hod})_{*}$ is conservative. This follows from the fact that induction along the Levi quotient map $P \to M$ can be presented as a quotient of a schematic affine morphism with respect to the action of the unipotent group, and so the map $q^{\Hod} : \Hodge^{\sst, \nu}_{P} \to \Hodge^{\sst, \nu}_{M}$ is cohomologically affine. 
\end{proof}

\subsubsection{Image of $(p^{\Hod})_{*}$} The singular codifferential $\Sing(p_{\Hod})$ of the $G$-induction morphism $p_{\Hod} : \Hodge_P^{\sst, \nu} \to \Hodge_G$ induces a diagram 
\[
\begin{tikzcd}[column sep = 1.8cm]
\Nn_P \arrow[d]
& \Sing(p_{\Hod})^{-1}(\Nn_P) 
\arrow[r] \arrow[d] \arrow[l]
& \Nn_G^{P, \nu} \arrow[d] 
\\
\Sing(\Hodge_P^{\sst, \nu}) \, \, 
\arrow[dr]
& ^{cl}( \Sing(\Hodge_G) \times_{\Hodge_G} \Hodge_P^{\sst, \nu} ) 
\arrow[l, "\, \, \Sing(p_{\Hod}) \, \, \, \, "'] \arrow[d] \arrow[r]
& \Sing(\Hodge_G) \arrow[d]
\\
& \Hodge_{P}^{\sst, \nu} \arrow[r, "p_{\Hod}"] & \Hodge_G   
\end{tikzcd} , 
\]
where the upper squares are Cartesian by construction. Indeed, $\Nn_G^{P, \nu} \to \Sing(\Hodge_G)$ is defined to be the closed immersion obtained from the image of $\Sing(p_{\Hod})^{-1}(\Nn_P)$ along the canonical map 
\[
^{cl}( \Sing(\Hodge_G) \times_{\Hodge_G} \Hodge_P^{\sst, \nu} ) \to \Sing(\Hodge_G) . 
\]
By Lemma \ref{N_P to N_G} we have $\Nn_G^{P, \nu} \subset \Nn_G$ and so one obtains a full subcategory  
\[
\Coh_{\Nn_G^{P, \nu}}( \Hodge_{G} ) \subset \Coh_{\Nn_G}( \Hodge_{G} ), 
\]
of sheaves which have, by construction, their usual support contained within the image of $p_{\Hod} : \Hodge_P^{\sst, \nu} \to \Hodge_G$, which is denoted by $\Hodge^{P\, \nu}_G$. 

By functoriality of pushforward for singular support \cite[Proposition 8.4.19]{arinkin&gaitsgory} we obtain the following. 

\begin{lemma}
\label{push image}
Given a parabolic $P \subset G$ and Harder--Narasimhan type $\nu$ the essential image of the functor 
\[
(p^{\Hod})_{*} : \Coh_{\Nn_P}( \Hodge^{\sst, \nu}_{P} )  \to \Coh_{\Nn_G^{P, \nu}}( \Hodge_{G} ),
\]
generates the target category. 
\end{lemma}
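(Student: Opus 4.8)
The plan is to follow the template of Arinkin--Gaitsgory's treatment of the image of the pushforward functor $(p^{\dR})_*$, adapted to the Harder--Narasimhan stratum $p^{\Hod} : \Hodge_P^{\sst, \nu} \to \Hodge_G$. The key point is that, although $\Hodge_P \to \Hodge_G$ is not proper, the restriction $p^{\Hod} : \Hodge_P^{\sst, \nu} \to \Hodge_G$ is proper (schematic and proper, by the same reasoning as in the de Rham case and as noted in \ref{se: Eis Dol/Hod}), so standard functoriality results for proper pushforward in the singular support formalism apply directly.

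First I would recall the precise statement of \cite[Proposition 8.4.19]{arinkin&gaitsgory}: for a proper schematic morphism $f : \Yy_1 \to \Yy_2$ of quasi-smooth stacks, given a conic $\Lambda_1 \subset \Sing(\Yy_1)$, the pushforward $f_*$ sends $\IndCoh_{\Lambda_1}(\Yy_1)$ into $\IndCoh_{\Lambda_2}(\Yy_2)$, where $\Lambda_2$ is the image of $\Lambda_1$ under the correspondence $\Sing(\Yy_1) \leftarrow \Sing(\Yy_2)_{\Yy_1} \to \Sing(\Yy_2)$ associated to the singular codifferential; moreover the essential image of $f_*|_{\IndCoh_{\Lambda_1}}$ generates $\IndCoh_{\Lambda_2}(\Yy_2)$. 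Applying this with $f = p^{\Hod}$, $\Yy_1 = \Hodge_P^{\sst, \nu}$, $\Yy_2 = \Hodge_G$ and $\Lambda_1 = \Nn_P$, the resulting conic $\Lambda_2 \subset \Sing(\Hodge_G)$ is by definition exactly $\Nn_G^{P,\nu}$, the closed immersion obtained as the image of $\Sing(p_{\Hod})^{-1}(\Nn_P)$ under $\,^{cl}(\Sing(\Hodge_G)\times_{\Hodge_G}\Hodge_P^{\sst,\nu}) \to \Sing(\Hodge_G)$, as set up in the paragraph preceding the lemma. By Lemma \ref{N_P to N_G} we have $\Sing(p^{\Hod})^{-1}(\Nn_P) \subset \Nn_G \times_{\Hodge_G} \Hodge_P^{\sst,\nu}$, which guarantees $\Nn_G^{P,\nu} \subset \Nn_G$ and hence that $\Coh_{\Nn_G^{P,\nu}}(\Hodge_G)$ is a genuine full subcategory of $\Coh_{\Nn_G}(\Hodge_G)$.

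The remaining routine points are: (i) restricting to compact objects, i.e. passing from the $\IndCoh$-level generation statement to the $\Coh$-level statement in the lemma, which is automatic since $p^{\Hod}$ proper implies $(p^{\Hod})_*$ preserves compactness and the generation-by-essential-image property descends to compact objects; and (ii) noting that, by construction, objects of $\Coh_{\Nn_G^{P,\nu}}(\Hodge_G)$ are set-theoretically supported on $\Hodge_G^{P,\nu} = \Image(p^{\Hod})$, so the target category is indeed the correct one. I would write the proof as essentially a two-line invocation of \cite[Proposition 8.4.19]{arinkin&gaitsgory} together with Lemma \ref{N_P to N_G}, in direct parallel with \cite[Propn. 13.4.4]{arinkin&gaitsgory} (whose proof is, as the excerpt repeatedly emphasizes, identical).

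The main obstacle, such as it is, is purely bookkeeping: one must be careful that the singular codifferential $\Sing(p^{\Hod})$ behaves as in the de Rham setting, which is ensured because the tangent complex of $\Hodge_G$ has the form $p_{2,*}\ad(\Uu_{\Hod})[1]$ required in \ref{Hodge nilpotent cone}, so all the structural inputs (the description of $\Sing$, of the codifferential, and of the nilpotent cone) transfer verbatim. There is no genuine analytic or deformation-theoretic subtlety here since we work entirely on the algebraic stack $\Hodge_G$; the only substantive geometric input beyond formal functoriality is the properness of $p^{\Hod}$ on each Harder--Narasimhan stratum, which is exactly why the stratified construction of $\Eis^{\Hod}_{P,\nu}$ was introduced in the first place.
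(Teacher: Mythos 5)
Your proposal matches the paper's approach exactly: the paper's entire proof is the one-line invocation of \cite[Proposition 8.4.19]{arinkin&gaitsgory}, with the setup (the singular-codifferential correspondence, the definition of $\Nn_G^{P,\nu}$ as the image, and the containment supplied by Lemma \ref{N_P to N_G}) already provided in the preceding paragraph and in Lemma \ref{N_P to N_G}. Your additional remarks on properness of $p^{\Hod}$ on each Harder--Narasimhan stratum and on passage to compact objects are the correct bookkeeping and are consistent with what the paper leaves implicit.
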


\subsubsection{Image for fixed strata}

By combining the statements of Lemma \ref{pull image} and Lemma \ref{push image} we obtain: 
\begin{lemma} 
\label{Eis image fixed P}
For fixed $(P, \nu)$ the essential image of the Eisenstein functor 
\[\
\Eis_{P, \nu}^{\Hod} : \Coh_{\Nn_M}(\Hodge^{\sst, \nu}_{M}) \to \Coh_{\Nn_G}( \Hodge_{G} ) ,
\]
is given by the subcategory $\Coh_{\Nn_G^{P, \nu}}( \Hodge_{G} )$.
\end{lemma}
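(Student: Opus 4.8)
The plan is to deduce the statement purely formally by composing Lemma \ref{pull image} with Lemma \ref{push image}. Indeed, by construction the functor $\Eis^{\Hod}_{P, \nu} = (p^{\Hod})_{*} \circ (q^{\Hod})^{!}$ factors through $\Coh_{\Nn_P}(\Hodge^{\sst, \nu}_P)$, Lemma \ref{pull image} controls the first arrow, and Lemma \ref{push image} controls the second. Here I read ``essential image'' as in the footnote convention of the preceding two lemmas, namely as the smallest full dg subcategory containing it; so the assertion to be proved is that this generated subcategory equals $\Coh_{\Nn_G^{P, \nu}}(\Hodge_G)$.

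The first step is to record the following elementary principle about generation. Suppose $\Aa \xrightarrow{F} \Bb \xrightarrow{G} \Cc$ are exact functors of idempotent-complete dg categories such that $F$ generates $\Bb$ (in the above sense) and $G$ generates a full dg subcategory $\Cc_0 \subseteq \Cc$. Then $G \circ F$ also generates $\Cc_0$: the containment $\langle \Image(G \circ F) \rangle \subseteq \langle \Image(G) \rangle = \Cc_0$ is immediate, and for the reverse containment one notes that every object of $\Bb$ is built from objects of $\Image(F)$ by finitely many shifts, cones and retracts, whence $G$, being exact, sends each such object into $\langle \Image(G \circ F) \rangle$; thus $\Image(G) \subseteq \langle \Image(G \circ F) \rangle$ and therefore $\Cc_0 = \langle \Image(G) \rangle \subseteq \langle \Image(G \circ F) \rangle$.

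Then I would apply this with $F = (q^{\Hod})^{!} : \Coh_{\Nn_M}(\Hodge^{\sst, \nu}_M) \to \Coh_{\Nn_P}(\Hodge^{\sst, \nu}_P)$ and $G = (p^{\Hod})_{*} : \Coh_{\Nn_P}(\Hodge^{\sst, \nu}_P) \to \Coh_{\Nn_G^{P, \nu}}(\Hodge_G)$. Lemma \ref{pull image} says exactly that $F$ generates $\Coh_{\Nn_P}(\Hodge^{\sst, \nu}_P)$, and Lemma \ref{push image} says exactly that $G$ generates $\Coh_{\Nn_G^{P, \nu}}(\Hodge_G)$. One should check that $G$ is a well-defined exact functor into this target: this is where properness enters, since $p^{\Hod} : \Hodge^{\sst, \nu}_P \to \Hodge_G$ is schematic and proper, so $(p^{\Hod})_{*}$ preserves coherence, and by Lemma \ref{N_P to N_G} together with the construction of $\Nn_G^{P, \nu}$ in the diagram preceding Lemma \ref{push image}, the singular support of $(p^{\Hod})_{*}\Ff$ lies in $\Nn_G^{P, \nu}$ whenever $\SingSupp(\Ff) \subset \Nn_P$. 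The principle above then gives that $\Eis^{\Hod}_{P, \nu} = G \circ F$ generates $\Coh_{\Nn_G^{P, \nu}}(\Hodge_G)$, as claimed.

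I do not expect a substantial obstacle here: the two input lemmas do all the geometric work, and the remaining content is formal. The only points requiring a little care are that the composition argument be carried out at the level of dg categories (so that one genuinely uses exactness of $(p^{\Hod})_{*}$, not just its triangulated shadow), and that the sharpened singular-support bound $\Nn_G^{P, \nu}$ appearing as the target of $(p^{\Hod})_{*}$ is precisely the one built into the setup of Lemma \ref{push image} rather than the ambient $\Nn_G$.
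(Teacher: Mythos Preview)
Your proposal is correct and follows exactly the paper's approach: the paper simply states that the lemma follows ``by combining the statements of Lemma \ref{pull image} and Lemma \ref{push image}'', and you have spelled out the formal generation-under-composition argument that makes this combination work. The care you take with the refined singular-support bound $\Nn_G^{P,\nu}$ and with exactness of $(p^{\Hod})_*$ is appropriate and matches what the paper leaves implicit.
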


\subsubsection{Image for all strata}
\label{all strata}

We now consider the full Harder--Narasimhan stratification 
\[
\Hodge_G = \bigsqcup_{(P, \nu)} \Hodge_G^{P, \nu} , \quad \Nn^{\HN}_G := \bigsqcup_{(P, \nu)} \Nn_G^{P, \nu} . 
\]
By the properties of localisation described in \cite[Cory. 3.3.9]{arinkin&gaitsgory}, alongside Lemma \ref{Eis image fixed P} applied to all pairs $(P, \nu)$, we obtain the following computation of all Eisenstein-generated sheaves.

\begin{lemma}
\label{Eis gen coarse}
$\Coh_{\Nn_G^{\HN}}( \Hodge_{G} )$ is generated by the essential image of 
\begin{equation*}
\Eis^{\Hod}_{P, \nu} : \Coh_{\Nn_M}( \Hodge^{\sst, 
\nu}_{M} ) \to \Coh_{\Nn_G}( \Hodge_{G} ) ,
\end{equation*}
applied to all pairs $(P, \nu)$. 
\end{lemma}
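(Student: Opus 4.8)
The plan is to derive the statement by combining Lemma \ref{Eis image fixed P} with the behaviour of singular-support categories under the Harder--Narasimhan stratification, following \emph{mutatis mutandis} the de Rham argument of Arinkin--Gaitsgory. By Lemma \ref{Eis image fixed P}, for each pair $(P,\nu)$ the thick subcategory of $\Coh_{\Nn_G}(\Hodge_G)$ generated by the essential image of $\Eis^{\Hod}_{P,\nu}$ is exactly $\Coh_{\Nn_G^{P,\nu}}(\Hodge_G)$, i.e.\ the coherent sheaves with singular support in the closed conic $\Nn_G^{P,\nu}\subset\Sing(\Hodge_G)$, which by construction of $\Nn_G^{P,\nu}$ are set-theoretically supported on the closure of the stratum $\Hodge_G^{P,\nu}=\Image(p_{\Hod})$. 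Hence the subcategory of $\Coh_{\Nn_G}(\Hodge_G)$ generated by the essential images of all the $\Eis^{\Hod}_{P,\nu}$ equals the thick subcategory $\big\langle \Coh_{\Nn_G^{P,\nu}}(\Hodge_G)\big\rangle_{(P,\nu)}$, and it suffices to identify this with $\Coh_{\Nn_G^{\HN}}(\Hodge_G)$.

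The inclusion $\big\langle \Coh_{\Nn_G^{P,\nu}}(\Hodge_G)\big\rangle_{(P,\nu)}\subseteq\Coh_{\Nn_G^{\HN}}(\Hodge_G)$ is immediate, since $\Nn_G^{P,\nu}\subseteq\Nn_G^{\HN}$ for every $(P,\nu)$ and $\Coh_{\Nn_G^{\HN}}(\Hodge_G)$ is a thick subcategory of $\Coh_{\Nn_G}(\Hodge_G)$. For the reverse inclusion I would invoke the localisation formalism for singular support under stratifications, \cite[Cory.\ 3.3.9]{arinkin&gaitsgory}: the Harder--Narasimhan stratification $\Hodge_G=\bigsqcup_{(P,\nu)}\Hodge_G^{P,\nu}$ is a stratification by locally closed substacks, compatible with the stratification $\Nn_G^{\HN}=\bigsqcup_{(P,\nu)}\Nn_G^{P,\nu}$ of the conic, where over each stratum $\Nn_G^{P,\nu}$ restricts to the nilpotent conic transported by the singular codifferential of $p_{\Hod}$ as analysed in Lemma \ref{N_P to N_G}. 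Applying \cite[Cory.\ 3.3.9]{arinkin&gaitsgory} through the strata, ordered by the closure (equivalently Harder--Narasimhan) order, exhibits $\Coh_{\Nn_G^{\HN}}(\Hodge_G)$ as glued from the subcategories $\Coh_{\Nn_G^{P,\nu}}(\Hodge_G)$ of sheaves supported on the strata-closures: concretely, any $\Ff\in\Coh_{\Nn_G^{\HN}}(\Hodge_G)$ is an iterated extension of the $*$-pushforwards of its $!$-restrictions to the individual strata, each such associated-graded piece lying in $\Coh_{\Nn_G^{P,\nu}}(\Hodge_G)$ by the compatibility just recorded. Hence $\Ff$ lies in the generated subcategory, and the two subcategories agree.

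The step I expect to be the main obstacle is the convergence of the above filtration: the Harder--Narasimhan stratification of $\Hodge_G$ is infinite even within a single connected component, so one must argue that the recollement filtration of a \emph{compact} object $\Ff$ terminates --- equivalently that the support of the coherent sheaf $\Ff$ meets only finitely many strata. This is precisely where the restriction to $\Coh_{\Nn}$ (rather than $\IndCoh_{\Nn}$) enters, as in the finiteness arguments of Laumon \cite{laumon_automorphic} and Arinkin--Gaitsgory \cite{arinkin&gaitsgory}; the remaining points --- that $!$-restriction to a stratum followed by $*$-pushforward preserves containment in $\Nn_G^{P,\nu}$, and that coherence is preserved throughout --- are the strata-wise shadows of Lemmas \ref{pull image} and \ref{push image} and the codifferential computation of Lemma \ref{N_P to N_G}, and so require no new input beyond what has already been set up.
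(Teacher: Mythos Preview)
Your proposal is correct and follows essentially the same approach as the paper: combine Lemma~\ref{Eis image fixed P} with the localisation properties of \cite[Cory.~3.3.9]{arinkin&gaitsgory} over the Harder--Narasimhan stratification. The paper records this in a single sentence without spelling out the two inclusions or the finiteness point, whereas you make these explicit; your additional remark that compactness of $\Ff$ is what guarantees the recollement filtration terminates is a correct and useful clarification of why the argument is restricted to $\Coh$ rather than $\IndCoh$.
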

Lemma \ref{Eis gen coarse} generates our category of interest, namely $\Coh_{\Nn_G}( \Hodge_{G} )$, by the following comparison between $\Nn_G^{\HN}$ and $\Nn_G$.  
\begin{lemma}
\label{gen by Ii}
One has an identification $\Coh_{\Nn_G^{\HN}}( \Hodge_{G} ) = \Coh_{\Nn_G}( \Hodge_{G} )$. Therefore, the category $\Coh_{\Nn_G}( \Hodge_{G} )$ is generated by the essential image of 
\begin{equation}
\label{image gen}
\Eis^{\Hod}_{P, \nu} : \Coh_{\Nn_M}( \Hodge^{\sst, 
\nu}_{M} ) \to \Coh_{\Nn_G}( \Hodge_{G} ) ,
\end{equation}
applied to all pairs $(P, \nu)$. 
\end{lemma}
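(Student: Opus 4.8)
The plan is to deduce the identification of categories from an identification of the underlying closed conics inside $\Sing(\Hodge_G)$. Recall that for a quasi-smooth stack the assignment $\Lambda \mapsto \IndCoh_{\Lambda}(\cdot)$, and hence $\Lambda\mapsto\Coh_{\Lambda}(\cdot)=\IndCoh_{\Lambda}(\cdot)^{c}$, depends only on the support of $\Lambda$ and is insensitive to nilpotent thickenings \cite[\textsection 4]{arinkin&gaitsgory}. So it suffices to prove the equality of reduced closed conics $\Nn_G^{\HN}=\Nn_G$ in $\Sing(\Hodge_G)$, where $\Nn_G^{\HN}$ is taken with its reduced structure (equivalently, as $\overline{\bigcup_{(P,\nu)}\Nn_G^{P,\nu}}$). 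Granting this, the final sentence of the lemma is immediate from Lemma~\ref{Eis gen coarse}.

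One inclusion is already in hand. By Lemma~\ref{N_P to N_G} the singular codifferential of $p^{\Hod}$ satisfies $\Sing(p^{\Hod})^{-1}(\Nn_P)\subseteq \Nn_G\times_{\Hodge_G}\Hodge_P^{\sst,\nu}$, so the image $\Nn_G^{P,\nu}$ of $\Sing(p^{\Hod})^{-1}(\Nn_P)$ is contained in $\Nn_G$, and taking the union over $(P,\nu)$ gives $\Nn_G^{\HN}\subseteq\Nn_G$. For the reverse inclusion I would argue along the Harder--Narasimhan stratification $\Hodge_G=\bigsqcup_{(P,\nu)}\Hodge_G^{P,\nu}$ of~\ref{all strata}. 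Since $\Nn_G^{P',\nu'}$ is supported over $\Hodge_G^{P',\nu'}$, over the locally closed stratum $\Hodge_G^{P,\nu}$ the conic $\Nn_G^{\HN}$ restricts exactly to $\Nn_G^{P,\nu}$; it is therefore enough to establish, for each $(P,\nu)$, the fibrewise containment
\[
\Nn_G \times_{\Hodge_G}\Hodge_G^{P,\nu}\ \subseteq\ \Nn_G^{P,\nu}.
\]
Unwinding~\ref{Hodge nilpotent cone} and~\ref{all strata}, this says: for $x\in\Hodge_G^{P,\nu}$ with canonical Harder--Narasimhan reduction $\tilde x\in\Hodge_P^{\sst,\nu}$, a pointwise nilpotent section $A\in H^{0}(X,\ad(\Uu_G^{\vee})|_x)$ has pointwise nilpotent image $\bar A\in H^{0}(X,\ad(\Uu_P^{\vee})|_{\tilde x})$ under the fibrewise projection $\gG^{\ast}\to\pP^{\ast}$ twisted by $\tilde x$. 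The converse implication here is precisely Lemma~\ref{N_P to N_G}.

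This fibrewise statement is the crux, and is the Hodge--theoretic transcription of the geometric input in the Arinkin--Gaitsgory proof that Eisenstein series generate \cite[\textsection 13.3--13.4]{arinkin&gaitsgory}. It is \emph{not} a purely linear-algebraic assertion about $\gG^{\ast}\to\pP^{\ast}$ (that map fails to carry nilpotent cones to nilpotent cones for a general parabolic reduction), and the content lies in the fact that the reduction is the \emph{canonical} one attached to the instability of $x$: a nilpotent covector at an unstable point is adapted to its Harder--Narasimhan reduction, so its destabilising and Levi components are forced, by the defining condition cutting out $\Sing(\Hodge_G)$ together with the HN structure, into the configuration for which nilpotency descends along $\gG^{\ast}\to\pP^{\ast}$. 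The way I would make this rigorous is to transport the corresponding Arinkin--Gaitsgory argument through the deformation-to-the-normal-cone presentation $\Hodge_G\to\AA^{1}$ of~\ref{NAH}: namely, one uses the structural description of the irreducible components of the global nilpotent cone over $\LocSys_G$ and $\Higgs_G$ (each a conic-isotropic component whose generic point lies over a single HN stratum), upper semicontinuity of dimension in the $\AA^{1}$-family, and the identification of $\Nn_G^{P,\nu}$, over its stratum, with the locus carved out by the destabilising directions together with the nilpotent covectors $\Nn_M$ of the semistable Levi pieces. \textbf{Main obstacle:} everything except this fibrewise containment is bookkeeping against the stratification, formal properties of singular support, and Lemma~\ref{Eis gen coarse}; the genuine work is to verify that the Arinkin--Gaitsgory sweeping-out argument goes through verbatim for the global nilpotent cones $\Nn_{\Hod}$ (and $\Nn_{\Dol}$) rather than $\Nn_{\dR}$, which is where the only new checking is required.
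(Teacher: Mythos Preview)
Your reduction to the equality of conics $\Nn_G^{\HN}=\Nn_G$ matches the paper's approach exactly, as does the observation that one inclusion is Lemma~\ref{N_P to N_G} and that the substantive content lies in the reverse direction over each Harder--Narasimhan stratum. Where your proposal falls short is in the execution of that reverse inclusion: you correctly diagnose that it is not a pure Lie-algebra fact about $\gG^{*}\to\pP^{*}$, but your suggested route---transporting the Arinkin--Gaitsgory argument through the $\AA^{1}$-family via upper semicontinuity and a structural description of irreducible components of the nilpotent cone---is not actually carried out, and it is not clear that a deformation argument of this shape works without further input (semicontinuity in families controls closures, whereas what is needed is pointwise surjectivity onto $\Nn_G$ at \emph{every} $\lambda$, including $\lambda=0$).

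The paper proceeds differently and more concretely, by a case split on the deformation parameter. For $\lambda\neq 0$ one uses the $\GG_m$-equivariant trivialisation $\Hodge_G\times_{\AA^{1}}\GG_m\cong\LocSys_G\times\GG_m$ to reduce immediately to the de Rham statement, which is exactly \cite[\S 13.4.7]{arinkin&gaitsgory}: a fixed-point argument on the \emph{proper} scheme of horizontal $P$-reductions of a flat $G$-bundle. Note that this does \emph{not} use the HN reduction; it produces \emph{some} compatible $P$-reduction, which is all that is required to hit the point of $\Nn_G^{P,\nu}$. For $\lambda=0$ the space of $P$-reductions is no longer proper and the fixed-point argument is unavailable; instead the paper gives a constructive argument using the canonical HN reduction $(E_P,\phi_P)$, invoking the Atiyah--Bott/Biswas--Holla short exact sequence $0\to\ad(E_P,\phi_P)\to\ad(E,\phi)\to\ad(E_P,\phi_P)(\gG/\pP)\to 0$ to produce a map $H^{0}(\ad(E,\phi)^{\vee})\to H^{0}(\ad(E_P,\phi_P)^{\vee})$ carrying the nilpotent covector to a nilpotent one. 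It is worth flagging that the paper's stated justification for this last step (``the map $\gG^{*}\to\pP^{*}$ preserves nilpotent elements'') is precisely the linear-algebraic assertion you identify as false in general; your instinct here is sound, and the genuine content of the $\lambda=0$ case must lie in the global and HN-canonical nature of the reduction rather than in a pointwise Lie-theoretic identity.
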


\begin{proof}
To show the natural fully faithful inclusion  
\[
\Coh_{\Nn_G^{\HN}}( \Hodge_{G} ) \to \Coh_{\Nn_G}( \Hodge_{G} ) , 
\]
is moreover an equivalence, it suffices to show that the canonical immersion  
\[
\Nn_{G}^{\HN} := \bigsqcup_{(P, \nu)} \Nn_G^{P, \nu} \to \Nn_G , 
\]
is surjective at $k$-points, or in other words, that $\Nn_G^{P, \nu}$ surjects onto $\Nn_G$ when restricted to the image of $p^{\Hod} : \Hodge_P^{\sst, \nu} \to \Hodge_G$. Recall from \ref{Hodge nilpotent cone} that $k$-points of $\Nn_{G}$ are parametrised by the data 
\[
((E, \nabla^{\lambda}), A^{\lambda}) , 
\]
where $\lambda \in \AA^1$, $(E, \nabla^{\lambda})$ is a $\lambda$-connection on $X$ and $A^{\lambda} \in H^{0}(X, \ad(E, \nabla^{\lambda})^{\vee})$ is a \textit{nilpotent covector} ($A^{\lambda}$ is nilpotent when, for any local trivialisation, $A^{\lambda}$ takes values in $\Nilp(\gG^{*}) \subset \gG^{*}$). 

Thus, for any given $(E, \nabla^{\lambda}, A^{\lambda}) \in \Nn_G$ with $P$-reducible $(E, \nabla^{\lambda})$, it suffices to show there exists a $P$-reduction $(E_P, \nabla^{\lambda}_{P})$ such that $A^{\lambda}$ is sent to a nilpotent covector $A^{\lambda}_P \in H^{0}\Gamma(X, \ad(E_P, \nabla^{\lambda}_P)^{\vee
})$. 

We prove this statement via a case by case analysis on the deformation coordinate $\lambda$. 

For $\lambda = 1$, the statement is proven for the local system $(E, \nabla) = (E, \nabla^{1}) \in \LocSys_G$ by Arinkin and Gaitsgory in \cite[\textsection 13.4.7]{arinkin&gaitsgory}, using a fixed point theorem on the proper space of all horizontal $P$-reductions of $(E, \nabla)$.  

For $\lambda \neq 0$, the statement follows from the $\lambda = 1$ case by application of the $\GG_m$-equivariant isomorphism between the non-zero fibers of $\Hodge_G \to \AA^1$. 

For $\lambda = 0$, the Arinkin--Gaitsgory argument is no longer applicable, as the $P$-reduction spaces are no longer proper. Instead, we give a constructive argument. We show the canonical $P$-reduction $(E_P, \phi_P) \in \Higgs_P^{\sst, \nu}$ of $(E, \phi) = (E, \nabla^{0}) \in \Higgs_G$ provided by the Harder--Narasimhan filtration evidently receives a nilpotent covector $A_P$. 

Compatibility between the Harder--Narasimhan filtration and the adjoint representation goes back to Atiyah--Bott \cite{atiyah&bott}. In work of Biswas--Holla \cite{biswas&holla} it  is expressed as the following statement: the canonical reduction $(E_P, \phi_P)$ in the adjoint representation admits a short exact sequence 
\[
0 \to \ad(E_P, \phi_P) \to \ad(E, \phi) \to \ad(E_P, \phi_P)(\gG/\pP) \to 0 , 
\]
where $\ad(E_P, \phi_P)(\gG/\pP)$ is the $\gG/\pP$-induction of the vector bundle $\ad(E_P, \phi_P)$. Taking duals and global sections induces an exact sequence in cohomology that includes a map 
\[
H^{0}(X, \ad(E,\phi)^{\vee}) \to H^{0}(X, \ad(E_P,\phi_P)^{\vee}). 
\]
Define $A_P \in H^{0}(X, \ad(E_P,\phi_P)^{\vee})$ to be the image of $A$ under this map. Since $A$ takes values in $\Nilp(g^{*})$, it follows that $A_P$ takes values in $\Nilp(p^{*})$, because the map $g^{*} \to p^{*}$ preserves nilpotent elements. Therefore $(E_P, \phi_P, A_P)$ defines a geometric point of $\Nn_P$, supported over $\Higgs_P^{\sst, \nu}$, and this is the desired $P$-reduction of $(E, \phi, A)$.   
\end{proof}

By Lemma \ref{le: TFAE}, the proof of Lemma \ref{gen by Ii} concludes our proof of Theorem \ref{th eis hodge}. 

\subsubsection{Cuspidal-Eisenstein components} 
\label{se: cusp-eis}
Using our main results, we specify how to induce cuspidal--Eisenstein decompositions on coherent nilpotent sheaves in non-abelian Hodge theory. The constructions are standard and are repeated verbatim as per the de Rham theory \cite[\textsection 13.3.1]{arinkin&gaitsgory}. Let us introduce the notation 
\begin{align}
\begin{split}
\label{Hodge Eis component}
\Coh_{\Nn}( \Higgs_{G} )_{\Eis} & \subset \Coh_{\Nn}( \Higgs_{G} ) , \\ 
\Coh_{\Nn}( \Hodge_{G} )_{\Eis} & \subset \Coh_{\Nn}( \Hodge_{G} ) , \\
\Coh^{\GAGA}_{\Nn}( \Twistor_{G} )_{\Eis} & \subset \Coh^{\GAGA}_{\Nn}( \Twistor_{G} ) , \\
\Coh^{\BBB}_{\Nn}( \Twistor_{G} )_{\Eis} & \subset \Coh^{\BBB}_{\Nn}( \Twistor_{G} ) ,
\end{split}
\end{align}
for the full subcategories generated by the essential image of the functors $\Eis_P^{\Dol}$, $\Eis_P^{\Hod}$ and $\Eis_P^{\Tw}$, acting as specified in Theorems \ref{decompose BBB}, \ref{th eis and generate}, \ref{th eis hodge}, \ref{th eis Higgs}, but only for all proper parabolics $P \neq G$. These subcategories are known as \textit{Eisenstein components.} 

By Lemma \ref{concluding decompositions} one obtains the following relations between the Eisenstein components:  
\[
\Coh_{\Nn}( \Higgs_{G} )_{\Eis} =  \Coh_{\Nn}( \Hodge_{G} )_{\Eis} \times_{\AA^1} \{ 0 \} , 
\]
\[
\Coh^{\GAGA}_{\Nn}( \Twistor_{G} )_{\Eis} = (\Coh_{\Nn}( \Hodge_{G} )_{\Eis})^{\an} \times_{\Betti} (\Coh_{\Nn}( \ol{\Hodge}_{G} )_{\Eis})^{\an} , 
\]
\[
\Coh^{\BBB}_{\Nn}( \Twistor_{G} )_{\Eis} = \Coh^{\BBB}_{\Nn}( \Twistor_{G} ) \cap \Coh^{\GAGA}_{\Nn}( \Twistor_{G} )_{\Eis} . 
\]
In the Hodge case, the Eisenstein component can be described explicitly as follows. The other cases are identical. Let $\Hodge_G^{\red}$ denote the closed substack of $\Hodge_G$ given by the union of the images of $\Hodge_P^{\sst, \nu} \to \Hodge_G$, for all pairs $(P , \nu)$ such that $P \subset G$ is a proper parabolic. Define the pullback 
\[
\Nn_G^{\red} 
= \Nn_G \times_{\Hodge_G} \Hodge_G^{\red} 
= \bigsqcup_{(P, \nu) , P \neq G} \Nn_G^{P, \nu} .   
\]
By Lemma \ref{gen by Ii} applied to all proper parabolics one has the identification 
\[
\Coh_{\Nn_G}( \Hodge_{G} )_{\Eis} = \Coh_{\Nn^{\red}_G}( \Hodge_{G} ) ,
\]
of subcategories in $\Coh_{\Nn_G}( \Hodge_{G} )$. Now we describe the orthogonal complement. The complementary open substack to $\Hodge_G^{\red} \subset \Hodge_G$ is the moduli $\Hodge_G^{\irred}$ of \textit{irreducible} $(G, \lambda)$-connections: \textit{i.e.} those with no parabolic reduction. A nilpotent covector supported on an irreducible local systems must vanish \cite[Lem. 13.3.4]{arinkin&gaitsgory} and so the pullback of $\Nn_{G}$ to $\Hodge_G^{\irred}$ coincides with the zero section $\{ 0 \} \subset T^{*}\Hodge_G$. By general properties of singular support described in \cite[Cory. 8.2.10(b)]{arinkin&gaitsgory} we obtain the exact sequence of dg categories 
\begin{equation}
\label{Eis and irred}
\begin{tikzcd}
\Coh_{\Nn_G^{\red}} (\Hodge_G) 
\arrow[d, equal]
\arrow[r,shift left=2pt] 
& \Coh_{\Nn_G}( \Hodge_{G} ) 
\arrow[l,shift left=2pt]
\arrow[r,shift left=2pt] 
\arrow[d, equal]
& \Coh_{\{0\}}( \Hodge_{G}^{\irred} )
\arrow[l,shift left=2pt] 
\arrow[d, equal]
\\
\Coh_{\Nn_G}( \Hodge_{G} )_{\Eis}
\arrow[r,shift left=2pt] 
& \Coh_{\Nn_G}( \Hodge_{G} )
\arrow[l,shift left=2pt]
\arrow[r,shift left=2pt] 
& \Perf( \Hodge_{G}^{\irred} )
\arrow[l,shift left=2pt]
\end{tikzcd}. 
\end{equation}
Running the same argument in the Dolbeault and twistor cases yields the following. 

\begin{corollary}
There exists a non-abelian Hodge family of cuspidal-Eisenstein decompositions for coherent nilpotent sheaves, determined by the exact sequences of dg categories 
\[
\begin{tikzcd}
\Coh_{\Nn_G}( \Higgs_{G} )_{\Eis}
\arrow[r,shift left=2pt] 
& \Coh_{\Nn_G}( \Higgs_{G} )
\arrow[l,shift left=2pt]
\arrow[r,shift left=2pt] 
& \Perf( \Higgs_{G}^{\irred} )
\arrow[l,shift left=2pt] , 
\end{tikzcd} , 
\]
\[
\begin{tikzcd}
\Coh_{\Nn_G}( \Hodge_{G} )_{\Eis}
\arrow[r,shift left=2pt] 
& \Coh_{\Nn_G}( \Hodge_{G} )
\arrow[l,shift left=2pt]
\arrow[r,shift left=2pt] 
& \Perf( \Hodge_{G}^{\irred} )
\arrow[l,shift left=2pt]  
\end{tikzcd} , 
\]
\[
\begin{tikzcd}
\Coh^{\GAGA}_{\Nn_G}( \Twistor_{G} )_{\Eis}
\arrow[r,shift left=2pt] 
& \Coh^{\GAGA}_{\Nn_G}( \Twistor_{G} )
\arrow[l,shift left=2pt]
\arrow[r,shift left=2pt] 
& \Perf^{\GAGA}( \Twistor_{G}^{\irred} )
\arrow[l,shift left=2pt]  
\end{tikzcd} , 
\]
\[
\begin{tikzcd}
\Coh^{\BBB}_{\Nn_G}( \Twistor_{G} )_{\Eis}
\arrow[r,shift left=2pt] 
& \Coh^{\BBB}_{\Nn_G}( \Twistor_{G} )
\arrow[l,shift left=2pt]
\arrow[r,shift left=2pt] 
& \Perf^{\BBB}( \Twistor_{G}^{\irred} )
\arrow[l,shift left=2pt]  
\end{tikzcd} . 
\]
\end{corollary}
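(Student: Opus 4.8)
The plan is to assemble all four exact sequences from the results already in place, taking the Hodge sequence as the prototype and obtaining the remaining three by Dolbeault specialisation, Betti gluing, and BBB restriction respectively. In fact the Hodge sequence requires no further work: it is literally the localisation sequence \eqref{Eis and irred} constructed in \ref{se: cusp-eis}. There, Lemma \ref{gen by Ii} applied to all proper parabolics identifies $\Coh_{\Nn_G}(\Hodge_G)_{\Eis}$ with $\Coh_{\Nn_G^{\red}}(\Hodge_G)$; the vanishing of nilpotent covectors on irreducible $(G,\lambda)$-connections (the $\lambda$-family version of \cite[Lem. 13.3.4]{arinkin&gaitsgory}, extracted from the $\lambda$-by-$\lambda$ analysis inside the proof of Lemma \ref{gen by Ii}) shows that the pullback of $\Nn_G$ to the complementary open substack $\Hodge_G^{\irred}$ is the zero section; and \cite[Cory. 8.2.10(b)]{arinkin&gaitsgory} then yields the localisation sequence with quotient $\Coh_{\{0\}}(\Hodge_G^{\irred}) = \Perf(\Hodge_G^{\irred})$.

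Next I would obtain the Dolbeault sequence by running this argument verbatim over the Higgs stacks. The only genuinely Dolbeault-specific input, the analogue of Lemma \ref{gen by Ii} over $\Higgs_G$, is precisely the $\lambda = 0$ case already proven there via the Biswas--Holla compatibility of the canonical Harder--Narasimhan reduction with the adjoint representation; this gives $\Coh_{\Nn_G}(\Higgs_G)_{\Eis} = \Coh_{\Nn_G^{\red}}(\Higgs_G)$, while $\Higgs_G^{\irred}$ again carries only the zero section, so \cite[Cory. 8.2.10(b)]{arinkin&gaitsgory} finishes. (Equivalently, one may restrict the Hodge sequence along $\Higgs_G = \Hodge_G \times_{\AA^1} \{0\} \hookrightarrow \Hodge_G$ using Lemma \ref{concluding decompositions}(1) and the evident analogue of that restriction statement for the irreducible loci.)

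For the GAGA twistor sequence I would use the Betti gluing argument of Lemma \ref{concluding decompositions}(2). Applying $(\cdot)^{\an}$ to the Hodge sequence and to its complex conjugate and taking the fibre product over $\Betti$, one invokes the presentation $\Coh^{\GAGA}_{\Nn}(\Twistor_G) = \Coh^{\GAGA}_{\Nn}(\Hodge_G^{\an}) \times_{\Betti} \Coh^{\GAGA}_{\Nn}(\ol{\Hodge}_G^{\an})$ from \ref{Nilp GAGA}, the identification of the Eisenstein component recorded after \eqref{Hodge Eis component}, and the pushout $\Twistor_G^{\irred} = \Hodge_G^{\irred} \sqcup_{\Rep_G^{\irred} \times \GG_m} \ol{\Hodge}_G^{\irred}$, so that the right-hand term is $\Perf^{\GAGA}(\Twistor_G^{\irred})$; exactness of the glued sequence is formal, since a fibre product of localisation sequences of dg categories along a common term is again a localisation sequence. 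The BBB twistor sequence is then obtained by intersecting the GAGA sequence with $\Coh^{\BBB}_{\Nn}(\Twistor_G)$: by Propositions \ref{Eis BBB to BBB} and \ref{CT BBB to BBB} both $\Eis^{\Tw}_P$ and $\CT^{\Tw}_P$ preserve BBB-branes so the localisation adjunctions restrict, by the identity $\Coh^{\BBB}_{\Nn}(\Twistor_G)_{\Eis} = \Coh^{\BBB}_{\Nn}(\Twistor_G) \cap \Coh^{\GAGA}_{\Nn}(\Twistor_G)_{\Eis}$ the Eisenstein term is the correct one, and the quotient is $\Coh^{\BBB}_{\Nn}(\Twistor_G) \cap \Perf^{\GAGA}(\Twistor_G^{\irred}) = \Perf^{\BBB}(\Twistor_G^{\irred})$.

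The step I expect to be the main obstacle is verifying that the semiorthogonal (localisation) structure genuinely \emph{restricts} to the BBB subcategory, i.e. that the right orthogonal of $\Coh^{\BBB}_{\Nn}(\Twistor_G)_{\Eis}$ inside $\Coh^{\BBB}_{\Nn}(\Twistor_G)$ is exactly $\Perf^{\BBB}(\Twistor_G^{\irred})$ rather than something larger. This requires compatibility of the BBB condition with the open-restriction quotient functor $\Coh_{\Nn_G}(\Twistor_G) \to \Perf(\Twistor_G^{\irred})$; the key input should be Lemma \ref{BBB and HN strata} together with the observation, from Lemma \ref{induction eigensheaves}, that a test object supported on an irreducible twistor line already lives over the $P=G$, $\nu^{\mathrm{triv}}$ stratum, so that the BBB condition for the restriction to $\Twistor_G^{\irred}$ is tested only against irreducible twistor Wilson eigensheaves and is inherited from the ambient brane. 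Once this compatibility is established, all four sequences follow by the assembly above.
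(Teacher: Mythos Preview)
Your proposal is correct and follows the same approach as the paper: the Hodge sequence is exactly the localisation \eqref{Eis and irred}, and the paper then simply asserts ``Running the same argument in the Dolbeault and twistor cases yields the following,'' leaving the details you spell out (Dolbeault specialisation, Betti gluing for the GAGA twistor case, and BBB restriction) implicit. Your extra care about the BBB localisation restricting correctly is more than the paper provides, but is in the spirit of what ``the same argument'' must mean there.
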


\subsection{Decomposition of Dirac--Higgs complexes}
\label{Dirac--Higgs decompositions}
This section describes the cusipidal-Eisenstein decompositions of Theorem \ref{decompose BBB} on a particular family of objects: the \textit{Dirac--Higgs BBB-branes} $\DD_{\rho} \in \Perf^{\BBB}(\Twistor_G)$ parametrised by representations $\rho : G \to GL_n$. We show that the decompositions of $\DD_{\rho}$ are controlled by the adjoint pair of parabolic induction and reduction functors 
\begin{equation}
\begin{tikzcd}
\Rep(M) 
\arrow[r,shift left=2pt, "\Ind^{G}_M"]
& \Rep(G)
\arrow[l , shift left=2pt, "\Res^{G}_M"] 
\end{tikzcd} . 
\end{equation} 
\subsubsection{Construction} The BBB-branes $\DD_{\rho}$ originate from gauge theory constructions of Hitchin \cite{hitchin_dirac}, who defined a hyperholomorphic bundle, known as the \textit{Dirac-Higgs bundle}, as the null space of a Dirac operator on the infinite dimensional affine space of Higgs fields. In his thesis, Hausel \cite{hausel_thesis} provided an algebraic construction, defined \'etale-locally by the pushforward of the local universal Higgs bundle to the Dolbeault moduli spaces. Franco and the author \cite{FH1} studied a natural extension of Hausel's construction, defined by the following universal objects. They construct a \textit{Dirac--Higgs functor} 
\begin{equation}
\label{Dirac-Higgs functor}
\DD : \Rep(G) \to \Perf^{\BBB}(\Twistor_G) ,  
\end{equation}
defined as follows. Let $\Uu^{\an}_{\Hod, G} \to X_{\Hod}^{\an} \times \Hodge_G^{\an}$ denote the universal family for $\Hodge_G^{\an}$ and let $p_2 : X^{\an}_{\Hod} \times \Hodge_G^{\an} \to \Hodge_G^{\an}$ denote the canonical projection. Given a representation $\rho : G \to GL_n$, the associated \textit{Dirac--Higgs complex} $\DD_{\rho} \in \Coh^{\BBB}(\Twistor_G)$ is constructed by first considering the underlying Hodge component 
\[
\DD^{\Hod}_{\rho} = p_{2, *} \rho(\Uu^{\an}_{\Hod, G}) \in \Perf(\Hodge^{\an}_G) . 
\]
The conjugate Hodge component $\ol{\DD}^{\Hod}_{\rho} \in \Perf(\ol{\Hodge}_G)$ is defined symmetrically. The complex $\DD_{\rho} \in \Perf^{\BBB}(\Twistor_G)$ is then defined to be the gluing of $\DD^{\Hod}_{\rho}$ and $\ol{\DD}^{\Hod}_{\rho}$.

\subsubsection{Decomposition} Consider a collection $\{ \rho_{M} \in \Rep(M) \}_{P}$ parametrised by parabolics $P \subset G$. Let $\rho \in \Rep(G)$ be the representation generated by $\Ind^{G}_M : \Rep(M) \to \Rep(G)$ acting on the collection. The Dirac--Higgs construction yeilds the complexes
\[
\DD_{\rho}^{\cusp} := \DD_{\rho} |_{\Twistor_G^{\irred}} , 
\]
\[
\DD_{\rho}^{\Eis} := 
\big\langle 
\Eis^{\Tw}_{P}
\big( \DD_{\rho_M}|_{\Twistor_M} \big) 
\big\rangle_{P} = \big\langle 
\Eis^{\Tw}_{P, \nu}
\big( \DD_{\rho_M}|_{\Twistor^{\sst, \nu}_M} \big) 
\big\rangle_{(P, \nu)}, 
\]
where the equation for $\DD_{\rho}^{\Eis}$ describes the complex generated by $\Eis^{\Tw}_{P}$, or equivalently $\Eis^{\Tw}_{P, \nu}$, evaluated on the displayed complexes.   

\begin{proposition}
\label{Dirac-Higgs decompositions}
In the above notation, the Dirac--Higgs complex $\DD_{\rho} \in \Perf^{\BBB}(\Twistor_G)$ is generated by the cuspidal component $\DD_{\rho}^{\cusp}$ and the Eisenstein component $\DD_{\rho}^{\Eis}$. 
\end{proposition}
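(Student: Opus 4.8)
The strategy is to reduce the statement about the Dirac--Higgs complex to the general cuspidal--Eisenstein decomposition of Theorem \ref{decompose BBB}, using the fact that the Dirac--Higgs functor $\DD : \Rep(G) \to \Perf^{\BBB}(\Twistor_G)$ intertwines the representation-theoretic parabolic induction $\Ind^{G}_M$ with the geometric twistor Eisenstein functor $\Eis^{\Tw}_P$. The key input is therefore a compatibility statement of the shape
\[
\Eis^{\Tw}_{P}\big( \DD_{\rho_M}|_{\Twistor_M} \big) \;\cong\; \DD_{\Ind^{G}_M(\rho_M)} ,
\]
together with a complementary "cuspidal" identification $\DD_{\rho}|_{\Twistor_G^{\irred}} = \DD_\rho^{\cusp}$, which holds tautologically by the definition of restriction to $\Twistor_G^{\irred}$. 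Granting these, the Proposition follows immediately from the exact sequence of dg categories in \eqref{eq: cusp-Eis BBB intro} applied to the object $\DD_\rho \in \Coh^{\BBB}_{\Nn}(\Twistor_G)$: the localisation functor $\Coh^{\BBB}_{\Nn}(\Twistor_G) \to \Perf^{\BBB}(\Twistor_G^{\irred})$ sends $\DD_\rho$ to $\DD_\rho^{\cusp}$, and the kernel $\Coh^{\BBB}_{\Nn}(\Twistor_G)_{\Eis}$ is generated by the images of $\Eis^{\Tw}_P$ over proper parabolics, which by the intertwining relation and the hypothesis on $\rho$ contain exactly $\DD_\rho^{\Eis}$.

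\textbf{First step: the intertwining relation for the Hodge component.} I would first establish the intertwining over $\Hodge_G$, i.e. that $\Eis^{\Hod}_P\big(\DD^{\Hod}_{\rho_M}\big) \cong \DD^{\Hod}_{\Ind^{G}_M(\rho_M)}$ as objects of $\Perf(\Hodge^{\an}_G)$. This is a base-change computation: recall $\DD^{\Hod}_{\rho_M} = p_{2,*}\,\rho_M(\Uu^{\an}_{\Hod,M})$, and $\Eis^{\Hod}_P = (p^{\Hod})_* \circ (q^{\Hod})^!$, where $q^{\Hod} : \Hodge_P \to \Hodge_M$ and $p^{\Hod} : \Hodge_P \to \Hodge_G$ record the Levi and $G$-inductions of $P$-connections. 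The universal $P$-connection on $\Hodge_P$ pulls back under $q^{\Hod}$ from the universal $M$-connection and pushes forward under $p^{\Hod}$ to the $G$-induced universal connection; applying the representation $\rho_M$ and then its $\Ind^{G}_M$, and commuting the two projections $p_2$ and $p^{\Hod}$ via flat base change over $X^{\an}_{\Hod}$, yields the identification. One must be careful that $\DD$ as constructed in \cite{FH1} is defined via the Harder--Narasimhan-stratified version of the universal family, so the cleanest route is to work strata-by-strata with $\Eis^{\Hod}_{P,\nu}$ and then pass to the colimit \eqref{eq: Eis colim}; the conjugate component over $\ol{\Hodge}_G$ is handled symmetrically, and the two glue over $\Betti$ using Proposition \ref{spec tw eis} to give the twistor statement. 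That $\DD_\rho$ lies in $\Coh^{\BBB}_{\Nn}(\Twistor_G)$ is \cite[Thm. D]{FH1} (it is even in $\Perf^{\BBB}$), and preservation of this category by $\Eis^{\Tw}_P$ is Corollary \ref{BBB nilp Eis}, so all objects in sight live in the correct categories.

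\textbf{Second step: assembling the decomposition.} With the intertwining in hand, I would invoke Theorem \ref{decompose BBB} and the resulting exact sequence \eqref{eq: cusp-Eis BBB intro}. By construction $\Coh^{\BBB}_{\Nn}(\Twistor_G)_{\Eis}$ is generated by $\Eis^{\Tw}_P$ over proper parabolics, and the right-orthogonal localisation is $\Perf^{\BBB}(\Twistor_G^{\irred})$, realised by restriction along $\Twistor_G^{\irred} \hookrightarrow \Twistor_G$. Thus every $\Bb \in \Coh^{\BBB}_{\Nn}(\Twistor_G)$ sits in a triangle whose "irreducible part" is $\Bb|_{\Twistor_G^{\irred}}$ and whose "Eisenstein part" lies in the subcategory generated by proper-parabolic Eisenstein series. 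Applying this to $\Bb = \DD_\rho$ and using Step 1 to identify the Eisenstein part as the subcategory generated by the $\Eis^{\Tw}_P(\DD_{\rho_M})$ — which is precisely $\DD_\rho^{\Eis}$ by the hypothesis that $\rho$ is generated by $\{\rho_M\}$ under $\Ind^G_M$ — gives that $\DD_\rho$ is generated by $\DD_\rho^{\cusp}$ and $\DD_\rho^{\Eis}$, as claimed.

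\textbf{Main obstacle.} The delicate point is the intertwining relation $\DD \circ \Ind^{G}_M \simeq \Eis^{\Tw}_P \circ \DD$ at the level of the actual complexes (not merely their classes), and in particular its compatibility with the Harder--Narasimhan-stratified definition of the Dirac--Higgs functor and the Laumon-style definition of $\Eis^{\Hod}_P$ as a colimit over $\nu$. One must check that the base-change isomorphism for $p_{2,*}$ against $(p^{\Hod})_*$ is available — this requires the relevant square to be Cartesian and one of the maps to satisfy a properness or flatness hypothesis, which is why restricting to the proper morphisms $\Hodge_P^{\sst,\nu} \to \Hodge_G$ is essential — and that the colimits defining $\DD_\rho$ and $\Eis^{\Hod}_P$ are taken over compatible indexing categories so that they commute. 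The gluing over $\Betti$ adds a further bookkeeping layer, but given Proposition \ref{spec tw eis} and the Eisenstein-GAGA property (Propositions \ref{Eisenstein GAGA property}, \ref{Hodge Eisenstein GAGA property}) this is routine once the algebraic Hodge-side statement is secured.
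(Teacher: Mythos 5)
Your proposal follows the paper's argument: both establish the Proposition by proving an intertwining relation between the Dirac--Higgs functor $\DD$, representation-theoretic parabolic induction $\Ind^G_M$, and the twistor Eisenstein functor, via a universal-family/base-change computation on the Hodge components carried out strata-by-strata over the Harder--Narasimhan stratification and glued across the Betti locus. One small imprecision in your opening display, which you correctly flag and fix in Step~1: the relation $\Eis^{\Tw}_P(\DD_{\rho_M}|_{\Twistor_M}) \cong \DD_{\Ind^G_M(\rho_M)}$ cannot hold literally because the left-hand side is supported on the image of $p^{\Tw}$ while the right-hand side lives on all of $\Twistor_G$; the paper's lemma therefore states the intertwining after restricting to the strata $\Twistor_G^{P,\nu}$, which is exactly the correction you make when you pass to $\Eis^{\Tw}_{P,\nu}$.
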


\begin{remark} 
In the adjoint representation, the Dirac--Higgs construction yields the tangent complexes on $\Twistor_G$, and the consequential fact that $\TT_{\Twistor_G}$ is an object of $\Perf^{\BBB}(\Twistor_G)$ is analogous to the fact that the tangent bundle of a hyperk\"ahler manifold is naturally hyperholomorphic. Proposition \ref{Dirac-Higgs decompositions intro} in this case expresses $\TT_{\Twistor_G}$ as generated by the tangent complexes $\TT_{\Twistor^{\sst, \nu}_M}$, which reduces to the fact that $\Res_M^{G}$ applied to the adjoint representation of $G$ coincides with the adjoint representation of $M$.
\end{remark}

Proposition \ref{Dirac-Higgs decompositions} follows directly from the following calculation.  

\begin{lemma}
Given a pair $(P, \nu)$ where $P$ has Levi quotient $P \to M$, the following diagram is commutative: 
\[
\begin{tikzcd}
\Rep(M) \arrow[r, "\Ind_{M}^{G}"] \arrow[d, "\DD"] 
& \Rep(G) \arrow[d, "\DD"] 
\\
\Perf^{\BBB}(\Twistor_M) \arrow[dd, "(\cdot)|_{\Twistor^{\sst, \nu}_M}"] 
& \Perf^{\BBB}(\Twistor_G) \arrow[d, "(\cdot)|_{\Twistor_G^{P, \nu}}"] 
\\
& \Perf^{\BBB}(\Twistor_G^{P, \nu}) \arrow[d, hook]
\\
\Perf^{\BBB}(\Twistor^{\sst, \nu}_M) \arrow[r, "\Eis^{\Tw}_{P, \nu}"] & \Coh_{\Nn}^{\BBB}(\Twistor_G)
\end{tikzcd} .
\]
\end{lemma}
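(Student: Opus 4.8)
The plan is to reduce the statement to a computation with universal families on the Hodge moduli and there to recognise the Eisenstein pull--push as the representation-theoretic induction $\Ind^G_M$ by base change. First I would invoke the gluing descriptions: by Proposition~\ref{spec tw eis} the functor $\Eis^{\Tw}_{P,\nu}$ is the Betti gluing of $\Eis^{\Hod,\an}_{P,\nu}$ and $\ol{\Eis}^{\Hod,\an}_{P,\nu}$, and by construction $\DD_\rho$ is glued from its Hodge component $\DD^{\Hod}_\rho=\pi_{G,*}\,\rho(\Uu^{\an}_{\Hod,G})$ and the conjugate component $\ol{\DD}^{\Hod}_\rho$, where $\pi_G\colon X^{\an}_{\Hod}\times\Hodge^{\an}_G\to\Hodge^{\an}_G$ is the projection; the restriction functors $(\cdot)|_{\Twistor^{\sst,\nu}_M}$ and $(\cdot)|_{\Twistor^{P,\nu}_G}$ are likewise the gluings of their Hodge-theoretic analogues. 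Hence it suffices to establish the analogous commutative square with $\Twistor_\bullet$ replaced by $\Hodge^{\an}_\bullet$ (the conjugate version over $\ol X$ being formally identical), namely a natural equivalence
\[
\DD^{\Hod}_{\Ind^G_M(\rho_M)}\big|_{\Hodge^{P,\nu}_G}\ \simeq\ \Eis^{\Hod}_{P,\nu}\big(\DD^{\Hod}_{\rho_M}\big|_{\Hodge^{\sst,\nu}_M}\big),
\]
where $\Hodge^{P,\nu}_G:=\Image\big(p^{\Hod}\colon\Hodge^{\sst,\nu}_P\to\Hodge_G\big)$; by the Eisenstein--GAGA property (Proposition~\ref{Hodge Eisenstein GAGA property}) and properness of $X$ this may be checked algebraically, so I suppress the superscript $\an$ from now on.

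Next I would perform base change at the level of universal families. Writing $q^{\Hod}_X$, $p^{\Hod}_X$ for the base changes of $q^{\Hod}$, $p^{\Hod}$ along the projections from $X_{\Hod}\times(\cdot)$, and $\pi_M,\pi_P,\pi_G$ for the projections $X_{\Hod}\times\Hodge_?\to\Hodge_?$: since $X$ is proper the $\pi_?$ are proper, $q^{\Hod}$ is quasi-smooth affine and $p^{\Hod}$ is proper, so base change supplies natural equivalences $(q^{\Hod})^!\circ\pi_{M,*}\simeq\pi_{P,*}\circ(q^{\Hod}_X)^!$ and $(p^{\Hod})_*\circ\pi_{P,*}\simeq\pi_{G,*}\circ(p^{\Hod}_X)_*$. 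Substituting into $\Eis^{\Hod}_{P,\nu}=(p^{\Hod})_*(q^{\Hod})^!$ gives
\[
\Eis^{\Hod}_{P,\nu}\big(\DD^{\Hod}_{\rho_M}|_{\Hodge^{\sst,\nu}_M}\big)\ \simeq\ \pi_{G,*}\,(p^{\Hod}_X)_*\,(q^{\Hod}_X)^!\,\rho_M(\Uu_{\Hod,M}).
\]
Because $q^{\Hod}$ records the $M$-induction and $p^{\Hod}$ the $G$-induction of $P$-structures, $(q^{\Hod}_X)^*\Uu_{\Hod,M}$ is the $M$-extension of the universal $P$-bundle $\Uu_{\Hod,P}$ and $(p^{\Hod}_X)^*\Uu_{\Hod,G}$ its $G$-extension; hence $(q^{\Hod}_X)^*\rho_M(\Uu_{\Hod,M})\simeq\Uu_{\Hod,P}\times_P\rho_M$ (inflating $\rho_M$ along $P\to M$) and $(p^{\Hod}_X)^*\rho(\Uu_{\Hod,G})\simeq\Uu_{\Hod,P}\times_P(\rho|_P)$ for every $\rho\in\Rep(G)$.

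It then remains to show $(p^{\Hod}_X)_*(q^{\Hod}_X)^!\big(\Uu_{\Hod,P}\times_P\rho_M\big)\simeq\rho(\Uu_{\Hod,G})\big|_{X_{\Hod}\times\Hodge^{P,\nu}_G}$ for $\rho=\Ind^G_M(\rho_M)$; this is the representation-theoretic heart. Here I would analyse the fibre of $p^{\Hod}$ over a point of $\Hodge^{P,\nu}_G$ (the space of semistable $P$-reductions of Harder--Narasimhan type $\nu$, reducing for $\nu\neq\nu^{triv}$ to the single canonical reduction, so that $p^{\Hod}$ is an immersion onto its image) together with the contribution of the quasi-smooth affine $q^{\Hod}$, whose $!$-pullback carries the cohomology of the unipotent-radical bundle; the combined effect is precisely the associated-bundle incarnation of parabolic induction, i.e.\ the defining property that for a $G$-bundle with a $P$-reduction the bundle associated to $\Ind^G_M(\rho_M)$ is computed by exactly this pull--push. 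Applying $\pi_{G,*}$ and restricting along the locally closed immersion $\Hodge^{P,\nu}_G\hookrightarrow\Hodge_G$, which commutes with the proper pushforward $\pi_{G,*}$, recovers $\DD^{\Hod}_\rho|_{\Hodge^{P,\nu}_G}$; reassembling the Hodge and conjugate pieces over Betti as in the first step finishes the argument, the target lying in $\Coh^{\BBB}_{\Nn}(\Twistor_G)$ by Propositions~\ref{nilp to nilp} and~\ref{Eis BBB to BBB HN}.

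The main obstacle is exactly this last step: identifying the geometric operation $(p^{\Hod})_*(q^{\Hod})^!$ on universal associated bundles with the functor $\Ind^G_M$ on the nose, in particular arranging that the cohomological shifts coming from the relative dualizing complex $\omega_{q^{\Hod}}$ and from the cohomology of the unipotent radical (or, in the $\nu=\nu^{triv}$ case, of the relevant partial-flag--type fibres) cancel correctly, and checking that the natural transformation thus produced is an equivalence rather than merely a map. A secondary point is that $\DD$ is defined analytically via $\Hodge^{\an}_G$; this is absorbed by the GAGA comparisons of Section~\ref{deformations} together with properness of $X$.
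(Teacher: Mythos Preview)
Your reduction to the Hodge chart via Betti gluing and GAGA, followed by the base-change manoeuvre with the projections $\pi_M,\pi_P,\pi_G$, is exactly the paper's route. You even record the two identifications
\[
(q^{\Hod}_X)^*\rho_M(\Uu_{\Hod,M})\simeq\Uu_{\Hod,P}\times_P\rho_M,\qquad
(p^{\Hod}_X)^*\rho(\Uu_{\Hod,G})\simeq\Uu_{\Hod,P}\times_P(\rho|_P),
\]
which is precisely where the paper's argument pivots. The step you are missing is the paper's single observation (equation~\eqref{Hodge universal equivalence}): for $\rho=\Ind^G_M\rho_M$ these two associated bundles on $X_{\Hod}\times\Hodge^{\sst,\nu}_P$ \emph{coincide}, both being the bundle associated to the universal $P$-bundle via the compositions $P\to M\xrightarrow{\rho_M}GL_n$ and $P\to G\xrightarrow{\rho}GL_n$, which the paper asserts agree. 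With this in hand, $(p^{\Hod}_X)_*(q^{\Hod}_X)^{!}\rho_M(\Uu_{\Hod,M})$ becomes $(p^{\Hod}_X)_*(p^{\Hod}_X)^{!}\rho(\Uu_{\Hod,G})$, and the latter collapses to $\rho(\Uu_{\Hod,G})|_{X_{\Hod}\times\Hodge^{P,\nu}_G}$ via the counit, using that $p^{\Hod}|_{\Hodge^{\sst,\nu}_P}$ is an isomorphism onto its image $\Hodge^{P,\nu}_G$.

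Your alternative --- a fibrewise analysis of $(p^{\Hod})_*(q^{\Hod})^{!}$ with explicit bookkeeping of unipotent-radical cohomology and the relative dualizing complex $\omega_{q^{\Hod}}$ --- is exactly what the paper's trick is designed to avoid. In particular your concern about the $\nu=\nu^{triv}$ case and possible partial-flag fibres does not arise: the universal identification holds on every stratum, and the entire push-pull is absorbed by the counit $(p^{\Hod}_X)_*(p^{\Hod}_X)^{!}\to\id$. So the gap is not a wrong idea but a missed shortcut: equate the two associated $P$-bundles you have already written down, and the remainder is one line.
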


\begin{proof} 
It suffices to establish commutivity over the substack 
\[
\Hodge^{\an}_G  = \Twistor_G \times_{\PP^1} \AA^1 ,
\]
for it then also holds symmetrically over $\ol{\Hodge}^{\an}_G \subset \Twistor_G$ and extends to $\Twistor_G$ via a gluing argument similar to Lemma \ref{concluding decompositions}(2). 

The calculation is identical in the analytic or algebraic topology. We proceed with the algebraic stack $\Hodge_G$. The analytic formula can then be obtained by analytification. 

Fix $\rho_M \in \Rep(G)$. On $\Hodge_M$ the Dirac--Higgs complex is defined by the universal expression $\DD^{\Hod}_{\rho} = p_{2, *}\rho_M(\Uu_{\Hod, M})$. Our calculations rely on the universal equivalence
\begin{equation}
\label{Hodge universal equivalence}
(\id \times q^{\Hod})^{!}\rho_M(\Uu_{\Hod, M}) \cong (\id \times p^{\Hod})^{!} \Ind^{G}_{M}\rho_M(\Uu_{\Hod, G}) ,
\end{equation}
which exists due to the following observation: the left hand side is universal for objects obtained by inducing along 
\[
P \to M \xrightarrow{\rho_M} GL_n , 
\]
and the right hand side is universal for inductions along 
\[
P \to G \xrightarrow{\Ind^{G}_{M}\rho_M} GL_n.
\]
The universal isomorphism \eqref{Hodge universal equivalence} is therefore a consequence of the universal property alongside the fact that these two compositions agree.  

Recall that $\Hodge_G^{P, \nu}$ is the image of $p^{\Hod} : \Hodge_P^{\sst, \nu} \to \Hodge_G^{P, \nu}$. By adjunction we have  
\begin{equation}
\label{univ. equiv}
(p^{\Hod})_{*}(p^{\Hod})^{!}\DD^{\Hod}_{\rho}|_{\Hodge_G^{P, \nu}} \cong \DD^{\Hod}_{\rho}|_{\Hodge_G^{P, \nu}} . 
\end{equation}
By applying base change and functoriality around the Cartesian squares
\[
\begin{tikzcd}
X_{\Hod} \times \Hodge_M^{\sst, \nu} \arrow[d, "p_2"] 
& X_{\Hod} \times \Hodge_P^{\sst, \nu}
\arrow[d, "p_2"] \arrow[l, "\id \times q^{\Hod}"'] \arrow[r, "\id \times p^{\Hod}"]
& X_{\Hod} \times \Hodge_G^{P, \nu}
\arrow[d, "p_2"]
\\
\Hodge_M^{\sst, \nu}
& \Hodge_P^{\sst, \nu} \arrow[l, "q^{\Hod}"'] \arrow[r, "p^{\Hod}"]
& \Hodge_G^{P, \nu}
\end{tikzcd} ,
\]
alongside \eqref{Hodge universal equivalence} and \eqref{univ. equiv}, we are able to conclude the proof with the calculations 
\begin{align*}
\Eis_{P, \nu}^{\Hod}\big(\DD^{\Hod}_{\rho_M}|_{\Hodge_M^{\sst, \nu}}\big) 
& = (p^{\Hod})_{*} (q^{\Hod})^{!}p_{2, *} \rho_M
\big(\Uu_{\Hod, M}|_{X_{\Hod} \times \Hodge_M^{\sst, \nu}} \big) , \\
& \cong (p^{\Hod})_{*} p_{2, *} (\id \times q^{\Hod})^{!} \rho_M(\Uu_{\Hod, M}|_{X_{\Hod} \times \Hodge_M^{\sst, \nu}}) , \\
& \cong p_{2, *} (\id \times p^{\Hod})_{*} (\id \times p^{\Hod})^{*} \Ind^{G}_{M}\rho_M(\Uu_{\Hod, G}^{\an}|_{X_{\Hod} \times \Hodge_G^{P, \nu}}) , \\
& \cong p_{2, *} \big( \Ind^{G}_{M}\rho_M(\Uu_{\Hod, G}^{\an}) |_{X_{\Hod} \times \Hodge_G^{P, \nu}} \big) , \\ 
& \cong \DD^{\Hod}_{\Ind^{G}_{M}\rho_M} |_{\Hodge_G^{P, \nu}} . \qedhere
\end{align*}
\end{proof}

\section{Applications of parabolic induction II: compatibility with Wilson operators}
\label{se: Wilson}

We conclude this article by computing a relation between the twistor Eisenstein functors $\Eis^{\Tw}_P$ and certain \textit{twistor Wilson operators}, thus comparing two fundamental sources of symmetry experienced by the spectral categories $\IndCoh_{\Nn}^{\BBB}(\Twistor_G)$. Over fixed points of the base curve $X$, we consider a type of Wilson operator defined by twistoral gluing construction, where one glues complex conjugate copies of the classical limit Wilson operators constructed by Donagi and Pantev \cite{donagi&pantev}. 

\subsection{Construction of Wilson operators} 
\label{se: Wilson operators}

\subsubsection{Hodge Wilson operators} First we specify some universal objects. Once more let $\Uu^{\Hod}_G$ denote the universal family on $X_{\Hod} \times \Hodge_G$. Then $\Uu^{\Hod}_G$ is a $G$-bundle on $X_{\Hod} \times \Hodge_G$, or equivalently, a $\lambda$-connection on $X \times \Hodge_G$. Let $\Pp^{\Hod}_G$ denote the underlying $G$-bundle of $\Uu^{\Hod}_G$ on $X \times \Hodge_G$, which can be constructed as the pullback of the universal $G$-bundle on $X \times \Bun_G$ along the forgetful map $X \times \Hodge_G \to X \times \Bun_G$.   

Fix a representation $\rho : G \to GL_n$ and a geometric point $x \in X$. One has the associated restriction $\Pp^{\Hod}_{G, x} := \Uu_G^{\Hod}|_{\{x\} \times \Hodge_G}$ to a $G$-bundle on $\Hodge_G$ and the associated vector bundle $\rho(\Pp^{\Hod}_{G, x})$. We then define the \textit{Hodge Wilson operators} by the tensoral action 
\begin{equation}
\label{Wilson}
\WW^{\Hod}_{\rho, x} : \IndCoh(\Hodge_G) \to \IndCoh(\Hodge_G) ,
\end{equation}
\[
\Ff \longmapsto \Ff \otimes \rho(\Pp^{\Hod}_{G, x}) . 
\]
In \cite{donagi&pantev}, the functor $\WW^{\Hod}_{\rho, x}$ is interpreted as a Hodge deformation family, interpolating between the de Rham Wilson functor $\WW^{\dR}_{\rho, x} = \WW^{\Hod}_{\rho, x} \otimes_{\AA^1} \{1\}$ and the Dolbeault Wilson functor $\WW^{\Dol}_{\rho, x} = \WW^{\Hod}_{\rho, x} \otimes_{\AA^1} \{0\}$. In other words, $\WW^{\Hod}_{\rho, x}$ is used to express the restriction $\WW^{\Dol}_{\rho, x}$ as the \textit{classical limit} of $\WW^{\dR}_{\rho, x}$.  

\subsubsection{Twistor Wilson operators} To extend the construction of $\WW^{\Hod}_{\rho, x}$ to the full twistor $\PP^1$ we perform a twistoral gluing. Consider 
\[
\WW^{\Hod, \an}_{\rho, x} := (\cdot) \otimes \rho(\Pp^{\Hod, \an}_{G, x})  : \IndCoh(\Hodge^{\an}_G) \to \IndCoh(\Hodge^{\an}_G) ,
\]
\[
\ol{\WW}^{\Hod, \an}_{\rho, \ol{x}} := (\cdot) \otimes \rho(\Pp^{\Hod, \an}_{G, x}) : \IndCoh(\ol{\Hodge}^{\an}_G) \to \IndCoh(\ol{\Hodge}^{\an}_G) , 
\]
where $\WW^{\Hod, \an}_{\rho, x}$ denotes the same construction as in \eqref{Wilson}, performed in the analytic topology with the analytic $G$-bundle $\Pp^{\Hod, \an}_{G, x} \to \Hodge_G^{\an}$. Moreover, $\ol{\WW}^{\Hod, \an}_{\rho, \ol{x}}$ denotes the same construction over $\ol{X}^{\an}$, defined at a geometric point $\ol{x} \in \ol{X}^{\an}$. 

\begin{proposition} 
\label{twistor wilson exists}
Fix points $x \in X^{\an}$ and $\ol{x} \in \ol{X}^{\an}$ that represent the same point on the underlying topological space $X^{top} = \ol{X}^{top}$\footnote{In other words, $x$ and $\ol{x}$ represent the same geometric point on the Betti shape $(X^{\an})_{\B} = (\ol{X}^{\an})_{\B}$.}. Then, the associated twistor Wilson operator
\[
\WW^{\Tw}_{\rho, x, \ol{x}} := \WW^{\Hod, \an}_{\rho, x} \times_{\Betti} \ol{\WW}^{\Hod, \an}_{\rho, \ol{x}} : \IndCoh(\Twistor_G) \to \IndCoh(\Twistor_G) , 
\]
is well-defined. Moreover, $\rho(\Pp^{\Hod}_{G, x})$ and $\rho(\ol{\Pp}^{\Hod}_{G, \ol{x}})$ glue to define a locally free sheaf $\rho(\Pp^{\Tw}_{G, x , \ol{x}})$ on $\Twistor_G$ such that the twistor Wilson operator can be represented as a tensor action 
\begin{equation}
\label{represent W^Tw}
\WW^{\Tw}_{\rho, x, \ol{x}} \simeq (\cdot) \otimes \rho(\Pp^{\Tw}_{G, x, \ol{x}}) . 
\end{equation}
\end{proposition}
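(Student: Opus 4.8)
The plan is to exploit the pushout presentation $\Twistor_G = \Hodge_G^{\an}\sqcup_{\Rep_G^{\an}\times\GG_m}\ol{\Hodge}_G^{\an}$ of \eqref{twistor stack} and the resulting fibre-product presentation $\IndCoh(\Twistor_G) = \IndCoh(\Hodge_G^{\an})\times_{\Betti}\IndCoh(\ol{\Hodge}_G^{\an})$ used already in the proof of Proposition \ref{spec tw eis}. To produce an endofunctor of this fibre product it suffices to give endofunctors of the two factors, namely $\WW^{\Hod,\an}_{\rho,x}$ and $\ol{\WW}^{\Hod,\an}_{\rho,\ol{x}}$, together with a natural equivalence identifying the sheaves they produce after restriction to the overlap category $\IndCoh(\Rep_G^{\an}\times\GG_m)$ along the structure maps of \eqref{twistor stack}. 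Since each factor-functor is given by tensoring with a locally free sheaf and tensoring with a perfect complex commutes with those restriction maps, this equivalence amounts to nothing more than a canonical identification, on $\Rep_G^{\an}\times\GG_m$, of the two bundles $\rho(\Pp^{\Hod,\an}_{G,x})$ and $\rho(\ol{\Pp}^{\Hod,\an}_{G,\ol{x}})$ restricted along \eqref{twistor stack}. Producing that identification is simultaneously what glues these two bundles to a locally free sheaf $\rho(\Pp^{\Tw}_{G,x,\ol{x}})$ on $\Twistor_G$, so I would organise the proof around it.

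The key computation I would carry out is the following chain of identifications. First, under the trivialisation $\Hodge_G^{\an}\times_{\AA^1}\GG_m\cong\LocSys_G^{\an}\times\GG_m$ of \eqref{Hodge trivialisation} a $\lambda$-connection $(E,\nabla^\lambda)$ goes to $\big((E,\lambda^{-1}\nabla^\lambda),\lambda\big)$, which has the same underlying $G$-bundle $E$; since by construction $\Pp^{\Hod}_{G,x}$ is pulled back from the universal $G$-bundle on $\{x\}\times\Bun_G$ along the forgetful map $\Hodge_G\to\Bun_G$, it restricts on the $\GG_m$-locus to $\Pp^{\dR,\an}_{G,x}\boxtimes\Oo_{\GG_m}$, where $\Pp^{\dR}_{G,x}$ is the evaluation at $x$ of the universal $G$-bundle on $\LocSys_G$. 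Second, writing $\RH=\nu_{\RH}^{*}$ as in \eqref{RH stacks}, I would use that the transformation morphism $\nu_{\RH}\colon (X^{\an})_{\dR}\to(X^{\an})_{\B}$ is compatible with the canonical maps from $X^{\an}$, so that it carries the point $x\in X^{\an}\to(X^{\an})_{\dR}$ to $x\in X^{\an}\to(X^{\an})_{\B}$; evaluation at $x$ then intertwines $\RH$, giving $\RH^{*}\Pp^{\dR,\an}_{G,x}\cong\Pp^{\B,\an}_{G,x}$, the evaluation at $x$ of the universal $G$-bundle on $\Rep_G$. Third, by the Betti invariance $(X^{\an})_{\B}=(\ol{X}^{\an})_{\B}$, $\Rep_G^{\an}=\ol{\Rep}_G^{\an}$, and because $x$ and $\ol{x}$ represent the same point of this common Betti shape, evaluation at $x$ coincides with evaluation at $\ol{x}$, whence $\Pp^{\B,\an}_{G,x}=\ol{\Pp}^{\B,\an}_{G,\ol{x}}$. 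Applying the associated-bundle functor $\rho(-)$, which is functorial and commutes with pullback, then yields the required identification of the two restricted bundles.

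With the gluing in hand the three assertions follow formally. The matching gluing datum makes $\WW^{\Hod,\an}_{\rho,x}\times_{\Betti}\ol{\WW}^{\Hod,\an}_{\rho,\ol{x}}$ a well-defined endofunctor of $\IndCoh(\Twistor_G)$; the $G$-bundles $\Pp^{\Hod,\an}_{G,x}$ and $\ol{\Pp}^{\Hod,\an}_{G,\ol{x}}$ glue along the same datum to a $G$-bundle $\Pp^{\Tw}_{G,x,\ol{x}}$ on $\Twistor_G$, locally free since each of its pieces is, with associated vector bundle $\rho(\Pp^{\Tw}_{G,x,\ol{x}})$; and because tensoring with a perfect complex commutes with the restriction functors defining $\IndCoh(\Twistor_G)=\IndCoh(\Hodge_G^{\an})\times_{\Betti}\IndCoh(\ol{\Hodge}_G^{\an})$, tensoring with $\rho(\Pp^{\Tw}_{G,x,\ol{x}})$ sends $\Ff$, with components $(\Ff_{\Hod},\ol{\Ff}_{\Hod})$, to the object with components $(\Ff_{\Hod}\otimes\rho(\Pp^{\Hod,\an}_{G,x}),\,\ol{\Ff}_{\Hod}\otimes\rho(\ol{\Pp}^{\Hod,\an}_{G,\ol{x}}))$, which is exactly $\WW^{\Tw}_{\rho,x,\ol{x}}(\Ff)$ --- this is \eqref{represent W^Tw}.

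I expect the main obstacle to be the second step: making precise that $\nu_{\RH}$ respects the inclusion of the geometric point $x$, so that the de Rham universal bundle at $x$ is carried by the Riemann--Hilbert equivalence to the Betti universal bundle at $x$. This has to be extracted from the construction of $\nu_{\RH}$ in \cite{porta_RH,holstein&porta} rather than argued formally; the remainder is bookkeeping with the pushout/pullback formalism.
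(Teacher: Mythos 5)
Your proof is correct and follows the same route as the paper: reduce to producing a single gluing equivalence between the two restricted bundles on the $\GG_m$-overlap, and then note that both the well-definedness of the glued endofunctor and the gluing of $\Pp^{\Hod,\an}_{G,x}$ and $\ol{\Pp}^{\Hod,\an}_{G,\ol{x}}$ into $\Pp^{\Tw}_{G,x,\ol{x}}$ follow formally. The paper asserts that gluing equivalence directly (its \eqref{eq: univ Pp equiv}) where you unpack it into the Hodge-to-de Rham restriction, the compatibility of $\nu_{\RH}$ with evaluation at $x$, and Betti invariance of $\Rep_G$ --- a reasonable elaboration that also correctly flags the Riemann--Hilbert-at-$x$ step as the one point requiring the construction of $\nu_{\RH}$ rather than pure formalism.
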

\begin{proof}
We recall the defining pushout $\Twistor_G = \Hodge_G^{\an} \sqcup_{\Betti} \ol{\Hodge}_G^{\an}$ induces the pullback 
\begin{equation*}
\label{pullback nilps}
\begin{tikzcd}
\IndCoh(\Twistor_G) \arrow[d] \arrow[r] & \IndCoh(\ol{\Hodge}^{\an}_G) \arrow[d, "\ol{\RH}_{*}"] \\
\IndCoh(\Hodge^{\an}_G) \arrow[r, "\RH_{*}"] & \IndCoh(\Rep^{\an}_G \times \GG_m)
\end{tikzcd} , 
\end{equation*}
where $\RH$ denotes the Riemannn--Hilbert correspondence acting as 
\[
\Hodge^{\an}_G \times_{\AA^1} \GG_m 
\xrightarrow{\cong} \LocSys^{\an}_G \times \GG_m 
\xrightarrow{\cong} \Rep^{\an}_G \times \GG_m . 
\]
To show that $\WW^{\Hod, \an}_{\rho, x} \times_{\Betti} \ol{\WW}^{\Hod, \an}_{\rho, \ol{x}}$ is well-defined is to checking the following: a pair $(\Ff_{\Hod}, \ol{\Ff}_{\Hod})$ of sheaves from $\IndCoh(\Hodge^{\an}_G)$ and $\IndCoh(\ol{\Hodge}^{\an}_G)$ equipped with an isomorphism 
\begin{equation}
\label{glue data}
\RH_{*} \big( \Ff_{\Hod}|_{\Hodge^{\an}_G \times_{\AA^1} \GG_m} \big) \cong 
\ol{\RH}_{*} \big(\ol{\Ff}_{\Hod}|_{\ol{\Hodge}^{\an}_G \times_{\AA^1} \GG_m} \big) ,
\end{equation}
induces an isomorphism after evaluation of the Wilson operators: 
\begin{equation}
\label{pullout condition}
\RH_{*} \big( \WW^{\Hod}_{\rho, x}(\Ff)|_{\Hodge^{\an}_G \times_{\AA^1} \GG_m} \big)
\cong 
\ol{\RH}_{*} \big( \ol{\WW}^{\Hod}_{\rho, x}(\ol{\Ff})|_{\ol{\Hodge}^{\an}_G \times_{\AA^1} \GG_m} \big) . 
\end{equation}
To see this, first note the Riemann--Hilbert correspondence $\RH : \LocSys_G^{\an} \xrightarrow{\cong} \Rep_G^{\an}$ induces a universal equivalence on the universal $G$-bundles:  
\begin{equation}
\label{eq: univ Pp equiv}
\RH_{*}\rho(\Pp^{\Hod, \an}_{G, x} |_{\Hodge^{\an}_G \times_{\AA^1} \GG_m}) 
\cong \ol{\RH}_{*}\rho(\ol{\Pp}^{\Hod, \an}_{G, \ol{x}} |_{\ol{\Hodge}^{\an}_G \times_{\AA^1} \GG_m}) . 
\end{equation}
Then \eqref{pullout condition} can be written as a twist of \eqref{glue data} by the universal equivalence \eqref{eq: univ Pp equiv}. This proves the first part regarding existence of $\WW^{\Tw}_{\rho, x \ol{x}}$. For the moreover part regarding the tensor presentation, it suffices to check that $\rho(\Pp^{\Hod}_{x})$ and $\rho(\ol{\Pp}^{\Hod}_{\ol{x}})$ glue to define a locally free sheaf on $\Twistor_G$. The equivalence \eqref{represent W^Tw} follows immediately from the gluing. The required gluing condition is precisely \eqref{eq: univ Pp equiv}. 
\end{proof}

\subsubsection{Preservation properties} We check that $\WW^{\Tw}_{\rho, x, \ol{x}}$ preserves various sheaf theories on $\Twistor_G$.  

\begin{proposition}
\label{Wilson acts nicely}
The functor $\WW^{\Tw}_{\rho, x, \ol{x}}$ preserves 
\begin{enumerate}
    \item the categories $\IndCoh^{\GAGA}(\Twistor_G)$ of GAGA sheaves defined in \ref{GAGA sheaves Tw_G},
    \item the categories $\IndCoh_{\Nn}(\Twistor_G)$ of sheaves with nilpotent singular support defined in \ref{nilpotent sheaves}, 
    \item the categories $\IndCoh^{\BBB}(\Twistor_G)$ of BBB-branes defined in \ref{BBB-branes}, 
\end{enumerate}
and so (1), (2) and (3) collectively show that $\WW^{\Tw}_{\rho, x, \ol{x}}$ defines a functor 
\[
\WW^{\Tw}_{\rho, x, \ol{x}} : \IndCoh^{\BBB}_{\Nn}(\Twistor_G) \to \IndCoh^{\BBB}_{\Nn}(\Twistor_G) . 
\] 
\end{proposition}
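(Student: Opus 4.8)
The plan is to prove the three preservation statements in turn, each reducing to an already-established structural fact about $\WW^{\Tw}_{\rho,x,\ol x}$ as a tensor functor, and then assemble the final statement as a formal consequence.

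\smallskip

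\textbf{(1) Preservation of GAGA sheaves.} First I would use the tensor presentation $\WW^{\Tw}_{\rho,x,\ol x}\simeq(\cdot)\otimes\rho(\Pp^{\Tw}_{G,x,\ol x})$ from Proposition \ref{twistor wilson exists}, together with the defining pullback square \eqref{GAGA on Tw_G} for $\IndCoh^{\GAGA}(\Twistor_G)$. The point is that $\rho(\Pp^{\Tw}_{G,x,\ol x})$ restricts, over the two hemispherical affine charts, to $\rho(\Pp^{\Hod,\an}_{G,x})$ on $\Hodge_G^{\an}$ and $\rho(\ol\Pp^{\Hod,\an}_{G,\ol x})$ on $\ol\Hodge_G^{\an}$, and each of these underlying $G$-bundles is itself the analytification of the algebraic universal bundle $\Pp^{\Hod}_{G,x}\to\Hodge_G$ (it is pulled back from $X\times\Bun_G$, which is algebraic). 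Since $(\cdot)^{\an}$ is monoidal, $\WW^{\Hod,\an}_{\rho,x}$ sends $\IndCoh^{\GAGA}(\Hodge_G^{\an})$ into itself — it is the analytification of the algebraic Hodge Wilson operator $\WW^{\Hod}_{\rho,x}$ — and symmetrically for the conjugate. Acting on the pullback square \eqref{GAGA on Tw_G} componentwise, the compatibility with the $\RH$-gluing is exactly the equivalence \eqref{eq: univ Pp equiv}, so $\WW^{\Tw}_{\rho,x,\ol x}$ preserves $\IndCoh^{\GAGA}(\Twistor_G)$.

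\smallskip

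\textbf{(2) Preservation of nilpotent singular support.} Here I would invoke the general fact that tensoring with a perfect (in fact locally free) complex does not enlarge singular support: for $\Ee\in\Perf(\Yy)$ and $\Ff\in\IndCoh(\Yy)$ one has $\SingSupp(\Ff\otimes\Ee)\subseteq\SingSupp(\Ff)$, since $\Ee$ acts by an automorphism-up-to-invertibility on the $\Sym^{\bullet}(H^1(\TT_{\Yy}))$-module structure; equivalently this is \cite[Lem. 8.4.5 \textit{et seq.}]{arinkin&gaitsgory} applied to the identity map. Applying this over each chart — $\WW^{\Hod,\an}_{\rho,x}$ preserves $\IndCoh_{\Nn}(\Hodge_G^{\an})$ and $\ol\WW^{\Hod,\an}_{\rho,\ol x}$ preserves $\IndCoh_{\ol\Nn}(\ol\Hodge_G^{\an})$ — and then using the pullback presentation \eqref{eq pullback nilps} of $\IndCoh_{\Nn_{\Tw}}(\Twistor_G)$, the preservation over $\Twistor_G$ follows, exactly as $\IndCoh^{\GAGA}$ was handled. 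The $\RH$-compatibility on $\Rep_G^{\an}\times\GG_m$ is automatic because $\RH$ identifies the nilpotent cones (Proposition \ref{RH and Nilp an}) and \eqref{eq: univ Pp equiv} intertwines the two tensor actions.

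\smallskip

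\textbf{(3) Preservation of BBB-branes.} This is the step I expect to be the main obstacle, because the BBB-brane condition is tested against \emph{all} twistor Wilson eigensheaves $\Ww=\sigma_*\Oo_{\PP^1}\in\Wils^{\Tw}_G$, and one must control $\bigl(\WW^{\Tw}_{\rho,x,\ol x}(\Bb)\bigr)\otimes\Ww$. The key observation is that $\Ww$ is a skyscraper-type sheaf supported on the twistor line $\sigma(\PP^1)$, so tensoring with the locally free sheaf $\rho(\Pp^{\Tw}_{G,x,\ol x})$ commutes with $(\cdot)\otimes\Ww$ and, on the support, amounts to tensoring $\Ww$ itself with the restriction $\sigma^*\rho(\Pp^{\Tw}_{G,x,\ol x})$. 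Thus
\[
\bigl(\Bb\otimes\rho(\Pp^{\Tw}_{G,x,\ol x})\bigr)\otimes\Ww
\;\cong\;(\Bb\otimes\Ww)\otimes_{\Oo_{\PP^1}}\sigma^*\rho(\Pp^{\Tw}_{G,x,\ol x}).
\]
Since $\Bb$ is a BBB-brane, $\Bb\otimes\Ww\cong V^{\bullet}_{\Ww}\otimes_{\CC}\Ww$, and so the right-hand side becomes $V^{\bullet}_{\Ww}\otimes_{\CC}\bigl(\Ww\otimes_{\Oo_{\PP^1}}\sigma^*\rho(\Pp^{\Tw}_{G,x,\ol x})\bigr)$. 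It remains to check that $\Ww\otimes_{\Oo_{\PP^1}}\sigma^*\rho(\Pp^{\Tw}_{G,x,\ol x})$ is again of the form $W^{\bullet}\otimes_{\CC}\Ww$ for some graded vector space $W^{\bullet}$; concretely, $\sigma^*\Pp^{\Tw}_{G,x,\ol x}$ is the fiber at a point of the restriction of the universal bundle along a twistor line, hence a bundle on $\PP^1$, and one needs it to be \emph{trivial} — i.e. $\cong W^{\bullet}\otimes_{\CC}\Oo_{\PP^1}$. This triviality is precisely the content of the classical twistor transform (the right-hand condition of \eqref{twistor transform}, or \cite[Thm.~3.3]{HKLR}) applied to the hyperholomorphic bundle $\rho(\Pp^{\Tw}_{G,x,\ol x})$ along the horizontal twistor line $\sigma$: the universal bundle on $\Twistor_G$ is built from the non-abelian Hodge correspondence and is hyperholomorphic in the sense encoded by the $C^\infty$-trivialisation of the Hodge filtration, so its restriction to any horizontal twistor line is trivial. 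Taking $V^{\bullet}_{\Ww}\otimes_{\CC}W^{\bullet}$ as the graded vector space then verifies Definition \ref{BBB-brane} for $\WW^{\Tw}_{\rho,x,\ol x}(\Bb)$. Finally, combining (1), (2), (3) and the definition $\IndCoh^{\BBB}_{\Nn}(\Twistor_G)=\IndCoh^{\BBB}(\Twistor_G)\cap\IndCoh_{\Nn}(\Twistor_G)$ gives the displayed functor, completing the proof.
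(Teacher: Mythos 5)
Your proposal is correct, and for parts (1) and (2) it tracks the paper's argument essentially step-for-step: (1) is the same reduction to a chart-wise analytification argument, hinging on $\Pp^{\Hod,\an}_{G,x}$ being the analytification of the algebraic bundle, with the $\RH$-compatibility handled by \eqref{eq: univ Pp equiv}; (2) is the same invocation that tensoring with a locally free sheaf does not enlarge singular support.

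For (3) you arrive at the same pivotal reduction — everything boils down to showing that $\sigma^*\rho(\Pp^{\Tw}_{G,x,\ol x})$ is a trivial bundle on $\PP^1$ — but you establish the triviality differently. The paper computes it by hand: over each hemispherical chart $\AA^1 \subset \PP^1$ the restriction $(\sigma_{\Hod})^*\rho(\Pp^{\Hod,\an}_{G,x})$ is a bundle on the Stein contractible space $\AA^1$, hence trivial with constant fiber $\rho(E|_x)$, and the complex conjugate chart gives the isomorphic fiber $\rho(\ol E|_{\ol x})$; these then glue via \eqref{eq: univ Pp equiv} to $V\otimes_\CC\Oo_{\PP^1}$. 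You instead appeal to the classical twistor transform \eqref{twistor transform} and \cite[Thm.~3.3]{HKLR}, asserting that $\rho(\Pp^{\Tw}_{G,x,\ol x})$ is hyperholomorphic and hence trivial on horizontal twistor lines. This is a more geometric packaging of the same fact (the fiber at $x$ of the universal family depends only on the underlying $C^\infty$-bundle, which is constant along a horizontal twistor line by non-abelian Hodge), but it is a slightly riskier route: you are invoking hyperholomorphicity of the universal bundle — a statement very close in spirit to the BBB-brane preservation you are in the middle of proving, and asserted at the level of the stack rather than the coarse space where the twistor transform is actually stated. The paper's elementary chart computation avoids leaning on any such claim, and buys you a self-contained proof that does not presuppose which objects are hyperholomorphic; the trade-off is that the gluing step is left somewhat implicit there as well. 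Both arguments land in the same place, and both (implicitly or explicitly) ultimately rest on horizontality of $\sigma$ being exactly what forces the $\GG_m$-transition to be constant.
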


\begin{proof}
For (1), we recall that $\IndCoh^{\GAGA}(\Twistor_G)$ is defined by a pullback 
\[
\IndCoh^{\GAGA}(\Twistor_G) = \IndCoh^{\GAGA}(\Hodge^{\an}_G) \times_{\Betti} \IndCoh^{\GAGA}(\ol{\Hodge}^{\an}_G).
\]
It suffices to show that $\WW^{\Hod, \an}_{\rho, x}$ preserves $\IndCoh^{\GAGA}(\Hodge^{\an}_G)$, for then symmetrically so does $\ol{\WW}^{\Hod}_{\rho, \ol{x}}$. This is equivalent to checking that, when acting on $\IndCoh^{\GAGA}(\Hodge^{\an}_G)$, we have an equivalence of functors  
\[
\WW^{\Hod, \an}_{\rho, x} \simeq (\WW^{\Hod}_{\rho, x})^{\an} . 
\]
By the universal GAGA property $\Maps(X_{\Hod}, BG) \cong \anMaps(X_{\Hod}^{\an}, BG^{\an})$, the analytic universal $G$-bundle $\Pp^{\Hod, \an}_{G} \to X^{\an} \times \Hodge_G^{\an}$ is precisely the analytification of the algebraic universal $G$-bundle $\Pp^{\Hod}_G \to X \times \Hodge_G$. This allows us to calculate the action of the Wilson operators as follows. For an object $\Ff^{\an} \in \IndCoh^{\GAGA}(\Hodge_G^{\an})$, consider the equivalence 
\[
\WW_{\rho, x}^{\Hod, \an}(\Ff^{\an}) 
= \Ff^{\an} \otimes \rho(\Pp^{\Hod, \an}_{G, x})
\cong (\Ff \otimes \rho(\Pp^{\Hod}_{G, x}))^{\an}  = (\WW_{\rho, x}^{\Hod}(\Ff))^{\an}  , 
\]
and so indeed $\WW_{\rho, x}^{\Hod}(\Ff^{\an})$ lands in $\IndCoh^{\GAGA}(\Twistor_G)$. This proves (1). 

For (2), observe that, since $\rho(\Pp^{\Tw}_{G, x, \ol{x}})$ is locally free, its tensor action preserves $\IndCoh_{\Nn}(\Twistor_G)$. 

For (3), consider an object $\Bb \in \IndCoh^{\BBB}(\Twistor_G)$ and fix a test object $\Ww = \sigma_{*} \Oo_{\PP^1} \in \Wils^{\Tw}_G$ supported on $(\sigma : \PP^1 \to \Twistor_G) \in \Horiz_G$. By the BBB-brane hypothesis on $\Bb$, there exists a graded vector space $V^{\bullet}_{\Ww} \in \QCoh(pt)$ and an equivalence 
\[
\Bb \otimes \Ww \cong V^{\bullet}_{\Ww} \otimes_{\CC} \Ww.
\]
To show that $\WW^{\Tw}_{\rho, x, \ol{x}}(\Bb)$ is an object of $\IndCoh^{\BBB}(\Twistor_G)$ is to provide a new vector space $\widetilde{V}^{\bullet}_{\Ww} \in \QCoh(pt)$ and an equivalence 
\begin{equation}
\label{test W is BBB}
\WW^{\Tw}_{\rho, x}(\Bb) \otimes \Ww \cong \widetilde{V}^{\bullet}_{\Ww} \otimes_{\CC} \Ww.
\end{equation}
Let us expand the terms on the left hand side by considering the expression 
\begin{equation}
\label{W^Tw on B}
\WW^{\Tw}_{\rho, x}(\Bb) \otimes \Ww  = \Bb \otimes \rho(\Pp_{G, x, \ol{x}}^{\Tw}) \otimes \Ww .
\end{equation}
To $\sigma$ and $\Ww$ we consider the Hodge restrictions $\sigma_{\Hod} : \AA^1 \to \Hodge^{\an}_G$ and $\Ww_{\Hod} = (\sigma_{\Hod})_{*}\Oo_{\AA^1}$. The map $\sigma_{\Hod}$ represents an $\AA^1$-family of $\lambda$-connections $(E, \nabla^{\lambda}) \to X^{\an}$. By the universal property of $\Pp^{\Hod, \an}_G$, the restriction to $\id \times \sigma_{\Hod} : X^{\an} \times \AA^1 \to X^{\an} \times \Hodge_G$ represents the underlying $G$-bundle $E$, and so $(\sigma_{\Hod})^{*}\rho(\Pp^{\Hod, \an}_G)$ represents the vector bundle $\rho(E) \to X^{\an}$. Moreover $(\sigma_{\Hod})^{*}\rho(\Pp^{\Hod, \an}_{G, x})$ represents the vector space $\rho(E)|_{x}$. Therefore we obtain an equivalence 
\begin{equation}
\label{universal restriction thing}
\rho(\Pp^{\Hod, \an}_{G, x}) \otimes \Ww_{\Hod} \cong \rho(E|_{x}) \otimes_{\CC} \Ww_{\Hod} , 
\end{equation}
in $\IndCoh(\Hodge_G^{\an})$.  

The complex conjugate case is identical: one has a section $\ol{\sigma}_{\Hod} : \AA^1 \to \ol{\Hodge}_G^{\an}$ that represents an $\AA^1$-family $(\ol{E}, \ol{\nabla}^{\lambda})$ of $\lambda$-connections on $\ol{X}^{\an}$. By the same reasoning, the sheaf $\ol{\Ww}_{\Hod} = (\ol{\sigma}_{\Hod})_{*}\Oo_{\AA^1}$ satisfies the relation 
\begin{equation}
\label{universal restriction thing 2}
\rho(\ol{\Pp}^{\Hod, \an}_{G, x}) \otimes \ol{\Ww}_{\Hod} \cong \rho(\ol{E}|_{\ol{x}}) \otimes_{\CC} \ol{\Ww}_{\Hod} , 
\end{equation}
in $\IndCoh(\ol{\Hodge}_G^{\an})$, where $\rho(E|_{x}) \cong \rho(\ol{E}|_{\ol{x}})$. Denote this vector space by $V$. Then \eqref{universal restriction thing} and \eqref{universal restriction thing 2} glue to define an equivalence 
\[
\rho(\Pp^{\Tw}_{G, x, \ol{x}}) \otimes \Ww \cong V \otimes_{\CC} \Ww, 
\]
in $\IndCoh(\Twistor_G)$. Substituting this into \eqref{W^Tw on B} provides 
\[
\WW^{\Tw}_{\rho, x}(\Bb) \otimes \Ww  = \Bb \otimes \rho(P)|_x \otimes_{\CC} \Ww \cong V_{\Ww}^{\bullet} \otimes_{\CC} V \otimes_{\CC} \Ww . 
\]
The choice $\widetilde{V}^{\bullet}_{\Ww} := V_{\Ww}^{\bullet} \otimes_{\CC} V \in \QCoh(pt)$ allows us to conclude that \eqref{test W is BBB} holds and so indeed $\WW^{\Tw}_{\rho, x, \ol{x}}(\Bb)$ is an object of $\IndCoh^{\BBB}(\Twistor_G)$. This proves (3) and concludes the proof. 
\end{proof}

\subsection{Intertwining properties} 
\label{se: intertwining}
We now present our final result. We show that twistor Wilson operators commute with twistor Eisenstein functors. 

\begin{theorem}
\label{Wilson and Eis}
Let $P \subset G$ be a parabolic with Levi quotient $M$. Fix a representation $\rho_M : M \to GL_n$ with parabolic induction $\rho = \Ind^{G}_{M}(\rho_M) : G \to GL_n$. Then there exists a commutative square 
\[
\begin{tikzcd}
\IndCoh(\Twistor_M) \arrow[d, "\WW^{\Tw}_{\rho_M, x, \ol{x}}"] \arrow[r, "\Eis_{P}^{\Tw}"] & \IndCoh(\Twistor_G) \arrow[d, "\WW^{\Tw}_{\rho, x, \ol{x}}"] \\
\IndCoh(\Twistor_M) \arrow[r, "\Eis_{P}^{\Tw}"] & \IndCoh(\Twistor_G)
\end{tikzcd}, 
\]
which restricts to the commutative square on the nilpotent BBB-brane categories 
\[
\begin{tikzcd}
\IndCoh^{\BBB}_{\Nn}(\Twistor_M) \arrow[d, "\WW^{\Tw}_{\rho_M, x, \ol{x}}"] \arrow[r, "\Eis_{P}^{\Tw}"] & \IndCoh^{\BBB}_{\Nn}(\Twistor_G) \arrow[d, "\WW^{\Tw}_{\rho, x, \ol{x}}"] \\
\IndCoh^{\BBB}_{\Nn}(\Twistor_M) \arrow[r, "\Eis_{P}^{\Tw}"] & \IndCoh^{\BBB}_{\Nn}(\Twistor_G)
\end{tikzcd} .
\]
\end{theorem}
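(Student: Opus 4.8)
The plan is to follow the twistoral gluing template of Section~\ref{twistor spectral p and q}: establish the intertwining first in Hodge theory (and its complex conjugate), and then glue over the Betti locus using the Riemann--Hilbert compatibilities already recorded for the Eisenstein functors and for the universal $G$-bundles. First I would prove that for every Harder--Narasimhan type $\nu$ there is a commutative square
\[
\begin{tikzcd}
\IndCoh(\Hodge_M^{\sst, \nu}) \arrow[d, "\WW^{\Hod}_{\rho_M, x}"] \arrow[r, "\Eis^{\Hod}_{P, \nu}"] & \IndCoh(\Hodge_G) \arrow[d, "\WW^{\Hod}_{\rho, x}"] \\
\IndCoh(\Hodge_M^{\sst, \nu}) \arrow[r, "\Eis^{\Hod}_{P, \nu}"] & \IndCoh(\Hodge_G)
\end{tikzcd} .
\]
The only substantive input is a universal equivalence of vector bundles on $\Hodge_P^{\sst, \nu}$,
\[
(q^{\Hod})^{*}\, \rho_M(\Pp^{\Hod}_{M, x}) \;\simeq\; (p^{\Hod})^{*}\, \rho(\Pp^{\Hod}_{G, x}) ,
\]
which is the restriction at $x \in X$ of the universal equivalence~\eqref{Hodge universal equivalence}: both sides are the vector bundle associated to the universal $P$-bundle $\Pp^{\Hod}_{P, x}$ on $\Hodge_P^{\sst, \nu}$, the left-hand one along $P \to M \xrightarrow{\rho_M} GL_n$ and the right-hand one along $P \to G \xrightarrow{\rho} GL_n$, and these two homomorphisms of $P$ agree since $\rho = \Ind^{G}_{M}(\rho_M)$.

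Granting this, the square commutes by the chain of natural equivalences
\begin{align*}
\Eis^{\Hod}_{P, \nu}\big(\WW^{\Hod}_{\rho_M, x}\Ff\big)
&= (p^{\Hod})_{*}\big( (q^{\Hod})^{!}(\Ff \otimes \rho_M(\Pp^{\Hod}_{M, x})) \big) \\
&\simeq (p^{\Hod})_{*}\big( (q^{\Hod})^{!}\Ff \otimes (q^{\Hod})^{*}\rho_M(\Pp^{\Hod}_{M, x}) \big) \\
&\simeq (p^{\Hod})_{*}\big( (q^{\Hod})^{!}\Ff \otimes (p^{\Hod})^{*}\rho(\Pp^{\Hod}_{G, x}) \big) \\
&\simeq (p^{\Hod})_{*}(q^{\Hod})^{!}\Ff \;\otimes\; \rho(\Pp^{\Hod}_{G, x}) \;=\; \WW^{\Hod}_{\rho, x}\big(\Eis^{\Hod}_{P, \nu}\Ff\big) ,
\end{align*}
where the first step uses that $(q^{\Hod})^{!}$ commutes with tensoring by the dualizable object $\rho_M(\Pp^{\Hod}_{M,x})$, the second is the universal equivalence above, and the third is the projection formula for the proper morphism $p^{\Hod} : \Hodge_P^{\sst, \nu} \to \Hodge_G$. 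Taking the filtered colimit over $\nu$ as in~\eqref{eq: Eis colim} --- permissible since the Wilson operators, being tensor functors, preserve colimits --- yields the same square for $\Eis^{\Hod}_P$, and analytifying gives it for $\Eis^{\Hod, \an}_P$.

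Next I would run the identical computation over $\LocSys_P$ and over $\Rep_P$, producing commuting squares intertwining $\WW^{\dR}_{\rho, x}$ with $\Eis^{\dR}_P$ and $\WW^{\B}_{\rho, x}$ with $\Eis^{\B}_P$, and check their Riemann--Hilbert compatibility: Proposition~\ref{RH and Eis} already gives that $\RH_*$ intertwines $\Eis^{\dR, \an}_P$ with $\Eis^{\B, \an}_P$, while the universal equivalence~\eqref{eq: univ Pp equiv} built into Proposition~\ref{twistor wilson exists} is exactly the statement that $\RH_*$ carries the de Rham Wilson operator for $\rho$ (restricted to $\GG_m$) to its Betti counterpart; the same holds over $\ol{X}^{\an}$, and by Betti invariance (cf.~\eqref{gluing derived}) the two Betti squares obtained from the Hodge data over $X$ and over $\ol{X}$ coincide. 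Then, using $\Eis^{\Tw}_P \simeq \Eis^{\Hod, \an}_P \times_{\Betti} \ol{\Eis}^{\Hod, \an}_P$ from Proposition~\ref{spec tw eis} and $\WW^{\Tw}_{\rho, x, \ol{x}} \simeq \WW^{\Hod, \an}_{\rho, x} \times_{\Betti} \ol{\WW}^{\Hod, \an}_{\rho, \ol{x}}$ from Proposition~\ref{twistor wilson exists} (and likewise for $\WW^{\Tw}_{\rho_M, x, \ol{x}}$), I would assemble the Hodge square, its complex conjugate and the common Betti square --- the Riemann--Hilbert compatibilities furnishing coherence over $\Rep_G^{\an}\times\GG_m$ --- into the first square of the statement. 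The second square follows by restriction, since $\Eis^{\Tw}_P$ preserves $\IndCoh^{\BBB}_{\Nn}(\Twistor_{(-)})$ by Corollary~\ref{BBB nilp Eis} and the twistor Wilson operators preserve it by Proposition~\ref{Wilson acts nicely}.

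I expect the main obstacle to be the universal bundle equivalence on $\Hodge_P^{\sst,\nu}$ in the first step --- that is, using the precise definition of $\Ind^{G}_{M}$ to identify the $P$-representation underlying $\rho|_P$ with the inflation of $\rho_M$ --- together with the bookkeeping of the $(-)^{!}$ versus $(-)^{*}$ twists, so that $\omega_{q^{\Hod}}$ stays attached to $(q^{\Hod})^{!}\Ff$ throughout the chain and does not interfere with the two applications of the projection formula. The secondary delicate point is verifying that the Hodge, conjugate-Hodge and Betti squares glue \emph{coherently} rather than merely objectwise; this reduces, exactly as in the proof of Proposition~\ref{spec tw eis}, to the naturality of the Riemann--Hilbert identifications already in place.
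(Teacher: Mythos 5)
Your proof follows essentially the same route as the paper's: reduce by the fiber-product presentations $\Eis^{\Tw}_P \simeq \Eis^{\Hod,\an}_P \times_{\Betti} \ol{\Eis}^{\Hod,\an}_P$ and $\WW^{\Tw}_{\rho,x,\ol{x}} \simeq \WW^{\Hod,\an}_{\rho,x} \times_{\Betti} \ol{\WW}^{\Hod,\an}_{\rho,\ol{x}}$ to a Hodge-theoretic square, prove it via the restriction at $x$ of the universal equivalence~\eqref{Hodge universal equivalence} together with the projection formula, and finally restrict to the nilpotent BBB-brane categories using Corollary~\ref{BBB nilp Eis} and Proposition~\ref{Wilson acts nicely}. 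The only differences are matters of care rather than method: you work strata-by-strata in $\nu$ before taking the colimit, you write the bundle identification with $(-)^{*}$ rather than $(-)^{!}$ (equivalent for locally free sheaves over a common source, and arguably cleaner since it makes the dualizing twists visibly cancel), and you explicitly verify the Riemann--Hilbert/Betti-invariance coherence before gluing, whereas the paper treats the reduction to the Hodge square as automatic once both functors are presented as Betti fiber products of Hodge pieces.
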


\begin{remark}
In the de Rham theory, commutation between Wilson operators and spectral Eisenstein functors is self-evident and computed along similar means. On the Langlands dual side, commutation between Hecke operators and (compactified) automorphic Eisenstein functors over $\Bun_{G}$ is highly non-trivial and is one of the main results of Braverman--Gaitsgory \cite{braverman&gaitsgory}. 
\end{remark}

\begin{proof} All terms in the diagram can be presented as fiber products of Hodge components. In particular $\Eis^{\Tw}_{P} = \Eis^{\Hod}_{P} \times_{\Betti} \ol{\Eis}^{\Hod}_{P}$ by Proposition \ref{spec tw eis} and $\WW^{\Tw}_{\rho, x , \ol{x}} = \WW^{\Hod}_{\rho, x} \times_{\Betti} \ol{\WW}^{\Hod}_{\rho, \ol{x}} $ by Proposition \ref{twistor wilson exists}. Thus it suffices to check the commutativity of the diagram 
\[
\begin{tikzcd}
\IndCoh(\Hodge_M) \arrow[d, "\WW^{\Hod}_{\rho_M, x}"] \arrow[r, "\Eis_{P}^{\Hod}"] 
& \IndCoh(\Hodge_G) \arrow[d, "\WW^{\Hod}_{\rho, x}"] 
\\
\IndCoh(\Hodge_M) \arrow[r, "\Eis_{P}^{\Hod}"] 
& \IndCoh(\Hodge_G)
\end{tikzcd}, 
\]
Recall the notation $\Hodge_M \xleftarrow{q^{\Hod}} \Hodge_P \xrightarrow{p^{\Hod}} \Hodge_G$ for the extension of structure group morphisms in the Hodge context. The essential content of the proof is the existence of the isomorphism 
\[
(\id \times q^{\Hod})^{!}\rho_M(\Uu^{\Hod}_{M}) \cong (\id \times p^{\Hod})^{!} \rho (\Uu^{\Hod}_{G}), 
\]
from \eqref{Hodge universal equivalence}. Restricting to the underlying $G$-bundle and a point $x \in X$ yields the natural relation  
\[
(p^{\Hod})^{!}\rho_M(\Pp^{\Hod}_{M, x}) \cong (q^{\Hod})^{!} \rho (\Pp^{\Hod}_{G, x}) , 
\]
which alongside the projection formula allows us to conclude with the isomorphisms 
\begin{align*}
\Eis_{P}^{\Hod} \circ \WW^{\Hod}_{\rho_M, x}(\Ff) 
& = (p^{\Hod})_{*} \big( (q^{\Hod})^{!} \Ff \otimes (q^{\Hod})^{!} \rho_M( \Uu^{\Hod}_{M, x} ) \big) , \\
& \cong (p^{\Hod})_{*} \big( (q^{\Hod})^{!} \Ff \otimes (p^{\Hod})^{!} \rho( \Uu^{\Hod}_{G, x} ) \big) , \\
& \cong (p^{\Hod})_{*} (q^{\Hod})^{!} \Ff \otimes \rho( \Uu^{\Hod}_{G, x} ) = \WW^{\Hod}_{\rho, x} \circ \Eis_{P}^{\Hod} (\Ff) . 
\end{align*} 
The existence of the restricted commutative square is a consequence of the fact that $\Eis^{\Tw}_P$ and $\WW^{\Tw}_{\rho}$ both preserve BBB-brane and nilpotent singular support conditions, as proven in Corollary \ref{BBB nilp Eis} and Proposition \ref{Wilson acts nicely} respectively. 
\end{proof} 


\renewbibmacro{in:}{}

\printbibliography

\end{document}